\def\A{\mathcal{A}}
\def\P{\mathcal{P}}
\def\COMMENT#1{}
\let\COMMENT=\footnote
\newtheorem{question}{Question}
\newtheorem{corollary}[question]{Corollary}
\newtheorem{conjecture}[question]{Conjecture}
\newtheorem{theorem}[question]{Theorem}
\newtheorem{proposition}[question]{Proposition}
\newtheorem{lemma}[question]{Lemma}
\newtheorem{remark}[question]{Remark}
\newtheorem{claim}[question]{Claim}
\newtheorem{definition}[question]{Definition}
\numberwithin{question}{section}
\numberwithin{equation}{section}
\begin{document}
\title{
	Existence thresholds and Ramsey properties of random posets}
\author{Victor Falgas-Ravry, Klas Markstr\"om,
 Andrew Treglown and Yi Zhao}

\thanks{VFR:  Ume{\aa} Universitet, Sweden, {\tt victor.falgas-ravry@umu.se}. Research supported by VR grant 2016-03488.\\
\indent KM: Ume{\aa} Universitet, Sweden, {\tt klas.markstrom@umu.se}. Research supported by VR grant 2014-48978.\\
\indent AT: University of Birmingham, United Kingdom, {\tt a.c.treglown@bham.ac.uk}. Research supported by EPSRC \indent grant EP/M016641/1. \\
\indent YZ: Georgia State University, Atlanta GA, USA, {\tt yzhao6@gsu.edu}. Researched supported by NSF grants \\ \indent DMS-1400073 and  DMS-1700622.}

\begin{abstract}
Let $\P(n)$ denote the power set of $[n]$, ordered by inclusion, and let $\P (n,p)$ denote the random poset obtained from $\P(n)$ by retaining each element from $\P (n)$ independently at random with probability $p$ and discarding it otherwise.

Given \emph{any} fixed poset $F$ we determine the threshold for the property that $\P(n,p)$ contains $F$ as an induced subposet. We also asymptotically determine the number of copies of a fixed poset $F$ in $\P(n)$. Finally, we obtain a number of results on the Ramsey properties of  the random poset $\P(n,p)$.

\end{abstract}
\maketitle


\section{Introduction}
Let $(P, \leq_P), (Q, \leq_Q)$ be posets. A \emph{poset homomorphism} from $(P, \leq_P)$ to $(Q, \leq_Q)$ is a function $\phi: \ P \rightarrow Q$ such that for every $x,y\in P$, if $x\leq_P y$ then $\phi(x)\leq_Q \phi(y)$. 
  We say that $(P, \leq_P)$ is a \emph{subposet} of $(Q, \leq_Q)$ if there is an injective poset homomorphism from $(P, \leq_P)$ to $(Q, \leq _Q)$; otherwise, $(Q, \leq_Q)$ is said to be \emph{$(P, \leq_P)$-free}. Further we say $(P, \leq_P)$ is an \emph{induced subposet} of $(Q, \leq_Q)$ if there is an injective poset homomorphism $\phi$ from $(P, \leq_P)$ to $(Q, \leq _Q)$ such that for every $x,y$ in $P$, $\phi(x)\leq_Q \phi(y)$ if and only if $x\leq_P y$.
We shall sometimes use $P$ as a shorthand for the poset $(P, \leq_P)$ when the partial order $\leq_P$ is clear from context, and write e.g. $P$-free for $(P, \leq_P)$-free.

Set $[n]:=\{1,2,\ldots, n\}$, and denote by $\mathcal P(n)$ the power set of $[n]$. When viewed as a poset equipped with the inclusion relation, we refer to $\mathcal P(n)$ as the \emph{Boolean lattice of dimension $n$}.
Recall that given a poset $P$,
a subset $\mathcal A \subseteq P$ is an \emph{antichain} if all distinct $A, B \in \A$ are incomparable. A \emph{chain of length $\ell$} in $P$ is an $\ell$-subset of $P$ in which all elements are comparable.

Many classical questions in graph theory have analogues in the setting of the Boolean lattice. For example, in graph theory, Tur\'an-type questions ask what is the maximum number of edges a graph on $n$ vertices may have if it does not contain any copy of a fixed graph $H$ as a subgraph. The oldest result of this flavour in the study of the Boolean lattice is Sperner's theorem~\cite{Sperner28} which asserts that the size of the largest antichain in $\mathcal P(n)$ (i.e. the maximum size of a subposet of $\P(n)$ not containing a chain of length $2$) is $\binom{n}{\lfloor n/2\rfloor}$. More generally, there has been much interest in determining the largest $P$-free subset of $\mathcal P(n)$ for a range of posets $P$ (for a  sample of such results see e.g.~\cite{AxenovichManskeMartin12, Bukh09, debon, grosz, lu, Patkos};
furthermore,~\cite{posetsurvey, katsurvey} are surveys on the topic).

 Recall that the Erd\H{o}s--R\'enyi random graph $G_{n,p}$ is the $n$-vertex graph where each edge is present with probability $p$, independently of all other choices.
In this paper we consider an analogue of the Erd\H{o}s--R\'enyi random graph in the setting of posets:
let $\P (n,p)$ be the induced subposet of $\P(n)$ obtained by independently including each element from $\P (n)$ in $\P(n,p)$ independently at random with probability $p$ (and discarding it otherwise). The random poset model $\P(n,p)$ was first investigated by R\'enyi~\cite{renyi} who determined the probability threshold for the property that $\P(n,p)$ is not itself an antichain, thereby answering a question of Erd\H{o}s. This model has also been studied with respect to a range of other properties. Answering a question of Osthus~\cite{Osthus00},   a  version of Sperner's theorem for $\P(n, p)$ was obtained independently by Balogh, Mycroft and Treglown~\cite{sper} and by Collares Neto and  Morris~\cite{cm}.\footnote{This question had first been studied by  Kohayakawa and Kreuter~\cite{kk}.} There have also been a number of results concerning the length of (the longest) chains in $\P(n,p)$ and related models of random posets (see
for example, \cite{bol,  koh, Kreuter98}).

Note too that natural questions concerning $\mathcal P(n,p)$ also arise when viewing it as a set system rather than a poset (see e.g. the random version of Katona's intersection theorem in~\cite{btw}).

\subsection{The existence threshold for a subposet}
One of the fundamental questions in the study of  the random graph $G_{n,p}$ concerns the values of $p$ for which $G_{n,p}$ with high probability (w.h.p.) contains a given fixed graph $H$ as a subgraph.
Indeed, this was the first problem studied in a seminal paper of Erd\H{o}s and R\'enyi~\cite{er1}, who determined the threshold for this problem in the case when $H$ belongs to the class of \emph{balanced} graphs. It took another twenty-one
years before Bollob\'as~\cite{boll} determined the threshold for general graphs.

It is natural to ask the analogous question in the setting of posets. That is: given a fixed poset $P$, for which values of $p$ do we have that $\P(n,p)$ w.h.p. contains a copy of $P$ as a subposet?
In this paper we answer this question for every poset $P$. More precisely, we determine the critical value $c_{\star}(P)$ such that for $p=e^{-cn}$ and $c>c_{\star}(P)$ fixed, w.h.p. $\P(n,p)$ does not contain a copy of $P$,  while for $p=e^{-cn}$ and $c<c_{\star}(P)$ fixed, w.h.p. $\P(n,p)$ contains an \emph{induced} copy of $P$.

Whilst the analogous result in the setting of the random graph $G_{n,p}$ is not too difficult to state, in the Boolean lattice we must introduce several non-trivial concepts before we can give a formal statement of our main result. Thus, we defer its precise statement
(Theorem~\ref{theorem: existence threshold}) to Section~\ref{sec:exist}. In Section~\ref{intu} we give the intuition behind this result.
We additionally prove that for almost all posets $P$ with $N$ elements, $c_{\star}(P)=(\log 2)/3+O(1/\log N)$ (see Theorem~\ref{universality}).

We remark that Kreuter~\cite{Kreuter98} considered a closely related question.
Indeed, given a \emph{distributive lattice} $L$ he determined the threshold for the property that w.h.p.  $L$ can be  \emph{embedded} in $\mathcal P(n,p)$. That is, 
given $a,b \in L$, write $a \vee b$ for the \emph{join} of $a$ and $b$ and $a \wedge b$ for the \emph{meet} of $a$ and $b$.
Then an \emph{embedding} of $L$ into $\mathcal P(n,p)$ is an injective poset homomorphism $\phi$ from $L$ to $\mathcal P(n,p)$
such that for all $a,b \in L$, $\phi (a \vee b)= \phi (a) \vee \phi (b)$ and $\phi (a \wedge b )=\phi (a) \wedge \phi (b)$.
Whilst the existence threshold we obtain for our problem has some features similar to that of Kreuter's, the two problems differ quite significantly.

\subsection{Counting subposets in the Boolean lattice}
In order to prove our existence threshold result (Theorem~\ref{theorem: existence threshold}), in Section~\ref{sec:bi} we provide a correspondence between copies of a fixed poset $P$ in $\mathcal P(n)$ and partitions of $[n]$.
As a consequence of this we  asymptotically determine the number of copies of $P$ in $\mathcal P(n)$. 
\begin{theorem}\label{thm:count}
Let $P$ be a fixed poset with $m$ antichains (including the empty antichain). Then $\mathcal{P}(n)$ contains 
$$(1+o(1))m^n$$ copies of $P$.	
\end{theorem}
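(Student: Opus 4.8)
The plan is to make precise the correspondence advertised in the excerpt between copies of $P$ in $\P(n)$ and labelled partitions of $[n]$, and then to check that requiring injectivity (and the induced property) rules out only an $o(1)$ proportion of those labelled partitions. Throughout, a \emph{copy} of $P$ means a labelled one, i.e.\ an injective poset homomorphism $\phi\colon P\to\P(n)$; we will see that additionally demanding that $\phi$ be an induced embedding does not change the count to leading order. Recall that an \emph{up-set} of $P$ is a subset $U\subseteq P$ with $x\in U$ and $x\leq_P y$ implying $y\in U$. Given any map $\phi\colon P\to\P(n)$ and any $i\in[n]$, set $U_i:=\{x\in P:i\in\phi(x)\}$. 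Then $\phi$ is order-preserving if and only if every $U_i$ is an up-set of $P$, and in that case $\phi$ is recovered from $(U_1,\dots,U_n)$ via $\phi(x)=\{i\in[n]:x\in U_i\}$; conversely every $n$-tuple of up-sets of $P$ — equivalently, every function from $[n]$ to the set of up-sets of $P$, that is, a partition of $[n]$ whose blocks are labelled by up-sets — arises this way and yields an order-preserving $\phi$. A short check shows that this $\phi$ is an injective \emph{induced} copy of $P$ exactly when the following condition $(\star)$ holds: for every ordered pair $x,y\in P$ with $x\not\leq_P y$ there is an index $i$ with $x\in U_i$ and $y\notin U_i$. Since $(\star)$ forces $\phi$ to be injective even without the induced requirement, the number of induced copies is at most the number of copies, which is at most the number of $n$-tuples of up-sets of $P$; so it suffices to show that the first and last of these quantities are both $(1+o(1))m^n$.

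First I would count the up-sets of $P$. Taking complements is a bijection between up-sets and down-sets (order ideals) of $P$, and the classical bijection sending an antichain $A\subseteq P$ to the down-set $\{x\in P:x\leq_P a\text{ for some }a\in A\}$ (with inverse ``take the set of maximal elements'') shows that $P$ has exactly $m$ down-sets, hence exactly $m$ up-sets. Therefore the number of $n$-tuples of up-sets of $P$ is precisely $m^n$, which already gives the upper bound in Theorem~\ref{thm:count}.

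For the matching lower bound I would estimate the number of ``bad'' tuples, namely those failing $(\star)$. Fix an ordered pair $x,y\in P$ with $x\not\leq_P y$. The principal up-set $\{z\in P:z\geq_P x\}$ contains $x$ but not $y$, so at most $m-1$ up-sets $U$ of $P$ satisfy ``$x\notin U$ or $y\in U$''; hence at most $(m-1)^n$ tuples $(U_1,\dots,U_n)$ violate $(\star)$ for this particular pair. As $P$ has $O(1)$ elements there are $O(1)$ such pairs, so the union bound gives $O\!\left((m-1)^n\right)=o(m^n)$ bad tuples, using that $m\geq 2$ for every nonempty $P$ (each singleton of $P$ is an antichain). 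Subtracting, the number of injective induced copies of $P$ in $\P(n)$ is $m^n-o(m^n)=(1+o(1))m^n$, and by the sandwich in the first paragraph the same asymptotics hold for copies (injective homomorphisms); this proves the theorem. For $P=\varnothing$ the statement is trivial.

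I do not anticipate a genuine obstacle here: the content is entirely in recognising the up-set/antichain correspondence, after which the estimate is a one-line union bound. The only place that needs care is the bookkeeping in the bijection — verifying that \emph{arbitrary} tuples of up-sets give order-preserving maps, that $\phi$ is truly reconstructed from $(U_i)_{i\in[n]}$, and that $(\star)$ is precisely the condition for $\phi$ to be an injective (induced) copy — which is routine but should be spelled out, since it is exactly the correspondence the paper will reuse for the existence-threshold theorem.
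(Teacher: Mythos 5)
Your proof is correct, and it takes a genuinely cleaner route than the paper's. The paper constructs two explicit maps $f_1\colon \mathrm{hom}_P(\mathcal{P}(n))\to [m]^{[n]}$ and $f_2\colon [m]^{[n]}_{\star}\to \mathrm{inj\textrm{-}hom}_P(\mathcal{P}(n))$ built from a fairly intricate recipe (the $X_i, Y_i, Z_j, A_j$ stages), and proving each is an injection requires delicate case analysis (in particular the paper's Claim on injectivity of $f_1$, which argues via $\leq_P$-minimal disagreements and maximal extending antichains). You instead exploit the product structure $\mathcal{P}(n)\cong\mathcal{P}(1)^n$: reading off the ``coordinate'' up-sets $U_i=\{x\in P: i\in\phi(x)\}$ makes the bijection between poset homomorphisms $P\to\mathcal{P}(n)$ and $n$-tuples of up-sets of $P$ immediate, and then the number of up-sets equals the number of antichains by the standard correspondence. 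Your condition $(\star)$ cleanly characterises exactly which tuples give induced injective copies, and the union bound over pairs $x\not\leq_P y$ using the principal up-set $\uparrow x$ gives the $O((m-1)^n)$ error term. This is essentially the Stanley-flavoured argument that the authors themselves describe in their ``Added in proof'' note — a bijection between antichains and $\mathrm{hom}_P(\mathcal{P}(1))$, followed by $\mathrm{hom}_P(\mathcal{P}(n))\cong(\mathrm{hom}_P(\mathcal{P}(1)))^n$ — which they only learned of after submission. The trade-off is that the paper's heavier $f_1/f_2$ machinery is set up not just for the count but also for the existence-threshold proof (Theorem 6.5), where the partition $(A_1,\dots,A_m)$ indexed by antichains, and its shadows on subposets $Q\subseteq P$, are manipulated directly; your encoding carries the same information (recover $A_j$ as $\{\ell: U_\ell \text{ is the up-set generated by } S_j\}$), but if you want the threshold theorem too you would still need to translate into that partition/shadow language.
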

Thus, 
 the number $m$ of antichains in $P$ is the parameter governing how many copies of $P$ there are in the Boolean lattice. 
Theorem~\ref{thm:count} also implies that the number of non-induced copies of $P$ in $\mathcal P(n)$ is $o(m^n)$, since such a copy of $P$ is simply another poset with strictly fewer antichains.
Note that Axenovich and Walzer~\cite[Theorem~5]{AxenovichWalzer17}  gave (asymptotically weaker) bounds
on the number of copies of $\mathcal P(t)$ in $\mathcal P(n)$.

\subsection{Ramsey properties of random posets}\label{subsection: intro ramsey}
Ramsey-type problems have been extensively studied for graphs, and there has been interest in investigating similar phenomena in the Boolean lattice. As a consequence of the Hales--Jewett theorem, we have the following analogue of Ramsey's theorem for complete graphs in the Boolean setting: given any fixed $r,m\in \mathbb N$, if $n$ is sufficiently large then in every $r$-colouring of the elements of $\mathcal P(n)$ there is a monochromatic  copy of $\mathcal P(m)$ (see e.g.~\cite[p49]{hjpbook}).
 In a recent paper, Axenovich and Walzer~\cite{AxenovichWalzer17} gave bounds on the so-called \emph{poset Ramsey number} for various posets: given posets $F$ and $F'$, the \emph{poset Ramsey number} $R(F,F')$ is the smallest $N$ such that any $2$-colouring of the elements of $\mathcal P(N)$ contains a red induced copy of $F$ or a blue induced copy of $F'$. They also considered multicolour variants of this Ramsey number. 
See also the very recent papers~\cite{lut, newboolean}.

In the case of the  random graph $G_{n,p}$, we have a clear understanding of (symmetric) Ramsey properties. Indeed, seminal work of R\"odl and Ruci\'nski~\cite{random1, random2, random3} determines the threshold for the property of $G_{n,p}$ being $(H,r)$-Ramsey for any fixed graph $H$ and  $r\in \mathbb N$. (We say that a graph $G$ is \emph{$(H,r)$-Ramsey} if every $r$-colouring of $G$ yields a monochromatic copy of $H$ in $G$.)
However, far less is known about Ramsey properties of $\P (n,p)$. 

In 1998, Kreuter~\cite{Kreuter98}  initiated the study of such questions for $\P (n,p)$ -- however, in the setting of  monochromatic embedded copies of a distributive lattice $L$.
Given a lattice $L$ and $r \in \mathbb N$ we say that a poset $P$ is \emph{$(L,r)$-embed-Ramsey} if whenever the elements of $P$ are $r$-coloured, there exists an embedded monochromatic copy of $L$ in $P$.
Kreuter~\cite{Kreuter98} determined the threshold for the property that  $\P (n,p)$ is $(C_{\ell},r)$-embed-Ramsey, where here $C_{\ell}$ denotes a chain of length $\ell$. 
He also raised the question of  generalising this result to other sublattices. In particular, he asked for the probability threshold
for the property that whenever $\P (n,p)$ is $2$-coloured it contains an embedded monochromatic copy of $\mathcal P(2)$.

Given a poset $F$ and $r \in \mathbb N$ we say that a poset $P$ is \emph{$(F,r)$-Ramsey} if whenever the elements of $P$ are $r$-coloured, there exists a monochromatic copy of $F$ in $P$.
Similarly, given posets $F_1,\dots, F_r$ we say that a poset $P$ is \emph{$(F_1,\dots, F_r)$-Ramsey} if whenever the elements of $P$ are $r$-coloured, for at least one $1\leq i \leq r$, in $P$ there exists a copy of $F_i$ in colour $i$.
In this paper we initiate the study of the following general question:
\begin{question}\label{mainques}
Given posets $F_1,\dots, F_r$, what values of $p$ ensure that w.h.p. $\P(n,p)$ is $(F_1,\dots, F_r)$-Ramsey?
\end{question}
Note that  a copy of $C_{\ell}$ in $\P(n,p)$ is always an embedded copy; so in fact the aforementioned result of Kreuter answers Question~\ref{mainques} in the case when $F_1=\dots=F_r=C_{\ell}$. As an application of our existence threshold theorem (Theorem~\ref{theorem: existence threshold}), we obtain a number of  somewhat modest results concerning the Ramsey properties of random posets. For each poset $P$ on at most $3$ elements, (combined with Kreuter's result)
we determine 
the critical value $c_{\mathrm{Ram}}(P)$ such that for $p=e^{-cn}$ and $c>c_{\mathrm{Ram}}(P)$ fixed, w.h.p. $\P(n,p)$ is not $(P,2)$-Ramsey,  while for $p=e^{-cn}$ and $c<c_{\mathrm{Ram}}(P)$ fixed, w.h.p. $\P(n,p)$ is  $(P,2)$-Ramsey. 
We give a number of results for other posets too, for example, the following result for $\mathcal P(2)$.


\begin{theorem}\label{thm1short}
The following holds:
\begin{itemize} 
\item[(i)] If $c<0.3250121326$ and $p= e^{-cn}$ then w.h.p. $\P (n,p)$ is $(\mathcal P(2),2)$-Ramsey. 
\item[(ii)] If $c>0.3289037391$ and $p= e^{-cn}$ then w.h.p. $\P (n,p)$ is not $(\mathcal P(2),2)$-Ramsey.
\end{itemize}
\end{theorem}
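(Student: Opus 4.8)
The poset $\P(2)$ is the diamond: four elements $w,x,y,z$ with $w\leq x\leq z$, $w\leq y\leq z$ and $x\|y$. A short case analysis shows that a poset contains a (not necessarily induced) copy of $\P(2)$ if and only if it contains a chain of length $4$ or an induced copy of $\P(2)$ — indeed, the only four-element posets containing $\P(2)$ are $C_4$ and $\P(2)$ itself. In particular any monochromatic $C_4$ yields a monochromatic $\P(2)$, and a $2$-colouring of a poset $P$ avoids monochromatic $\P(2)$ exactly when it partitions $P$ into two parts, each of which is both $C_4$-free and diamond-free. I would deduce both halves of the theorem from the existence threshold Theorem~\ref{theorem: existence threshold} (and, for the chain thresholds, the earlier results of Kreuter).

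For part~(i) the plan is to exhibit a fixed poset $Q$ that is itself $(\P(2),2)$-Ramsey — every $2$-colouring of its elements contains a monochromatic $\P(2)$ — and to compute $c_{\star}(Q)$ from Theorem~\ref{theorem: existence threshold}; optimising over such $Q$ should yield $c_{\star}(Q)=0.3250121326$. Then for $c<c_{\star}(Q)$, Theorem~\ref{theorem: existence threshold} gives that w.h.p.\ $\P(n,p)$ contains an induced copy of $Q$, and restricting any $2$-colouring of $\P(n,p)$ to that copy produces a monochromatic $\P(2)$ inside $\P(n,p)$. To certify that a candidate $Q$ is $(\P(2),2)$-Ramsey one notes that if $|Q|\geq 7$ then some colour class has at least $4$ elements, and then checks (using the classification of four-element posets above) that this class must contain $\P(2)$; a more efficient $Q$ keeps its chains short, so that the forced monochromatic copy is a diamond arising from a carefully designed family of diamonds inside $Q$. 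Since $Q\supseteq C_7$ would force $c_{\star}(Q)\leq c_{\star}(C_7)$, an optimal $Q$ contains no $7$-chain; the gain over the chain value $c_{\star}(C_7)$ comes from trading a few elements for many antichains, which is what governs $c_{\star}$ (compare Theorem~\ref{thm:count}).

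For part~(ii) the plan is to show that w.h.p.\ $\P(n,p)$ admits a $2$-colouring with both classes $\P(2)$-free, so that it is not $(\P(2),2)$-Ramsey. I would split this into two steps. First, a purely combinatorial lemma: there is an explicit \emph{finite} family $\mathcal{R}$ of posets — containing $C_7$ together with a small number of ``diamond-forcing'' configurations — such that any poset $P$ containing no member of $\mathcal{R}$ as a subposet can be $2$-coloured with both classes $\P(2)$-free. Here avoidance of $C_7$ means $P$ has no $7$-chain, so Mirsky's theorem splits $P$ into at most $6$ antichains; one distributes these (or a suitable refinement of them) into two parts of ``height'' $\leq 3$, so neither part has a $4$-chain, and then uses the avoidance of the remaining members of $\mathcal{R}$ to kill monochromatic diamonds — the delicate point being that a crude Mirsky split does create diamonds, so the way one partitions the antichain classes matters. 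Second, apply Theorem~\ref{theorem: existence threshold} (and Kreuter's bound for $C_7$): for $c>\max_{R\in\mathcal{R}}c_{\star}(R)=:c_2$, w.h.p.\ $\P(n,p)$ contains no copy of any $R\in\mathcal{R}$ and hence can be coloured as required. One chooses $\mathcal{R}$ so that $c_2=0.3289037391$.

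The main obstacle sits in the two ``gadget'' problems. For part~(i) one must both find a $(\P(2),2)$-Ramsey poset attaining $c_{\star}(Q)=0.3250121326$ and argue (by an appropriate finite search) that nothing better is available to the method. For part~(ii) the crux is the structural colouring lemma: identifying precisely which bounded-height configurations obstruct a $\P(2)$-free $2$-colouring, and proving that avoiding the chosen finite $\mathcal{R}$ already lets one build the colouring. The residual gap between $0.3250121326$ and $0.3289037391$ reflects exactly this: the finite obstruction family in part~(ii) is sufficient but not known to be optimal, and the true critical value — the supremum of $c_{\star}(Q)$ over all $(\P(2),2)$-Ramsey posets $Q$ — is only located within this interval (the set of minimal $(\P(2),2)$-Ramsey posets may well be infinite, which is what blocks a matching union-bound argument).
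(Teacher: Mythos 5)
Your plan for part~(i) matches the paper's approach: one exhibits a fixed poset that is $(\P(2),\P(2))$-Ramsey and applies the existence threshold theorem. The paper's choice is the specific poset $H$ of Figure~2 (with $c_{\star}(H)\geq 0.3250121326$ from Table~1), and the verification that $H\rightarrow(\P(2),\P(2))$ is a six-step case analysis on the colours of its $11$ elements. Your heuristic about keeping chains short is on the right track, and your preliminary observation that a poset contains $\P(2)$ as a subposet iff it contains $C_4$ or an induced diamond is correct and is implicitly used in the paper's case analysis.

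For part~(ii) you and the paper diverge, and the paper's route is both simpler and cleaner than what you propose. The paper's colouring (Theorem~\ref{theorem: upper bound on Ramsey exponent using tower colouring}, applied with $P=Q=\P(2)$) is \emph{local}: give an element colour $2$ exactly when it is the unique maximal element of some copy of $\P(2)$ in $\P(n,p)$, and colour $1$ otherwise. Then no colour-$1$ copy of $\P(2)$ is possible (its max would have colour $2$), and a colour-$2$ copy of $\P(2)$ would have a minimum that tops a further copy of $\P(2)$, producing a copy of $T(\P(2),\P(2))=C(1,2,1,2,1)$. So the only obstruction one must exclude is the single $7$-element poset $C(1,2,1,2,1)$, and $c_{\star}(C(1,2,1,2,1))=0.3289037\ldots$ gives part~(ii) directly. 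Your Mirsky-based plan aims at a stronger-looking statement (bounded height plus avoiding a finite list of configurations implies a good $2$-colouring), but as you yourself note, a crude height split creates monochromatic diamonds and the construction is not actually completed; this is a genuine gap in your proposal, not just a loose end. Also, listing $C_7$ separately in $\mathcal{R}$ is redundant: any linear extension gives an injective poset homomorphism $C(1,2,1,2,1)\to C_7$, so $C(1,2,1,2,1)$-freeness already entails $C_7$-freeness. Finally, your closing paragraph correctly identifies why the two constants do not match: the lower bound in~(i) depends on the best known $(\P(2),\P(2))$-Ramsey gadget, while the upper bound in~(ii) depends on the best known colouring argument; the paper makes no claim that either is optimal, and Conjecture~\ref{conjecture: Ramsey exponents exist} leaves open whether a single critical exponent exists.
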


Further Ramsey-type results are presented in Section~\ref{sec:ramsey}.
We suspect that  Question~\ref{mainques} is likely to be extremely challenging in general. It would be very interesting to resolve the question fully in the case when $r=2$ and $F_1=F_2=\mathcal P(2)$.

\subsection{Organisation of the paper}
The paper is organised as follows. In Section~\ref{intu} we provide an intuitive outline of the existence threshold result  (Theorem~\ref{theorem: existence threshold}).
Sections~\ref{s3}--\ref{s5} introduce a number of concepts and auxiliary results that allow us to formally state and prove Theorem~\ref{theorem: existence threshold} in Section~\ref{sec:exist}.
Specifically, Section~\ref{s3} introduces the crucial notions of extension families and shadows. In Section~\ref{sec:bi} we formally introduce a correspondence between  partitions of  
$[n]$ and copies of a fixed poset $P$ in $\P(n)$. This correspondence not only allows us to prove Theorem~\ref{thm:count}, but is also vital for the proof of Theorem~\ref{theorem: existence threshold}.
In Section~\ref{s5} we introduce the notion of the weight of a partition of $[n]$ and describe its connection to shadows.

As discussed earlier, Theorem~\ref{theorem: existence threshold} determines the critical value $c_{\star}(P)$ such that for $p=e^{-cn}$ and $c>c_{\star}(P)$ fixed, w.h.p. $\P(n,p)$ does not contain a copy of $P$,  while for $p=e^{-cn}$ and $c<c_{\star}(P)$ fixed, w.h.p. $\P(n,p)$ contains an {induced} copy of $P$. Whilst we provide an explicit formula for $c_{\star}(P)$ (see Definition~\ref{def: critical density of a poset} and Remark~\ref{remarkable!}), computing $c_{\star}(P)$ by hand is in general awkward.
In Section~\ref{sec:computing} we give a number of results (and heuristics) that provide bounds for $c_{\star}(P)$; this allows us to then compute $c_{\star}(P)$ for a number of posets $P$ in Section~\ref{sec:egs}.
In Section~\ref{sec:ramsey} we turn our attention to Ramsey questions; we provide a range of results including  general bounds,  and bounds for the $(P,Q)$-Ramsey problem in $\mathcal P(n,p)$ for several specific pairs of posets $P,Q$.
We conclude the paper with a number of open problems (see Section~\ref{sec:open}).

\section{Intuition behind the existence threshold}\label{intu}
In order to state the threshold for the property that $\mathcal P(n,p)$ contains a fixed $P$ as a subposet, one requires several concepts. In particular, at first sight, it may seem rather hard to understand  the intuition behind the formal statement of the threshold. In this section we describe the threshold in more informal terms in order to build up understanding for when we do finally state the precise result.

\subsection{Notation}
All posets considered in this paper are finite. Given a poset $P$, we denote by $\mathcal{A}(P)$ the family of all \emph{antichains} of $P$. 
Note that we include the empty antichain in $\mathcal{A}(P)$. So for example, both $\emptyset$ and $\{ \emptyset \}$ belong to $\mathcal{A}(\mathcal P(n))$!
We write $a(P)$ for the cardinality of $\mathcal{A}(P)$.  We will often enumerate the set of antichains as $\mathcal{A}(P)=: \{S_1, \dots, S_{a(P)}\}$;  we always implicitly assume that $S_{a(P)}=\emptyset$.

A poset on a set of elements $X$   is said to be \emph{connected}  if it is not possible to partition $X$ into two non-empty sets $X_1$ and $X_2$  such that for every $x_1\in X_1$  and $x_2\in X_2$, the elements $x_1$ and $x_2$ are mutually incomparable.  Equivalently, a poset is connected if its Hasse diagram is a connected graph.

Given integers $m,n$, we write $[m]^{[n]}$ for the collection of ordered $m$-partitions of $[n]$, i.e. the collection of all $m$-tuples $(A_1, A_2, \ldots , A_m)$, where the $A_i$s are pairwise disjoint subsets of $[n]$ whose union is $[n]$. Further,  we write $[m]^{[n]}_{\star}$ for the collection of all $(A_1,A_2, \ldots , A_m)$ in $[m]^{[n]}$ for which $A_j \neq \emptyset$ for all $j \in [m]$.

We use standard Landau notation throughout this paper. In a probabilistic setting, we say that an $n$-dependent event $E=E(n)$ occurs \emph{with high probability (w.h.p.)} if $\mathbb{P}(E)\rightarrow 1$ as $n\rightarrow \infty$.

\subsection{A correspondence between partitions of $[n]$ and subposets of $\P (n)$}\label{subsection: intro intuition}
A crucial feature of the existence threshold concerns a correspondence between partitions of  
$[n]$ and subposets of $\P(n)$. More precisely, fix a poset $P$, and consider an arbitrary fixed ordering $S_1,\dots, S_m$ of the elements of 
$\mathcal A(P)$ (so here $m:=a(P)$). 
Now suppose we are given a partition $\mathbf{A}=(A_1, A_2, \ldots, A_m)\in [m]^{[n]}_{\star}$.  We use $\mathbf{A}$ to build an injective poset homomorphism $\phi: \ P\rightarrow \mathcal{P}_n$ as follows.  Given $i \in P$, let $D(i):=\{i' \in P: \ i'\leq_P i\}$ denote the collection of elements of $P$ which are less than or equal to $i$ with respect to the partial order $\leq_P$.
Then let $\phi$ be the map sending $i\in P$ to the subset $X_i \subseteq [n]$, where
\begin{equation}
\label{eq:Xi}
X_i :=\bigcup_{j: \ S_j\cap D(i) \ne \emptyset} A_j.
\end{equation}
Set $f(\mathbf{A})=\phi$. Roughly speaking,
in Section~\ref{sec:bi} we show that $f$ is a bijection from the set of partitions
 $[m]^{[n]}_{\star}$
 to the set of all induced copies of $P$ in $\P(n)$ (this is a slight simplification, see Lemmas~\ref{lemma: injection from hom to partitions} and~\ref{lemma: injection from partitions* to inj}).

Indeed, notice that for any $i$, $X_i=\bigcup_{i'\in D(i)} X_{i'}$ and thus $\phi$ ensures that comparable elements of $P$ are mapped
to comparable elements of $\P(n)$. Meanwhile, if $i,i'$ are incomparable elements
of $P$, since the definition of $X_i$ involves the antichain $S_j:=\{i\}$, and the definition of
$X_{i'}$ involves the antichain $S_{j'}:=\{i'\}$, we see that $i$ and
$i'$ are mapped to incomparable elements of $\P(n)$.

Given an induced copy $P'$ of $P$ in $\P(n)$ we say that ${\mathbf A}:= f^{-1} (P')\in
 [m]^{[n]}_{\star}$ is the partition of $[n]$ \emph{associated with $P'$}.
Write ${\mathbf A}=(A_1,\dots,A_m)$ and define $a_i:=|A_i|/n$.
We say that $P'$ is a  copy of $P$ of \emph{$(a_1,\dots, a_m)$-type}.

The partition associated with a copy $P'$ of $P$ in $\P(n)$ encodes structural information on how $P'$ is positioned within $\P(n)$. To illustrate this, consider the case when $P:=\P(2)$.
Let $x_1,\dots, x_4$ denote the elements of $\P(2)$, where $x_1$ is the minimal element and $x_4$ is the maximal element.
Note we have that $|\mathcal A(\P(2))|=6$. By definition, every copy $P'$ of $\P(2)$ in $\P(n)$ of $(1/6,1/6,1/6,1/6,1/6,1/6)$-type has its minimal element $x_1$ on the $n/6$-layer of $\P(n)$ (since $x_1$ lies in one antichain in $\P(2)$).
The two middle elements $x_2, x_3$ of $\P(2)$ lie in the $n/2$-layer of $\P(n)$ (since they each lie in two antichains, in addition to the antichain that $x_1$ lies in). Finally,
$x_4$ is positioned on the $5n/6$-layer (since it is only the empty antichain that does not contain one of $x_1,\dots,x_4$).
The type of $P'$ not only determines the layers of $\P(n)$ in which the elements of $P'$ are located; it also gives us information about  the way in which such elements `overlap', when viewed as subsets of $[n]$.
Indeed, continuing our running example above, suppose $x_2\in \P(2)$ is mapped to the set $X_2 \in \mathcal \P(n)$ and $x_3\in \P(2)$ is mapped to the set $X_3 \in \mathcal \P(n)$. Since $\{x_1\}$ and $\{x_2, x_3\}$ are the only antichains that intersect both $D(x_2)$ and $D(x_3)$,
we know that $\vert X_2 \cap X_3 \vert=n/3$.

Note that in such a copy $P'$ of $\P(2)$ in $\P(n)$ we have a copy $V'$ of the poset $V=\{x_1,x_2,x_3\}$ 
whose minimal element lies in the $n/6$-layer, and whose two other elements lie in the $n/2$-layer of $\P(n)$.
We have that $\mathcal A(V)= \left \{  \{ x_1 \}, \{x_2\}, \{x_3 \} , \{x_1,x_2 \} , \emptyset\right \}$.
It is easy to check that $V'$ is in fact a copy of $V$ in $\P(n)$ of  $( 1/6, 1/6,1/6   ,1/6,1/3)$-type.

\subsection{Copies of $P$ in $\mathcal P(n,p)$}
The correspondence mentioned in the previous section plays a crucial role in the existence threshold problem.
To see this, suppose we first ask for the threshold for the property that $\mathcal P(n,p)$ contains a copy of $\P(2)$ of $(1/6,1/6,1/6,1/6,1/6,1/6)$-type.
To ensure w.h.p. that $\mathcal P(n,p)$ contains such a copy of $\P(2)$, certainly we need $p$ to be chosen so that the expected number of copies of $\P(2)$ of $(1/6,1/6,1/6,1/6,1/6,1/6)$-type in $\mathcal P(n,p)$ is at least $1$.
(Otherwise, Markov's inequality easily implies one cannot have such a copy of $\P(2)$ w.h.p.)
Moreover, one also needs $p$ to be such that the expected number of copies of $V$ of the `correct type' (i.e.
$(1/6,1/6,1/6,1/6,1/3)$-type) in $\mathcal P(n,p)$ is at least 1.
In fact, for any subposet $F$ of $\P(2)$ one needs that the  expected number of copies of $F$ of the `correct' type in $\mathcal P(n,p)$ is at least 1.
It turns out these are the only barriers; if $p$ is such that each of the above expectations is large, then indeed w.h.p. $\mathcal P(n,p)$ contains a copy of $\P(2)$ of $(1/6,1/6,1/6,1/6,1/6,1/6)$-type.

Now suppose we wish to find the threshold for the property that $\mathcal P(n,p)$ contains a copy of $\P(2)$ (i.e. the type of $\P(2)$ does not matter).
Then, roughly speaking, we prove that this threshold $p^*$ is the smallest $0<p<1$ such that there is some $6$-tuple $(\alpha _1,\dots, \alpha _6)$ with each $\alpha _i >0$ such that
\begin{itemize}
\item $\sum \alpha _i =1$;
\item the expected number of copies of $\P(2)$ in $\mathcal P(n,p)$ of $(\alpha _1,\dots, \alpha _6)$-type is at least one;
\item given any subposet $F$ of $\P(2)$, the expected number of copies of $F$ of the `correct' type is at least one.
\end{itemize}

More generally, the threshold $p^*$ for the existence of \emph{any} fixed poset $P$ in $\mathcal P(n,p)$ is analogous.
Indeed, it is the smallest $0<p<1$ such that there is some $m$-tuple $(\alpha _1,\dots, \alpha _m)$ with each $\alpha _i >0$ such that
\begin{itemize}
\item $\sum \alpha _i =1$;
\item the expected number of copies of $P$ in $\mathcal P(n,p)$ of $(\alpha _1,\dots, \alpha _m)$-type is at least one;
\item given any subposet $F$ of $P$, the expected number of copies of $F$ of the `correct' type is at least one.
\end{itemize}
(Recall here we define $m:=a(P)=|\mathcal A(P)|$.)

\section{Extension families and shadows}\label{s3}
To rigorously build up the correspondence between partitions of $[n]$ and copies of a poset $P$ in $\mathcal P(n)$ described in the last section, we require several definitions. In this section we shall define notions of \emph{extension families} and \emph{shadows} for antichains that will play a crucial role in both the proofs and the statements of our main results. Here (and elsewhere in the paper) we write $x<_Py$ as a shorthand for the  statement ``$x\leq_P y$ and $x\neq y$''.

\begin{definition}[Extension family]\label{def: extension family}
Let $S$ be an antichain in $\mathcal{A}(P)$. The extension family of $S$ in $P$ is 
\[\mathrm{Ext}_P(S):=\{ S' \in \mathcal{A}(P): \ S\subsetneq S'  \}.\]
\end{definition}
\noindent  Thus $\mathrm{Ext}_P(S)$ is the collection of all antichains in $\mathcal{A}(P)$ strictly containing $S$ as a sub-antichain. So, for example, $\mathrm{Ext}_P(\emptyset)=\mathcal{A}(P)\setminus \{\emptyset\}$, and $\mathrm{Ext}_P(S)=\emptyset$ if and only if $S$ is a maximal antichain. When the poset $P$ is clear from context, we usually omit the subscript $P$ and write $\mathrm{Ext}(S)$ for $\mathrm{Ext}_P(S)$.
\begin{definition}[Shadow]\label{def: shadow of an antichain}
	Let $(P, \leq_P)$ be a poset, and $Q\subseteq P$. Given an antichain $S\in \mathcal{A}(P)$, its \emph{(upper) shadow} $\partial_Q S$ is the set of all $y\in Q$ such that
	\begin{enumerate}[(i)]
		\item there exists $x\in S$ with $x\leq_P y$,
		\item $y$ is $\leq_P$-minimal in $Q$ with respect to property (i): that is, for every $z\in Q$ with $z<_P y$ and every $x \in S$, we have $x\not\leq_P z$. 
	\end{enumerate}	
\end{definition}
\noindent Clearly we have $\partial_PS=S$ for every antichain $S\in \mathcal{A}(P)$. Going back to our running example $\P(2)$ from Section~\ref{subsection: intro intuition}, the shadow of the antichain $\{x_1\}$ inside the subposet $\Lambda$ of $\P(2)$ induced by $\{x_2, x_3, x_4\}$ is the antichain $\{x_2, x_3\}$; also, the shadow of the antichain $\{x_2, x_3\}$ inside the subposet induced by $\{x_3, x_4\}$ is the singleton antichain $\{x_3\}$. More generally we have: 
\begin{proposition}\label{prop:shadow}
{\ }
		For every $S\in \mathcal{A}(P)$ and $Q\subseteq P$, the shadow $\partial_QS$ is an antichain in $(Q, \leq_P)$ (and thus in $(P, \leq_P)$).	
\end{proposition}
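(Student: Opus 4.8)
The plan is to show directly that $\partial_Q S$ contains no two comparable elements. Suppose for contradiction that $y, y' \in \partial_Q S$ with $y <_P y'$. Both $y$ and $y'$ satisfy properties (i) and (ii) of Definition~\ref{def: shadow of an antichain}. In particular, since $y' \in \partial_Q S$, property (i) gives some $x \in S$ with $x \leq_P y'$, and property (ii) applied to $y'$ says that for every $z \in Q$ with $z <_P y'$ and every $x'' \in S$, we have $x'' \not\leq_P z$. But $y \in Q$ and $y <_P y'$, so taking $z := y$ we conclude that $x'' \not\leq_P y$ for every $x'' \in S$. This directly contradicts property (i) for $y$, which asserts the existence of some $x \in S$ with $x \leq_P y$. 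Hence no such pair $y, y'$ exists, and $\partial_Q S$ is an antichain in $(Q, \leq_P)$.

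Since $\partial_Q S \subseteq Q \subseteq P$ and the partial order on $Q$ is the restriction of $\leq_P$, any antichain in $(Q, \leq_P)$ is also an antichain in $(P, \leq_P)$, which gives the parenthetical remark. This is essentially the whole argument; there is no real obstacle, as the statement is an immediate unwinding of the minimality condition (ii) in the definition of the shadow. I would present it in a couple of sentences, emphasising that property (ii) for the larger element $y'$ is incompatible with property (i) for the smaller element $y$ whenever $y <_P y'$. It may also be worth noting explicitly that $\partial_Q S$ could be empty (for instance when no element of $Q$ lies above any element of $S$), and the empty set is vacuously an antichain, so the statement holds in that degenerate case as well.
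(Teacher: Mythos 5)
Your argument is correct and is exactly the paper's one-line proof, just spelled out in full: the paper simply observes that property (ii) of Definition~\ref{def: shadow of an antichain} forces pairwise incomparability, and you unwind this by applying (ii) for the larger element $y'$ with $z := y$ to contradict (i) for $y$.
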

\begin{proof}
Property (ii) in Definition~\ref{def: shadow of an antichain} implies elements of $\partial_QS$ are pairwise $\leq_P$-incomparable. 
\end{proof}

\section{Poset embeddings and partitions}	\label{sec:bi}
Let $(P, \leq_P), (Q, \leq_Q)$ be posets. Recall that a \emph{poset homomorphism} from $(P, \leq_P)$ to $(Q, \leq_Q)$ is a function $\phi: \ P \rightarrow Q$ such that for every $x,y\in P$, if $x\leq_P y$ then $\phi(x)\leq_Q \phi(y)$. 
We denote by $\mathrm{hom}_P(Q)$ the set of poset homomorphism from $(P, \leq_P)$ to $(Q, \leq _Q)$, and by $\mathrm{inj-hom}_P(Q)$ the set of injective homomorphisms.

We begin by showing that the number of copies of a subposet $P$ inside $\mathcal{P}(n)$ may be estimated by counting certain partitions of $[n]=\{1,2,\ldots , n\}$.	Given integers $m,n$, recall that we write $[m]^{[n]}$ for the collection of ordered $m$-partitions of $[n]$, i.e. the collection of all $m$-tuples $(A_1, A_2, \ldots , A_m)$, where the $A_i$s are pairwise disjoint subsets of $[n]$ whose union is $[n]$. Further, recall that we write $[m]^{[n]}_{\star}$ for the collection of all $(A_1,A_2, \ldots , A_m)$ in $[m]^{[n]}$ for which $A_j \neq \emptyset$ for all $j \in [m]$. Note that 
\begin{align}\label{eq: estimate on m-partitions}
	\vert [m]^{[n]}\vert =m^n\qquad \textrm{ and, for $m$ fixed } \qquad \vert[m]^{[n]}_{\star}\vert=\sum_{i=0}^{m-1}(m-i)^n\binom{m}{i}\left(-1\right)^{i}= m^n +O\left((m-1)^n\right).
\end{align}

\noindent Our goal in this section is to prove the following:
\begin{theorem}\label{theorem: counting copies of P}
Let $P$ be a fixed poset with $a(P)=m$. Then
\[ \Bigl\vert \mathrm{inj-hom}_P(\mathcal{P}(n))\Bigr\vert =m^n+O\left((m-1)^n\right).\] 	
\end{theorem}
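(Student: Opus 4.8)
The plan is to establish a near-bijection between $\mathrm{inj\text{-}hom}_P(\mathcal{P}(n))$ and the set of ordered $m$-partitions $[m]^{[n]}_\star$, and then invoke the count in~\eqref{eq: estimate on m-partitions}. Fix an enumeration $S_1, \dots, S_m$ of $\mathcal{A}(P)$ with $S_m = \emptyset$, and recall the map $f$ from Section~\ref{subsection: intro intuition}: given $\mathbf{A} = (A_1, \dots, A_m) \in [m]^{[n]}$, define $\phi = f(\mathbf{A})$ by $\phi(i) = X_i := \bigcup_{j : S_j \cap D(i) \ne \emptyset} A_j$. First I would verify that $f(\mathbf{A})$ is always a poset homomorphism: if $i \leq_P i'$ then $D(i) \subseteq D(i')$, so every index $j$ contributing to $X_i$ also contributes to $X_{i'}$, giving $X_i \subseteq X_{i'}$. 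Next, the crucial injectivity-of-$\phi$ claim: if $i \not\leq_P i'$, then since $\{i\} \in \mathcal{A}(P)$ is one of the $S_j$'s, and $\{i\} \cap D(i) \ne \emptyset$ while $\{i\} \cap D(i') = \emptyset$, we get $A_j \subseteq X_i$ but $A_j \cap X_{i'} = \emptyset$; provided $A_j \ne \emptyset$ this shows $X_i \not\subseteq X_{i'}$. Hence when $\mathbf{A} \in [m]^{[n]}_\star$, $\phi$ is injective (comparabilities are reflected, incomparabilities preserved), so $f$ maps $[m]^{[n]}_\star$ into $\mathrm{inj\text{-}hom}_P(\mathcal{P}(n))$. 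This gives the lower bound $\bigl|\mathrm{inj\text{-}hom}_P(\mathcal{P}(n))\bigr| \geq |[m]^{[n]}_\star| = m^n + O((m-1)^n)$ — though I should be slightly careful that distinct partitions give distinct homomorphisms, which follows because $A_j = X_{\{j\}}$-type expressions let one recover $\mathbf{A}$ from $\phi$ (the $A_j$ are recoverable as suitable differences of the sets $X_i$ together with $\bigcup_i X_i$'s complement for the top antichain, matching the forthcoming Lemmas~\ref{lemma: injection from hom to partitions} and~\ref{lemma: injection from partitions* to inj}).

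For the upper bound I would go in the other direction: given $\phi \in \mathrm{inj\text{-}hom}_P(\mathcal{P}(n))$, I want to assign to it a partition of $[n]$ into at most $m$ parts, in a way that is injective (or boundedly many-to-one). The natural choice is to look at the atoms of the Boolean subalgebra generated by the sets $\phi(i)$, $i \in P$: for each element $x \in [n]$, record the down-set $\{i \in P : x \in \phi(i)\}$; since $\phi$ is a homomorphism this is always a down-set of $P$, equivalently is determined by its set of maximal elements, which is an antichain of $P$. This gives a map $[n] \to \mathcal{A}(P)$, i.e. an ordered partition of $[n]$ into $m = a(P)$ (possibly empty) parts, hence an element of $[m]^{[n]}$. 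The point is that this assignment, composed appropriately, inverts $f$ up to a controlled amount of slack: two homomorphisms giving the same partition can differ only in data not detected by the atom structure, and one checks this overcounting is absorbed by the $O((m-1)^n)$ error. Combined with $|[m]^{[n]}| = m^n$ this yields the matching upper bound.

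The main obstacle I anticipate is bookkeeping the exact relationship between $\mathrm{inj\text{-}hom}$ and $[m]^{[n]}_\star$ versus $[m]^{[n]}$ — i.e. pinning down precisely which homomorphisms arise from partitions with some empty parts, and confirming that the discrepancy between "injective homomorphism" and "induced copy", and between "all partitions" and "partitions with all parts nonempty", contributes only an $O((m-1)^n)$ term. Both slacks are of the right order: a partition with one forced-empty part lives in an $(m-1)^n$-sized set, and a non-induced injective homomorphism corresponds (as the paper notes after Theorem~\ref{thm:count}) to embedding a poset with strictly fewer than $m$ antichains. So the strategy is to sandwich: $|[m]^{[n]}_\star| \leq |\mathrm{inj\text{-}hom}_P(\mathcal{P}(n))| \leq |[m]^{[n]}|$ (the right inequality via the atom map being at worst boundedly non-injective), and then note both bounds equal $m^n + O((m-1)^n)$ by~\eqref{eq: estimate on m-partitions}. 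The remaining care is simply to make the "boundedly many-to-one" claim precise, which I would do by showing the atom map followed by $f$ recovers $\phi$ exactly on the induced copies and differs by a negligible count otherwise.
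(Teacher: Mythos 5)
Your proposal follows exactly the paper's strategy: sandwich $\bigl\vert\mathrm{inj\mbox{-}hom}_P(\mathcal{P}(n))\bigr\vert$ between $\bigl\vert[m]^{[n]}_{\star}\bigr\vert$ and $\bigl\vert[m]^{[n]}\bigr\vert$ by constructing an injection $[m]^{[n]}_{\star}\to \mathrm{inj\mbox{-}hom}_P(\mathcal{P}(n))$ (the paper's $f_2$, Lemma~\ref{lemma: injection from partitions* to inj}) and an injection $\mathrm{hom}_P(\mathcal{P}(n))\to [m]^{[n]}$ (the paper's $f_1$, Lemma~\ref{lemma: injection from hom to partitions}), then invoke~\eqref{eq: estimate on m-partitions}. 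Your ``atom map'' description of the second injection is a clean way to think about $f_1$, and your lower-bound argument matches the paper's Claim~\ref{claim: f2(phi) is a poset-homomorphism}.

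However, as written the atom map has a concrete direction error. Since a poset homomorphism $\phi:P\to\mathcal{P}(n)$ satisfies $i\leq_P i'\Rightarrow \phi(i)\subseteq\phi(i')$, the set $\{i\in P:\ x\in\phi(i)\}$ is an \emph{up-set}, not a down-set; an up-set is determined by its \emph{minimal} elements, not its maximal ones. Recording the maximal elements of $\{i: x\in\phi(i)\}$, as you propose, destroys information: for $P=C_3$ with $1<2<3$, the three nonempty up-sets $\{3\}$, $\{2,3\}$, $\{1,2,3\}$ all have maximal-element set $\{3\}$, so the resulting map $[n]\to\mathcal{A}(P)$ collapses them and is nowhere near injective, and the upper bound fails. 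The repair is straightforward --- take minimal elements of the up-set (equivalently the paper's $A_j:=\{x: \min\{i: x\in\phi(i)\}=S_j\}$ obtained via the $Y_i,Z_j$ construction). Once corrected, the map genuinely inverts $f_2$ (see Remark~\ref{theremark}) and is an honest injection on all of $\mathrm{hom}_P(\mathcal{P}(n))$, so your hedge about it being only ``boundedly many-to-one'' can also be dropped; the content that this is indeed injective is exactly what Claims~\ref{claim: f1(phi) is a partition} and~\ref{claim: f1 is an injection} establish, and you would still need to supply that argument.
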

We shall prove this result by constructing two maps: an injection from $\mathrm{hom}_P(\mathcal{P}(n))$ into $[m]^{[n]}$ (Lemma~\ref{lemma: injection from hom to partitions}), and an injection  from $[m]^{[n]}_{\star}$ into $\mathrm{inj-hom}_P(\mathcal{P}(n))$ (Lemma~\ref{lemma: injection from partitions* to inj}). In addition to proving Theorem~\ref{theorem: counting copies of P}, these maps will play an important role when determining the existence threshold for copies of $P$ in $\mathcal{P}(n)$, by establishing a correspondence between certain ``weighted'' copies of $P$ and certain ``weighted'' $m$-partitions of $[n]$.
(In the language of Section~\ref{intu} we mean the correspondence between copies of $P$ in $\P (n)$ of a given type and the associated partition of $[n]$.)

Let $(P, \leq_P)$ be a poset with $a(P)=m$. Assume without loss of generality that $P=[N]$, and that the antichains of $P$ are enumerated as 
\[\mathcal{A}(P)=\{S_1, S_2, \ldots, S_m\},\]
where $S_m=\emptyset$ is the empty antichain. Suppose we are given $\phi\in \mathrm{hom}_P(\mathcal{P}(n))$. We use $\phi$ to build an $m$-partition of $[n]$ as follows. 
\begin{enumerate}
	\item For $i\in [N]$, set $X_i:=\phi(i)$.
	\item For every $i\in [N]$, set $Y_i:=X_i \setminus \bigcup_{j<_Pi} X_j$.
	\item For every $j\in [m-1]$, set $Z_j:=\bigcap_{i \in S_j} Y_i$. 
	\item For every $j\in [m-1]$, set  $A_j:= Z_j \setminus \left(\bigcup_{S_k \in \mathrm{Ext}(S_j)} Z_k\right)$. 
	\item Finally, set $A_m := [n]\setminus \left(\bigcup_{j\in [m-1]}A_j\right)$.
\end{enumerate}
So $X_i$ is the subset of $[n]$ that $\phi$ maps $i$ to; $Y_i$ is the subset of $[n]$ that contains all elements of $X_i$ that do not lie in any other $X_j$ where $j<_P i$; given a non-empty antichain $S_j$, $Z_j$ is set of elements of $[n]$ that lie in every $Y_i$, for each $i$ in the antichain $S_j$.
Now define $f_1(\phi):=(A_1, A_2, \ldots, A_m)$.
\begin{lemma}\label{lemma: injection from hom to partitions}
	The map $f_1$ is an injection $\mathrm{hom}_P(\mathcal{P}(n))\rightarrow[m]^{[n]}$.
\end{lemma}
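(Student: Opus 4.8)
The goal is to show that $f_1$ is well-defined (i.e.\ lands in $[m]^{[n]}$) and injective. First I would verify that for any $\phi \in \mathrm{hom}_P(\mathcal{P}(n))$, the tuple $f_1(\phi) = (A_1,\dots,A_m)$ is genuinely an ordered $m$-partition of $[n]$. That $\bigcup_{j\in[m]} A_j = [n]$ is immediate from step (5), which defines $A_m$ as the complement of $A_1 \cup \dots \cup A_{m-1}$. The substantive point is pairwise disjointness. For $j, k \in [m-1]$ with $j \neq k$, one of the antichains $S_j, S_k$ cannot contain the other; say $S_j \not\subseteq S_k$ — but we need the cleaner dichotomy, so I would split into the case $S_k \in \mathrm{Ext}(S_j)$ (i.e.\ $S_j \subsetneq S_k$) and the case $S_j, S_k$ are $\subseteq$-incomparable, treating $S_k \subsetneq S_j$ symmetrically. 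If $S_j \subsetneq S_k$ then $Z_k$ is explicitly subtracted off in the definition of $A_j$ in step (4), so $A_j \cap Z_k = \emptyset$, hence $A_j \cap A_k = \emptyset$ since $A_k \subseteq Z_k$. If $S_j$ and $S_k$ are incomparable, I claim $Z_j \cap Z_k = \emptyset$ already: pick $x \in S_j \setminus S_k$ and $y \in S_k \setminus S_j$. Since $S_j \cup \{y\}$ contains the antichain $S_j$ and $y \notin S_j$, but note $y$ may be comparable to some element of $S_j$; the right statement is that $Z_j \subseteq Y_x$ and $Z_k \subseteq Y_y$, and I would argue that $Y_x \cap Y_y = \emptyset$ whenever $x, y$ are distinct and $\{x,y\}$ is \emph{not} an antichain, i.e.\ $x <_P y$ or $y <_P x$; say $x <_P y$, then $Y_y = X_y \setminus \bigcup_{j <_P y} X_j$ excludes $X_x \supseteq Y_x$, giving disjointness. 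When $\{x,y\}$ \emph{is} an antichain (which happens precisely when $S_j \cup S_k$ is an antichain), incomparability of $S_j, S_k$ forces the existence of $x' \in S_j, y' \in S_k$ with $x', y'$ comparable after all — no: if $S_j \cup S_k$ were an antichain it would be a common extension, contradicting incomparability only if neither equals the other, which is the case. So I must be more careful: incomparability of $S_j$ and $S_k$ as sets does \emph{not} force $S_j \cup S_k$ to be a non-antichain. The correct handling: the definition in step (4) only subtracts $Z_k$ for $S_k \in \mathrm{Ext}(S_j)$, so for incomparable $S_j, S_k$ we genuinely need $Z_j \cap Z_k = \emptyset$ from scratch, and this holds because $S_j \cup S_k$ contains a comparable pair (otherwise $S_j \cup S_k$ is an antichain strictly containing both, so it lies in $\mathcal{A}(P)$ and the intersection is controlled — but then $Z_j \cap Z_k \subseteq \bigcap_{i \in S_j \cup S_k} Y_i = Z_{S_j \cup S_k}$, which need not be empty). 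I would resolve this by observing the asymmetry is intentional: pick the enumeration-independent fact that \emph{exactly one} of ``$S_j \subseteq S_k$'', ``$S_k \subseteq S_j$'', ``$S_j, S_k$ incomparable'' holds, and in the incomparable case $S_j \cup S_k$ is \emph{not} an antichain (two antichains whose union is an antichain are comparable under $\subseteq$ — false in general, e.g.\ $\{a\}$ and $\{b\}$ with $a,b$ incomparable have union $\{a,b\}$ an antichain). Hence the incomparable case really does require that the union not be an antichain, which fails. The genuine fix: redo the disjointness argument using that $A_j \subseteq Y_i$ for $i \in S_j$ forces, when we also note $A_j \subseteq Z_j \setminus \bigcup_{\mathrm{Ext}} Z_k$, a minimality that rules out overlap — I expect the paper's argument leverages that if $x \in A_j \cap A_k$ then $x \in Z_j \cap Z_k = \bigcap_{i \in S_j \cup S_k} Y_i$, and if $S_j \cup S_k$ is an antichain $S_\ell$ then $S_\ell \in \mathrm{Ext}(S_j)$ (as $S_j \subsetneq S_\ell$ since $S_k \not\subseteq S_j$), so $x \in Z_\ell$ is subtracted in step (4), contradiction; and if $S_j \cup S_k$ is not an antichain then $Z_j \cap Z_k = \emptyset$ by the $Y$-disjointness above. \textbf{This case analysis is the crux of well-definedness.}

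Second, for injectivity I would reconstruct $\phi$ from $(A_1,\dots,A_m)$. The key reconstruction identity is
\[
X_i = \phi(i) = \bigcup_{j \,:\, S_j \cap D(i) \neq \emptyset} A_j,
\]
exactly the formula~\eqref{eq:Xi} from Section~\ref{subsection: intro intuition}. I would prove this by first showing $Y_i = \bigcup_{j \,:\, i \in S_j} A_j$ (the elements of $[n]$ ``assigned to $i$ and to no smaller element''), which follows by unwinding steps (3)--(4): $Z_j \supseteq A_j$ and the $A_j$'s refine the $Z_j$'s in a way that, summed over antichains containing $i$, recovers $Y_i$; here one uses that every element of $Y_i$ lies in some maximal-under-refinement cell. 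Then $X_i = \bigcup_{i' \le_P i} Y_{i'} = \bigcup_{i' \in D(i)} \bigcup_{j : i' \in S_j} A_j = \bigcup_{j : S_j \cap D(i) \neq \emptyset} A_j$, using that $Y_i$ captures $X_i$ minus what is already in strictly-smaller $X_{i'}$, so the union over the down-set $D(i)$ telescopes back to $X_i$ (this requires $\phi$ a homomorphism, so $X_{i'} \subseteq X_i$ for $i' \le_P i$). Since the right-hand side depends only on $(A_1,\dots,A_m)$ and on $P$ (not on $\phi$), $\phi$ is determined by $f_1(\phi)$, giving injectivity.

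Third — a bookkeeping point — I should confirm $f_1(\phi) \in [m]^{[n]}$ and not merely that the $A_j$ are disjoint: the union being all of $[n]$ is forced by step (5). No claim of surjectivity onto $[m]^{[n]}_\star$ is needed here; that is the content of the companion Lemma~\ref{lemma: injection from partitions* to inj}, which I would not touch. I would also note in passing that the reconstruction formula shows $\phi$ is \emph{uniquely} recoverable whenever $f_1(\phi)$ is known, which is all injectivity requires.

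\textbf{Main obstacle.} The delicate part is the disjointness verification in the first step, specifically the case of two $\subseteq$-incomparable antichains $S_j, S_k$: one must correctly exploit that step (4) subtracts off $Z_k$ precisely for the extensions $S_k \in \mathrm{Ext}(S_j)$, and dovetail this with the ``$Y_x \cap Y_y = \emptyset$ for comparable $x \neq y$'' observation to cover the sub-case where $S_j \cup S_k$ is an antichain versus where it is not. Getting the quantifiers and the direction of the containments right there — and noticing that the asymmetry in step (4) is exactly what makes the partition well-defined — is where the real work lies; everything else is unwinding definitions.
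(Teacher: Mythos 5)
Your proof is correct, and while the well-definedness half essentially matches the paper, the injectivity half takes a genuinely different (and arguably more illuminating) route. For pairwise disjointness of the $A_j$, after some visible back-and-forth you land on exactly the right dichotomy: for $\subseteq$-incomparable $S_j, S_k$, either $S_j\cup S_k$ is an antichain, in which case $Z_j\cap Z_k = Z_{S_j\cup S_k}$ is subtracted off in step (4); or $S_j\cup S_k$ contains a comparable pair $x'\in S_j$, $y'\in S_k$ (neither can lie wholly in $S_j$ or $S_k$ since those are antichains), in which case $Y_{x'}\cap Y_{y'}=\emptyset$ already. The paper organizes this as a single proof by contradiction — assume $x\in A_{j_1}\cap A_{j_2}$, deduce that all cross-pairs are incomparable and hence that $S_{j_1}\cup S_{j_2}$ must be an antichain — but the content is the same; your version makes the role of the asymmetry in step (4) more explicit up front. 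For injectivity, the paper gives a direct argument: locate a $\leq_P$-minimal $i$ where $\phi$ and $\phi'$ disagree, pick $x\in X_i\setminus X_i'$, take a $\subseteq$-maximal antichain $S_{j_2}\ni i$ with $x\in Z_{j_2}$, and observe $x\in A_{j_2}$ but $x\notin A'_{j_2}$. You instead prove a reconstruction formula $X_i=\bigcup_{j:\,S_j\cap D(i)\neq\emptyset}A_j$ via $Y_i=\bigcup_{j:\,i\in S_j}A_j$ and telescoping over $D(i)$ — in effect proving $f_2\circ f_1=\mathrm{id}$, a stronger statement than the paper establishes in this lemma (the paper only asserts the reverse composition $f_1\circ f_2=\mathrm{id}$ in Remark~\ref{theremark}, without proof). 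This is cleaner and unifies Lemma~\ref{lemma: injection from hom to partitions} with part of Remark~\ref{theremark}, at the small cost of needing the maximality argument for $Y_i\subseteq\bigcup_{j:\,i\in S_j}A_j$ — which, fortunately, is the same maximality trick the paper already uses for its direct injectivity proof, so nothing new is required.

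One word of caution on your write-up: the maximal antichain you take in the proof of $Y_i\subseteq\bigcup_{j:\,i\in S_j}A_j$ must be maximal among \emph{all} antichains $T$ with $x\in Z_T$, not merely among those containing $i$ — but since any antichain $T'$ strictly extending a $T\ni i$ automatically satisfies $i\in T'$, the maximal element of $\{T\in\mathcal{A}(P):\,i\in T,\ x\in Z_T\}$ is in fact maximal in the larger family, and the argument closes. This is worth saying explicitly rather than leaving to ``some maximal-under-refinement cell.''
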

\begin{proof}We split the proof into two claims, from which the lemma is immediate.
\begin{claim}\label{claim: f1(phi) is a partition}
For every $\phi\in \mathrm{hom}_P(\mathcal{P}(n))$, $f_1(\phi)\in [m]^{[n]}$.	
\end{claim}	
\begin{proof}
The definition of $A_m$ ensures that $\bigcup_{j\in [m]}A_j=[n]$, so all we need to show is that the $A_j$ are pairwise disjoint. Consider $A_{j_1}$ and $A_{j_2}$	with $j_1,j_2$ distinct elements of $[m]$. 
Clearly $A_m$ is disjoint from every other $A_i$ so we may assume that $j_1,j_2 \leq m-1$.

If $S_{j_1}\in \mathrm{Ext}(S_{j_2})$, then by definition (step 4) $A_{j_2}$ is disjoint from $Z_{j_1}\supseteq A_{j_1}$. We are similarly done if $S_{j_2}\in \mathrm{Ext}(S_{j_1})$. Thus we may assume that $S_{j_1}\not\subseteq S_{j_2}$ and $S_{j_2}\not\subseteq S_{j_1}$. In particular, there exist $i_1 \in S_{j_1}\setminus S_{j_2}$ and $i_2\in S_{j_2}\setminus S_{j_1}$.

Suppose for a contradiction that there exists some $x\in [n]$ with $x\in A_{j_1}\cap A_{j_2}$. Then by definition (steps 3 and 4) $x\in A_{j_1}\subseteq Z_{j_1}=\bigcap_{i \in S_{j_1}}Y_i$. In particular we must have $x\in Y_{i_1}$. Similarly we have $x\in Y_{i_2} $. By definition (step 2), this implies the elements $i_1$ and $i_2$ are incomparable in $(P, \leq_P)$.

Now consider $S_{j_3}:=S_{j_1}\cup S_{j_2}$. The paragraph above established that elements in $S_{j_1}\setminus S_{j_2}$ and $S_{j_2}\setminus S_{j_1}$ are mutually incomparable. Together with the fact that $S_{j_1}$ and $S_{j_2}$ are antichains, this implies that $S_{j_3}$ is an antichain.

Thus $S_{j_3}$ is an antichain which extends both of $S_{j_1}$ and $S_{j_2}$. Further $S_{j_3}$ is distinct from both $S_{j_1}$ (since $i_2 \in S_{j_2}\setminus S_{j_1}\subseteq S_{j_3}\setminus S_{j_1}$) and $S_{j_2}$ (since $i_1\in S_{j_3}\setminus S_{j_2}$). What is more, since $x\in A_{j_1} \subseteq Z_{j_1} = \bigcap_{i \in S_{j_1}} Y_i$, we have that $x\in Y_i$ for every $i \in S_{j_1}$, and similarly $x\in Y_i$ for every $i \in S_{j_2}$. This implies that $x\in Z_{j_3}=\bigcap_{i \in S_{j_3}} Y_i$. Since $S_{j_3}\in \mathrm{Ext}(S_{j_1})$ and by definition (step 4) we have $A_{j_1}\subseteq Z_{j_1}\setminus Z_{j_3}\subseteq [n]\setminus \{x\}$, and $x\notin A_{j_1}$, which gives the desired contradiction.
\end{proof}

\begin{claim}\label{claim: f1 is an injection}
	$f_1$  is injective.
\end{claim}
\begin{proof}
Let $\phi, \phi'$ be distinct elements of $\mathrm{hom}_P(\mathcal{P}(n))$. Let $X_i$, $Y_i$, $Z_j$, $A_j$ and $X'_i$, $Y'_i$, $Z'_j$, $A'_j$ be the families of subsets of $[n]$ 	in steps 1--4 of the definition of $f_1$ applied to $\phi$ and $\phi'$ respectively.

Since $\phi\neq \phi'$, there must be a $\leq_P$-minimal element $i\in P$ such that $\phi(i)=X_i\neq X'_i=\phi'(i)$ and  for all $k<_P i$, $X_k=X'_k$.  The symmetric difference of $X_i \triangle X'_i$ is nonempty and we may assume without loss of generality that there exists $x\in [n]$ with $x\in X_i\setminus X'_i$. By our $\leq_P$-minimality assumption, we have $x\notin X_k$ for all $k<_P i$, and thus $x\in Y_i$.

Let $S_{j_1}$ be the antichain consisting of the singleton $\{i\}$. By definition (step 3) , we have $x\in Z_{j_1}= Y_i $. Now let $S_{j_2}$ be a $\subseteq$-maximal antichain from $\mathrm{Ext}(S_{j_1})\cup \{S_{j_1}\}$
with $x\in Z_{j_2}$ (i.e. $x\in Z_{j_2}$ and for every $S_{j_3}\in \mathrm{Ext}(S_{j_2})$, we have $x\notin Z_{j_3}$). By definition (step 4), we have $x\in A_{j_2}$. On the other hand, since $i \in S_{j_1}\subseteq S_{j_2}$, we have by definition (steps 4, 3) that $A'_{j_2}\subseteq Z'_{j_2}\subseteq Y'_i \subseteq X'_i$. Since $x\notin X'_i$, $x\notin A'_{j_2}$ and hence $A_{j_2}\neq A'_{j_2}$.
 The partitions $f_1(\phi)$ and $f_1(\phi')$ are thus different, as claimed.
\end{proof}			
\end{proof}

Now suppose we are given a partition $\mathbf{A}=(A_1, A_2, \ldots, A_m)\in [m]^{[n]}_{\star}$.  We use $\mathbf{A}$ to build an injective poset homomorphism $\phi: \ P\rightarrow \mathcal{P}(n)$ by letting $\phi(i)= X_i$, where $X_i$ defined in \eqref{eq:Xi}.
Set $f_2(\mathbf{A}):=\phi$.
\begin{lemma}\label{lemma: injection from partitions* to inj}
	The map $f_2$ is an injection $[m]^{[n]}_{\star}\rightarrow \mathrm{inj-hom}_P(\mathcal{P}(n))$. Moreover, for each $\mathbf{A} \in [m]_{\star} ^{[n]}$, $f_2(A)$ is an induced copy of $P$ in $\mathcal P(n)$.
\end{lemma}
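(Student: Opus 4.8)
\textbf{Proof proposal for Lemma~\ref{lemma: injection from partitions* to inj}.}

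The plan is to verify three things about the map $f_2$: first, that $\phi = f_2(\mathbf{A})$ is a poset homomorphism; second, that $\phi$ is in fact an \emph{induced} embedding of $P$ into $\mathcal{P}(n)$ (which in particular implies injectivity, so $f_2$ does land in $\mathrm{inj\text{-}hom}_P(\mathcal{P}(n))$); and third, that $f_2$ is itself injective as a map on $[m]^{[n]}_\star$. For the first point, note that if $i \leq_P i'$ then $D(i) \subseteq D(i')$, so every antichain $S_j$ meeting $D(i)$ also meets $D(i')$, and hence $X_i \subseteq X_{i'}$ directly from the definition \eqref{eq:Xi}; in fact, as observed in Section~\ref{subsection: intro intuition}, $X_{i'} = \bigcup_{i \in D(i')} X_i$.

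For the second point, the substantive direction is: if $i$ and $i'$ are \emph{incomparable} in $P$, then $X_i \not\subseteq X_{i'}$ and $X_{i'} \not\subseteq X_i$. Here is where I would use the presence of singleton antichains. The singleton $\{i\}$ is an antichain of $P$, say $\{i\} = S_{j(i)}$, and since $i \in D(i)$ we have $A_{j(i)} \subseteq X_i$. I claim $A_{j(i)}$ is disjoint from $X_{i'}$: the block $A_{j(i)}$ appears in $X_{i'}$ only if some antichain $S_k$ with $S_k \cap D(i') \neq \emptyset$ ``contains'' the contribution of $A_{j(i)}$ — but more carefully, one should track, for each block $A_j$, exactly which $X_i$ contain it. From \eqref{eq:Xi}, $A_j \subseteq X_i$ iff $S_j \cap D(i) \neq \emptyset$, i.e.\ iff some element of $S_j$ lies below $i$. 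So $A_{j(i)} = A_{j}$ with $S_j = \{i\}$ lies in $X_{i'}$ iff $i \leq_P i'$; since $i \not\leq_P i'$, indeed $A_{j(i)} \cap X_{i'} = \emptyset$, and since $\mathbf{A} \in [m]^{[n]}_\star$ the block $A_{j(i)}$ is nonempty, so $X_i \setminus X_{i'} \neq \emptyset$. By symmetry $X_{i'} \setminus X_i \neq \emptyset$, so $X_i, X_{i'}$ are incomparable. Combined with the homomorphism property, this shows $\phi(i) \leq \phi(i')$ iff $i \leq_P i'$, so $\phi$ is an induced embedding; injectivity of $\phi$ follows since distinct elements of a poset are either incomparable (handled above) or strictly comparable (and then $X_i \subsetneq X_{i'}$ because the nonempty block $A_{j(i')}$ separates them). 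Hence $f_2(\mathbf{A}) \in \mathrm{inj\text{-}hom}_P(\mathcal{P}(n))$ and is an induced copy of $P$.

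For injectivity of $f_2$, suppose $\mathbf{A} = (A_1,\dots,A_m)$ and $\mathbf{A}' = (A'_1,\dots,A'_m)$ are distinct partitions in $[m]^{[n]}_\star$, with images $\phi, \phi'$. The cleanest route is to recover each block $A_j$ from $\phi$ by the formula $A_j = \bigl(\bigcap_{i \in S_j} X_i\bigr) \setminus \bigl(\bigcup_{S_k \in \mathrm{Ext}(S_j)} \bigcap_{i \in S_k} X_i\bigr)$ for $j \in [m-1]$, and $A_m = [n] \setminus \bigcup_{j<m} A_j$ — essentially steps 3--5 of the construction preceding Lemma~\ref{lemma: injection from hom to partitions} but with $Y_i$ replaced by $X_i$, which is legitimate here because $\phi$ is a genuine embedding built from $\mathbf{A}$. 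One checks $\bigcap_{i \in S_j} X_i = \bigcup\{A_k : S_j \cap D(i) \neq \emptyset \text{ for all } i \in S_j\} = \bigcup\{A_k : S_k \supseteq \partial$-type condition$\}$; more simply, $x \in \bigcap_{i\in S_j} X_i$ iff the block $A_{k(x)}$ containing $x$ satisfies $S_{k(x)} \cap D(i) \neq \emptyset$ for every $i \in S_j$, which one shows is equivalent to $S_{k(x)} \supseteq S_j$ after passing to shadows, so that subtracting the strictly-larger antichains isolates exactly the $x$ with $S_{k(x)} = S_j$. Since these formulas express $\mathbf{A}$ as a function of $\phi$, we get $\phi = \phi' \Rightarrow \mathbf{A} = \mathbf{A}'$, contrapositively $f_2$ is injective.

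\textbf{Main obstacle.} The routine parts are the homomorphism property and the incomparability argument. The delicate step is the inversion formula used for injectivity of $f_2$: one must show precisely that $\bigcap_{i \in S_j} X_i$ is the union of those blocks $A_k$ with $S_k$ ``extending'' $S_j$ in the right sense, which requires care because membership $A_k \subseteq X_i$ depends on $D(i)$ meeting $S_k$ rather than on containment of antichains directly; reconciling these via the shadow/extension-family machinery of Section~\ref{s3} is where the real work lies. (An alternative that sidesteps some of this: show $f_2$ is a one-sided inverse of the $f_1$ of Lemma~\ref{lemma: injection from hom to partitions} restricted to $[m]^{[n]}_\star$, i.e.\ $f_1 \circ f_2 = \mathrm{id}$ on $[m]^{[n]}_\star$, from which injectivity of $f_2$ is immediate since $f_1$ is a function.)
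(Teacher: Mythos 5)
The homomorphism and induced-embedding parts of your argument are correct and closely mirror the paper's Claim~\ref{claim: f2(phi) is a poset-homomorphism}: comparability of $i\leq_P i'$ gives $D(i)\subseteq D(i')$, hence $X_i\subseteq X_{i'}$; and for $i\not\leq_P i'$ the singleton antichain $\{i\}$ supplies a nonempty block $A_{j(i)}$ contained in $X_i$ but disjoint from $X_{i'}$, since $A_{j(i)}\subseteq X_{i'}$ iff $i\in D(i')$. That is exactly the paper's argument.

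The injectivity argument is where you run into trouble. The explicit inversion formula you propose,
\[
A_j \;=\; \Bigl(\bigcap_{i\in S_j} X_i\Bigr)\setminus\Bigl(\bigcup_{S_k\in\mathrm{Ext}(S_j)}\bigcap_{i\in S_k}X_i\Bigr),
\]
is \emph{not} correct; replacing the $Y_i$ of the paper's $f_1$-construction by $X_i$ is not "legitimate." Concretely take $P=C_2=\{1,2\}$ with $1<_P 2$, so $\mathcal{A}(P)=\{S_1=\{1\},\,S_2=\{2\},\,S_3=\emptyset\}$. Then $X_1=A_1$, $X_2=A_1\cup A_2$, and $\mathrm{Ext}(S_2)=\emptyset$ (since $\{1,2\}$ is not an antichain). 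Your formula then returns $A_2=X_2=A_1\cup A_2$, which is wrong whenever $A_1\neq\emptyset$ — i.e.\ for every $\mathbf{A}\in[m]^{[n]}_\star$. The difficulty you flag — that $A_k\subseteq X_i$ is governed by $D(i)\cap S_k\neq\emptyset$ rather than by containment of antichains — is exactly right, but subtracting extensions does not repair it: you also need to first strip out the "downward" contributions, which is precisely why the paper's $f_1$ passes through $Y_i:=X_i\setminus\bigcup_{j<_P i}X_j$ before intersecting. Your closing alternative, namely to show $f_1\circ f_2=\mathrm{id}$ on $[m]^{[n]}_\star$, is sound in principle (this is the content of the paper's Remark~\ref{theremark}), but as stated it is an unproved assertion that subsumes the real work. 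The paper's actual proof of injectivity (Claim~\ref{claim: f2 is injective}) avoids inversion altogether: given distinct $\mathbf{A},\mathbf{A}'$, it locates an $x$ and indices $j_1\neq j_2$ with $x\in A_{j_1}\setminus A'_{j_1}$ and $x\in A'_{j_2}\setminus A_{j_2}$, picks $i_1\in S_{j_1}\setminus S_{j_2}$, and splits into two cases (whether $S_{j_2}$ meets $D(i_1)$ or contains some $i_2<_P i_1$) to exhibit an element of $P$ on which $\phi$ and $\phi'$ disagree. You would need either to adopt that direct argument or to actually carry out the $f_1\circ f_2=\mathrm{id}$ verification with the correct ($Y_i$-based) formula.
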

\begin{proof}
 Again we split the proof of the lemma into two claims.	
\begin{claim}\label{claim: f2(phi) is a poset-homomorphism}
	For every $\mathbf{A}\in [m]^{[n]}_{\star}$, $\phi=f_2(\mathbf{A}) \in \mathrm{inj-hom}_P(\mathcal{P}(n))$. Moreover $\phi(P)$ is an induced copy of $P$ in $\mathcal P(n)$.
\end{claim}	
\begin{proof}
Let $\mathbf{A}\in [m]^{[n]}_{\star}$, and let $\phi=f_2(\mathbf{A})$.  If $i'\leq_P i$, then $D(i')\subseteq D(i)$, which by construction of $\phi$ implies $X_{i'}\subseteq X_i$. Thus $\phi: \ i \mapsto X_i$ is a poset homomorphism from $(P, \leq_P)$ to $(\mathcal{P}(n), \subseteq)$ as claimed.

It remains to show that $\phi$ is an injection and that $\phi(P)$ is an induced copy of $P$ in $\P(n)$.
It suffices to show that for $i_1, i_2\in P$, if $i_1\nleq _P i_2$, then $X_{i_1}\nsubseteq X_{i_2}$. 
Indeed, assuming this fact, if $i_1\ne  i_2$, then either $i_1\nleq _P i_2$ or $i_2\nleq _P i_1$, and in either case, we have $X_{i_1}\ne X_{i_2}$; in particular, this ensures $\phi $ is injective and that 
strictly comparable pairs are mapped to strictly comparable pairs. The fact also guarantees incomparable pairs are mapped to incomparable pairs, as desired.

To prove this fact
 assume that $i_1\nleq _P i_2$. Then $i_1\notin D(i_2)$. Let $S_j$ be the antichain $\{i_1\}$. Then $S_j\cap D(i_1)\ne \emptyset$ and $S_j\cap D(i_2)= \emptyset$. It follows that $A_j\subseteq X_{i_1}$ and $A_j\cap X_{i_2} = \emptyset$. 
Since $\mathbf{A}\in[m]^{[n]}_{\star}$, we have $A_j\neq \emptyset$. This implies that $X_{i_1}\nsubseteq X_{i_2}$, as claimed.
\end{proof}

\begin{claim}\label{claim: f2 is injective}
$f_2$ is injective.
\end{claim}	
\begin{proof}
Let $\mathbf{A}=(A_1, \ldots , A_m)$ and $\mathbf{A}'=(A'_1, \ldots, A'_m)$	be distinct partitions from $[m]^{[n]}_{\star}$, and let $\phi=f_2(\mathbf{A})$  and $\phi'=f_2(\mathbf{A}')$. We claim $\phi\neq \phi'$.

If $A_m \neq A'_m$, then $\bigcup_{i\in P}\phi(i)=[n]\setminus A_{m}\neq [n]\setminus A'_m=\bigcup_{i\in P}\phi'(i)$, and hence $\phi\neq \phi'$ as required. Assume therefore that $A_m=A'_m$. Since $\mathbf{A}, \mathbf{A}'$ are distinct partitions of $[n]$ in which every part is non-empty, there exists $x\in [n]$ and distinct elements $j_1, j_2 \in [m-1]$ such that $x\in A_{j_1}\setminus A'_{j_1}$ and $x\in A'_{j_2}\setminus A_{j_2}$. Now $S_{j_1},S_{j_2}$ are distinct non-empty antichains in $P$. Assume without loss of generality that there exists some element $i_1\in P$ with $i_1\in S_{j_1}\setminus S_{j_2}$.

First assume that $S_{j_2} \cap D(i_1) = \emptyset$. By the definition of $\phi$, $X'_{i_1}\subseteq [n]\setminus A'_{j_2}\subseteq [n]\setminus \{x\}$. On the other hand, since $S_{j_1}\cap D(i_1)\ne \emptyset$, we have 
$x\in A_{j_1}\subseteq X_{i_1}$. Thus $\phi(i_1)=X_{i_1}\neq X'_{i_1}=\phi'(i_1)$.

Second assume that there exists $i_2\in S_{j_2}$ with $i_2 <_P i_1$, then by definition of $f_2$ we have $x\in A'_{j_2}\subseteq X'_{i_2}$. On the other hand for every $i'\leq_P i_2$ we have $i'<_P i_1$ and thus $i'\notin S_{j_1}$, implying $X_{i_2}\subseteq [n]\setminus A_{j_1}$. 
In particular $x\notin X_{i_2}$,  and thus $\phi'(i_2)=X'_{i_2}\neq X_{i_2}=\phi(i_2)$.

%

It follows that $\phi$ and $\phi'$ are distinct members of $\mathrm{hom}_P(\mathcal{P}(n))$, as claimed.
\end{proof}
\end{proof}

\begin{remark}\label{theremark}
The proof of Lemma~\ref{lemma: injection from partitions* to inj} shows a little more, namely, $f_2$ remains an injection when viewed as a function $[m]^{[n]}\rightarrow \mathrm{hom}_P(\mathcal{P}(n))$ (because we only used $\mathbf{A}\in [m]^{[n]}_{\star}$ when showing that $f_2(\mathbf{A})$ is an  injective poset homomorphism). It is not hard to show $f_2$ is in fact the inverse of $f_1$: for every $\mathbf{A}\in [m]^{[n]}$, we have $f_1(f_2(\mathbf{A}))=\mathbf{A}$. 
\end{remark}
\begin{proof}[Proof of Theorem~\ref{theorem: counting copies of P}]
By Lemmas~\ref{lemma: injection from partitions* to inj} and~\ref{lemma: injection from hom to partitions}

\begin{align*}
\vert [m]^{[n]}_{\star} \vert  \leq \vert \mathrm{inj-hom}_P(\mathcal{P}(n))\vert \leq \vert \mathrm{hom}_P(\mathcal{P}(n))\vert  \leq \vert [m]^{[n]}\vert.
\end{align*}
The theorem then follows from the estimates (\ref{eq: estimate on m-partitions}).
\end{proof}

\section{Shadows and weighted partitions}\label{s5}
Building on the work in the previous section, we investigate weighted partitions and their interaction with shadows. Write $\triangle_m$ for the $m$-simplex
\[\triangle_m:= \left\{(\alpha_1, \alpha_2, \ldots , \alpha_m): \ \forall i, \ \alpha_i\geq 0  \textrm{ and } \sum_{i=1}^m \alpha_i=1\right\}.\]
We write $\triangle ^*_m$ for the set of all $\boldsymbol{\alpha} \in \triangle _m$ whose coordinates are all non-zero. We follow the convention of using $\boldsymbol{\alpha}$ to denote the vector $(\alpha_1, \alpha_2, \ldots , \alpha_m)\in \triangle_m$ (and vice versa). Similarly we use $\mathbf{A}$ to denote the partition $(A_1, A_2, \dots, A_m) \in [m]^{[n]}$ (and vice versa).

\begin{definition}[Weighted partitions]\label{def: weighted partitions}
	The \emph{weighting} of $\mathbf{A}\in[m]^{[n]}$ is 
	\[w(\mathbf{A}):= \left(\frac{\vert A_1\vert}{n}, \frac{\vert A_2\vert}{n},\ldots, \frac{\vert A_m\vert}{n} \right).\]
	The weighting $w(\mathbf{A})$ is an element of $\triangle_m$. Given $\boldsymbol{\alpha} \in \triangle_m$, we say that $\mathbf{A}$ is \emph{$\boldsymbol{\alpha}$-weighted} if $w(\mathbf{A})=\boldsymbol{\alpha}$.  We denote by $[m]^{[n]}_{\boldsymbol{\alpha}}$ the collection of all $\boldsymbol{\alpha}$-weighted $\mathbf{A}\in [m]^{[n]}$, and we say $\boldsymbol{\alpha}$ is \emph{feasible} for $n$ if this collection is non-empty (that is, if $n\boldsymbol{\alpha}\in \left(\mathbb{Z}_{\geq 0}\right)^m$).
\end{definition}
\begin{definition}[$\varepsilon$-close weightings]
	Given $\varepsilon>0$, we say that two elements $\boldsymbol{\alpha},  \boldsymbol{\beta} \in \triangle_m$ are \emph{$\varepsilon$-close} if 
	\[\|\boldsymbol{\alpha}-\boldsymbol{\beta} \|_{\infty}:= \max\Bigl\{ \vert \alpha_i -\beta_i\vert : \ 1\leq i \leq m\Bigr\} \leq \varepsilon.\]
	We also say that a partition $\mathbf{A}\in [m]^{[n]}$ is \emph{$\varepsilon$-close} to being $\boldsymbol{\alpha}$-weighted if its weighting $w(\mathbf{A})$ is  $\varepsilon$-close to $\boldsymbol{\alpha}$. We denote by  $[m]^{[n]}_{\boldsymbol{\alpha}\pm \boldsymbol{\varepsilon}}$ the collection of all such $\mathbf{A}$. We say $\boldsymbol{\alpha}\pm \boldsymbol{\varepsilon}$ is \emph{feasible} for $n$ if this collection is non-empty.
\end{definition}

\begin{definition}
	Let $(P, \leq_P)$ be a poset together with a labelling of its antichains as $\mathcal{A}(P)=\{S_1, S_2, \ldots, S_m\}$ (where $S_m$ is the empty antichain). Suppose we are given an ordered $m$-partition of $[n]$, $\mathbf{A}=(A_1, A_2, \ldots, A_m)$, with each set $A_i$ associated to 
	the antichain $S_i$. Given $Q\subseteq P$ and a labelling of the antichains in $(Q, \leq_P)$ as $\mathcal{A}(Q)=\{T_1, T_2, \ldots ,T_M\}$ (where $T_M$ is the empty antichain), the \emph{$M$-partition of $[n]$ inherited from $\mathbf{A}$} is $\mathbf{B}=(B_1, B_2, \ldots , B_M)$, where
	\[B_i :=\bigcup_{j: \ \partial_Q S_j=T_i} A_j.\]
	(Note that $\mathbf{B}$ is a partition of $[n]$ by Proposition~\ref{prop:shadow}.) We call $\mathbf{B}$ the \emph{$Q$-shadow of $\mathbf{A}$}, and denote it by $\partial_Q (\mathbf{A})$. 
	Note that $ A_m\subseteq B_M$.	
\end{definition}
Observe that if $\mathbf{A}$ is $\boldsymbol{\alpha}$-weighted, then its $Q$-shadow $\mathbf{B}=\partial_Q (\mathbf{A})$ is $\boldsymbol{\beta}$-weighted, where $\boldsymbol{\beta}$ is given by
\begin{align}\label{eq: inherited weighting}
\beta_i := \sum_{j: \  \partial_Q S_j=T_i} \alpha_j \qquad \forall i \in [M]. 
\end{align}
We call $\boldsymbol{\beta}$ the \emph{weighting induced by $\boldsymbol{\alpha}$ in $Q$}, or \emph{shadow} of $\boldsymbol{\alpha}$ in $Q$, and denote it by $\boldsymbol{\beta}=:\partial_Q(\boldsymbol{\alpha})$.

We can relate  $Q$-shadows to our injection $f_1: \ \mathrm{inj-hom}_P(\mathcal{P}(n))\rightarrow [m]^{[n]}$ as follows. Given  $\phi \in \mathrm{inj-hom}_{P}(\mathcal{P}(n))$, let $\phi_{\vert Q}$ denote the restriction of $\phi$ to $Q$ (which is an element of $\mathrm{inj-hom}_{Q}(\mathcal{P}(n))$). In a slight abuse of notation, we let $f_1(\phi_{\vert Q})$ denote the image of $\phi_{\vert Q}$ under $f_1$ defined with respect to $Q$.
\begin{proposition}\label{prop: Q-shadow and f1}
	Let $(P, \leq_P)$ be a poset, and let $Q\subseteq P$.  Then for every $\phi \in \mathrm{inj-hom}_{P}(\mathcal{P}(n))$, we have
	\[ \partial_Q (f_1(\phi)) = f_1(\phi_{\vert Q}).\]
\end{proposition}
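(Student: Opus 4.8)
The plan is to unwind both sides of the claimed identity $\partial_Q(f_1(\phi)) = f_1(\phi_{\vert Q})$ directly from the definitions, tracking a generic element $x \in [n]$ through the construction. Write $\mathcal{A}(P) = \{S_1,\dots,S_m\}$ and $\mathcal{A}(Q) = \{T_1,\dots,T_M\}$ with $S_m = T_M = \emptyset$. Let $(A_1,\dots,A_m) = f_1(\phi)$ be built from $\phi$ via steps 1--4 (using the sets $X_i, Y_i, Z_j$ for $i \in P$, $j \in [m-1]$), and let $(C_1,\dots,C_M) = f_1(\phi_{\vert Q})$ be built from $\phi_{\vert Q}$ via the analogous steps applied to $Q$ (using sets $X_i', Y_i', Z_i'$ for $i \in Q$, $i \in [M-1]$). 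Since $\phi_{\vert Q}$ is just the restriction, $X_i' = X_i$ for all $i \in Q$; the first subtlety is that $Y_i' = X_i \setminus \bigcup_{k <_P i,\, k\in Q} X_k$ may differ from $Y_i = X_i \setminus \bigcup_{k <_P i} X_k$ because the union in $Y_i$ ranges over all of $P$, not just $Q$. I would first record this and then show it does not affect the final partition.

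The key structural step is a lemma of the form: for a non-empty antichain $S_j$ of $P$ with shadow $\partial_Q S_j = T_i$, and for $x \in [n]$, we have $x \in Z_j$ if and only if $x \in Z_i'$ — or more precisely, the right comparison is between $A_j$ (the ``maximal'' refinement of $Z_j$ from step 4) and the $C_i$'s. The cleanest route is to use Remark~\ref{theremark}: $f_2$ is the inverse of $f_1$ on $[m]^{[n]}$, so it is equivalent to verify $f_2(\partial_Q f_1(\phi)) = \phi_{\vert Q}$ as a homomorphism $Q \to \mathcal{P}(n)$, i.e. that for each $i \in Q$, $\bigcup_{k: T_k \cap D_Q(i) \neq \emptyset} B_k = \phi(i) = X_i$, where $D_Q(i) = \{i' \in Q : i' \leq_P i\}$ and $(B_1,\dots,B_M) = \partial_Q f_1(\phi)$. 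Expanding the left side using $B_k = \bigcup_{j:\, \partial_Q S_j = T_k} A_j$, this becomes $\bigcup_{j:\, \partial_Q S_j \cap D_Q(i) \neq \emptyset} A_j = X_i$. So the whole proposition reduces to the combinatorial identity: for $i \in Q$, the antichains $S_j$ of $P$ with $\partial_Q S_j \cap D_Q(i) \neq \emptyset$ contribute exactly $X_i$ under $f_2$-reconstruction — and since $f_2 \circ f_1 = \mathrm{id}$ gives $X_i = \bigcup_{j:\, S_j \cap D_P(i) \neq \emptyset} A_j$, the task is to show $\{j : S_j \cap D_P(i) \neq \emptyset\}$ and $\{j : \partial_Q S_j \cap D_Q(i) \neq \emptyset\}$ yield the same union of $A_j$'s.

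For that last identity I would argue: if $S_j \cap D_P(i) \neq \emptyset$, pick $x \in S_j$ with $x \leq_P i$; since $i \in Q$, there is a $\leq_P$-minimal element $y \in Q$ with $x \leq_P y \leq_P i$, and by definition $y \in \partial_Q S_j$, witnessed also $y \in D_Q(i)$, so $\partial_Q S_j \cap D_Q(i) \neq \emptyset$. Conversely, if $y \in \partial_Q S_j \cap D_Q(i)$, then $y \in Q$, $y \leq_P i$, and there is $x \in S_j$ with $x \leq_P y \leq_P i$, so $S_j \cap D_P(i) \neq \emptyset$. Hence the two index sets coincide exactly, which is more than enough. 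I expect the main obstacle to be bookkeeping: carefully distinguishing the role of $P$ versus $Q$ in steps 2--4 (especially the $Y_i$ discrepancy noted above) and making sure that invoking Remark~\ref{theremark} is legitimate here, i.e. that $\partial_Q f_1(\phi)$ lies in the domain where $f_2$ inverts $f_1$; since $f_2 = f_1^{-1}$ holds on all of $[m]^{[n]}$ (not just $[m]^{[n]}_\star$), this is fine, but it should be stated explicitly. If one prefers to avoid Remark~\ref{theremark}, the alternative is a direct element-chase showing $x \in A_j \Rightarrow x \in B_k$ where $T_k = \partial_Q S_j$ together with the matching maximality of $T_k$ among shadows, which is doable but messier.
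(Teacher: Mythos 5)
Your strategy is genuinely different from the paper's and, modulo one slip, cleaner. The paper proves the identity by a direct element-chase: it introduces the sets $\tilde{X}_i, \tilde{Y}_i, \tilde{Z}_j, \tilde{A}_j$ for $\phi_{\vert Q}$ and proves two claims tracking a point $x\in A_j$ through that machinery to land in the correct $\tilde{A}_k$. You instead invert both sides via $f_2 = f_1^{-1}$ (legitimate on all of $[m]^{[n]}$, as you correctly flag via Remark~\ref{theremark}) and reduce the whole proposition to the index-set identity $\{j : S_j\cap D_P(i)\ne\emptyset\} = \{j : \partial_Q S_j\cap D_Q(i)\ne\emptyset\}$ for each $i\in Q$, which then makes the $Y_i$-versus-$Y'_i$ discrepancy you worried about irrelevant. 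That last combinatorial identity is the crux, and it is where there is a gap: in the forward direction you pick $x\in S_j\cap D_P(i)$, take $y$ to be a $\leq_P$-minimal element of $Q$ with $x\leq_P y\leq_P i$, and assert $y\in\partial_Q S_j$ ``by definition.'' This does not follow from Definition~\ref{def: shadow of an antichain} when $\vert S_j\vert>1$: minimality there is with respect to having \emph{some} element of $S_j$ below, not your fixed $x$. Concretely, take $S_j=\{x,x'\}$ and $Q=\{z,y,i\}$ inside a poset with $x<_P y<_P i$, $x'<_P z<_P y$, and $x$ incomparable with both $x'$ and $z$. Your $y$ is indeed $\leq_P$-minimal in $Q$ with $x\leq_P y\leq_P i$, yet $y\notin\partial_Q S_j$ because $z<_P y$ and $x'\leq_P z$; in fact $\partial_Q S_j=\{z\}$.

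The fix is local: choose $y$ to be $\leq_P$-minimal among elements of $D_Q(i)$ above \emph{some} member of $S_j$ (nonempty since $i$ itself qualifies). Then $y\in D_Q(i)$ by construction, and if some $z\in Q$ with $z<_P y$ had $x'\leq_P z$ for some $x'\in S_j$, then $z<_P y\leq_P i$ would put $z\in D_Q(i)$ and contradict minimality of $y$; so $y\in\partial_Q S_j\cap D_Q(i)$. Your reverse direction is fine as written. With this repair the reduction goes through, and it is an attractive alternative to the paper's argument: it replaces the heavier partition chase by a transparent statement about which antichains see $D(i)$, at the modest price of invoking $f_2=f_1^{-1}$ as a black box.
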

\begin{proof}
Assume without loss of generality that $P=[N]$ and $Q=[q]\subseteq [N]$.  Further let  $\mathcal{A}(P)=\{S_1, S_2, \ldots, S_m\}$ and $\mathcal{A}(Q)=\{T_1, T_2, \ldots ,T_M\}$ be enumerations of the antichains in $(P, \leq_P)$ and $(Q, \leq_P)$ respectively with $S_m= T_M= \emptyset$ being the empty antichain.

To prove the proposition, we need to revisit the construction of $f_1$ and introduce some notation. Let $X_i$, $Y_i$, $Z_j$ and $A_j$ be as in the construction of $f_1$. Now
\begin{itemize}
	\item for every $i \in [q]$, set $\tilde{X}_i :=\phi_{\vert Q}(i)$ (note $\tilde{X}_i=X_i$),
	\item for every $i \in [q]$ set $\tilde{Y}_i:=\tilde{X}_i \setminus \{\tilde{X}_j: \ j \in Q, \ j<_P i \}$,
	\item for every $j\in [M-1]$ set $\tilde{Z}_j:= \bigcap_{i \in T_j} \tilde{Y}_i$,
	\item for every $j\in [M-1]$ set $\tilde{A}_j:= {\tilde{Z}}_j \setminus \left(\bigcup_{T_k \in \mathrm{Ext}_Q(T_j)} {\tilde{Z}}_k\right)$,
\end{itemize}
and finally set $\tilde{A}_M:=[n]\setminus \left(\bigcup_{j \in [M-1]}\tilde{A}_j\right)$. To prove the proposition  we must  show that for every $k\in [M]$,
 $\tilde{A}_k =\bigcup_{j: \ \partial_Q S_j=T_k} A_j$. To do this, we must first establish an important property of the partition $\mathbf{A}=(A_1,A_2, \ldots, A_m)$. 
\begin{claim}\label{claim: x in Aj implies x in Xi iff i above Aj}
For every $j \in [m-1]$
 and every $x\in A_j$,  we have that $x\in \phi(i)=X_i$ if and only if $i'\leq_P i$ for some $i'\in S_j$.	
\end{claim}
\begin{proof}
Suppose $x\in A_j$. By construction of $f_1$, for every $i'\in S_j$ we have $x\in X_{i'}=\phi(i')$. As $\phi$ is a poset homomorphism, this implies $x\in X_{i}=\phi(i)$ for every $i\in P$ with $i'\leq_P i$.

For the reverse implication: let $i\in P\setminus S_j$ be such that $x\in X_i$ (we are done if $i\in S_j$). Without loss of generality, we may assume $i$ is $\leq_P$-minimal with that property; that is, for every $i'\in P\setminus S_j$ with $i'<_P i$,  we have $x\notin X_{i'}$.

Since $x\in A_j \subseteq \bigcap_{i'\in S_j} Y_{i'}$, we must have $x\in Y_{i'}$ for every $i'\in S_j$. In particular $x\notin X_{i''}$ for any $i''\in P$ with $i''<_P i'$ for some $i'\in S_j$. Thus $i$ cannot be below any element of $S_j$ in the partial order $\leq_P$.

Suppose for contradiction that $i$ was incomparable with every element of $S_j$. Then $S_{j'}=S_j\cup\{i\}$ is an antichain in $P$ extending $S_j$. Further, since $i'\not\leq_P i$ for any $i' \in S_j$ and since $x\notin X_{i'}$ for any $i'\in P\setminus S_j$ with $i'<_P i$ (by our minimality assumption on $i$), we have that $x\in Y_i$. We already know $x\in Z_j$, so we deduce that $x\in Z_{j'}=Z_j\cap Y_i$. But this implies $x\notin A_j \subseteq Z_j \setminus Z_{j'}$, a contradiction.

It follows that $i'\leq_P i$ for some $i'\in S_j$, as claimed.
\end{proof}
\begin{claim}\label{claim: Aj is included in shadow(Sj)'s set}
Let $j\in [m]$.  If $\partial_Q S_j = T_k$, then $A_j \subseteq \tilde{A}_k $.	
\end{claim}
\begin{proof}
First suppose that $j=m$. 
In Remark~\ref{theremark} we observed that $f_2$ is the inverse of $f_1$. In particular, by definition of $f_2$, $A_m$ is precisely the set of $x \in [n]$ such that 
$ x \not \in \phi (i)$ for all $i \in P=[N]$. But for each $i \in Q$ we have that $\phi (i)=X_i =\tilde{X}_i$. Thus by definition of the $\tilde{A}_{j'}$, this means $x\in A_m$ cannot be an element in $\tilde{A}_{j'}$ for any $j' \in [M-1]$. That is, $A_m \subseteq \tilde{A}_M $.
Now by definition $\partial_Q S_M =\emptyset = T_M$.
So this proves the claim in this case.

We may therefore assume that $j <m$. Now suppose that $\partial_Q S_j=T_k=\emptyset$, i.e. that $k=M$.  Suppose for a contradiction that there exists $x\in [n]$ such that $x\in A_j$ and $x \not \in \tilde{A}_M$. Then $x \in \tilde{A}_{k'}$ for some $k' <M$; this further implies
 $x \in  \tilde{X}_{i}=\phi(i) $ for some $i \in [q]=Q$. We may assume $i$ is $\leq_Q$-minimal in $Q$ with respect to this property. By Claim~\ref{claim: x in Aj implies x in Xi iff i above Aj}, there is some $i' \in S_j$ such that $i' \leq _P i$.
This property together with the definition of $\partial_Q S_j$ (and our assumption of $\leq_Q$-minimality for $i$) ensures $\partial_Q S_j\not =\emptyset$, a contradiction. Therefore $x  \in \tilde{A}_M$.
Thus $A_j \subseteq  \tilde{A}_{M}$, as desired.

Finally, suppose $T_k=\partial_Q S_j$ with $T_k, S_j\neq \emptyset$. By definition of $T_k=\partial_Q S_j$, for every $i'\in T_k$ there exists $i\in S_j$ with $i\leq_P i'$. By Claim~\ref{claim: x in Aj implies x in Xi iff i above Aj}, this implies $x\in \tilde{X}_{i'}=X_{i'}$ for every $i'\in T_k$.

Now consider $i''\in T_k$ and $i' \in Q\setminus T_k$ with $i'<_P i''$. If $x\in \tilde{X}_{i'}=X_{i'}$, then we must have $i\leq_P i'$ for some $i\in S_j$, contradicting $i'' \in T_k=\partial_Q S_j$. Thus $x\notin \tilde{X}_{i'}$ for any $i'\in Q$ such that $i'<_P i''$, and hence $x\in \tilde{Y}_{i''}$.  This in turn implies $x\in \tilde{Z}_k=\bigcap_{i''\in T_k} \tilde{Y}_{i''}$.

Now for any extension $T_{k'}$ of $T_k$ in $Q$ there exists $i''\in T_{k'}\setminus T_k$. Suppose $x\in \tilde{X}_{i''}=X_{i''}$. By Claim~\ref{claim: x in Aj implies x in Xi iff i above Aj}, this implies there exists $i\in S_j$ with $i\leq_P i''$. Since $i''\in Q$ and $i''\notin T_k=\partial_Q(S_j)$, this implies there exists $i'\in T_k$ with $i' <_P i''$ by the definition of $\partial_Q(S_j)$, contradicting  the fact that $T_{k'}$ is an antichain. Thus for any extension $T_{k'}$ of $T_k$ in $Q$ and every $i'' \in T_{k'}\setminus T_k$, $x\notin \tilde{X}_{i''}$, implying in turn that $x\notin \tilde{Z}_{k'}$.

In particular we have shown that $x\in \tilde{A}_k=\tilde{Z}_k \setminus \left(\bigcup_{T_{k'}\in \mathrm{Ext}_Q(T_k)} \tilde{Z}_{k'}\right)$. Since $x\in A_j$ was arbitrary, we have $A_j \subseteq \tilde{A}_k$ as claimed.
\end{proof}	
As $f_1(\phi)=(A_1, A_2, \ldots, A_m)$ and $f_1(\phi_{\vert Q})=(\tilde{A_1}, \tilde{A}_2, \ldots, \tilde{A}_M)$ both form partitions of $[n]$ (by Lemma~\ref{lemma: injection from hom to partitions}), Claim~\ref{claim: Aj is included in shadow(Sj)'s set} immediately implies that $\tilde{A}_k = \bigcup_{j: \partial_Q S_j=T_k}A_j$ for all $k\in [M]$. It follows that $\partial_Q (f_1(\phi)) =f_1(\phi_{\vert Q})$ as desired, proving the proposition.
\end{proof}

Having made these definitions and related shadows to partitions for subposets, our final goal in this section is to estimate the number of $\boldsymbol{\alpha}$-weighted partitions in $[m]^{[n]}$. For this purpose, we introduce the \emph{entropy} of a weighting $\boldsymbol{\alpha}\in \triangle_m$ to be
\begin{align}\label{eqdef: entropy}
H_m(\boldsymbol{\alpha}):= \sum_{j=1}^m -\alpha_j \log \alpha_j.
\end{align}
(Note that here $\log$ denotes the natural logarithm and $0 \log 0:=0$.)
The entropy function $H_m$ is a well-studied object in combinatorics and discrete probability. It has a maximum value of $\log m$ in $\triangle_m$, uniquely attained at $\boldsymbol{\alpha}=\left(\frac{1}{m}, \frac{1}{m}, \ldots \frac{1}{m}\right)$.
\begin{proposition}\label{prop: estimating the number of alpha-weighted partitions}
	Let $m\in \mathbb N$ and $\boldsymbol{\alpha} \in \triangle_m$ be fixed. 

	Then for any sequence $\varepsilon=\varepsilon(n)\rightarrow 0$ such that $\boldsymbol{\alpha}\pm \boldsymbol{\varepsilon}$ is feasible for every $n$, we have
	\begin{align*}
	\Bigr\vert [m]^{[n]}_{\boldsymbol{\alpha}\pm \boldsymbol{\varepsilon}}\Bigl\vert 
	= \exp\Bigl(H_m(\boldsymbol{\alpha})n +O\left(\log n\right)\Bigr).\notag
	\end{align*}
\end{proposition}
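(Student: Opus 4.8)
The plan is to recast the count as a short sum of multinomial coefficients and estimate each one by Stirling's formula. First observe that for a weighting $\boldsymbol{\beta}\in\triangle_m$ that is feasible for $n$, the partitions in $[m]^{[n]}_{\boldsymbol{\beta}}$ are exactly the ways of writing $[n]$ as an ordered disjoint union of sets of sizes $\beta_1 n,\dots,\beta_m n$, so $\bigl|[m]^{[n]}_{\boldsymbol{\beta}}\bigr|=\binom{n}{\beta_1 n,\dots,\beta_m n}$ (and $\bigl|[m]^{[n]}_{\boldsymbol{\beta}}\bigr|=0$ otherwise). Hence $\bigl|[m]^{[n]}_{\boldsymbol{\alpha}\pm\boldsymbol{\varepsilon}}\bigr|=\sum_{\boldsymbol{\beta}}\binom{n}{\beta_1 n,\dots,\beta_m n}$, the sum ranging over those $\boldsymbol{\beta}$ that are feasible for $n$ with $\|\boldsymbol{\beta}-\boldsymbol{\alpha}\|_\infty\le\varepsilon$.

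The workhorse is a uniform Stirling estimate: there is a constant $C=C(m)$ such that, for every $\boldsymbol{\beta}\in\triangle_m$ feasible for $n$,
\[\Bigl|\log\binom{n}{\beta_1 n,\dots,\beta_m n}-nH_m(\boldsymbol{\beta})\Bigr|\le C\log n.\]
This follows immediately from $\log k!=k\log k-k+O(\log(k+1))$ (valid for all integers $k\ge 0$, with $0\log 0:=0$) applied to the numerator and to each of the at most $m$ denominator factorials: the linear terms cancel since $\sum_i\beta_i n=n$, and what remains is a sum of at most $m+1$ error terms each of size $O(\log n)$. In particular empty or tiny parts are handled with no special casing.

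For the lower bound it suffices to exhibit one $\boldsymbol{\beta}$ in the sum with $\|\boldsymbol{\beta}-\boldsymbol{\alpha}\|_\infty=O_m(1/n)$. Rounding $\alpha_1 n,\dots,\alpha_{m-1}n$ down to integers and giving the (less than $m-1$) residue to the last coordinate produces a feasible $\boldsymbol{\beta}^\ast$ with $\|\boldsymbol{\beta}^\ast-\boldsymbol{\alpha}\|_\infty<m/n$; if $\varepsilon\ge m/n$ this $\boldsymbol{\beta}^\ast$ already appears in the sum, while if $\varepsilon<m/n$ the feasibility hypothesis supplies a feasible $\boldsymbol{\beta}$ with $\|\boldsymbol{\beta}-\boldsymbol{\alpha}\|_\infty\le\varepsilon<m/n$. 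Since $t\mapsto -t\log t$ has modulus of continuity $O\bigl(\delta\log(1/\delta)\bigr)$ on $[0,1]$, in either case $|H_m(\boldsymbol{\beta})-H_m(\boldsymbol{\alpha})|=O_m\bigl(\tfrac{\log n}{n}\bigr)$, so $nH_m(\boldsymbol{\beta})=nH_m(\boldsymbol{\alpha})+O(\log n)$, and the estimate above yields $\bigl|[m]^{[n]}_{\boldsymbol{\alpha}\pm\boldsymbol{\varepsilon}}\bigr|\ge\binom{n}{\beta_1 n,\dots,\beta_m n}\ge\exp\bigl(nH_m(\boldsymbol{\alpha})-O(\log n)\bigr)$. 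For the upper bound, a feasible $\boldsymbol{\beta}$ in the sum is determined by the integer vector $(\beta_1 n,\dots,\beta_{m-1}n)$, each coordinate of which lies in an interval of length $\le 2\varepsilon n\le 2n$, so there are at most $(2n+1)^{m-1}=\exp(O_m(\log n))$ terms; by the Stirling estimate each term is at most $\exp\bigl(n\max\{H_m(\boldsymbol{\gamma}):\|\boldsymbol{\gamma}-\boldsymbol{\alpha}\|_\infty\le\varepsilon\}+O(\log n)\bigr)$. As $\varepsilon=\varepsilon(n)\to 0$ and $H_m$ is uniformly continuous on $\triangle_m$, this maximum is $H_m(\boldsymbol{\alpha})+o(1)$ (quantitatively $H_m(\boldsymbol{\alpha})+O\bigl(\varepsilon\log(1/\varepsilon)\bigr)$), and absorbing the error into the $O(\log n)$ term gives $\bigl|[m]^{[n]}_{\boldsymbol{\alpha}\pm\boldsymbol{\varepsilon}}\bigr|\le\exp\bigl(nH_m(\boldsymbol{\alpha})+O(\log n)\bigr)$.

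The bijection with multinomial coefficients, the Stirling estimate, and the polynomial bound on the number of nearby feasible weightings are all routine; the one point requiring genuine care is the very last step, namely that the variation of $H_m$ over the $\varepsilon$-ball around $\boldsymbol{\alpha}$ is negligible compared with $O(\log n)$. This is exactly where the hypotheses $\varepsilon\to 0$ and ``$\boldsymbol{\alpha}\pm\boldsymbol{\varepsilon}$ feasible for every $n$'' are used, and it must be done via the $O\bigl(\delta\log(1/\delta)\bigr)$ modulus of continuity rather than a naive Lipschitz bound, since $-t\log t$ is not Lipschitz at $t=0$ and $\boldsymbol{\alpha}$ may have very small coordinates.
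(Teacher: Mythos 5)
Your argument follows the same route as the paper's: write the count as a sum of multinomial coefficients, apply Stirling to each, bound the number of feasible weightings in the $\varepsilon$-ball polynomially, and replace $H_m(\boldsymbol{\beta})$ by $H_m(\boldsymbol{\alpha})$ via continuity. Your write-up is, if anything, a little more explicit than the paper's (you split cleanly into lower and upper bounds and supply the rounding construction producing a feasible $\boldsymbol{\beta}^\ast$ within $O(1/n)$ of $\boldsymbol{\alpha}$ for the lower bound, where the $O(\delta\log(1/\delta))$ modulus indeed yields the required $O(\log n)$ error).

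That said, the final absorption step in the upper bound --- the claim that the $n\cdot O\bigl(\varepsilon\log(1/\varepsilon)\bigr)$ error coming from the variation of $H_m$ over the $\varepsilon$-ball can be folded into $O(\log n)$ --- does not actually follow from $\varepsilon\to 0$ or from the feasibility hypothesis (the latter only forces $\varepsilon(n)$ to be \emph{at least} of order $1/n$, not at most). Concretely, take $m=2$, $\boldsymbol{\alpha}=(1/3,2/3)$ and $\varepsilon(n)=1/\log n$: the single term $\binom{n}{k}$ with $k=\lfloor(1/3+\varepsilon)n\rfloor$ is $\exp\bigl(H_2(1/3)n + (\log 2)\varepsilon n + O(\varepsilon^2 n)+O(\log n)\bigr)$, and $(\log 2)\varepsilon n = (\log 2)\,n/\log n$ already exceeds $C\log n$ for every constant $C$. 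So for a general $\varepsilon\to 0$ the honest conclusion is $\exp\bigl(H_m(\boldsymbol{\alpha})n+o(n)\bigr)$, and the $O(\log n)$ form requires something like $\varepsilon(n)=O(\log n/n)$. This imprecision is shared with the paper's own proof, which writes $H_m(\boldsymbol{\beta})=H_m(\boldsymbol{\alpha})+o(1)$ and then silently records the error as $O(\log n)$, so you are matching the source both in approach and in this gap; but your claim that the modulus-of-continuity argument closes the gap is not correct, and worth flagging.
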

\begin{proof}
For every $\boldsymbol{\beta} \in \triangle_m$ which is feasible for $n$, we have by Stirling's estimate for the factorial that
	\begin{align}\label{eq: estimate for number of beta-weighted partitions}
	\Bigr\vert [m]^{[n]}_{\boldsymbol{\beta}}\Bigl\vert 
	= \binom{n}{\beta_1n, \beta_2n, \ldots, \beta_{m-1}n}
	= \exp\Bigl(H_m(\boldsymbol{\beta})n +O(\log n)\Bigr).
	\end{align}
Now there are at most 
\begin{align}\label{eq: estimate for number of feasible eps-near weightings}
\left(2\varepsilon n+1\right)^{m-1}= e^{O(\log \varepsilon n)}
\end{align} weightings $\boldsymbol{b}$ which are both $\boldsymbol{\varepsilon}$-close to $\boldsymbol{\alpha}$ and feasible for $m$. Let $\mathcal{B}$ denote the family of all such $\boldsymbol{\beta}$. By continuity of the function $H_m$ and the fact that $\varepsilon=o(1)$, for all $\boldsymbol{\beta}\in \mathcal{B}$ we have $H_m(\boldsymbol{\beta})=H_m(\boldsymbol{\alpha})+o(1)$. Putting it all together, we have
	\begin{align*}
	\Bigr\vert [m]^{[n]}_{\boldsymbol{\alpha}\pm \boldsymbol{\varepsilon}}\Bigl\vert = \sum_{\boldsymbol{\beta}\in \mathcal{B}}  \Bigr\vert [m]^{[n]}_{\boldsymbol{\beta}}\Bigl\vert 
=\Bigl \vert \mathcal{B} \Bigr\vert \exp\Bigl(H_m(\boldsymbol{\alpha})n +O( \log n)\Bigr)=\exp\Bigl(H_m(\boldsymbol{\alpha})n +O(\log n)\Bigr),
	\end{align*}
	as desired.
\end{proof}

\section{The existence threshold}\label{sec:exist}

\subsection{The existence threshold}
In this section, we shall determine the existence threshold for copies of a fixed poset $(P, \leq_P)$ in a random subposet of $(\mathcal{P}(n), \subseteq)$. Explicitly, let $p=e^{-cn}$. We let $\mathcal{P}(n,p)$ be a $p$-random subset of $\mathcal{P}(n)$, obtained by retaining each element of $\mathcal{P}(n)$ independently at random with probability $p$. This gives rise to a random poset $(\mathcal{P}(n,p), \subseteq)$. For which $c$ does this poset contain w.h.p. a copy of $P$ --- i.e. for which $c$ is $\mathrm{inj-hom}_P(\mathcal{P}(n,p))$ w.h.p. non-empty?

To answer this question, we need to introduce two definitions.
\begin{definition}\label{def: density of a subposet}
Let $(P, \leq_P)$ be a poset with $a(P) =m$. Let $Q\subseteq P$ be a subposet of $P$	with $Q\neq \emptyset$ and $a(Q)=M$.  Given a weighting $\boldsymbol{\alpha}\in \triangle_m$, let $\boldsymbol{\beta}=\partial_Q (\boldsymbol{\alpha})$ be the weighting from $\triangle_M$ induced by $\boldsymbol{\alpha}$ in $Q$ (that is, $\boldsymbol{\beta}$ is the weighting obtained when applying (\ref{eq: inherited weighting})). 
We define the \emph{critical exponent of $Q$ in $P$ with respect to $\boldsymbol{\alpha}$} to be
\begin{align*}
c_{\boldsymbol{\alpha},P}(Q):= \frac{H_M(\boldsymbol{\beta})}{\vert Q\vert}.
\end{align*}
\end{definition}
\begin{definition}\label{def: critical density of a poset}
Let $(P, \leq_P)$ be a poset with $a(P)=m$. The \emph{critical exponent} of $P$ is defined to be
\begin{align*}
c_{\star}(P):= \sup \Bigl\{c\in \mathbb{R}_{\geq 0}: \ \exists \boldsymbol{\alpha}\in \triangle_m\textrm{ s.t. } \forall Q: \emptyset\neq Q\subseteq P, \  c\leq c_{\boldsymbol{\alpha},P}(Q)\Bigr\}.
\end{align*}
\end{definition}
\begin{remark}\label{remarkable!}
	Equivalently, since $\triangle_m$ is compact and $\min\{c_{\boldsymbol{\alpha}, P}(Q): \ \emptyset \neq Q\subseteq P\}$ is a continuous function of $\boldsymbol{\alpha}$, we can express the critical exponent as:
	\begin{align*}
	c_{\star}(P)= \max_{\boldsymbol{\alpha} \in \triangle_m} \min_{\emptyset \neq Q\subseteq P} c_{\boldsymbol{\alpha},P}(Q).
	\end{align*}
\end{remark}

\begin{theorem}\label{theorem: existence threshold}
Let $(P, \leq_P)$ be a finite poset. For $c>0$ fixed and $p=p(n)=e^{-cn}$, the following hold:	
\begin{enumerate}[(i)]
	\item  if $c> c_{\star}(P)$, then w.h.p. $(P, \leq_P)$ is not a subposet of $(\mathcal{P}(n,p), \subseteq)$;
	\item if $c< c_{\star}(P)$, then w.h.p. $(P, \leq_P)$ is an induced subposet of $(\mathcal{P}(n,p), \subseteq)$.
\end{enumerate}	
\end{theorem}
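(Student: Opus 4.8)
The plan is to prove parts (i) and (ii) separately via the first and second moment methods respectively, using the bijective correspondence between induced copies of $P$ in $\mathcal{P}(n)$ and partitions in $[m]^{[n]}_\star$ established in Section~\ref{sec:bi}, together with the entropy estimate of Proposition~\ref{prop: estimating the number of alpha-weighted partitions}.

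For part (i), suppose $c > c_\star(P)$. An embedded (not necessarily induced) copy of $P$ in $\mathcal{P}(n,p)$ is a homomorphism $\phi \in \mathrm{inj\text{-}hom}_P(\mathcal{P}(n))$ all of whose image sets are retained. By Lemma~\ref{lemma: injection from hom to partitions} and Remark~\ref{theremark}, each such $\phi$ is determined by its associated partition $\mathbf{A} = f_1(\phi) \in [m]^{[n]}$ with weighting $w(\mathbf{A}) = \boldsymbol{\alpha}$. The crucial point is that the number of distinct sets in the image $\phi(P)$ is at least $|Q|$ for the ``spanning'' choice $Q = P$, but to get the right bound one wants, for \emph{every} non-empty $Q \subseteq P$, to estimate the probability that all $|Q|$ image sets of $\phi_{|Q}$ are present; by Proposition~\ref{prop: Q-shadow and f1}, the partition associated with $\phi_{|Q}$ is the $Q$-shadow $\partial_Q(\mathbf{A})$, which is $\partial_Q(\boldsymbol{\alpha})$-weighted. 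Summing over all possible weightings (discretised to a polynomial-size net), the expected number of copies of $P$ whose associated weighting is $\varepsilon$-close to a given $\boldsymbol{\alpha}$ is, up to $e^{O(\log n)}$ factors, at most $\exp(H_m(\boldsymbol{\alpha})n) \cdot p^{|Q|}= \exp\bigl((H_m(\boldsymbol{\alpha}) - c|Q|)n + O(\log n)\bigr)$ for each $Q$; taking $Q$ to be the minimiser shows this expectation is $\exp\bigl((c_{\boldsymbol{\alpha},P}(Q)|Q| - c|Q|)n + O(\log n)\bigr)$ after relating $H_m(\boldsymbol{\alpha})$ back through the shadow — here I need to be a little careful and instead bound the count of copies directly by the number of partitions times $p$ raised to the number of \emph{distinct image sets}, which for $Q=P$ is $|P| = N$, but more efficiently one argues: either some shadow inequality $c > c_{\boldsymbol{\alpha},P}(Q)$ holds and the corresponding sub-expectation is $e^{-\Omega(n)}$, or not. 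Since $c > c_\star(P) = \max_{\boldsymbol{\alpha}} \min_Q c_{\boldsymbol{\alpha},P}(Q)$, for \emph{every} feasible $\boldsymbol{\alpha}$ there is some $Q$ with $c > c_{\boldsymbol{\alpha},P}(Q)$, hence the expected number of copies of $Q$ of the corresponding shadow-type is $e^{-\Omega(n)}$; summing over the polynomially many weightings and applying Markov's inequality gives that w.h.p.\ there is no copy of $P$ at all (since any copy of $P$ would yield a copy of each such $Q$ of the forbidden type).

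For part (ii), suppose $c < c_\star(P)$, and fix $\boldsymbol{\alpha} \in \triangle_m^*$ achieving $\min_{\emptyset \neq Q \subseteq P} c_{\boldsymbol{\alpha},P}(Q) > c$; we may take $\boldsymbol{\alpha}$ to have all positive, rational coordinates by a perturbation/continuity argument and work with $\varepsilon = \varepsilon(n) \to 0$ chosen so $\boldsymbol{\alpha} \pm \boldsymbol{\varepsilon}$ is feasible for all large $n$. Let $X$ count induced copies of $P$ in $\mathcal{P}(n,p)$ whose associated partition lies in $[m]^{[n]}_{\boldsymbol{\alpha} \pm \boldsymbol{\varepsilon}} \cap [m]^{[n]}_\star$. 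By Lemma~\ref{lemma: injection from partitions* to inj}, $f_2$ bijects these partitions with such copies, and by Proposition~\ref{prop: estimating the number of alpha-weighted partitions} there are $\exp(H_m(\boldsymbol{\alpha})n + O(\log n))$ of them; each copy uses $N = |P|$ distinct sets, so $\mathbb{E}[X] = \exp\bigl((H_m(\boldsymbol{\alpha}) - cN)n + O(\log n)\bigr)$, which is $\exp(\Omega(n))$ precisely because $H_m(\boldsymbol{\alpha})/N = c_{\boldsymbol{\alpha},P}(P) \geq \min_Q c_{\boldsymbol{\alpha},P}(Q) > c$ (using $\partial_P(\boldsymbol{\alpha}) = \boldsymbol{\alpha}$ and $a(P) = m$). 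For the second moment, I would write $\mathbb{E}[X^2] = \sum_{\phi, \phi'} \mathbb{P}(\text{both present})$ and group pairs $(\phi,\phi')$ according to the subposet $Q' \subseteq P$ on which $\phi$ and $\phi'$ share image sets, i.e.\ the overlap $\phi(P) \cap \phi'(P)$; the key identity is that $\phi(P) \cap \phi'(P)$, as a subposet of $\mathcal{P}(n)$, is the image of some $Q \subseteq P$ under a shadow-type map, so the number of such pairs with a prescribed overlap pattern is controlled by $|[m]^{[n]}_{\boldsymbol{\alpha}\pm\boldsymbol{\varepsilon}}|^2$ times a correction $\exp(-H(\text{overlap weighting})n)$, and $\mathbb{P}(\text{both present}) = p^{2N - |\text{overlap}|}$. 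The contribution of each overlap type is then $\exp\bigl((2H_m(\boldsymbol{\alpha}) - c(2N - |Q|) - c_{\text{overlap}})n + O(\log n)\bigr)$, and one checks this is at most $\mathbb{E}[X]^2 \cdot e^{o(n)}$ precisely because the shadow inequality $H(\partial_Q \boldsymbol{\alpha}) \geq c|Q|$ kicks in for the overlap subposet $Q$ — the overlap is itself (isomorphic to) a shadow of a subposet, and $c_{\boldsymbol{\alpha},P}(Q) > c$ controls exactly the relevant entropy deficit. Then $\mathbb{E}[X^2] = (1+o(1))\mathbb{E}[X]^2$ (or at worst $\le e^{o(n)}\mathbb{E}[X]^2$, which suffices after a standard splitting-into-subgraphs argument à la Janson, or by noting $\mathbb{E}[X] \to \infty$ and applying Chebyshev to the dominant overlap class), and Chebyshev's inequality gives $X > 0$ w.h.p., i.e.\ an induced copy of $P$ exists w.h.p.

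The main obstacle I anticipate is the bookkeeping in the second moment computation: precisely identifying the overlap of two copies $\phi(P), \phi'(P)$ with a canonical sub-structure, and matching the number of overlapping pairs against the product of entropies so that the shadow inequalities $c < c_{\boldsymbol{\alpha},P}(Q)$ for all $Q$ are exactly what is needed to defeat every overlap class. This requires care because the overlap need not be an induced copy of a subposet of $P$ on the nose — it is the image under $f_2$ of the \emph{shadow} of the relevant common refinement — so the right statement is that the overlap's weighting is $\partial_{Q}(\boldsymbol{\alpha})$-weighted for some $Q \subseteq P$, which is exactly where Proposition~\ref{prop: Q-shadow and f1} and the definition of $c_{\boldsymbol{\alpha},P}(Q)$ are used. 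A secondary technical point is ensuring feasibility of $\boldsymbol{\alpha} \pm \boldsymbol{\varepsilon}$ and that rationalising $\boldsymbol{\alpha}$ does not destroy the strict inequality $\min_Q c_{\boldsymbol{\alpha},P}(Q) > c$, which follows from continuity of $\boldsymbol{\alpha} \mapsto \min_Q c_{\boldsymbol{\alpha},P}(Q)$ (Remark~\ref{remarkable!}) and density of rationals.
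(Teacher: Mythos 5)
Your proposal matches the paper's proof in all essentials: part (i) via a first-moment/Markov argument over a finite cover of the weighting simplex combined with the shadow inequalities (Proposition~\ref{prop: Q-shadow and f1}), and part (ii) via a second-moment argument classifying overlapping pairs by the shared subposet $Q\subseteq P$ and using the bounds $c<c_{\boldsymbol{\alpha},P}(Q)$ to defeat each overlap class. The only minor departure is that you suggest closing part (ii) with Chebyshev rather than the paper's choice of Janson's inequality, but both rest on the same estimate that the intersecting-pair contribution $D$ satisfies $D=o(\mu^2)$.
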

\noindent The proof of parts (i) and (ii) of Theorem~\ref{theorem: existence threshold} occupy the next two subsections. Before we dive into these proofs, however, we should like to outline the main idea behind Theorem~\ref{theorem: existence threshold}, which is to look at certain ``weighted'' copies of $P$ in $\mathcal{P}(n)$.

\begin{definition}[Weighted copies of posets]\label{def: weighted subposet}
	Let $(P, \leq_P)$ be a poset with $a(P)=m$. We define the \emph{weight} of $\phi \in \mathrm{inj-hom}_P(\mathcal{P}(n))$ to be
	\[w_{\phi}:=w(f_1(\phi)),\]
	where $f_1$ is the injection $\mathrm{inj-hom}_P(\mathcal{P}(n))\rightarrow [m]^{[n]}$ given in Lemma~\ref{lemma: injection from hom to partitions} and $w$ is the weighting from Definition~\ref{def: weighted partitions}. 
	Given $\boldsymbol{\alpha}\in \triangle_m$, we say $\phi$ is an \emph{$\boldsymbol{\alpha}$-weighted} copy of $P$ in $\mathcal{P}(n)$ if $w_{\phi}=\boldsymbol{\alpha}$.
\end{definition}
\noindent By \eqref{eq: estimate for number of beta-weighted partitions} the expected number of $\boldsymbol{\alpha}$-weighted copies of $(P, \leq_P)$ in $(\mathcal{P}(n,p), \subseteq)$  is 
\[e^{H_m(\alpha)n +O(\log n)} p^{\vert P \vert}= e^{\vert P\vert \left(c_{\boldsymbol{\alpha}, P}(P)-c\right) n+O(\log n)}.\]
Thus certainly for a fixed feasible weighting $\boldsymbol{\alpha}$, if $c>c_{\boldsymbol{\alpha}, P}(P)$ then w.h.p. no such $\boldsymbol{\alpha}$-weighted copies exist. Further by Proposition~\ref{prop: Q-shadow and f1}, an $\boldsymbol{\alpha}$-weighted copy of $P$ can only exist in $\mathcal P(n,p)$ 
 if for every $Q\subseteq P$ a $\boldsymbol{\beta}$-weighted copy of $Q$ exists, where $\boldsymbol{\beta}=\partial_Q(\boldsymbol{\alpha})$. This leads us to require $ c\leq \min_{Q\subseteq P} c_{\boldsymbol{\alpha}, P}(Q)$, and to the statement of the theorem.

\subsection{Proof of Theorem~\ref{theorem: existence threshold}, part (i)}
\label{sec:54i}
Let $(P, \leq_P)$ be a poset with $a(P)=m$. Suppose $c=c_{\star}(P)+\eta$, for some $\eta>0$. For every $Q\subseteq P$, both $c_{\boldsymbol{\alpha}, P}(Q)$ and $\partial_Q(\boldsymbol{\alpha})$ are continuous functions of $\boldsymbol{\alpha}$ in the compact set $\triangle_m$. Since there are finitely many subposets $Q$: $\emptyset \neq Q \subseteq P$,
there exist constants $\varepsilon_1, \varepsilon_2>0$ such that  if $\|\boldsymbol{\alpha}-\boldsymbol{\alpha}'\|_{\infty}<\varepsilon_1$, 
then for every $Q\subseteq P$
\begin{align}\label{eq: alphas are eps1-close imply betas eps2-close}
\|\partial_Q(\boldsymbol{\alpha})-\partial_Q(\boldsymbol{\alpha}')\|_{\infty}<\varepsilon_2 \end{align}
and
\begin{align}\label{eq: alphas are eps1-close imply c(alpha, P)q are eta/2-close}
\vert c_{\boldsymbol{\alpha},P}(Q)-c_{\boldsymbol{\alpha}',P}(Q)\vert <\frac{\eta}{2} \end{align}
both hold.

For $\boldsymbol{\alpha}\in \triangle_m$, denote by $B_{\varepsilon_1}(\boldsymbol{\alpha})$ the open $\ell_{\infty}$-ball in $\triangle_m$ of radius $\varepsilon_1$ centered at $\boldsymbol{\alpha}$.
As $\triangle_m$ is compact, there exists some finite set $\mathcal{C} \subseteq \triangle_m$ such that the collection  $\Bigl\{ B_{\varepsilon_1}(\boldsymbol{\alpha}): \ \boldsymbol{\alpha} \in \mathcal{C}\Bigr\}$ constitutes an open cover for $\triangle_m$. (In fact, since $\triangle_m$ has measure $1/m!$ and each $B_{\varepsilon_1}(\boldsymbol{\alpha})$ has measure at least $(\varepsilon_1/m)^{m-1}$, it is not hard to show that one can take $\vert \mathcal{C} \vert \leq \left(\frac{C}{\varepsilon_1}\right)^m$ for some absolute constant $C>0$.)

Now fix $n\in \mathbb{N}$. For each $\boldsymbol{\alpha}\in \mathcal{C}$, let $\tilde{B}_{\boldsymbol{\alpha}}$ denote the collection of $\boldsymbol{\alpha}'$ in $B_{\varepsilon_1}(\boldsymbol{\alpha})$ which are feasible for $n$. Pick $\boldsymbol{\alpha} \in \mathcal{C}$.  By definition of $c_{\star}(P)$, we have that $c_{\boldsymbol{\alpha}, P}(Q)\leq c_{\star}(P)$ for some $Q\in P$. Set $M:=a(Q)$. For every $\boldsymbol{\alpha}' \in \tilde{B}_{\boldsymbol{\alpha}}$, we have 
$\| \boldsymbol{\alpha}'-\boldsymbol{\alpha}\|_\infty <\varepsilon_1$. 
By (\ref{eq: alphas are eps1-close imply betas eps2-close}) this implies that 
$\partial_Q(\boldsymbol{\alpha}')$ is $\varepsilon_2$-close
 to $\partial_Q(\boldsymbol{\alpha})$. Further by (\ref{eq: alphas are eps1-close imply c(alpha, P)q are eta/2-close}) we have 
\begin{align}\label{eq: bound on c(alpha, P)Q in eps1-n'hood}
	c_{\boldsymbol{\alpha}', P}(Q)\leq c_{\boldsymbol{\alpha}, P}(Q)+\frac{\eta}{2}\leq c_{\star}(P)+\frac{\eta}{2}=c -\frac{\eta}{2}.
\end{align}
Combining (\ref{eq: alphas are eps1-close imply betas eps2-close}), (\ref{eq: estimate for number of feasible eps-near weightings}), (\ref{eq: estimate for number of beta-weighted partitions}), and (\ref{eq: bound on c(alpha, P)Q in eps1-n'hood}), the expected number of $\boldsymbol{\beta}$-weighted injective poset homomorphisms $\phi\in \mathrm{inj-hom}_Q(\mathcal{P}(n,p))$ with $\boldsymbol{\beta}\in \{\partial_Q(\boldsymbol{\alpha}'): \ \boldsymbol{\alpha}'\in \tilde{B}_{\boldsymbol{\alpha}}\}$ is at most
\begin{align*}
(2\varepsilon_2n +1)^{M-1}e^{\vert Q\vert \left(c-\frac{\eta}{2}\right) n +O(\log n)}p^{\vert Q\vert}= e^{- \frac{\vert Q\vert \eta }{2}n +O(\log n)}=o(1).
\end{align*} 
By Markov's inequality, we deduce that w.h.p. no such copy exists, which in turns implies by Proposition~\ref{prop: Q-shadow and f1} that w.h.p. $\mathcal{P}(n,p)$ contains no $\boldsymbol{\alpha}'$-weighted copy of $P$ for $\boldsymbol{\alpha}'\in \tilde{B}_{\boldsymbol{\alpha}}$. Since $\boldsymbol{\alpha} \in \mathcal{C}$ was arbitrary and $\mathcal{C}$ is finite, we deduce that w.h.p. $\mathcal{P}(n,p)$ contains no $\boldsymbol{\alpha}'$-weighted copy of $P$ for $\boldsymbol{\alpha}'\in \bigcup_{\boldsymbol{\alpha}\in \mathcal{C}} \tilde{B}_{\boldsymbol{\alpha}}$. By construction of $\mathcal{C}$, this latter union covers all weightings $\boldsymbol{\alpha}' \in \triangle_m$ which are feasible for $n$. Thus we deduce that for $c=c_{\star}(P)+\eta$, $\eta>0$, and $p=e^{-cn}$  the random poset $\mathcal{P}(n,p)$ is w.h.p. $P$-free. This concludes the proof of Theorem~\ref{theorem: existence threshold}, part (i). 
\qed

\subsection{Proof of Theorem~\ref{theorem: existence threshold}, part (ii)}
Let $(P, \leq_P)$ be a poset with $a(P)=m$. Suppose $c=c_{\star}(P)-\eta$ for some $\eta>0$. By definition of $c_{\star}(P)$, there exists $\boldsymbol{\alpha}_{\star}\in \triangle _m$ 
such that for all $Q$: $\emptyset \neq Q\subseteq P$, we have $c_{\boldsymbol{\alpha}_{\star}, P}(Q)\geq c_{\star}(P)$. 
When $n$ is sufficiently large there exists  a weighting $\boldsymbol{\alpha}_f=\boldsymbol{\alpha}_f(n)\in \triangle ^*_m$ (i.e. $\boldsymbol{\alpha}_f$ has non-zero coordinates)  such that $\boldsymbol{\alpha}_f$ is feasible for $n$ and 
$\|\boldsymbol{\alpha}_f - \boldsymbol{\alpha}_{\star}\|_{\infty}<\varepsilon_1$, where $\varepsilon_1>0$ is the constant given in Section~\ref{sec:54i}. 
By \eqref{eq: alphas are eps1-close imply c(alpha, P)q are eta/2-close}, for all non-empty $Q\subseteq P$, we have
\begin{align}\label{eq: lower bound on c(alpha-feasible, P)Q}
	c_{\boldsymbol{\alpha}_{f}, P}(Q)\geq c_{\star}(P)-\frac{\eta}{2}= c + \frac{\eta}{2}.
\end{align}
Write $\Phi_{\boldsymbol{\alpha}_f, Q}(\mathcal{P}(n))$ and $\Phi_{\boldsymbol{\alpha}_f, Q}(\mathcal{P}(n,p))$ for the collections of all $\partial_Q(\boldsymbol{\alpha}_f)$-weighted $\phi\in \mathrm{inj-hom}_Q(\mathcal{P}(n))$ and $\phi\in \mathrm{inj-hom}_Q(\mathcal{P}(n, p))$ respectively. By (\ref{eq: estimate for number of beta-weighted partitions}), for all non-empty $Q\subseteq P$,
\begin{align}\label{eq: expectation lower bound for weighted Q-copies}
\mathbb{E} \bigl\vert \Phi_{\boldsymbol{\alpha_f}, Q}(\mathcal{P}(n,p)) \bigr\vert &= \bigl\vert \Phi_{\boldsymbol{\alpha_f}, Q}(\mathcal{P}(n)) \bigr\vert p^{\vert Q\vert} 
=e^{\vert Q\vert c_{\boldsymbol{\alpha}_f, P}(Q)n +O(\log n)}e^{-c\vert Q\vert n}\stackrel{(\ref{eq: lower bound on c(alpha-feasible, P)Q})}{\geq} e^{\frac{\vert Q\vert \eta}{2}n +O(\log n)}, 
\end{align}
which tends to infinity as $n\rightarrow \infty$. We shall use the celebrated Janson inequalities to show $\mathbb{P}\Bigl(\bigl\vert \Phi_{\boldsymbol{\alpha_f}, P}(\mathcal{P}(n,p)) \bigr\vert =0\Bigr)$ is small. To do this, we must first introduce some notation.

Given $\phi_1, \phi_2\in \Phi_{\boldsymbol{\alpha_f}, P}(\mathcal{P}(n))$ and non-empty $Q_1, Q_2\subseteq P$, we say that the pair $(\phi_1, \phi_2)$ is \emph{$(Q_1, Q_2)$-intersecting} if $\phi_1(Q_1)=\phi_2(Q_2)=\phi_1(P)\cap\phi_2(P)$. Let $I(Q_1, Q_2)$ denote the collection of all $(Q_1,Q_2)$-intersecting pairs $(\phi_1, \phi_2)$, and define
\[D:= \sum_{Q_1, Q_2\neq \emptyset}\quad \left(\sum_{(\phi_1, \phi_2)\in I(Q_1, Q_2)} \mathbb{P}\Bigl(\phi_1(P)\cup \phi_2(P)\subseteq \mathcal{P}(n,p) \Bigr)\right).\]
Set $\mu:=\mathbb{E} \bigl\vert \Phi_{\boldsymbol{\alpha_f}, P}(\mathcal{P}(n,p)) \bigr\vert$. We now apply the following inequalities due to Janson (see e.g. \cite{JansonLuczakRucinski90}). 

\begin{proposition}[Janson inequalities]\label{prop: Janson inequalities}
\[\mathbb{P} \Bigl(\bigl\vert \Phi_{\boldsymbol{\alpha_f}, P}(\mathcal{P}(n,p)) \bigr\vert=0\Bigr) \leq \left\{\begin{array}{ll}
e^{-\frac{\mu^2}{2D}} & \textrm{if } D\geq \mu, \\
e^{-\mu + \frac{D}{2}} & \textrm{ otherwise.}
\end{array} \right.\]	
\end{proposition}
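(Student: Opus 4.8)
The plan is to recognise this as the classical Janson inequality (see e.g.~\cite{JansonLuczakRucinski90}) and to check that its standard proof applies verbatim in our setting. First I would set up the event framework: work in the product probability space $\{0,1\}^{\mathcal{P}(n)}$, where the coordinate indexed by $A\in\mathcal{P}(n)$ records whether $A\in\mathcal{P}(n,p)$, and for each $\phi\in\Phi:=\Phi_{\boldsymbol{\alpha_f},P}(\mathcal{P}(n))$ let $B_\phi$ be the increasing event $\{\phi(P)\subseteq\mathcal{P}(n,p)\}$. Then $B_\phi$ holds precisely when $\phi\in\Phi_{\boldsymbol{\alpha_f},P}(\mathcal{P}(n,p))$, so $\{\,\vert\Phi_{\boldsymbol{\alpha_f},P}(\mathcal{P}(n,p))\vert=0\,\}=\bigcap_{\phi\in\Phi}\overline{B_\phi}$, whose probability we must bound. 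I would then record three elementary facts: $B_{\phi_1}$ and $B_{\phi_2}$ depend on disjoint sets of coordinates whenever $\phi_1(P)\cap\phi_2(P)=\emptyset$; $\mathbb{P}(B_{\phi_1}\cap B_{\phi_2})=\mathbb{P}(\phi_1(P)\cup\phi_2(P)\subseteq\mathcal{P}(n,p))$; and every ordered pair $(\phi_1,\phi_2)$ with $\phi_1(P)\cap\phi_2(P)\neq\emptyset$ lies in exactly one class $I(Q_1,Q_2)$, namely with $Q_i:=\phi_i^{-1}(\phi_1(P)\cap\phi_2(P))$. Consequently $D$ equals the usual Janson correction term $\sum\mathbb{P}(B_{\phi_1}\cap B_{\phi_2})$, the sum running over all ordered pairs of mutually intersecting copies, including the diagonal pairs $\phi_1=\phi_2$; in particular $\mu\le D$.

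Next I would establish the bound $\mathbb{P}(\bigcap_{\phi\in\Phi}\overline{B_\phi})\le e^{-\mu+D/2}$, which is the second case of the statement. Fix an ordering $\phi_1,\dots,\phi_t$ of $\Phi$ and expand $\mathbb{P}(\bigcap_i\overline{B_{\phi_i}})=\prod_{i}\mathbb{P}(\overline{B_{\phi_i}}\mid\bigcap_{j<i}\overline{B_{\phi_j}})$. For each $i$, partition $\{j:j<i\}$ into $S_i:=\{j<i:\phi_j(P)\cap\phi_i(P)\neq\emptyset\}$ and $T_i:=\{j<i\}\setminus S_i$; since $C:=\bigcap_{j\in T_i}\overline{B_{\phi_j}}$ depends only on coordinates outside $\phi_i(P)$, the event $B_{\phi_i}$ is independent of $C$. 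Then $\mathbb{P}(B_{\phi_i}\mid C\cap\bigcap_{j\in S_i}\overline{B_{\phi_j}})\ge\mathbb{P}(B_{\phi_i}\cap\bigcap_{j\in S_i}\overline{B_{\phi_j}}\mid C)\ge\mathbb{P}(B_{\phi_i}\mid C)-\sum_{j\in S_i}\mathbb{P}(B_{\phi_i}\cap B_{\phi_j}\mid C)$, and using independence ($\mathbb{P}(B_{\phi_i}\mid C)=\mathbb{P}(B_{\phi_i})$) together with the Harris inequality (the increasing event $B_{\phi_i}\cap B_{\phi_j}$ is negatively correlated with the decreasing event $C$) gives $\mathbb{P}(\overline{B_{\phi_i}}\mid\bigcap_{j<i}\overline{B_{\phi_j}})\le 1-\mathbb{P}(B_{\phi_i})+\sum_{j\in S_i}\mathbb{P}(B_{\phi_i}\cap B_{\phi_j})\le\exp(-\mathbb{P}(B_{\phi_i})+\sum_{j\in S_i}\mathbb{P}(B_{\phi_i}\cap B_{\phi_j}))$. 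Multiplying over $i$ and observing that $\sum_i\sum_{j\in S_i}\mathbb{P}(B_{\phi_i}\cap B_{\phi_j})$ counts each unordered intersecting pair once, hence equals $(D-\mu)/2\le D/2$, yields the claim.

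For the bound $e^{-\mu^2/(2D)}$, which is the relevant one since $\mu\le D$ always, I would use the standard subsampling argument: for $q\in[0,1]$ let $\Phi_q$ be obtained from $\Phi$ by retaining each $\phi$ independently with probability $q$. Since $\bigcap_{\phi\in\Phi}\overline{B_\phi}\subseteq\bigcap_{\phi\in\Phi_q}\overline{B_\phi}$ for every realisation, applying the previous bound to $\Phi_q$ gives $\mathbb{P}(\bigcap_{\phi\in\Phi}\overline{B_\phi})\le\exp(-\mu_q+(D_q-\mu_q)/2)$ for every $\Phi_q$, where $\mu_q$ and $D_q$ are the analogues of $\mu$ and $D$ for $\Phi_q$; as the left-hand side is a constant it is at most the value at the minimising $\Phi_q$, hence at most $\exp(\mathbb{E}[-\mu_q+(D_q-\mu_q)/2])$. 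Now $\mathbb{E}[\mu_q]=q\mu$ and $\mathbb{E}[D_q-\mu_q]=q^2(D-\mu)$ (an off-diagonal ordered intersecting pair survives with probability $q^2$), so the exponent equals $-q\mu+q^2(D-\mu)/2$; taking $q=\min\{1,\mu/(D-\mu)\}$ and using $\mu\le D$, a short calculation bounds this by $-\mu^2/(2D)$, completing the proof.

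I expect the main obstacle here to be bookkeeping rather than ideas: namely, matching the double sum defining $D$ (over the classes $I(Q_1,Q_2)$) with the usual Janson correction term, and then chasing constants in the subsampling step, where one needs the sharp form of the first inequality and the optimal choice of $q$. The probabilistic inputs — the chain rule, the independence/correlation structure of the events $B_\phi$, and Harris's inequality — are entirely standard, and the positivity of the conditioning events is automatic since $0<p<1$.
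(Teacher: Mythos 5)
Your proof is correct. Note, however, that the paper does not actually prove this proposition: it is stated as a known result and attributed to Janson with a citation to \cite{JansonLuczakRucinski90}, so there is no in-paper argument to compare against. What you have done is reconstruct the standard proof of the (generalized) Janson inequality and verify that it applies in this setting, which is more than the paper attempts. Your reconstruction is sound on all the points that actually need checking here: the events $B_\phi$ are increasing and determined by the coordinates in $\phi(P)$, so disjoint images give independence and Harris applies; the chain-rule/Bonferroni/Harris argument yields $\exp(-\mu+(D-\mu)/2)\le\exp(-\mu+D/2)$; and, crucially, your identification of the paper's class-indexed double sum defining $D$ with the usual correction term $\sum_{(\phi_1,\phi_2):\,\phi_1(P)\cap\phi_2(P)\neq\emptyset}\mathbb{P}(B_{\phi_1}\cap B_{\phi_2})$ is right, since each intersecting ordered pair determines unique non-empty $Q_i=\phi_i^{-1}\bigl(\phi_1(P)\cap\phi_2(P)\bigr)$, and the diagonal pairs (with $Q_1=Q_2=P$) contribute exactly $\mu$, so $D\ge\mu$ always and the first bound is the operative one. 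The subsampling step and the optimization over $q=\min\{1,\mu/(D-\mu)\}$ do give $-\mu^2/(2D)$ after the short calculation you allude to (one checks $3\mu D-D^2-\mu^2\ge 0$ when $\mu\le D<2\mu$, and $-\mu^2/(2(D-\mu))\le-\mu^2/(2D)$ otherwise). So the only substantive difference is one of exposition: the paper buys brevity by citing the inequality as a black box, while your route buys self-containedness at the cost of a page of standard probabilistic bookkeeping.
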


\noindent It thus remains to bound $D$. Observe first of all that
\begin{align}\label{eq: bound on I(Q1, Q2)}
I(Q_1, Q_2)= {\bigl\vert \Phi_{\boldsymbol{\alpha_f}, P}(\mathcal{P}(n)) \bigr\vert}^2 \Bigl/\bigl\vert \Phi_{\boldsymbol{\alpha_f}, Q_1}(\mathcal{P}(n)) \bigr\vert, 
\end{align}
since each copy of $P$ in $\mathcal{P}(n)$ specifies a unique copy of $Q_1$ and $Q_2$, and since all copies `look the same', being given by partitions with exactly the same weight. Next, note that for any $(Q_1, Q_2)$-intersecting pair $(\phi_1, \phi_2)$ we have
\begin{align}\label{eq: bound on Prob(phi1 and phi2 both in)}
\mathbb{P}\Bigl(\phi_1(P)\cup \phi_2(P)\subseteq \mathcal{P}(n,p) \Bigr)= p^{2\vert P\vert - \vert Q_1\vert}.
\end{align}
Putting (\ref{eq: bound on I(Q1, Q2)}), (\ref{eq: bound on Prob(phi1 and phi2 both in)}) and (\ref{eq: expectation lower bound for weighted Q-copies}) together, we have
\begin{align}\label{eq: bound on I(Q1, Q2) summands in Delta}
\sum_{(\phi_1, \phi_2)\in I(Q_1, Q_2)} \mathbb{P}\Bigl(\phi_1(P)\cup \phi_2(P)\subseteq \mathcal{P}(n,p) \Bigr)&= \frac{{\mu}^2}{\mathbb{E} \bigl\vert \Phi_{\boldsymbol{\alpha_f}, Q_1}(\mathcal{P}(n,p)) \bigr\vert}\leq \mu^2 e^{-\frac{\vert Q_1\vert\eta}{2}n+O(\log n) }.\notag
\end{align}
Plugging this back into the definition of $D$ we have 
\begin{align*}
D\leq \sum_{Q_1\subseteq P: \ Q_1\neq \emptyset}\sum_{Q_2\subseteq P: \ \vert Q_2\vert =\vert Q_1 \vert} \quad \mu^2 e^{-\frac{\vert Q_1\vert\eta}{2}n+O(\log n) }< 2^{2\vert P\vert} \mu^2 e^{-\frac{\eta}{2}n+O(\log n)}.
\end{align*}
In particular, we have $\exp\left(-\mu^2/2D\right)=o(1)$. 
When $D< \mu$, since we showed in (\ref{eq: expectation lower bound for weighted Q-copies}) that $\mu \rightarrow \infty$ (as $n \rightarrow \infty$), we also have $\exp(-\mu + D/2)\le 
\exp\left(-\mu/2\right)=o(1)$. It then follows from the Janson inequalities (Proposition~\ref{prop: Janson inequalities}) that 
\[\mathbb{P} \Bigl(\bigl\vert \Phi_{\boldsymbol{\alpha_f}, P}(\mathcal{P}(n,p)) \bigr\vert=0\Bigr)=o(1).\]
Thus for $c=c_{\star}(P)-\eta$, $\eta>0$, and $p=e^{-cn}$,  the random poset $\mathcal{P}(n,p)$ contains w.h.p. the image of an $\boldsymbol{\alpha}_f$-weighted injective poset homorphism  $\phi: \ P\rightarrow \mathcal{P}(n)$. In particular, $P$ is w.h.p. a subposet of $\mathcal{P}(n,p)$. 

Moreover, recall that $\boldsymbol{\alpha}_f \in \triangle ^*_m$. That is, the weight  of the copy $P'$ of $P$ we obtain in $\mathcal P(n,p)$ lies in $[m]^*$.
Since $f_2$ is the inverse of $f_1$, the moreover part of Lemma~\ref{lemma: injection from partitions* to inj} implies that $P'$ is in fact an induced copy of $P$.
This concludes the proof of Theorem~\ref{theorem: existence threshold}, part (ii). 
\qed

\begin{remark}\label{remark: narrow window}
The proof of Theorem~\ref{theorem: existence threshold} parts (i) and (ii) shows something slightly stronger than we claimed. Namely, instead of having $\eta>0$ fixed, we can run through the same arguments with $\eta= K\log n /n $ for some sufficiently large constant $K>0$. A little analysis shows we can take values $\varepsilon_1, \varepsilon_2=O(\eta)$ and still satisfy \eqref{eq: alphas are eps1-close imply betas eps2-close} and~\eqref{eq: alphas are eps1-close imply c(alpha, P)q are eta/2-close}. Using the bound $\vert \mathcal{C}\vert =O\left((\varepsilon_1)^{-m}\right)$  and choosing $K$ sufficiently large to beat the $O(\log n)$ error terms, one then gets that 
\begin{itemize}
	\item if $p\leq e^{-c_{\star}(P)n -K\log n}$, then w.h.p. $\P(n,p)$ contains no copy of $P$;
	\item if $p\geq e^{-c_{\star}(P)n +K\log n}$, then w.h.p. $\P(n,p)$ contains an induced copy of $P$.
\end{itemize}
\end{remark}

\begin{remark}\label{general threshold}
The proof of Theorem~\ref{theorem: existence threshold}   can also be used to derive a more general result about the existence of induced copies of $P$  with a specific embedding in $\mathcal{P}(n)$.    Given a weighting 
$\boldsymbol{\alpha},$ we may define
$c_{\boldsymbol{\alpha}}(P):=  \min_{\emptyset \neq Q\subseteq P} c_{\boldsymbol{\alpha},P}(Q)$.
Our proof   demonstrates that   $c_{\boldsymbol{\alpha}}(P)$  is the threshold for the existence of an $\boldsymbol{\alpha}$-weighted copy of $P$ in $(\mathcal{P}(n,p), \subseteq)$. 

Here we also note that since $c_{\boldsymbol{\alpha}}(P)$ is continuous in $\boldsymbol{\alpha}$, there will be many different embeddings of  $\mathcal{P}$ in $(\mathcal{P}(n,p), \subseteq)$ as soon as $p$ is strictly larger 
than the value given by $c_{\star}(P)$.

\end{remark}

\section{Computing {$c_{\star}(P)$} in practice: general bounds and heuristics}\label{sec:computing}
Theorem~\ref{theorem: existence threshold} gives us the location of the threshold for the appearance of copies of a given poset $(P, \leq_P)$ inside the random poset $(\mathcal{P}(n,p), \subseteq)$ in terms of the parameter 
\begin{align}\label{eq: c_{star} def max min}
c_{\star}(P)=\max_{\boldsymbol{\alpha}\in \triangle_{a(P)}}\quad  \left(\min_{\emptyset\neq Q\subseteq P} \frac{H_{a(Q)}(\partial_Q(\boldsymbol{\alpha}))}{\vert Q\vert }\right).\end{align}
In practice, this parameter is somewhat awkward to compute by hand, even for very small examples. Of course, as it is the maximum of a continuous function over a compact set, we can obtain good computational approximations for its value --- though one should note that the complexity will certainly grow exponentially in $P$ (since we are optimising the value of $\boldsymbol{\alpha}$ over an $a(P)$-dimensional space, and minimising over all $2^{\vert P\vert}-1$ nonempty subsets $Q\subseteq P$).

In this section we prove some general bounds on $c_{\star}(P)$ and discuss heuristics for computing its value exactly --- heuristics that  in particular were used to determine $c_{\star}(P)$ for the examples in the next section. We begin by establishing some useful properties of the set of optimal weightings. Given a poset $P$, let $\mathit{Opt}(P)$ denote the collection of weightings $\boldsymbol{\alpha} \in \triangle_{a(P)}$ for which equality is attained in \eqref{eq: c_{star} def max min}, i.e. such that $c_{\star}(P)=\min\Bigl\{c_{\boldsymbol{\alpha}, P}(Q):\  \emptyset\neq Q\subseteq P \Bigr\}$.
\begin{proposition}\label{prop: opt(p) is convex}
$\mathit{Opt}(P)$ is a convex subset of $\triangle_{a(P)}$.	
\end{proposition}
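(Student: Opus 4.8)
The plan is to show convexity directly by exploiting concavity of the entropy function. Recall that $c_\star(P) = \min\{c_{\boldsymbol\alpha,P}(Q) : \emptyset \neq Q \subseteq P\}$ is the value attained by every $\boldsymbol\alpha \in \mathit{Opt}(P)$, and by Remark~\ref{remarkable!} it equals $\max_{\boldsymbol\alpha} \min_{\emptyset \neq Q \subseteq P} c_{\boldsymbol\alpha,P}(Q)$. So $\mathit{Opt}(P)$ is precisely the set of maximisers of the function $g(\boldsymbol\alpha) := \min_{\emptyset \neq Q \subseteq P} c_{\boldsymbol\alpha,P}(Q)$ on $\triangle_{a(P)}$. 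The set of maximisers of a concave function on a convex set is convex, so it suffices to prove that $g$ is concave on $\triangle_{a(P)}$.

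First I would observe that, for each fixed nonempty $Q \subseteq P$, the map $\boldsymbol\alpha \mapsto \partial_Q(\boldsymbol\alpha) = \boldsymbol\beta$ given coordinatewise by $\beta_i = \sum_{j:\,\partial_Q S_j = T_i} \alpha_j$ (equation~\eqref{eq: inherited weighting}) is linear in $\boldsymbol\alpha$. Next, the entropy function $H_M$ is concave on $\triangle_M$ (a standard fact, following from concavity of $t \mapsto -t\log t$). Composing a concave function with a linear map yields a concave function, so $\boldsymbol\alpha \mapsto H_{a(Q)}(\partial_Q(\boldsymbol\alpha))$ is concave on $\triangle_{a(P)}$; dividing by the positive constant $|Q|$ preserves concavity, hence each $c_{\boldsymbol\alpha,P}(Q)$ is concave in $\boldsymbol\alpha$. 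Finally, the pointwise minimum of finitely many concave functions is concave, so $g = \min_{\emptyset \neq Q \subseteq P} c_{\boldsymbol\alpha,P}(Q)$ is concave on $\triangle_{a(P)}$.

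It then remains to record the elementary fact that the argmax set of a concave function $g$ on a convex domain $\triangle_{a(P)}$ is convex: if $g(\boldsymbol\alpha) = g(\boldsymbol\alpha') = \max g$ and $t \in [0,1]$, then concavity gives $g(t\boldsymbol\alpha + (1-t)\boldsymbol\alpha') \geq t\,g(\boldsymbol\alpha) + (1-t)\,g(\boldsymbol\alpha') = \max g$, forcing equality; and $t\boldsymbol\alpha + (1-t)\boldsymbol\alpha' \in \triangle_{a(P)}$ since the simplex is convex. Hence $\mathit{Opt}(P)$ is convex, as claimed.

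I do not anticipate a serious obstacle here — the only point needing a little care is making sure the indexing of shadows is handled correctly so that $\partial_Q$ really is a \emph{linear} (not merely affine) map and that the denominators $|Q|$ are genuinely fixed constants independent of $\boldsymbol\alpha$; both are immediate from the definitions. One could alternatively argue convexity of $\mathit{Opt}(P)$ more hands-on by verifying that a convex combination of two optimal $\boldsymbol\alpha$'s still satisfies $c_{\boldsymbol\alpha,P}(Q) \geq c_\star(P)$ for every $Q$ (using concavity of each $c_{\boldsymbol\alpha,P}(Q)$ directly), but routing through concavity of $g$ is cleaner and is the approach I would present.
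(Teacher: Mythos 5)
Your proof is correct and uses essentially the same ingredients as the paper's: linearity of $\partial_Q$, concavity of $x \mapsto -x\log x$, and the fact that a pointwise minimum of concave functions is concave. The paper presents the argument in exactly the ``hands-on'' form you mention at the end (take $\boldsymbol{\gamma} = \theta\boldsymbol{\alpha} + (1-\theta)\boldsymbol{\beta}$ and verify $c_{\boldsymbol{\gamma},P}(Q) \geq c_{\star}(P)$ for each $Q$ directly), while you package it through the concavity of $g$ and the standard argmax fact; these are the same argument in different clothing.
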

\begin{proof}
Suppose $\boldsymbol{\alpha}, \boldsymbol{\beta} \in \mathit{Opt}(P)$. Given $\theta\in [0,1]$, consider the weighting $\boldsymbol{\gamma}= \theta \boldsymbol{\alpha}+ (1-\theta)\boldsymbol{\beta}$. Clearly $\boldsymbol{\gamma}\in \triangle_{a(P)}$. Fix any subposet $Q$ with $\emptyset \neq Q\subseteq P$. 
Since $\partial_Q: \ \boldsymbol{x}\mapsto \delta_Q(\boldsymbol{x})$ is a linear operator from $\triangle_{a(P)}$ to $\triangle_{a(Q)}$ and since the map $x\mapsto -x\log x$ is concave, we have 
\begin{align*}
c_{\boldsymbol{\gamma}, P}(Q)&=\frac{1}{\vert Q\vert }H_{a(Q)}(\partial_Q(\boldsymbol{\gamma}))= \frac{1}{\vert Q\vert }H_{a(Q)}\left(\theta \partial_Q\left( \boldsymbol{\alpha}\right) +(1-\theta)\partial_Q\left(\boldsymbol{\beta}\right)\right)\\
&\geq \frac{1}{\vert Q\vert }\left(\theta H_{a(Q)} \left(\partial_{Q}(\boldsymbol{\alpha})\right)   +(1-\theta)H_{a(Q)}\left(\boldsymbol{\partial_Q(\beta})\right)\right)= \theta c_{\boldsymbol{\alpha}, P}(Q)+(1-\theta)c_{\boldsymbol{\beta}, P}(Q)\geq c_{\star}(P).
\end{align*}	
In particular it follows from the definition of $c_{\star}(P)$ in \eqref{eq: c_{star} def max min} that $\boldsymbol{\gamma}\in \mathit{Opt}(P)$. Thus $\mathit{Opt}(P)$ is a convex set as claimed.
\end{proof}

A \emph{poset-automorphism} of $(P, \leq_P)$ is a bijection $\phi: \ P\rightarrow P$ such that both $\phi$ and its inverse are poset homorphisms. Write $\mathrm{Aut}(P)$ for the set of all poset-automorphisms of  $(P, \leq_P)$. Then each $\phi \in \mathrm{Aut}(P)$ induces a permutation on the elements of $\mathcal{A}(P)= \{S_1, S_2, \ldots, S_{a(P)} \}$, with $S_j$ sent to the antichain $\phi(S_j)=\{\phi(i): \ i \in S_j\}$. Similarly, each $\phi\in \mathrm{Aut}(P)$ gives rise to a permutation of the space of weightings $\triangle_{a(P)}$ via permutation of the coordinates, with $\boldsymbol{\alpha}$ sent to the weighting $\phi(\boldsymbol{\alpha})$ defined by $\phi(\alpha)_i=\alpha_j$ if $\phi(S_i)=S_j$.

Given a poset $(P, \leq_P)$, its \emph{reverse} is the poset $(P, \leq_{R(P)})$, where $x\leq_{R(P)}y$ if and only if $y\leq_P x$. Thus the reverse of a poset is simply the poset obtained by reversing all inequalities. We say a poset $(P, \leq_P)$ is \emph{reverse-symmetric} if there exists a bijection $\phi$ from $(P, \leq_P)$ to its reverse $(P, \leq_{R(P)})$ such that both $\phi$ and its inverse are poset homomorphisms. We refer to such a function $\phi$, if it exists, as \emph{reverse automorphism} of $P$, and let $R-\mathrm{Aut}(P)$ denote the set of all reverse automorphisms. Analogously to ordinary automorphisms, reverse automorphisms induce permutations on $\mathcal{A}(P)$ and $\triangle_{a(P)}$.
\begin{proposition}\label{prop: Opt(P) closed under Aut(P) and R-Aut(P)}
\begin{enumerate}	[(i)]
\item $\phi\left(\mathit{Opt}(P)\right)=\mathit{Opt}(P)$ for all $\phi\in \mathrm{Aut}(P)$.
\item If $P$ is reverse-symmetric, then  $\psi\left(\mathit{Opt}(P)\right)=\mathit{Opt}(P)$ for all $\psi\in R-\mathrm{Aut}(P)$.
\end{enumerate}
\end{proposition}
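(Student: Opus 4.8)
My plan is to prove both parts by ``naturality'': the ingredients in the definition of $\mathit{Opt}(P)$ --- antichains, subposets $Q$, the shadow operation $\partial_Q$, the quantities $|Q|$ and $a(Q)$, and the entropy $H_M$ --- all transform compatibly under (reverse) automorphisms. For part (i), I would fix $\phi\in\mathrm{Aut}(P)$ and observe that $\phi$ permutes $\mathcal{A}(P)$ (inducing the coordinate permutation on $\triangle_{a(P)}$ that I also call $\phi$), maps each subposet $Q$ to an isomorphic $\phi(Q)$ with $|\phi(Q)|=|Q|$ and $a(\phi(Q))=a(Q)$, and --- crucially --- commutes with the shadow, $\partial_{\phi(Q)}(\phi(S))=\phi(\partial_Q(S))$ for every antichain $S$ of $P$, since $\phi$ is an order-isomorphism. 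This gives $\partial_{\phi(Q)}(\phi(\boldsymbol{\alpha}))=\phi(\partial_Q(\boldsymbol{\alpha}))$ (with $\phi$ on the right the induced permutation $\triangle_{a(Q)}\to\triangle_{a(\phi(Q))}$), and since $H_M$ is unchanged by permuting coordinates, $c_{\phi(\boldsymbol{\alpha}),P}(\phi(Q))=c_{\boldsymbol{\alpha},P}(Q)$ for every nonempty $Q\subseteq P$. As $Q\mapsto\phi(Q)$ is a bijection on the nonempty subposets of $P$, taking minima over $Q$ yields $\min_Q c_{\phi(\boldsymbol{\alpha}),P}(Q)=\min_Q c_{\boldsymbol{\alpha},P}(Q)$, whence $\boldsymbol{\alpha}\in\mathit{Opt}(P)$ if and only if $\phi(\boldsymbol{\alpha})\in\mathit{Opt}(P)$, i.e.\ $\phi(\mathit{Opt}(P))=\mathit{Opt}(P)$.

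For part (ii) the same scheme is tempting, but there is a genuine wrinkle: a reverse automorphism $\psi$ is order-\emph{reversing}, so although it still permutes $\mathcal{A}(P)$ and sends subposets to subposets, it interchanges the (upper) shadow $\partial_Q$ with the corresponding \emph{lower} shadow rather than commuting with it, and in fact $\psi$ need not preserve $\boldsymbol{\alpha}\mapsto\min_Q c_{\boldsymbol{\alpha},P}(Q)$ pointwise. My plan is to pass through the reverse poset $R(P)$. A reverse automorphism is exactly an isomorphism $(P,\leq_P)\to(R(P),\leq_{R(P)})$, so the (order-preserving) naturality argument of part (i), applied to this isomorphism, identifies $\mathit{Opt}(R(P))$ with the $\psi$-image of $\mathit{Opt}(P)$. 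To relate $\mathit{Opt}(R(P))$ back to $\mathit{Opt}(P)$, I would use the complementation anti-automorphism $\kappa\colon A\mapsto[n]\setminus A$ of $(\mathcal{P}(n),\subseteq)$: composing a copy of $P$ with $\kappa$ produces a copy of $R(P)$, and tracking this through the bijection $f_1$ of Section~\ref{sec:bi} (noting that $\mathcal{A}(R(P))=\mathcal{A}(P)$ as sets, so $a(R(P))=a(P)$) should show the operation is weight-preserving up to an explicit coordinate permutation; since $\kappa(\mathcal{P}(n,p))$ has the same law as $\mathcal{P}(n,p)$, the threshold characterisation in Remark~\ref{general threshold} then forces the existence thresholds of $\boldsymbol{\alpha}$-weighted copies of $P$ and of the corresponding weighted copies of $R(P)$ to coincide, giving a second description of $\mathit{Opt}(R(P))$ as an image of $\mathit{Opt}(P)$. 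Comparing the two descriptions should yield the invariance of $\mathit{Opt}(P)$ under $\psi$; and since $\mathit{Opt}(P)$ is convex (Proposition~\ref{prop: opt(p) is convex}) and already $\mathrm{Aut}(P)$-invariant, while $R-\mathrm{Aut}(P)$ is a single coset of $\mathrm{Aut}(P)$ in the group they generate, it suffices to carry this out for one $\psi$.

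The main obstacle is precisely this last reconciliation in part (ii): one must match the concrete permutation arising from complementation on $\mathcal{P}(n)$ against the abstract permutation $\psi$ on $\triangle_{a(P)}$, carefully bookkeeping the swap of upper and lower shadows forced by the order reversal --- this is where the argument genuinely uses the global max-min structure of $\mathit{Opt}(P)$ rather than just pointwise invariance of the objective. By contrast, all of part (i), and the verification that $\kappa$ is weight-preserving up to a coordinate permutation, should be routine once the correspondences of Sections~\ref{s3}--\ref{sec:bi} are in hand.
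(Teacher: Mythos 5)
Your treatment of part (i) is correct and is essentially the paper's own argument (the single identity $c_{\phi(\boldsymbol{\alpha}),P}(Q)=c_{\boldsymbol{\alpha},P}(\phi(Q))$, then minimise over $Q$).

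For part (ii), you correctly flag the crux — an order-reversing $\psi$ does not commute with the upper shadow, and the naturality identity of part (i) must be run between $P$ and $R(P)$ rather than within $P$ — but the reconciliation step at the end is a genuine gap, and not a minor one. You derive two descriptions of $\mathit{Opt}(R(P))$: as $\psi(\mathit{Opt}(P))$ (from naturality) and as $\tau(\mathit{Opt}(P))$ (from the complementation bijection $\kappa$, where $\tau$ is the coordinate permutation it induces via $f_1$). Comparing them yields $\psi(\mathit{Opt}(P))=\tau(\mathit{Opt}(P))$, i.e.\ that $\tau^{-1}\circ\psi$ stabilises $\mathit{Opt}(P)$; this does \emph{not} give $\psi(\mathit{Opt}(P))=\mathit{Opt}(P)$ unless one separately knows that $\tau$ itself stabilises $\mathit{Opt}(P)$ — which is exactly the assertion $\mathit{Opt}(P)=\mathit{Opt}(R(P))$ one would be trying to establish. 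The "matching" you anticipate as the main obstacle in fact cannot be carried out as a pure bookkeeping exercise, because $\tau$ and the naive action $S\mapsto\psi(S)$ are genuinely different permutations of $\mathcal{A}(P)$. To see this concretely, take $P=\P(2)$ with $\min(P)=1$, $\max(P)=4$, middle elements $2,3$, and trace a single copy of $P$ through $f_1$, $\kappa$, and $f_1$ again: the resulting permutation $\tau$ sends the empty antichain to $\{\min(P)\}$ (and $\{\max(P)\}$ to $\{2,3\}$), whereas the naive $\psi$-action fixes $\emptyset$ and swaps $\{\min(P)\}\leftrightarrow\{\max(P)\}$. If $\mathit{Opt}(P)=\{\boldsymbol{b}\}$ with $b_\emptyset=b_{\{\min(P)\}}\ne b_{\{\max(P)\}}$ as in Proposition~\ref{newprop} for a (non-uniformly) balanced $P$, then $\tau(\boldsymbol{b})=\boldsymbol{b}$ but $\psi(\boldsymbol{b})\ne\boldsymbol{b}$, so "comparing the two descriptions" does not force $\psi$-invariance.

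For comparison, the paper's own proof of (ii) is extremely short: it records $c_{\psi(\boldsymbol{\alpha}),P}(Q)=c_{\boldsymbol{\alpha},R(P)}(\psi(Q))$, notes $c_\star(P)=c_\star(R(P))$ from distributional invariance of $\P(n,p)$ under complementation, and then simply asserts $\mathit{Opt}(P)=\mathit{Opt}(R(P))$ "since $P$ and $R(P)$ are isomorphic", from which the conclusion follows by taking minima over $Q$. Your proposal is a genuine attempt to fill in a justification for that last assertion, and it correctly isolates it as the load-bearing step, but the complementation route you sketch produces $\mathit{Opt}(R(P))$ only up to the permutation $\tau$, not on the nose. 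Closing the gap would require either a direct argument that $\tau$ (not $\psi$) is the permutation under which $\mathit{Opt}(P)$ is invariant, or a precise accounting of how $\tau$ and $\psi$ interact on $\mathcal{A}(P)$; neither is supplied in your plan, and the $\P(2)$ calculation above shows they do not agree.
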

\begin{proof}
	Clearly for any non-empty $Q\subseteq P$  and $\phi\in \mathrm{Aut}(P)$ we have
	\[c_{\phi(\boldsymbol{\alpha}), P}(Q)=c_{\boldsymbol{\alpha}, P} (\phi(Q)).\]
	In particular, $\boldsymbol{\alpha}\in \mathit{Opt}(P)$ if and only if $\phi(\boldsymbol{\alpha})\in\mathit{Opt}(P)$. Similarly, for any $\psi \in R-\mathrm{Aut}(P)$ we have
		\[c_{\psi(\boldsymbol{\alpha}), P}(Q)=c_{\boldsymbol{\alpha}, R(P)} (\psi(Q)).\]
		Now observe that the random poset $R(\P(n,p))$ has the same distribution as $\P(n,p)$. In particular, this immediately implies that for every reverse-symmetric poset $P$,
		 \begin{align*}
		 c_{\star}(P)=c_{\star}(R(P)),
		 \end{align*}
		 and further, since $P$ and $R(P)$ are isomorphic, that $\mathit{Opt}(P)=\mathit{Opt}(R(P))$. Together with the preceding equality, this shows that $\boldsymbol{\alpha}\in \mathit{Opt}(P)$ if and only if $\psi(\boldsymbol{\alpha})\in\mathit{Opt}(P)$.
\end{proof}
\begin{corollary}\label{cor: exists Aut and R-Aut invariant optimal weighting}
	For every poset $P$, there exists an optimal weighting $\boldsymbol{\alpha}\in \mathit{Opt}(P)$ such that $\boldsymbol{\alpha}$ is invariant under the action of automorphisms of $(P, \leq_P)$. Further if $P$ is 
	reverse-symmetric then this optimal weighting can in addition be taken to be invariant under the actions of reverse-automorphisms of $(P, \leq_P)$.
\end{corollary}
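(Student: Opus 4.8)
The plan is to average an arbitrary optimal weighting over the relevant finite symmetry group and use the convexity of $\mathit{Opt}(P)$ established in Proposition~\ref{prop: opt(p) is convex}. First I would fix an arbitrary $\boldsymbol{\alpha}_0 \in \mathit{Opt}(P)$; such a weighting exists since $\mathit{Opt}(P)$ is non-empty (it is the set where a continuous function attains its maximum over the compact set $\triangle_{a(P)}$, by Remark~\ref{remarkable!}). Let $G := \mathrm{Aut}(P)$, which is finite since $P$ is finite, and define the averaged weighting
\[
\boldsymbol{\alpha}^{\ast} := \frac{1}{\vert G \vert} \sum_{\phi \in G} \phi(\boldsymbol{\alpha}_0).
\]
By Proposition~\ref{prop: Opt(P) closed under Aut(P) and R-Aut(P)}(i), each $\phi(\boldsymbol{\alpha}_0)$ lies in $\mathit{Opt}(P)$, and since $\mathit{Opt}(P)$ is convex (Proposition~\ref{prop: opt(p) is convex}), the average $\boldsymbol{\alpha}^{\ast}$ also lies in $\mathit{Opt}(P)$. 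It remains to check that $\boldsymbol{\alpha}^{\ast}$ is $G$-invariant: for any $\psi \in G$, applying $\psi$ permutes the summands (since $\phi \mapsto \psi \circ \phi$ is a bijection of $G$, and the action on $\triangle_{a(P)}$ is linear so it commutes with taking averages), hence $\psi(\boldsymbol{\alpha}^{\ast}) = \boldsymbol{\alpha}^{\ast}$. This proves the first assertion.

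For the second assertion, suppose $P$ is reverse-symmetric. Here one should be a little careful: the composition of two reverse-automorphisms is an ordinary automorphism, so $R\text{-}\mathrm{Aut}(P)$ is not itself a group, but $G^{+} := \mathrm{Aut}(P) \cup R\text{-}\mathrm{Aut}(P)$ is a finite group (it is the "extended automorphism group", with $\mathrm{Aut}(P)$ as an index-two subgroup when $P$ is reverse-symmetric). I would repeat the averaging argument with $G$ replaced by $G^{+}$: set
\[
\boldsymbol{\alpha}^{\ast\ast} := \frac{1}{\vert G^{+} \vert} \sum_{\phi \in G^{+}} \phi(\boldsymbol{\alpha}_0).
\]
By Proposition~\ref{prop: Opt(P) closed under Aut(P) and R-Aut(P)}, parts (i) and (ii), every summand $\phi(\boldsymbol{\alpha}_0)$ with $\phi \in G^{+}$ lies in $\mathit{Opt}(P)$, so by convexity again $\boldsymbol{\alpha}^{\ast\ast} \in \mathit{Opt}(P)$; and the same bijection-of-the-group argument shows $\psi(\boldsymbol{\alpha}^{\ast\ast}) = \boldsymbol{\alpha}^{\ast\ast}$ for every $\psi \in G^{+}$, in particular for every ordinary and every reverse automorphism. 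This gives the desired weighting.

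The only genuine subtlety — and the step I would be most careful about — is verifying that $G^{+}$ really is closed under composition so that the averaging is over a genuine group orbit (equivalently, that the actions on $\triangle_{a(P)}$ compose correctly), since the averaging trick relies on $\phi \mapsto \psi \circ \phi$ being a permutation of the index set. Once one notes that a reverse-automorphism is precisely an isomorphism $P \to R(P)$, that $R(R(P)) = P$, and that the composite of two such isomorphisms is an automorphism of $P$ (and an automorphism followed by a reverse-automorphism is again a reverse-automorphism), closure is immediate, and everything else is a routine linearity computation. No new ideas beyond Propositions~\ref{prop: opt(p) is convex} and~\ref{prop: Opt(P) closed under Aut(P) and R-Aut(P)} are needed.
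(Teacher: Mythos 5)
Your proposal is correct and follows essentially the same strategy as the paper: average an arbitrary optimal weighting over a finite symmetry and invoke convexity of $\mathit{Opt}(P)$ together with Proposition~\ref{prop: Opt(P) closed under Aut(P) and R-Aut(P)}. The only (minor) divergence is in the second part: the paper does a two-stage averaging — first over $\mathrm{Aut}(P)$, then (starting from an $\mathrm{Aut}(P)$-invariant optimum) over $R\text{-}\mathrm{Aut}(P)$, using that $\mathrm{AutInv}(P)$, $R\text{-}\mathrm{AutInv}(P)$ and $\mathit{Opt}(P)$ are all closed and convex — whereas you average in one shot over the extended group $G^{+}=\mathrm{Aut}(P)\cup R\text{-}\mathrm{Aut}(P)$. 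Your single-stage version is slightly cleaner in that it sidesteps the need to check that averaging over $R\text{-}\mathrm{Aut}(P)$ preserves $\mathrm{Aut}(P)$-invariance (which the paper leaves implicit), at the cost of explicitly verifying that $G^{+}$ is a group — which you do. Both are correct; the underlying idea is identical.
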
 
\begin{proof}
Let $\mathrm{AutInv}(P)$ denote the collection of weightings in $\triangle_{a(P)}$ that are invariant under the action of automorphisms $\phi \in \mathrm{Aut}(P)$. Similarly, let $R-\mathrm{AutInv}(P)$ denote the collection of weightings invariant under the action of automorphisms $\phi \in R-\mathrm{Aut}(P)$ (if any such automorphism exists --- otherwise let $R-\mathrm{AutInv}(P)$ denote the whole of $\triangle_{a(P)}$). By considering the uniform weighting $\boldsymbol{u}$, we have that $\mathrm{AutInv}(P)$ and $R-\mathrm{AutInv}(P)$ are non-empty sets, and it is easy to see that both of them form closed convex subsets of $\triangle_{a(P)}$.

Given $\boldsymbol{\alpha}\in \mathit{Opt}(P)$, consider \[\boldsymbol{\tilde{\alpha}}:=\mathbb{E}_{\phi \in \mathrm{Aut}(P)} \phi(\boldsymbol{\alpha}).\]
By Proposition~\ref{prop: Opt(P) closed under Aut(P) and R-Aut(P)}(i) and the convexity of $\mathit{Opt}(P)$, $\boldsymbol{\tilde{\alpha}}$ is an element of $\mathit{Opt}(P)$, and is easily seen to be invariant under the action of automorphisms from $\mathrm{Aut}(P)$.  Thus $\mathrm{AutInv}(P)\cap \mathit{Opt}(P)$ is a non-empty closed and convex subset of $\triangle_{a(P)}$, being the non-empty intersection of closed, convex sets. Repeating the same argument mutatis mutandis starting with $\boldsymbol{\alpha}\in\mathrm{AutInv}(P)\cap \mathit{Opt}(P)$, we obtain in a similar way that $R-\mathrm{AutInv}(P) \cap \mathrm{AutInv}(P)\cap \mathit{Opt}(P)$ is a non-empty (closed and convex) subset of $\triangle_{a(P)}$. The corollary follows.
\end{proof}

For a non-connected poset $P$, we can reduce the computation of $c_{\star}(P)$ to its components.
\begin{lemma}\label{discon}
	If $P$  is a poset which can be partitioned into non-empty sets $P_1$  and $P_2$ of mutually incomparable elements, then $$c_{\star}(P)=\min \{ c_{\star}(P_1),c_{\star}(P_2)\}.$$ 
\end{lemma}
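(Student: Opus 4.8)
### Proof proposal for Lemma~\ref{discon}

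The plan is to exploit the max--min formula \eqref{eq: c_{star} def max min} for $c_\star(P)$ together with the observation that, when $P$ is the disjoint (incomparable) union of $P_1$ and $P_2$, the antichains of $P$ are exactly the sets $S_1 \cup S_2$ with $S_1 \in \mathcal{A}(P_1)$ and $S_2 \in \mathcal{A}(P_2)$, so that $\mathcal{A}(P)$ splits as a product and $a(P) = a(P_1)a(P_2)$. I would first set up this product structure carefully: index the antichains of $P$ by pairs $(S^{(1)}_j, S^{(2)}_k)$, and describe how the shadow operation $\partial_Q$ behaves when $Q \subseteq P$ is itself decomposed as $Q = Q_1 \sqcup Q_2$ with $Q_i = Q \cap P_i$. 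The key structural fact to record is that for such $Q$, an antichain $S = S^{(1)} \cup S^{(2)}$ of $P$ has shadow $\partial_Q S = \partial_{Q_1} S^{(1)} \cup \partial_{Q_2} S^{(2)}$ in $Q$, again using that elements of $P_1$ and $P_2$ are mutually incomparable.

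For the inequality $c_\star(P) \le \min\{c_\star(P_1), c_\star(P_2)\}$: I would take an optimal weighting $\boldsymbol{\alpha} \in \mathit{Opt}(P)$, "project" it to a weighting $\boldsymbol{\alpha}^{(1)} \in \triangle_{a(P_1)}$ by summing $\alpha_{(j,k)}$ over $k$ (the marginal on $\mathcal{A}(P_1)$), and similarly $\boldsymbol{\alpha}^{(2)}$. Testing $\boldsymbol{\alpha}$ against the subposet $Q = P_1 \subseteq P$ gives $c_\star(P) \le c_{\boldsymbol{\alpha},P}(P_1) = H_{a(P_1)}(\partial_{P_1}(\boldsymbol{\alpha}))/|P_1|$; one checks that $\partial_{P_1}(\boldsymbol{\alpha})$ is precisely the marginal $\boldsymbol{\alpha}^{(1)}$ (since $\partial_{P_1} S = S^{(1)}$ for every antichain $S$ of $P$), so this is exactly $c_{\boldsymbol{\alpha}^{(1)}, P_1}(P_1) \ge \min_{\emptyset \ne Q_1 \subseteq P_1} c_{\boldsymbol{\alpha}^{(1)}, P_1}(Q_1)$ is not quite what we want --- rather, I should argue directly: for \emph{every} non-empty $Q_1 \subseteq P_1$, $c_{\boldsymbol{\alpha}, P}(Q_1) = c_{\boldsymbol{\alpha}^{(1)}, P_1}(Q_1)$ because $\partial_{Q_1}$ applied to antichains of $P$ factors through the marginal. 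Since $c_\star(P) \le c_{\boldsymbol{\alpha},P}(Q_1)$ for all such $Q_1$, we get $c_\star(P) \le \min_{Q_1} c_{\boldsymbol{\alpha}^{(1)},P_1}(Q_1) \le c_\star(P_1)$ by the max--min characterization; symmetrically $c_\star(P) \le c_\star(P_2)$.

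For the reverse inequality $c_\star(P) \ge \min\{c_\star(P_1), c_\star(P_2)\}$: let $\boldsymbol{\alpha}^{(1)} \in \mathit{Opt}(P_1)$ and $\boldsymbol{\alpha}^{(2)} \in \mathit{Opt}(P_2)$, and form the product weighting $\boldsymbol{\alpha}$ on $\mathcal{A}(P) = \mathcal{A}(P_1) \times \mathcal{A}(P_2)$ by $\alpha_{(j,k)} := \alpha^{(1)}_j \alpha^{(2)}_k$. Now take any non-empty $Q \subseteq P$, write $Q = Q_1 \sqcup Q_2$, and use the shadow factorization together with the additivity of entropy over products of marginals: since $\partial_Q(\boldsymbol{\alpha})$ is (up to reindexing by the map $(T_1, T_2) \mapsto T_1 \cup T_2$, which is injective on antichains of $Q$) the product of $\partial_{Q_1}(\boldsymbol{\alpha}^{(1)})$ and $\partial_{Q_2}(\boldsymbol{\alpha}^{(2)})$, we get
\[
H_{a(Q)}(\partial_Q(\boldsymbol{\alpha})) = H_{a(Q_1)}(\partial_{Q_1}(\boldsymbol{\alpha}^{(1)})) + H_{a(Q_2)}(\partial_{Q_2}(\boldsymbol{\alpha}^{(2)})),
\]
with the convention that a term is zero if the corresponding $Q_i$ is empty. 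Hence
\[
c_{\boldsymbol{\alpha},P}(Q) = \frac{H_{a(Q_1)}(\partial_{Q_1}(\boldsymbol{\alpha}^{(1)})) + H_{a(Q_2)}(\partial_{Q_2}(\boldsymbol{\alpha}^{(2)}))}{|Q_1| + |Q_2|} \ge \frac{c_\star(P_1)|Q_1| + c_\star(P_2)|Q_2|}{|Q_1| + |Q_2|} \ge \min\{c_\star(P_1), c_\star(P_2)\},
\]
using $c_{\boldsymbol{\alpha}^{(i)},P_i}(Q_i) \ge c_\star(P_i)$ for non-empty $Q_i$ (from optimality of $\boldsymbol{\alpha}^{(i)}$, which guarantees the min over subposets is attained at value $c_\star(P_i)$) and the mediant/weighted-average inequality. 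Taking the minimum over all non-empty $Q \subseteq P$ and then using the max--min formula for $c_\star(P)$ yields $c_\star(P) \ge \min\{c_\star(P_1), c_\star(P_2)\}$, completing the proof.

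The main obstacle I anticipate is bookkeeping around the shadow factorization: verifying that $\partial_Q(S^{(1)} \cup S^{(2)}) = \partial_{Q_1} S^{(1)} \cup \partial_{Q_2} S^{(2)}$ requires unwinding Definition~\ref{def: shadow of an antichain} and using that no element of $Q_1$ is comparable to any element of $Q_2$, and one must also be careful that the identification $\mathcal{A}(Q) \cong \mathcal{A}(Q_1) \times \mathcal{A}(Q_2)$ is exactly compatible with how $\partial_Q$ was defined (in particular handling the empty-antichain conventions $S_m = \emptyset$ and the edge cases $Q_1 = \emptyset$ or $Q_2 = \emptyset$ cleanly). Once that compatibility lemma is in place, the entropy additivity and the averaging inequalities are routine.
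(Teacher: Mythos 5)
Your proof is correct, but it takes a genuinely different route from the paper's. The paper gives a short \emph{probabilistic} argument: the easy direction $c_{\star}(P)\leq \min\{c_{\star}(P_1),c_{\star}(P_2)\}$ is read off directly from the threshold interpretation (any copy of $P$ contains a copy of $P_i$), and for the harder direction one fixes $c<\min\{c_{\star}(P_1),c_{\star}(P_2)\}$, writes $\P(n,p)$ as a union $\P(n,p_1)\cup\P(n,p_2)$ with $p_1,p_2$ still above both thresholds, finds a copy of $P_1$ in $\P(n,p_1)$, then finds a disjoint copy of $P_2$ in a subcube $\P(n-d)$ of $\P(n,p_2)$ avoiding the first copy, and glues them. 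Your argument instead stays entirely within the max--min formula \eqref{eq: c_{star} def max min}: you exploit the bijection $\mathcal{A}(P)\cong\mathcal{A}(P_1)\times\mathcal{A}(P_2)$, check that shadows factor as $\partial_Q(S^{(1)}\cup S^{(2)})=\partial_{Q_1}S^{(1)}\cup\partial_{Q_2}S^{(2)}$ (which is correct since comparability never crosses between $P_1$ and $P_2$), use marginals of an optimal weighting to get $c_{\star}(P)\leq c_{\star}(P_i)$, and use a product weighting plus entropy additivity and the mediant inequality to get $c_{\star}(P)\geq\min\{c_{\star}(P_1),c_{\star}(P_2)\}$. Both routes are valid. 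Your analytic proof has the nice feature that it never invokes Theorem~\ref{theorem: existence threshold} (the threshold characterization), so it stands alone at the level of the $c_{\star}$-formula; the price is the shadow-factorization bookkeeping and the degenerate-$Q_i$ cases, which you rightly flag and which do need to be spelled out ($Q_1=\emptyset$ should simply reduce to $c_{\boldsymbol{\alpha},P}(Q)=c_{\boldsymbol{\alpha}^{(2)},P_2}(Q_2)\geq c_{\star}(P_2)$ rather than being a $0/0$ term). The paper's proof is shorter and more transparent but leans on the probabilistic meaning of $c_{\star}$.
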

\begin{proof}
	Since the $P_i$ are subposets of $P$  we have  $c_{\star}(P) \leq c':=\min \{c_{\star}(P_1),c_{\star}(P_2)\}$. 
	
	It is easy to see that $c_{\star}(P) \geq c'$ also. Indeed, given any $c<c'$, let $p=e^{-cn}$ and note that $\mathcal P(n,p)=\mathcal P(n,p_1) \cup \mathcal P(n,p_2)$  for some $p_1,p_2$ where $p_1, p_2 \geq e^{-c '' n}$ and $c \leq c'' <c'$.
As $c''<c_{\star}(P_1)$, w.h.p. there is a copy $P'_1$ of $P_1$ in $\mathcal P(n,p_1)$. 
Notice that there is a constant $d=d(P_1)$ so that $\mathcal P(n)$ contains  a copy of $\mathcal P(n-d)$ that avoids $P'_1$.
Further, as $c''< c_{\star}(P_2)$, w.h.p. $\mathcal P(n-d,p_2)$ contains a copy of $P_2$.
	The last two sentences together imply that w.h.p. there is a copy $P'_2$ of $P_2$ in $\mathcal P(n,p_2)$ that does not intersect $P'_1$. Together $P'_1$ and $P'_2$ yield a copy of $P$ in $\mathcal P(n,p)$, as desired.

\end{proof}

Next we consider some special classes of posets $P$ where $\mathit{Opt}(P)$ consists of a single, very simple weighting.
By the remark immediately after the definition of entropy (equation~(\ref{eqdef: entropy})), we have that for all posets $P$, $c_{\boldsymbol{\alpha}, P}(P)\leq \frac{\log a(P)}{\vert P\vert}$
with equality attained if and only if $\boldsymbol{\alpha}$ is the \emph{uniform weighting} $\boldsymbol{u}=\boldsymbol{u}(P)=(\frac{1}{a(P)}, \frac{1}{a(P)},\ldots ,\frac{1}{a(P)})$. In particular 
\begin{align}\label{eq: upper bound on c*(P)}
c_{\star}(P)\leq \frac{\log a(P)}{\vert P\vert} 
\end{align}
holds for all $P$.
\begin{remark}\label{remark: typical copies of P/expectation threshold}
Set $c:=\log (a(P))/\vert P\vert$. Then (by Theorem~\ref{thm:count})  $p=e^{-cn}$ is the threshold at which the expected number of copies of $P$ in $\P(n,p)$ becomes large. Further, for any $P$, the `typical' copies of $P$ in $\P(n)$ are precisely those with weightings close to $\boldsymbol{u}(P)$.
\end{remark}

	\begin{definition}
		A poset $(P, \leq_P)$ is \emph{uniformly balanced} if  $c_{\star}(P)= \frac{\log a(P)}{\vert P\vert}$.
	\end{definition}
\begin{remark}\label{remark: uniformly balancced implies Opt(P)={u}}
By the remark before \eqref{eq: upper bound on c*(P)}, if $P$ is uniformly balanced then necessarily $\mathit{Opt}(P)=\{\boldsymbol{u}(P)\}$.	
A more general form of this result is proved in Proposition~\ref{newprop} below.
\end{remark}

	Checking whether a given poset $(P, \leq_P)$ is uniformly balanced is easy (though possibly tedious if the poset is large): one runs over all possible non-empty subsets $Q$ of $P$, computing the value $c_{\boldsymbol{u}, P}(Q)$ and checking whether or not it is greater or equal to $\frac{\log a(P)}{\vert P\vert}$.

	A finite poset $P$ is \emph{bounded} if it contains a unique $\leq_P$-minimal and a unique $\leq_P$-maximal element. In such posets, we denote the unique minimum and maximum elements by $\min(P)$ and $\max(P)$ respectively. For bounded posets $P$, the upper bound $c_{\star}(P)\leq \log (a(P))/\vert P\vert$ in \eqref{eq: upper bound on c*(P)} may be improved as follows.
	
	\begin{proposition}\label{prop: bound on c_{star}(P) balanced}
	Let $(P, \leq_P)$ be a bounded poset on $\vert P\vert\geq 2$ elements. Let $\mathcal{Q}$ be the collection of subposets $Q$ of $P$ containing both $\min(P)$ and $\max(P)$. Then
	\begin{align}\label{eq: upper bound on c_{star}(P) balanced}
	c_{\star}(P) \leq \max_{x\in [\frac{1}{a(P)}, \frac{1}{2}]} \min\left \{H_2(x), \min_{Q\in \mathcal{Q}}-\frac{1}{\vert Q\vert} \left(2x\log x + (1-2x)\log \left(\frac{1-2x}{a(Q)-2}\right) \right)\right \}.\end{align}
\end{proposition}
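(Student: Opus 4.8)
The plan is to bound $c_{\star}(P)$ above by evaluating the $\min$--$\max$ expression \eqref{eq: c_{star} def max min} at an optimal weighting and reading off, from a few well-chosen subposets, exactly the terms on the right of \eqref{eq: upper bound on c_{star}(P) balanced}. Enumerate $\mathcal{A}(P)=\{S_1,\dots,S_m\}$ with $S_m=\emptyset$, let $j_0$ be the index with $S_{j_0}=\{\min(P)\}$, and fix $\boldsymbol{\alpha}^{*}\in\mathit{Opt}(P)$ (this exists by Remark~\ref{remarkable!}), so that $c_{\star}(P)\le c_{\boldsymbol{\alpha}^{*},P}(Q)$ for every non-empty $Q\subseteq P$.

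First I would record two \emph{singleton} estimates. Unwinding Definitions~\ref{def: shadow of an antichain} and~\ref{def: density of a subposet}: in $Q=\{\min(P)\}$ the only antichain of $P$ with $Q$-shadow $\{\min(P)\}$ is $S_{j_0}$ itself (an element can lie $\le_P\min(P)$ only if it \emph{is} $\min(P)$), so $\partial_{\{\min(P)\}}(\boldsymbol{\alpha}^{*})=(\alpha^{*}_{j_0},1-\alpha^{*}_{j_0})$ and $c_{\boldsymbol{\alpha}^{*},P}(\{\min(P)\})=H_2(\alpha^{*}_{j_0})$; dually, in $Q=\{\max(P)\}$ every non-empty antichain of $P$ has $Q$-shadow $\{\max(P)\}$, so $\partial_{\{\max(P)\}}(\boldsymbol{\alpha}^{*})=(1-\alpha^{*}_m,\alpha^{*}_m)$ and $c_{\boldsymbol{\alpha}^{*},P}(\{\max(P)\})=H_2(\alpha^{*}_m)$. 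Hence $c_{\star}(P)\le\min\{H_2(\alpha^{*}_{j_0}),H_2(\alpha^{*}_m)\}$, and concavity of $H_2$ gives $c_{\star}(P)\le H_2(x)$, where $x:=\tfrac12(\alpha^{*}_{j_0}+\alpha^{*}_m)\in[0,\tfrac12]$.

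The crux is a symmetrisation that equalises the $\{\min(P)\}$- and $\emptyset$-weights while leaving the subposets in $\mathcal{Q}$ untouched. Let $\tau$ swap the coordinates $j_0$ and $m$, and put $\tilde{\boldsymbol{\alpha}}:=\tfrac12(\boldsymbol{\alpha}^{*}+\tau\boldsymbol{\alpha}^{*})$, so that $\tilde{\alpha}_{j_0}=\tilde{\alpha}_m=x$. The key point is that for \emph{every} $Q\in\mathcal{Q}$, because $\min(P),\max(P)\in Q$, the antichain $S_{j_0}$ is the \emph{unique} one of $P$ with $Q$-shadow $\{\min(P)\}$ and $S_m=\emptyset$ is the \emph{unique} one with $Q$-shadow $\emptyset$ (a non-empty antichain has an element below $\max(P)\in Q$, hence a non-empty shadow). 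Thus $\partial_Q(\tau\boldsymbol{\alpha}^{*})$ is just $\partial_Q(\boldsymbol{\alpha}^{*})$ with its $\{\min(P)\}$- and $\emptyset$-coordinates interchanged, so $H_{a(Q)}(\partial_Q(\tau\boldsymbol{\alpha}^{*}))=H_{a(Q)}(\partial_Q(\boldsymbol{\alpha}^{*}))$; since $\partial_Q$ is linear and $H_{a(Q)}$ concave, $c_{\tilde{\boldsymbol{\alpha}},P}(Q)\ge\tfrac12\big(c_{\boldsymbol{\alpha}^{*},P}(Q)+c_{\tau\boldsymbol{\alpha}^{*},P}(Q)\big)=c_{\boldsymbol{\alpha}^{*},P}(Q)\ge c_{\star}(P)$. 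On the other hand $\partial_Q(\tilde{\boldsymbol{\alpha}})$ is a probability vector on $a(Q)\ge 3$ coordinates two of which equal $x$, so spreading the residual mass $1-2x$ uniformly over the remaining $a(Q)-2$ coordinates maximises the entropy, giving $H_{a(Q)}(\partial_Q(\tilde{\boldsymbol{\alpha}}))\le -2x\log x-(1-2x)\log\tfrac{1-2x}{a(Q)-2}$. Chaining these, $c_{\star}(P)\le c_{\tilde{\boldsymbol{\alpha}},P}(Q)=\tfrac1{\vert Q\vert}H_{a(Q)}(\partial_Q(\tilde{\boldsymbol{\alpha}}))\le -\tfrac1{\vert Q\vert}\big(2x\log x+(1-2x)\log\tfrac{1-2x}{a(Q)-2}\big)$ for every $Q\in\mathcal{Q}$.

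Combining the two halves, $c_{\star}(P)\le\min\big\{H_2(x),\ \min_{Q\in\mathcal{Q}}\big(-\tfrac1{\vert Q\vert}(2x\log x+(1-2x)\log\tfrac{1-2x}{a(Q)-2})\big)\big\}$. If $x\ge\tfrac1{a(P)}$, this is precisely the bracketed expression of \eqref{eq: upper bound on c_{star}(P) balanced} at the admissible point $x$. If instead $x<\tfrac1{a(P)}$, replace $x$ by $\tfrac1{a(P)}$: since $H_2$ is increasing on $[0,\tfrac12]$ and, for each fixed $k\ge 3$, the map $y\mapsto -2y\log y-(1-2y)\log\tfrac{1-2y}{k-2}$ is increasing on $[0,\tfrac1k]$, and since $[x,\tfrac1{a(P)}]\subseteq[0,\tfrac1{a(Q)}]$ for every $Q\in\mathcal{Q}$ (as $a(Q)\le a(P)$), every term in the minimum only increases; so $\tfrac1{a(P)}$ is admissible and at least as good. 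In either case $c_{\star}(P)$ does not exceed the supremum in \eqref{eq: upper bound on c_{star}(P) balanced}, as required. The step I expect to need the most care is the symmetrisation: one must pin down precisely which antichains of $P$ have $Q$-shadow $\{\min(P)\}$ or $\emptyset$ for a general $Q\in\mathcal{Q}$ — this is exactly where boundedness of $P$ enters — so that $\tau$ genuinely acts as a coordinate transposition on $\partial_Q$ and the averaging argument applies; the singleton shadow computations and the entropy/concavity estimates are then routine.
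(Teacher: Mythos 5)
Your proof is correct, and it takes a genuinely different route from the one in the paper. The paper first fixes coordinates $x:=\alpha_{\emptyset}$, $y:=\alpha_{\{\min(P)\}}$, uses the shadow identities $\partial_Q(\boldsymbol{\alpha})_{\emptyset}=\alpha_{\emptyset}$ and $\partial_Q(\boldsymbol{\alpha})_{\{\min(P)\}}=\alpha_{\{\min(P)\}}$ together with concavity of $t\mapsto -t\log t$ to reduce \eqref{eq: c_{star} def max min} to a two-variable max--min over $R=\{(x,y):x,y\ge 0,\ x+y\le 1\}$, and then asserts, without spelling out the details, that this max--min is attained on the diagonal $x=y$ with $x\in[1/a(P),1/2]$. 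You instead pick an optimal $\boldsymbol{\alpha}^{*}\in\mathit{Opt}(P)$, prove directly via the shadow identities that the coordinate transposition $\tau$ swapping $\alpha_{j_0}$ and $\alpha_m$ leaves $H_{a(Q)}(\partial_Q(\cdot))$ invariant for every $Q\in\mathcal{Q}$ (the key being that $\emptyset$ and $\{\min(P)\}$ are the unique antichains of $P$ whose $Q$-shadows are $\emptyset$ and $\{\min(P)\}$ respectively, precisely because $\min(P),\max(P)\in Q$), and use the concavity/linearity averaging of Proposition~\ref{prop: opt(p) is convex} to replace $\boldsymbol{\alpha}^{*}$ by a symmetrised weighting with $\tilde\alpha_{j_0}=\tilde\alpha_m$ without loss. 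You then handle the restriction to $[1/a(P),1/2]$ explicitly via the monotonicity of $H_2$ on $[0,1/2]$ and of $y\mapsto -2y\log y-(1-2y)\log\frac{1-2y}{k-2}$ on $[0,1/k]$. The net effect is the same bound, but your argument supplies a complete proof of the ``easy exercise in optimisation'' that the paper leaves to the reader, reformulated as a symmetrisation of the optimal weighting rather than a two-variable calculus problem; this is a nice clarification of why the diagonal restriction and the lower endpoint $1/a(P)$ are both legitimate.
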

\begin{proof}

	 Denote by $Q_-$ and $Q_+$ the one-element subposets of $(P, \leq_P)$ induced by $\{\min(P)\}$ and $\{\max(P)\}$ respectively.  By definition of $c_{\star}(P)$, we have
	\begin{align}\label{eq: upper bound on c_star(P) in terms of Q, Q-, Q+}
	c_{\star}(P) \leq \max_{\boldsymbol{\alpha} \in \Delta_m}\min \left \{ c_{\boldsymbol{\alpha}, P}(Q_-), c_{\boldsymbol{\alpha}, P}(Q_+), \min_{Q\in \mathcal{Q}}c_{\boldsymbol{\alpha}, P}(Q) \right \}.
	\end{align}
	Consider a  weighting $\boldsymbol{\alpha}\in \Delta_m$ of $\mathcal{A}(P)$.  Then by definition
	\begin{align}\label{eq: c-value for Q_- and Q_+}
	c_{\boldsymbol{\alpha}, P}(Q_-)= H_2(\alpha_{\{\min(P)\}}) &&\textrm{ and }&&  c_{\boldsymbol{\alpha}, P}(Q_+)=H_2(\alpha_{\emptyset}). 
	\end{align}
Observe that $\partial_Q(\boldsymbol{\alpha})_{\emptyset}=\alpha_{\emptyset}$ and $\partial_Q(\boldsymbol{\alpha})_{\{\min(P)\}}=\alpha_{\{\min(P)\}}$.\footnote{Note that here and elsewhere, we abuse notation slightly in order to ease the exposition.
Explicitly, if the antichains of $P$ are enumerated as $\mathcal{A}(P)=\{S_1, S_2, \ldots, S_{a(P)}\}$, we use the antichain $S_i$ in place of $i$ as an index for weightings in $\triangle_{a(P)}$. So for example $\alpha_{\{\min(P)\}}:=\alpha_i$, where $S_i$ is the antichain $\{\min(P)\}$.} 
Since $x\mapsto -x\log x$ is strictly concave in $[0,1]$, we have that for any $Q\in \mathcal{Q}$, 
	\begin{align}\label{eq: c-value upper bound  for Q}
	c_{\boldsymbol{\alpha}, P}(Q)\leq \frac{1}{\vert Q\vert }&\Bigl(-\alpha_{\{\min(P)\}}\log (\alpha_{\{\min(P)\}}) -\alpha_{\emptyset}\log (\alpha_{\emptyset} )
	\notag \\&
	- \left(a(Q)-2\right)\left (\frac{1-\alpha_{\{\min(P)\}}-\alpha_{\emptyset}}{a(Q)-2} \right)\log \left(\frac{1-\alpha_{\{\min(P)\}}-\alpha_{\emptyset}}{a(Q)-2 }\right) \Bigr), 
	\end{align}
	with equality holding if and only if $\partial_Q(\boldsymbol{\alpha})_S=(1-\alpha_{\emptyset} -\alpha_{\{\min(P)\}})/(a(Q)-2)$ for all antichains $S\in \mathcal{A}(Q)\setminus\{ \emptyset, \{\min(P)\}\}$. Setting $x:=\alpha_{\emptyset}$, $y:=\alpha_{\{\min(P)\}}$, $R:=\{(x,y)\in [0,1]^2: \ x+y\leq 1 \}$ and combining \eqref{eq: upper bound on c_star(P) in terms of Q, Q-, Q+}, \eqref{eq: c-value for Q_- and Q_+} and \eqref{eq: c-value upper bound  for Q} we get
	\begin{align*}
	c_{\star}(P)\leq \max_{(x,y)\in R} \min \left \{ H_2(x), H_2(y),
	\min_{Q\in \mathcal{Q}}\frac{1}{\vert Q\vert }\left(-y\log y -x\log x- (1-x-y)\log \left(\frac{1-x-y}{a(Q)-2}\right) \right )  \right \}. 
	\end{align*}
	It is then an easy exercise in optimisation to show that the maximum on the right hand side is attained on the line $x=y$, and that for $x=y$ the maximum is obtained for some $x$ satisfying $\frac{1}{a(P)}\leq x \leq \frac{1}{2}$, yielding the claimed upper bound on $c_{\star}(P)$.
\end{proof}
\begin{definition}\label{def: balanced}
	A bounded poset $(P, \leq_P)$ with $\vert P\vert\geq 2$ elements is \emph{balanced} if $c_{\star}(P)=H_2(x_{\star})$, where $x_{\star}=x_{\star}(P)$ is the unique solution in $[1/a(P), 1/2]$ to the equation 
	\begin{align*}
	H_2(x)= -\frac{1}{\vert P\vert } \left(2x\log x + (1-2x)\log \left(\frac{1-2x}{a(P)-2}\right)\right).
	\end{align*}
\end{definition}

\begin{proposition}\label{newprop}
If $P$ is balanced then $\mathit{Opt}(P)=\{\boldsymbol{b}(P)\}$, where $\boldsymbol{b}=\boldsymbol{b}(P)$ is defined so that $b_S:=x_{\star} (P)$ if $S= \emptyset, \{ min (P)\}$, and
$b_S:=(1-2x_{\star} (P) )/(a(P)-2)$ otherwise.
\end{proposition}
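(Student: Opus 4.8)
The plan is to establish the inclusion $\mathit{Opt}(P)\subseteq\{\boldsymbol{b}(P)\}$; since $\mathit{Opt}(P)$ is non-empty --- it is the set of maximisers of the continuous function $\boldsymbol{\alpha}\mapsto\min_{\emptyset\neq Q\subseteq P}c_{\boldsymbol{\alpha},P}(Q)$ on the compact set $\triangle_{a(P)}$, cf.\ Remark~\ref{remarkable!} --- this already yields $\mathit{Opt}(P)=\{\boldsymbol{b}(P)\}$. So I would fix an arbitrary $\boldsymbol{\alpha}\in\mathit{Opt}(P)$, set $m:=a(P)$, $x:=\alpha_{\emptyset}$ and $y:=\alpha_{\{\min(P)\}}$ (indexing weightings by antichains, as in Proposition~\ref{prop: bound on c_{star}(P) balanced}), and use that, because $P$ is balanced, $c_{\boldsymbol{\alpha},P}(Q)\geq c_{\star}(P)=H_2(x_{\star})$ for every non-empty $Q\subseteq P$. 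In fact only the three subposets $Q\in\{\{\min(P)\},\{\max(P)\},P\}$ will be needed.

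First I would invoke \eqref{eq: c-value for Q_- and Q_+} (together with the observation that $\{\min(P)\}$ is the \emph{only} antichain of $P$ containing $\min(P)$, since $\min(P)$ is comparable to every element) to get $c_{\boldsymbol{\alpha},P}(\{\max(P)\})=H_2(x)$ and $c_{\boldsymbol{\alpha},P}(\{\min(P)\})=H_2(y)$. As $H_2$ is unimodal with maximum at $1/2$ and $x_{\star}\leq1/2$, the resulting inequalities $H_2(x),H_2(y)\geq H_2(x_{\star})$ force $x,y\geq x_{\star}$. Next, applying the concavity (Jensen) estimate from the proof of Proposition~\ref{prop: bound on c_{star}(P) balanced} to the remaining $m-2$ coordinates of $\boldsymbol{\alpha}$, the bound $c_{\boldsymbol{\alpha},P}(P)\geq H_2(x_{\star})$ turns into
\[G(x,y):=-\frac{1}{\vert P\vert}\left(x\log x+y\log y+(1-x-y)\log\frac{1-x-y}{m-2}\right)\geq H_2(x_{\star}),\]
with the slack in the Jensen step vanishing exactly when those $m-2$ coordinates are all equal; moreover the defining equation of a balanced poset is precisely $G(x_{\star},x_{\star})=H_2(x_{\star})$.

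The crux is then to show that $G(x,y)\geq G(x_{\star},x_{\star})$ together with $x,y\geq x_{\star}$ forces $(x,y)=(x_{\star},x_{\star})$. Here I would use that $G$ is strictly concave --- being $\vert P\vert^{-1}$ times the (strictly concave) entropy of the $m$-vector $(x,y,\tfrac{1-x-y}{m-2},\ldots,\tfrac{1-x-y}{m-2})$ composed with an injective affine map --- and compute $\partial_xG(x_{\star},x_{\star})=\vert P\vert^{-1}\log\bigl(((1-2x_{\star})/(m-2))/x_{\star}\bigr)$, with the analogous formula for $\partial_yG$. If $x_{\star}>1/m=1/a(P)$ both partials are strictly negative, so by concavity $G(x,y)\leq G(x_{\star},x_{\star})+\partial_xG(x_{\star},x_{\star})(x-x_{\star})+\partial_yG(x_{\star},x_{\star})(y-x_{\star})$ on $\{x,y\geq x_{\star}\}$, and the right-hand side is $\leq G(x_{\star},x_{\star})$ with equality only at $(x_{\star},x_{\star})$. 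If $x_{\star}=1/m$ then $P$ is uniformly balanced and $(x_{\star},x_{\star})$ is the unconstrained global maximiser of the strictly concave $G$, so again $G(x,y)\geq G(x_{\star},x_{\star})$ holds only at $(x_{\star},x_{\star})$ (alternatively one quotes Remark~\ref{remark: uniformly balancced implies Opt(P)={u}}). Either way $(x,y)=(x_{\star},x_{\star})$; equality must then hold in the Jensen step, so the remaining $m-2$ coordinates of $\boldsymbol{\alpha}$ are each $(1-2x_{\star})/(m-2)$, i.e.\ $\boldsymbol{\alpha}=\boldsymbol{b}(P)$.

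I expect the main obstacle to be exactly this last step: confirming that the three subposets $\{\min(P)\}$, $\{\max(P)\}$ and $P$ already pin $\boldsymbol{\alpha}$ down uniquely. This rests on the sign of $\nabla G$ at $(x_{\star},x_{\star})$, and hence on the position of $x_{\star}$ relative to $1/a(P)$ --- that is, on cleanly separating the genuinely balanced case from the uniformly balanced one.
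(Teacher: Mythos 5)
Your proof is correct and follows the same strategy as the paper's: restrict attention to the three subposets $\{\min(P)\}$, $\{\max(P)\}$ and $P$, convert the optimality of $\boldsymbol{\alpha}$ into entropy inequalities in the two variables $x=\alpha_{\emptyset}$, $y=\alpha_{\{\min(P)\}}$ via Jensen, force $(x,y)=(x_{\star},x_{\star})$, and then invoke equality in Jensen to pin down the remaining $a(P)-2$ coordinates. The one place you do more than the paper is in establishing that $(x_{\star},x_{\star})$ is the \emph{unique} point satisfying the constraints: the paper cites this as the ``easy exercise in optimisation'' from the proof of Proposition~\ref{prop: bound on c_{star}(P) balanced}, whereas you supply an actual argument via the sign of $\nabla G$ at $(x_{\star},x_{\star})$ and strict concavity (correctly splitting off the degenerate uniformly balanced case $x_{\star}=1/a(P)$, where the gradient vanishes and one instead appeals to $(x_{\star},x_{\star})$ being the unconstrained maximiser).
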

\proof


Let $f_1(x)$ and $f_2(x,y)$ be the functions defined by $f_1(x):=-\frac{1}{|P|} \left (2x \log x +(1-2x) \log \left (\frac{1-2x}{a(P)-2 }\right ) \right )$ for $x \in [0,1]$
and $f_2(x,y):=\frac{1}{\vert P\vert }\left(-y\log y -x\log x- (1-x-y)\log \left(\frac{1-x-y}{a(P)-2}\right)  \right)$ for $(x,y)\in R$, where $R:=\{(x,y)\in [0,1]^2: \ x+y \leq 1 \}$. As observed in the proof of Proposition~\ref{prop: bound on c_{star}(P) balanced}, 
\begin{align}\label{eq: unique maximum of min(f, H)}
\max_{(x,y)\in R} \min\left \{ H_2(x), H_2(y), f_2(x,y)\right \} =\max_{x\in [0,1]}\left \{ H_2(x), f_1(x)\right \}=H_2(x_{\star})=f_2(x_{\star}, x_{\star}),
\end{align}
and this common maximum is uniquely attained at $x=y=x_{\star}$.

Consider any $\boldsymbol{\alpha}\in \mathit{Opt}(P)$. Let $x':=\alpha_{\emptyset}$, $y':=\alpha_{\{\min(P)\}}$. Thus $c_{\boldsymbol{\alpha}, P}(\{\max(P)\})= H_2(x')$ and  
$c_{\boldsymbol{\alpha}, P}(\{\min(P)\})= H_2(y')$. Further, by strict concavity of the function $x\mapsto -x\log x$, $c_{\boldsymbol{\alpha}, P}(P)\leq f_2(x',y')$ and this inequality is strict unless $\alpha_S= (1-x'-y')/(a(P)-2)$ for all antichains $S\in \mathcal{A}(P)$ with $S\neq \emptyset, \{\min(P)\}$. Since $\boldsymbol{\alpha}\in \mathit{Opt}(P)$, we thus have

\begin{align*}
c_{\star}(P)=\min_{\emptyset \neq Q\subseteq P} c_{\boldsymbol{\alpha},P}(Q)&\leq \min\left \{ H_2(x'), H_2(y'), f_2 (x',y')\right \}\stackrel{(\ref{eq: unique maximum of min(f, H)})}{\leq} H_2(x_{\star})=c_{\star}(P).
\end{align*}
 
By the uniqueness of the maximum in \eqref{eq: unique maximum of min(f, H)} the last inequality immediately implies $x'=y'=x_{\star}$.  For these values of $x',y'$, the first inequality is strict unless $\alpha_S= (1-2x_{\star})/(a(P)-2)$ for all antichains $S\in \mathcal{A}(P)$, as observed above. Thus $\boldsymbol{\alpha}\in \mathit{Opt}(P)$ implies $\boldsymbol{\alpha}=\boldsymbol{b}$, as desired.
\endproof


The next simple proposition shows that a uniformly balanced poset is balanced; note the reverse is not true (see the next section for an example).  
\begin{proposition}
Suppose $P$ is a bounded poset on $|P|\geq 2$ elements. If $P$ is uniformly balanced then it is balanced.
\end{proposition}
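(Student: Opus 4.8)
The plan is to show that, for a uniformly balanced bounded poset $P$, the general upper bound on $c_\star(P)$ supplied by Proposition~\ref{prop: bound on c_{star}(P) balanced} is already attained, and that it is exactly $H_2(x_\star)$; this is precisely the statement that $P$ is balanced.

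\emph{First I would record an elementary maximisation.} Put $m:=a(P)$ and let $g(x):=-\tfrac1{\vert P\vert}\bigl(2x\log x+(1-2x)\log\tfrac{1-2x}{m-2}\bigr)$ for $x\in[0,\tfrac12]$, so $g$ is the function on the right-hand side of the defining equation for $x_\star$ in Definition~\ref{def: balanced} (taken with $Q=P$). Since $t\mapsto -t\log t$ is strictly concave, $g$ is a sum of a strictly concave and a linear function of $x$, hence strictly concave; and $g'(x)=-\tfrac2{\vert P\vert}\log\tfrac{(m-2)x}{1-2x}$, which vanishes precisely at $x=1/m$ (note $1/m<1/2$ since a bounded poset with $\vert P\vert\ge 2$ has $m\ge |P|+1\ge 3$). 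Thus $g$ attains its maximum over $[0,\tfrac12]$ at $x=1/m$, and a direct substitution gives $g(1/m)=\tfrac{\log m}{\vert P\vert}$.

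\emph{Next I would invoke the general bound.} Since $P$ itself lies in the family $\mathcal{Q}$ of subposets of $P$ containing $\min(P)$ and $\max(P)$, replacing $\min_{Q\in\mathcal{Q}}$ by the single term $Q=P$ only weakens the bound of Proposition~\ref{prop: bound on c_{star}(P) balanced}, so $c_\star(P)\le \max_{x\in[1/m,1/2]}\min\{H_2(x),g(x)\}$. By \eqref{eq: unique maximum of min(f, H)} (established in the proofs of Propositions~\ref{prop: bound on c_{star}(P) balanced} and~\ref{newprop}) the right-hand side equals $H_2(x_\star)=g(x_\star)$, so $c_\star(P)\le H_2(x_\star)=g(x_\star)$. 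Finally, combining this with Step~1 and the hypothesis that $P$ is uniformly balanced (so $c_\star(P)=\tfrac{\log m}{\vert P\vert}$) yields
\[ c_\star(P)=\frac{\log m}{\vert P\vert}=g(1/m)=\max_{x\in[0,1/2]}g(x)\ \ge\ g(x_\star)=H_2(x_\star)\ \ge\ c_\star(P), \]
whence all of these coincide; in particular $c_\star(P)=H_2(x_\star)$, i.e.\ $P$ is balanced by Definition~\ref{def: balanced}.

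The argument is short and, as the authors indicate, "simple"; the only genuine computation is the optimisation in Step~1 — essentially the "easy exercise" already used in the proof of Proposition~\ref{prop: bound on c_{star}(P) balanced} — and I do not anticipate a real obstacle. The one point that deserves a moment's care is that $x_\star$ be genuinely well-defined, i.e.\ that $H_2$ and $g$ actually meet in $[1/m,1/2]$ so that the max--min in the second step is realised at an interior crossing rather than at the endpoint $x=1/m$; but this is exactly what \eqref{eq: unique maximum of min(f, H)} records, so nothing new beyond the earlier propositions is required.
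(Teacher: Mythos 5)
Your proof is correct and follows essentially the same route as the paper's: both arguments combine the upper bound from Proposition~\ref{prop: bound on c_{star}(P) balanced} (restricted to $Q=P$) with the identity $\max_{x\in[1/a(P),1/2]}\min\{H_2(x),f_1(x)\}=H_2(x_\star)=f_1(x_\star)$ recorded in \eqref{eq: unique maximum of min(f, H)}, and then use the hypothesis $c_\star(P)=\tfrac{\log a(P)}{|P|}$ to force equality throughout. The only cosmetic difference is that you verify $\max f_1=\tfrac{\log a(P)}{|P|}$ by an explicit derivative computation rather than invoking (as the paper does) the one-line entropy bound $f_1(x)=\tfrac{1}{|P|}H_{a(P)}(\boldsymbol\beta(x))\le\tfrac{\log a(P)}{|P|}$ coming from concavity of $t\mapsto -t\log t$; this is longer but gives the same information.
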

\proof
Let $f_1(x)$ be as in the proof of Proposition~\ref{newprop}. Then by  (\ref{eq: upper bound on c_{star}(P) balanced}) we have
\begin{align}\label{l4}
c_{\star}(P) \leq \max_{x\in [\frac{1}{a(P)}, \frac{1}{2}]} \min \{H_2(x),f_1 (x) \} \leq \frac{\log a(P)}{|P|},
\end{align}
where the rightmost inequality follows from the concavity of the function $x\mapsto -x\log x$. So if $P$ is uniformly balanced there is equality in (\ref{l4}). Thus,
$$c_{\star}(P) =\max_{x\in [\frac{1}{a(P)}, \frac{1}{2}]} \min\Bigl\{H_2(x), f_1(x) \Bigr\} = H_2 (x_{\star})= f_1 (x_{\star}),$$
and so $P$ is balanced.
\endproof

Informally, a poset $P$ is uniformly balanced if the first copy of $P$ to appear in $\P(n,p)$ is a `typical' copy of $P$ in $\P(n)$. On the other hand a poset $P$ is balanced if the first copy $P'$ of $P$ to appear in $\P(n,p)$ is a `squashed' version of a typical copy, in the following sense: let $x_{\star}=x_{\star}(P)$ be as above.
Then the sets $X_{\min(P)}$ and $X_{\max(P)}$ in $P'$ corresponding to the $\leq_P$-extremal elements $\min(P)$ and $\max(P)$ are sitting in layers $x_{\star}n$ and $(1-x_{\star})n$ rather than in layers $\frac{1}{a(P)}n$ and $(1-\frac{1}{a(P)})n$ as they would in a typical copy. (Recall that $\frac{1}{a(P)}\leq x_{\star}(P)\leq 1/2$, so this means $P'$ has been pushed towards the middle layer relative to a typical copy of $P$ in $\P(n)$.)

\smallskip

Giving an explicit value for $c_{\star}(P)$ when $P$ is balanced is not straightforward ---  indeed we have $c_{\star}(P)= H_2(x_{\star})$, where $x_{\star}=x_{\star}(P)$ is  the unique solution to the equation
\[-x\log x -(1-x)\log (1-x) =\frac{1}{\vert P\vert} \left(-2x\log x - (1-2x)\log \left(\frac{1-2x}{a(P)-2}\right) \right)\]
in the interval $[\frac{1}{a(P)},\frac{1}{2}]$. To show $P$ is balanced  likewise entails some non-trivial computations: one must consider the weighting $\boldsymbol{b}= \boldsymbol{b}(P)$ introduced in Proposition~\ref{newprop}. Running over all non-empty subposets $Q\subseteq P$, one must then check that $c_{\boldsymbol{b}, P}(Q)\geq c_{\boldsymbol{b}, P}(P)$, which involves delicate algebraic manipulations of entropic expressions involving $x_{\star}$. This can be done by hand for some small or nicely structured examples, but requires computer assistance for even moderately-sized posets $P$ (unless they are very nicely structured indeed). However if one is content with numerical approximations for $c_{\star}(P)$, then balanced posets are certainly quite easy to handle with the aid of a computer.

\smallskip

What, however, does one do if $P$ is \emph{not} balanced? To prove a lower bound of the form $c\leq c_{\star}(P)$, we must find a `good' $\boldsymbol{\alpha}\in \triangle_{a(P)}$ such that $c_{\boldsymbol{\alpha}, P}(Q)\geq c$ for all non-empty $Q\subseteq P$. This is in principle an $(a(P)-1)$-dimensional problem, but using our earlier observations about the shape $\mathit{Opt}(P)$, we can significantly reduce the dimension of the search-space.

Explicitly, let $\mathcal{A}(P)=\{S_1, S_2, \ldots S_m\}$. Then Corollary~\ref{cor: exists Aut and R-Aut invariant optimal weighting} says there exists $\boldsymbol{\alpha}\in \mathit{Opt}(P)$ such that for every $\phi \in \mathrm{Aut}(P)$ and $S_i\in \mathcal{A}(P)$,
\begin{align}\label{eq: heuristic 1}
\alpha_ {S_i} =\alpha_{\phi(S_i)},
\end{align}
and if $P$ is reverse-symmetric we in addition have that for every $\psi \in R-\mathrm{Aut}(P)$ and $S_i \in \mathcal A (P)$,
\begin{align}\label{eq: heuristic 2}
\alpha_ {S_i} =\alpha_{\psi(S_i)}.
\end{align}
These inequalities can be used to reduce the number of unknown variables when solving the optimisation problem ~\eqref{eq: c_{star} def max min} to determine $c_{\star}(P)$.

Finally, suppose we believe that we can identify a balanced `core' in $P$, that is some unique non-empty $Q_{\star}\subseteq P$ such that $(Q_{\star}, \leq_P)$ is balanced and such that we believe it is the non-existence of a copy of $Q_{\star}$ which is the last obstruction to the appearance of a copy of $P$ ---  i.e. as soon as copies of $Q_{\star}$ exist in $\mathcal{P}(n,p)$, then so will copies of $P$.  This entails
\[c_{\star}(P)=c_{\star}(Q)=c_{\boldsymbol{b}, Q}(Q),\]
where $\boldsymbol{b}=\boldsymbol{b}(Q_{\star})\in \triangle_{a(Q_{\star})}$ is the (unique) optimal balanced weighting of $Q_{\star}$ as in Definition~\ref{def: balanced}.
\begin{corollary}
If	$Q_{\star}$ is a balanced subposet of $P$ with $c_{\star}(Q_{\star})=c_{\star}(P)$, then for every $\boldsymbol{\alpha}\in \mathit{Opt}(P)$,
\begin{align}\label{eq: heuristic 3}
\partial_{Q_{\star}}(\boldsymbol{\alpha})=\boldsymbol{b},
\end{align}
where $\boldsymbol{b}\in \triangle_{a(Q_{\star})}$ is the optimal balanced weighting of $Q_{\star}$. \qed
\end{corollary}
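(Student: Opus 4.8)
The plan is to show that if $\boldsymbol{\alpha}\in\mathit{Opt}(P)$ then its shadow $\partial_{Q_\star}(\boldsymbol{\alpha})$ must coincide with the unique optimal weighting $\boldsymbol{b}=\boldsymbol{b}(Q_\star)$ of the balanced poset $Q_\star$. First I would record the key inequality that for \emph{any} subposet $R$ with $\emptyset\ne R\subseteq Q_\star$, the partitions (and hence weightings) restrict compatibly: by Proposition~\ref{prop: Q-shadow and f1} (or directly from the definition of shadows, $\partial_R\circ\partial_{Q_\star}=\partial_R$), we have $\partial_R(\partial_{Q_\star}(\boldsymbol{\alpha}))=\partial_R(\boldsymbol{\alpha})$, so $c_{\partial_{Q_\star}(\boldsymbol{\alpha}),Q_\star}(R)=c_{\boldsymbol{\alpha},P}(R)$. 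Since $\boldsymbol{\alpha}\in\mathit{Opt}(P)$, the right-hand side is at least $c_\star(P)$ for every such $R$.

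Next I would combine this with the hypothesis $c_\star(Q_\star)=c_\star(P)$. Set $\boldsymbol{\beta}:=\partial_{Q_\star}(\boldsymbol{\alpha})\in\triangle_{a(Q_\star)}$. The previous paragraph shows $c_{\boldsymbol{\beta},Q_\star}(R)\ge c_\star(P)=c_\star(Q_\star)$ for all $\emptyset\ne R\subseteq Q_\star$. By the definition of $c_\star(Q_\star)$ as a max-min (Remark~\ref{remarkable!}), this means $\boldsymbol{\beta}$ witnesses the supremum, i.e.\ $\min_{\emptyset\ne R\subseteq Q_\star}c_{\boldsymbol{\beta},Q_\star}(R)=c_\star(Q_\star)$, so $\boldsymbol{\beta}\in\mathit{Opt}(Q_\star)$. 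Finally, invoke Proposition~\ref{newprop}: since $Q_\star$ is balanced, $\mathit{Opt}(Q_\star)=\{\boldsymbol{b}(Q_\star)\}$ is a single point, forcing $\boldsymbol{\beta}=\boldsymbol{b}$, which is exactly \eqref{eq: heuristic 3}.

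The only genuinely delicate point is the compatibility identity $\partial_R(\partial_{Q_\star}(\boldsymbol{\alpha}))=\partial_R(\boldsymbol{\alpha})$ for $R\subseteq Q_\star\subseteq P$ — everything else is bookkeeping. One way to see it cleanly is at the level of partitions: if $\phi\in\mathrm{inj\text{-}hom}_P(\mathcal{P}(n))$ realises a weighting close to $\boldsymbol{\alpha}$, then $\phi_{|Q_\star}$ realises $\partial_{Q_\star}$ of that weighting and $\phi_{|R}=(\phi_{|Q_\star})_{|R}$ realises $\partial_R$ of it; applying Proposition~\ref{prop: Q-shadow and f1} twice (once for the pair $R\subseteq P$ and once for $R\subseteq Q_\star$) gives $\partial_R(f_1(\phi))=f_1(\phi_{|R})=\partial_R(f_1(\phi_{|Q_\star}))=\partial_R(\partial_{Q_\star}(f_1(\phi)))$, and passing to weightings via \eqref{eq: inherited weighting} yields the identity for $\boldsymbol{\alpha}$. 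Alternatively, one checks directly from Definition~\ref{def: shadow of an antichain} that for an antichain $S\in\mathcal A(P)$ one has $\partial_R S=\partial_R(\partial_{Q_\star}S)$ whenever $R\subseteq Q_\star$: an element $y\in R$ lies in $\partial_R S$ iff it is $\leq_P$-minimal in $R$ above some element of $S$, and this is unchanged if we first pass to the $\leq_P$-minimal elements of $Q_\star$ above $S$. Once this identity is in hand, the corollary follows in two lines as above.
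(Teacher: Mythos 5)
Your proof is correct. The paper marks this corollary with \qed and gives no explicit argument, treating it as an immediate consequence of Proposition~\ref{newprop}, and your write-up supplies exactly the implicit reasoning the authors intend: the shadow-transitivity identity $\partial_R\circ\partial_{Q_{\star}}=\partial_R$ for $R\subseteq Q_{\star}\subseteq P$, which yields $c_{\partial_{Q_{\star}}(\boldsymbol{\alpha}),Q_{\star}}(R)=c_{\boldsymbol{\alpha},P}(R)\geq c_{\star}(P)=c_{\star}(Q_{\star})$ for all nonempty $R\subseteq Q_{\star}$, hence $\partial_{Q_{\star}}(\boldsymbol{\alpha})\in\mathit{Opt}(Q_{\star})=\{\boldsymbol{b}\}$. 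Your direct verification of the antichain-level identity $\partial_R S=\partial_R(\partial_{Q_{\star}}S)$ from Definition~\ref{def: shadow of an antichain} is the cleanest way to justify the transitivity step; the route via Proposition~\ref{prop: Q-shadow and f1} works too but requires passing through partitions realising nearby feasible weightings, which is a small detour.
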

\noindent The heuristic from \eqref{eq: heuristic 3} in conjunction with \eqref{eq: heuristic 1} and \eqref{eq: heuristic 2} can aid our computations of lower bounds for $c_{\star}(P)$ by giving us extra constraints on the coordinates of an optimal weighting $\boldsymbol{\alpha}\in \triangle_{a(P)}$. It is worth noting that the existence of a (presumed) balanced `core' is of course also helpful for obtaining upper bounds on $c_{\star}(P)$, as noted in Proposition~\ref{prop: bound on c_{star}(P) balanced}.

Given our work in this section, and considering the behaviour for small examples, it is natural to wonder (a) whether or not $\mathit{Opt}(P)$ is always invariant under the action of automorphisms of $P$, and (b) whether or not $\mathit{Opt}(P)$ always consists of a single point. The answer to both of these questions turns out to be no.    Let $H_1$ be the poset obtained from $C_7$ by adding two new element $x_1$ and $x_2$, such that $x_1 <_{H_1} y_4$ 
and $y_4 <_{H_1} x_2$, with no other added relations, where $y_4$ is the middle element of the $C_7$.    We next assign all antichains from $C_7$ the same weight $w_1$  and all which contain either $x_1$ or $x_2$ the weight $w_2$. It is now straightforward, but a bit tedious, to check that $c_*(H_1)=c_*(C_7)$ and that this  value is achieved for an  interval of values for $w_2$, thereby giving a negative answer to the first question.

We can answer the second question by using a similar construction.  Let $H_2$ be obtained from $C_{t}$, for odd $t\geq 11$, by adding a copy of $C(1,2)$ and $C(2,1)$, and specifying that the middle element $y$ of the chain is below the copy of $C(1,2)$ and above the $C(2,1)$.  For this poset we can assign a uniform weight to the antichains in the $C_{t}$ and a reversal invariant weight to the antichain which contain elements not in the $C_{t}$. Here the 
weight of the  two maximal elements of the $C(1,2)$ can be made slightly different without changing the threshold.  Apart from working out the explicit entropies this can be seen by considering the up and down-sets  from $y$ in 
$\mathcal{P}(n)$; these are both copies of $\mathcal{P}(n/2)$, and since  our $c$ is strictly less than half of $c_*(C(1,2))$ both will contain copies of  $C(1,2)$.  By Remark \ref{general threshold} we will also have copies where the maximal elements  receive different weights.

\section{Computing $c_{\star}(P)$ in practice: some concrete examples}\label{sec:egs}

\subsection{Chains, stars and wide diamonds}
Let $C(n_1, n_2, \ldots , n_t)$ denote the poset whose elements come from $t$ pairwise disjoint sets $V_1, V_2, \ldots, V_t$ with $\vert V_i\vert=n_i$ and  $x<_P y$ precisely when $x\in V_i$ and $y\in V_j$ for some $i,j$ with $1\leq i < j\leq t$. When $n_1=n_2=\ldots n_t=1$, we write $C_t$ as a shorthand for $C(1,1,\ldots, 1)$. Thus $C_t$ is the chain of length $t$, and is one of the simplest posets there is as far as computing the parameter $c_{\star}(P)$ is concerned.
\begin{theorem}\label{theorem: cstar for chains}
For all integers $t \geq 2$, $C_t$ is uniformly balanced and satisfies\[c_{\star}(C_t)= \frac{\log (t+1)}{t}.\]
\end{theorem}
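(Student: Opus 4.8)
The plan is to verify directly that the uniform weighting $\boldsymbol{u}$ on $\mathcal{A}(C_t)$ already satisfies $c_{\boldsymbol{u}, C_t}(Q) \geq \frac{\log(t+1)}{t}$ for every non-empty $Q \subseteq C_t$; combined with the general upper bound~\eqref{eq: upper bound on c*(P)} this gives $c_{\star}(C_t) = \log(t+1)/t$ and that $C_t$ is uniformly balanced. First I would record the structure of $\mathcal{A}(C_t)$: since $C_t$ is itself a chain, every antichain has size at most $1$, so $\mathcal{A}(C_t) = \{\emptyset, \{v_1\}, \dots, \{v_t\}\}$ and hence $a(C_t) = t+1$. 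Thus $c_{\boldsymbol{u}, C_t}(C_t) = H_{t+1}(\boldsymbol{u})/t = \log(t+1)/t$, matching the upper bound, so the only thing to check is the minimisation over proper subposets $Q$.

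Next I would analyse a general non-empty $Q \subseteq C_t$. Since $Q$ is a sub-chain, say $Q = \{v_{i_1} <_P \cdots <_P v_{i_q}\}$ with $q = |Q|$, we have $a(Q) = q+1$ and $\mathcal{A}(Q) = \{\emptyset, \{v_{i_1}\}, \dots, \{v_{i_q}\}\}$. The key computational step is to identify the shadow map $\partial_Q$ on antichains of $C_t$: for a singleton $\{v_j\}$ with $j \leq i_q$, the shadow $\partial_Q\{v_j\}$ is $\{v_{i_\ell}\}$ where $i_\ell$ is the least index among $i_1, \dots, i_q$ with $i_\ell \geq j$; if $j > i_q$ (or $j$ corresponds to the empty antichain) then $\partial_Q\{v_j\} = \emptyset$. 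Under the uniform weighting, $\alpha_S = 1/(t+1)$ for all $S$, so the induced weighting $\boldsymbol{\beta} = \partial_Q(\boldsymbol{u})$ on $\mathcal{A}(Q)$ has coordinate $\beta_{T}$ equal to $(1/(t+1))$ times the number of antichains of $C_t$ mapping to $T$. Concretely, if $T = \{v_{i_\ell}\}$ then $\beta_T = (i_\ell - i_{\ell-1})/(t+1)$ (with $i_0 := 0$), and the empty-antichain coordinate is $\beta_\emptyset = (t+1 - i_q)/(t+1)$; these $q+1$ numbers are non-negative and sum to $1$, as they must.

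The crux is then the inequality $c_{\boldsymbol{u}, C_t}(Q) = H_{q+1}(\boldsymbol{\beta})/q \geq \log(t+1)/t$, i.e. $t \cdot H_{q+1}(\boldsymbol{\beta}) \geq q \log(t+1)$. Writing the block sizes as $d_1, \dots, d_{q+1}$ (positive integers with $\sum d_\ell = t+1$ and $q+1 \leq t+1$ blocks), we have $H_{q+1}(\boldsymbol{\beta}) = \sum_\ell -\frac{d_\ell}{t+1}\log\frac{d_\ell}{t+1} = \log(t+1) - \frac{1}{t+1}\sum_\ell d_\ell \log d_\ell$. So the required inequality rearranges to $\frac{1}{t+1}\sum_\ell d_\ell \log d_\ell \leq \frac{t+1-q}{t}\log(t+1)$, equivalently $\frac{t}{(t+1)^2}\sum_\ell d_\ell \log d_\ell \leq \frac{t+1-q}{t+1}\log(t+1)$. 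I would prove this by a convexity/smoothing argument: the function $\sum d_\ell \log d_\ell$ subject to $\sum d_\ell = t+1$ with a fixed number $q+1$ of positive integer parts is maximised when the mass is as unbalanced as possible, namely one block of size $t+1-q$ and $q$ blocks of size $1$ (the terms with $d_\ell = 1$ contribute $0$). In that extremal configuration $\sum_\ell d_\ell \log d_\ell = (t+1-q)\log(t+1-q)$, so it remains to check $\frac{t}{(t+1)}\cdot\frac{(t+1-q)\log(t+1-q)}{t+1} \leq \frac{t+1-q}{t+1}\log(t+1)$, i.e. $\frac{t}{t+1}\log(t+1-q) \leq \log(t+1)$, which is immediate since $t+1-q \leq t+1$ forces $\log(t+1-q) \leq \log(t+1) \leq \frac{t+1}{t}\log(t+1)$.

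I expect the main obstacle to be making the extremal (smoothing) argument for $\sum d_\ell \log d_\ell$ fully rigorous over integer partitions — one needs to check that replacing two blocks $(a,b)$ with $a,b \geq 2$ by $(a+b-1, 1)$ strictly increases $\sum d_\ell \log d_\ell$ (equivalently $(a+b-1)\log(a+b-1) > a\log a + b\log b$, which follows from superadditivity/convexity of $x\log x$), and that this process terminates at the claimed extremal shape; alternatively one can sidestep this by noting $\sum_\ell d_\ell \log d_\ell \leq \sum_\ell d_\ell \log(t+1-q) = (t+1)\log(t+1-q)$ directly since each $d_\ell \leq t+1-q$ (as there are $q$ other positive parts each at least $1$), which gives the even cleaner bound $\frac{t}{(t+1)^2}\sum_\ell d_\ell \log d_\ell \leq \frac{t}{t+1}\log(t+1-q)$, and we are done as before. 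With that in hand, the theorem follows: $c_{\boldsymbol{u}, C_t}(Q) \geq \log(t+1)/t$ for all non-empty $Q$, so $c_{\star}(C_t) \geq \log(t+1)/t$, and equality with uniform balance follows from~\eqref{eq: upper bound on c*(P)}.
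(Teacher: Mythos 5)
Your overall strategy — check that the uniform weighting $\boldsymbol{u}$ already satisfies $c_{\boldsymbol{u},C_t}(Q)\geq \log(t+1)/t$ for every non-empty subchain $Q$, identify the shadow $\partial_Q(\boldsymbol{u})$ with block sizes $d_1,\dots,d_{q+1}$ summing to $t+1$, and invoke concavity of $x\mapsto -x\log x$ to reduce to the extremal configuration with one block of size $t+1-q$ and $q$ singleton blocks — is exactly the paper's approach. However, there is an algebraic slip in the rearrangement that renders your final verification vacuous. From
\[t\,H_{q+1}(\boldsymbol{\beta}) \geq q\log(t+1)\quad\text{and}\quad H_{q+1}(\boldsymbol{\beta}) = \log(t+1) - \tfrac{1}{t+1}\sum_\ell d_\ell \log d_\ell\]
one gets $\tfrac{1}{t+1}\sum_\ell d_\ell\log d_\ell \leq \tfrac{t-q}{t}\log(t+1)$, with coefficient $t-q$, not $t+1-q$ as you wrote. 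With the corrected target, the extremal configuration requires
\[(t+1-q)\log(t+1-q)\;\leq\; \frac{(t-q)(t+1)}{t}\log(t+1),\]
and this is no longer ``immediate'': the scalar multiplying $\log(t+1-q)$ after dividing through is $\frac{t(t+1-q)}{(t+1)(t-q)}$, which is strictly greater than $1$ for $1\leq q\leq t-1$ (e.g.\ it is $\frac{2t}{t+1}$ at $q=t-1$). Your ``alternative'' shortcut, $\sum_\ell d_\ell\log d_\ell \leq (t+1)\log(t+1-q)$, also relies on the erroneous coefficient: with $t-q$ in place the resulting claim $\log(t+1-q)\leq\frac{t-q}{t}\log(t+1)$ is actually false (take $t=2$, $q=1$: $\log 2 \not\leq \frac{1}{2}\log 3$).

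The correct inequality does hold, but it requires a genuine monotonicity argument. Setting $s := t+1-q$, it reads $\frac{s\log s}{s-1}\leq \frac{(t+1)\log(t+1)}{t}$ for $2\leq s\leq t$, which follows because $g(x):=\frac{x\log x}{x-1}$ is increasing on $(1,\infty)$ (since $g'(x)$ has the sign of $x-1-\log x > 0$), together with the trivial case $s=1$. The paper performs the equivalent verification by asserting that the lower-bound expression is a non-increasing function of $q$. So the theorem is reachable by your method, but as written the key numerical step is incorrect and needs to be replaced by this monotonicity argument (or the paper's).
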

This result in fact already follows from the work of Kreuter~\cite{Kreuter98}, since every copy of the poset $C_t$ in $\P(n,p)$ is also an embedded copy of $C_t$ viewed as a distributive lattice. Nevertheless, it is worth giving a proof here as an illustration of the general technique for determining $c_{\star}(P)$ when $P$ is uniformly balanced.
 \begin{proof}
	We may identify $C_t$ with the integer set $[t]=\{1,2,  \ldots, t\}$ equipped with the usual order relation $<$. The antichains of $C_t$ are then 
	$\mathcal{A}(C_t)=\{ \{1\}, \{2\}, \ldots, \{t\}, \emptyset\}$. Consider the uniform weighting $\boldsymbol{u}\in \triangle_{a(C_t)}$, and a non-empty subposet $Q\subseteq P$ with elements $i_1<i_2<\ldots <i_{q}$. This 
	subposet is also 	a chain, of length $q$, and its antichains are the singletons from $Q$ together with the empty antichain. Set $i_0:=0$ and $i_{q+1}:=t+1$, and for each $j\in [q +1]$ let $x_j:= (i_j -i_{j-1})/(t+1)$. Then 
	$\boldsymbol{\beta}=\partial_Q(\boldsymbol{u})$ satisfies $\beta_{\{i_j\}}=x_j$ for each $j\in [q]$ and $\beta_{\emptyset}= x_{q +1}$. By elementary properties of entropy, subject to the constraints above, for a fixed $q$ we have that
	$H_{a(Q)}(\boldsymbol{\beta})$ is minimised by $Q=[q]$,  $x_1=x_2=\dots = x_{q}=1/(t+1)$ and $x_{t+1}= 1-q /(t+1)$. Thus, as $q\leq t$, we have
	\begin{align*}
		c_{\boldsymbol{u}, C_t}(Q)
		&= -\frac{1}{q }\sum_{j=1}^{q+1} x_j \log x_j 
		\geq \frac{1}{q}\left(\frac{q}{t+1} \log (t+1) - \left(\frac{t+1-q}{t+1}\right)\log \left(\frac{t+1-q}{t+1}\right)\right)\\
		&=  \frac{1}{t+1}\log ({t+1})- \left(\frac{1}{q}-\frac{1}{t+1}\right)\log \left(\frac{t+1-q}{t+1}\right) \geq \frac{\log (t+1)}{t}=c_{\boldsymbol{u},C_t}(C_t),
	\end{align*}
	where the last inequality follows from the fact the expression on the left hand side is a non-increasing function of $q$ and $q\leq t$.
	It follows from~\eqref{eq: upper bound on c*(P)} that $c_{\star}(C_t)=\log(t+1)/t$ as claimed.
 \end{proof} 
 
Our second general family are the star posets  $C(1,t)$, which have a common threshold  for $t\geq 2$.
\begin{theorem}\label{theorem: cstar for stars}
	Let $x_{\star}$ denote the unique solution in $[\frac{1}{5},\frac{1}{2}]$ to the equation
	\begin{align}\label{eq: def of critical weight for stars}
	(1-x)\log 2 -H_2(x)=0.
	\end{align}	 	
	Then for all $t\geq 2$, we have
	\[c_{\star}(C(1,t))= H_2(x_{\star})\approx 0.5357390.\]	
\end{theorem}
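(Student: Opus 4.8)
The plan is to evaluate the max--min formula \eqref{eq: c_{star} def max min} for $P:=C(1,t)$ directly, proving matching upper and lower bounds. Throughout write $v$ for the unique minimal element of $P$ and $V_2=\{w_1,\dots,w_t\}$ for its set of maximal elements, so that the antichains of $P$ are $\{v\}$, the $2^t-1$ nonempty subsets of $V_2$, and $\emptyset$, giving $a(P)=2^t+1$ and $\vert P\vert=t+1$. I would first record two elementary facts about $x_\star$: the map $x\mapsto(1-x)\log 2-H_2(x)$ has derivative $-\log 2-\log\tfrac{1-x}{x}\le-\log 2<0$ on $[1/5,1/2]$, so it is strictly decreasing there and, being positive at $1/5$ and negative at $1/3$, has a unique zero $x_\star\in(1/5,1/3)$. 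The bounds $1/5<x_\star<1/3$ are used in both halves of the argument.

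For the upper bound $c_\star(P)\le H_2(x_\star)$, fix any $\boldsymbol{\alpha}\in\triangle_{a(P)}$ and set $u:=\alpha_{\{v\}}$. If $u\le x_\star$, then taking $Q=\{v\}$ gives $c_{\boldsymbol{\alpha},P}(Q)=H_2(u)\le H_2(x_\star)$, since $H_2$ is increasing on $[0,1/2]$ and $x_\star<1/2$. If instead $u\ge x_\star$, take $Q=P$: the $2^t$ antichains contained in $V_2$ carry total weight $1-u$, so concavity of $x\mapsto-x\log x$ gives $c_{\boldsymbol{\alpha},P}(P)=\tfrac1{t+1}H_{a(P)}(\boldsymbol{\alpha})\le\tfrac1{t+1}\bigl(H_2(u)+t(1-u)\log 2\bigr)$; as a function of $u$ the right-hand side has derivative with the sign of $\log\tfrac{1-u}{u}-t\log 2$, which is negative on $[x_\star,1]$ because $\log\tfrac{1-x_\star}{x_\star}<\log 4\le t\log 2$ (using $x_\star>1/5$ and $t\ge 2$), so it is maximised at $u=x_\star$, where it equals $\tfrac{t+1}{t+1}H_2(x_\star)=H_2(x_\star)$ by the defining relation $(1-x_\star)\log 2=H_2(x_\star)$. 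Either way $\min_{\emptyset\ne Q\subseteq P}c_{\boldsymbol{\alpha},P}(Q)\le H_2(x_\star)$, as required.

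For the lower bound I would exhibit a single good weighting: let $\boldsymbol{\alpha}_\star$ put weight $x_\star$ on $\{v\}$ and weight $(1-x_\star)/2^t$ on each of the $2^t$ antichains contained in $V_2$ (the $\mathrm{Aut}(P)$-symmetric weighting supplied by Corollary~\ref{cor: exists Aut and R-Aut invariant optimal weighting}; it corresponds to placing $v$ on the $x_\star n$-layer and the $w_i$ as ``random'' sets in the up-set of $v$). One then checks $c_{\boldsymbol{\alpha}_\star,P}(Q)\ge H_2(x_\star)$ for every nonempty $Q\subseteq P$, in two cases. If $v\in Q$ with $\vert Q\cap V_2\vert=s$, then $Q\cong C(1,s)$ and a direct computation of the shadow $\partial_Q(\boldsymbol{\alpha}_\star)$ shows it has weight $x_\star$ on $\{v\}$ and $(1-x_\star)/2^s$ on each of the remaining $2^s$ antichains of $Q$, so $c_{\boldsymbol{\alpha}_\star,P}(Q)=\tfrac1{s+1}\bigl(H_2(x_\star)+s(1-x_\star)\log 2\bigr)=H_2(x_\star)$, again by the defining relation. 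If $v\notin Q$, so $Q\subseteq V_2$ is an antichain of size $s\ge 1$, then $\partial_Q(\boldsymbol{\alpha}_\star)$ puts weight $x_\star+(1-x_\star)2^{-s}$ on the full antichain $Q$ and $(1-x_\star)2^{-s}$ on each of the other $2^s-1$ antichains of $Q$; a short rearrangement shows that $c_{\boldsymbol{\alpha}_\star,P}(Q)\ge H_2(x_\star)$ is equivalent to $G\bigl((1-x_\star)2^{-s}\bigr)\ge 0$, where $G(y):=-(1-x_\star)\log(1-x_\star)-(x_\star+y)\log(x_\star+y)+y\log y$ satisfies $G'(y)=\log\tfrac{y}{x_\star+y}<0$, so $G$ is decreasing and it suffices to treat $s=1$, i.e.\ to verify $H_2\bigl(\tfrac{1-x_\star}{2}\bigr)\ge H_2(x_\star)$ --- which holds precisely because $\tfrac{1-x_\star}{2}\ge x_\star\iff x_\star\le 1/3$. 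Hence $\min_{\emptyset\ne Q\subseteq P}c_{\boldsymbol{\alpha}_\star,P}(Q)=H_2(x_\star)$, so $c_\star(P)\ge H_2(x_\star)$; together with the upper bound this gives $c_\star(C(1,t))=H_2(x_\star)$, and solving the defining equation numerically yields $\approx 0.5357390$.

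The main obstacle is the lower bound, and within it the case $v\notin Q$: while the case $v\in Q$ collapses to an exact identity via the defining relation, the ``antichain'' subposets need the extra monotonicity observation (that $G$ is decreasing) to reduce the range of $s$ to the single value $s=1$, together with the a priori bound $x_\star<1/3$. This case is also what makes the threshold independent of $t$ for $t\ge 2$: enlarging $V_2$ only introduces subposets $Q$ of the two types already analysed, all of which remain non-binding for $\boldsymbol{\alpha}_\star$, so the value of $c_\star$ never changes once $t\ge 2$.
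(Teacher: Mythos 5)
Your proof is correct, but it takes a genuinely different route from the paper's, especially for the lower bound. For the upper bound, the paper computes only for $C(1,2)$ (bounding $c_\star(C(1,2))$ by the two subposets $\{\min\}$ and $C(1,2)$ itself, then solving the resulting two-function max--min explicitly) and then invokes the trivial monotonicity $c_\star(C(1,t))\le c_\star(C(1,2))$ for $t\ge 2$; you instead run the same two-subposet argument directly for general $t$, which is a little longer but streamlines the appeal to monotonicity into the derivative estimate $\log\frac{1-x_\star}{x_\star}<\log 4\le t\log 2$. The real divergence is in the lower bound: the paper bypasses the max--min formula entirely and uses a direct probabilistic argument --- a Chernoff bound to find a point $A_0$ on the $x_\star n$-layer, then a second Chernoff bound showing the up-set of $A_0$ in $\P(n,p)$ w.h.p.\ contains $\geq t$ points, since $(2^{(1-x_\star)n})p = e^{(H_2(x_\star)-c)n+o(1)}$ is exponentially large. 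You instead exhibit the explicit $\mathrm{Aut}(P)$-invariant weighting $\boldsymbol{\alpha}_\star$ and verify $c_{\boldsymbol{\alpha}_\star,P}(Q)\ge H_2(x_\star)$ for every nonempty $Q\subseteq P$ through a careful case analysis (the subposets containing $v$ being exact ties, and the antichain subposets reducing to $s=1$ via monotonicity of $G$). The paper's probabilistic argument is shorter and directly explains why the threshold is independent of $t$ (once $A_0$ appears, exponentially many elements lie above it), while your computation has the compensating virtue of producing the optimiser $\boldsymbol{\alpha}_\star$ of the max--min formula and thereby the extremal geometry of the first copies of $C(1,t)$ to appear, which the paper's argument does not record. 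Both are sound; yours is a legitimate, more verbose, but also more informative alternative.
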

\begin{proof}
	For the upper bound, consider the $2$-leaved star $C(1,2)$. Let $\boldsymbol{\alpha}\in \mathit{Opt}(C(1,2))$, and set $x:=\alpha_{\{\min(C(1,2))\}}$. Then by standard properties of the entropy function, we have
	\begin{align}\label{newlab2}
		c_{\star}(C(1,2))&\leq \min\left \{ c_{\boldsymbol{\alpha}, C(1,2)}(\{\min(C(1,2))\}), c_{\boldsymbol{\alpha}, C(1,2)}(C(1,2)) \right \} \nonumber \\
		&\leq \min\left \{ H_2(x), \frac{1}{3}\left(-x\log x -(1-x)\log\left(\frac{1-x}{4} \right)\right)\right \}.
	\end{align}
	Now the function $F_1: \ x\mapsto H_2(x)$ is increasing in $[0,\frac{1}{2}]$ and decreasing in $[\frac{1}{2}, 1]$, while the function $F_2: \ x\mapsto \frac{1}{3}\left(-x\log x -(1-x)\log\left(\frac{1-x}{4}\right)\right)$ is 
	increasing in $[0, \frac{1}{5}]$ and decreasing in $[\frac{1}{5}, 1]$. Further, we have $F_1(\frac{1}{5})<F_2(\frac{1}{5})$ and 
	$F_1(\frac{1}{2})>F_2(\frac{1}{2})$.  Thus there is a unique solution in the interval $[\frac{1}{5}, \frac{1}{2}]$ to the equation $F_1(x)=F_2(x)$. Rearranging terms, we see that this unique solution is precisely the value $x_{\star}$ 
	from the statement of the theorem. Together with (\ref{newlab2}) this implies
	\begin{align*}
		c_{\star}(C(1,2))\leq  F_1(x_{\star})=H_2(x_{\star}).
	\end{align*}
	Since $c_{\star}(C(1,t))$ is non-increasing in $t$ this establishes the upper bound in the theorem.

	For the lower bound, we use a direct probabilistic argument. Let $c$ be a fixed constant with $c< H_2(x_{\star})$, and let $p=e^{-cn}$. Then by a standard Chernoff bound, w.h.p. $\P(n,p)$ contains an element $A_0$ with 
	$\vert A_0\vert =\lfloor x_{\star} n \rfloor$. Condition on this event, consider then  $Y= \P(n,p)\cap \{X\in \P(n): A_0\subsetneq X\}$. With our conditioning $\vert Y\vert $ is a binomially distributed random variable with expected 
	value
	\begin{align*}
		\left(2^{(1-x_{\star})n}-1\right)p= e^{n \bigl((1-x_{\star})\log 2  - c\bigr)+o(1)}= e^{n\bigl(H_2(x_{\star})- c\bigr)+o(1)}, 
	\end{align*}
	where in the last inequality we use the defining equation \eqref{eq: def of critical weight for stars} for $x_{\star}$. Thus for $c<H_2(x_{\star})$ fixed, the expectation above is (exponentially) large, and a standard Chernoff 
	bound shows that for any fixed $t$, w.h.p. $\vert Y \vert \geq t$. In particular this shows that for any fixed $t$, w.h.p. $\P(n,p)$ contains a copy of $C(1,t)$ (by considering $A_0$ and any $t$ elements from $Y$),  and 
	$c_{\star}(C(1,t))\geq H_2(x_{\star})$ as claimed. 
\end{proof}

The poset $C(1,2,1)$ is in fact the Boolean lattice $\P(2)$, and better known in an extremal setting as the \emph{diamond}.   Extending the methods of the previous proof  we next determine the threshold for the 
appearance for the broader class of \emph{$t$-wide diamonds} $C(1, t, 1)$, $t\geq 2$, which again share a common threshold for $t\geq 2$.
\begin{theorem}\label{theorem: cstar for wide diamonds}
	Let $x_{\star}$ denote the unique solution in $[\frac{1}{6},\frac{1}{4}]$ to the equation
	\begin{align}\label{eq: def of critical weight for wide diamonds}
	2(1-3x)\log 2 -H_2(2x)=0 .
	\end{align}	 	
	Then for all $t\geq 2$, we have
	\[c_{\star}(C(1,t,1))= (1-2x_{\star})\log 2\approx 0.389429.\]	
\end{theorem}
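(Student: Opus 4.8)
The plan is to establish matching bounds $c_\star(C(1,t,1)) \le (1-2x_\star)\log 2 \le c_\star(C(1,t,1))$, reducing the upper bound by monotonicity in $t$ to the diamond $C(1,2,1)=\mathcal P(2)$, and proving the lower bound by a direct probabilistic argument that works uniformly in $t$.

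\emph{Upper bound.} Since $C(1,2,1)$ is an induced subposet of $C(1,t,1)$ for every $t\ge 2$, any poset containing a copy of $C(1,t,1)$ contains a copy of $C(1,2,1)$, so $c_\star(C(1,t,1))\le c_\star(C(1,2,1))$ and it suffices to bound $c_\star(\mathcal P(2))$. As $\mathcal P(2)$ is bounded with $a(\mathcal P(2))=6$, I would apply Proposition~\ref{prop: bound on c_{star}(P) balanced} with $\mathcal Q$ consisting of the $2$-element chain $\{\min,\max\}$ (here $a=3$), the two copies of $C_3$ obtained by adjoining one middle element ($a=4$), and $\mathcal P(2)$ itself ($a=6$). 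Substituting into \eqref{eq: upper bound on c_{star}(P) balanced} and writing $u:=2x\log x$, $v:=(1-2x)\log(1-2x)$, $w:=(1-2x)\log 2$, the three subposet contributions become $-\tfrac12(u+v)$, $-\tfrac13(u+v-w)$, $-\tfrac14(u+v-2w)$. The key algebraic point is that these three coincide, with common value $w$, exactly when $-(u+v)=2w$, i.e.\ $H_3\bigl((x,x,1-2x)\bigr)=2(1-2x)\log 2$, which rearranges precisely to the defining equation $2(1-3x)\log 2=H_2(2x)$. A short calculus check then shows that for $x<x_\star$ the minimum of the three contributions is $-\tfrac12(u+v)$ and increasing, for $x>x_\star$ it is $-\tfrac14(u+v-2w)$ and decreasing, while $H_2(x_\star)>w$ (using $x_\star\ge 1/6$ and monotonicity of $H_2$), so the maximum in \eqref{eq: upper bound on c_{star}(P) balanced} is attained at $x_\star$ with value $w=(1-2x_\star)\log 2$. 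Hence $c_\star(\mathcal P(2))\le (1-2x_\star)\log 2$.

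\emph{Lower bound.} Fix $c<(1-2x_\star)\log 2=:c_\star$, set $p=e^{-cn}$ and $\lambda:=x_\star$; I claim $\mathcal P(n,p)$ w.h.p.\ contains an induced copy of $C(1,t,1)$ for every fixed $t\ge 2$. Let $Z$ count the pairs $(A_0,A_1)$ with $A_0\subsetneq A_1$, $|A_0|=\lfloor\lambda n\rfloor$, $|A_1|=n-\lfloor\lambda n\rfloor$, and both sets in $\mathcal P(n,p)$. The number of such pairs in $\mathcal P(n)$ is $\binom{n}{\lfloor\lambda n\rfloor}\binom{n-\lfloor\lambda n\rfloor}{n-2\lfloor\lambda n\rfloor}=\exp\bigl(H_3((\lambda,1-2\lambda,\lambda))\,n+O(\log n)\bigr)$, and since the defining equation for $x_\star$ is equivalent to $H_3((\lambda,1-2\lambda,\lambda))=2(1-2\lambda)\log 2=2c_\star$, we get $\mathbb E Z=\exp\bigl((2c_\star-2c)n+O(\log n)\bigr)\to\infty$. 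I would then bound the second moment of $Z$ (or apply the Janson inequalities, Proposition~\ref{prop: Janson inequalities}): the dominant over-count comes from pairs of pairs sharing a common $A_0$ or a common $A_1$, contributing $\exp\bigl((4c_\star-H_2(x_\star)-3c+o(1))n\bigr)$, which is $o((\mathbb E Z)^2)$ because $c<c_\star<H_2(x_\star)$. Thus $\mathbb P(Z=0)=o(1)$, and we may fix a pair $A_0\subsetneq A_1$ in $\mathcal P(n,p)$. Conditioning on this pair, the interval $[A_0,A_1]$ is a Boolean lattice of dimension $d:=|A_1\setminus A_0|=(1-2x_\star)n+O(1)$, and each of the $2^d-2$ sets strictly between $A_0$ and $A_1$ lies in $\mathcal P(n,p)$ independently with probability $p$; the expected number that do is $\exp\bigl((c_\star-c+o(1))n\bigr)\to\infty$, so by a Chernoff bound $M:=\mathcal P(n,p)\cap\{X:A_0\subsetneq X\subsetneq A_1\}$ has $|M|\ge e^{(c_\star-c)n/2}$ w.h.p. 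Every chain in $[A_0,A_1]$ has length at most $d+1$, so by Mirsky's (or Dilworth's) theorem any subset of $[A_0,A_1]$ of size more than $(t-1)(d+1)$ contains an antichain of size $t$; since $e^{(c_\star-c)n/2}\gg tn\ge (t-1)(d+1)$ for large $n$, $M$ contains an antichain $\{M_1,\dots,M_t\}$. Then $A_0\subsetneq M_i\subsetneq A_1$ for all $i$ and the $M_i$ are pairwise incomparable, so $\{A_0,M_1,\dots,M_t,A_1\}$ is an induced copy of $C(1,t,1)$ in $\mathcal P(n,p)$. This gives $c_\star(C(1,t,1))\ge (1-2x_\star)\log 2$, which together with the upper bound finishes the proof for all $t\ge 2$.

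\emph{Main obstacle.} The delicate step is the existence of the ``frame'' $(A_0,A_1)$ — equivalently, controlling the second moment of $Z$. This is where the defining equation for $x_\star$ enters essentially: $x_\star$ is precisely the weight at which the expected number of frames and the expected interior population have the same exponential growth rate, and one must additionally verify the strict inequality $c_\star<H_2(x_\star)$ so that $\mathrm{Var}(Z)$ is dominated by $(\mathbb E Z)^2$. The remaining ingredients — the Chernoff estimate for the interior and the Dilworth/Mirsky extraction of an antichain — are routine, and the upper-bound optimisation, though somewhat fiddly, is elementary.
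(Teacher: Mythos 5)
Your proof is correct and follows essentially the same two-step approach as the paper's: the upper bound is obtained by applying Proposition~\ref{prop: bound on c_{star}(P) balanced} to the diamond $\P(2)=C(1,2,1)$ and using monotonicity of $c_{\star}(C(1,t,1))$ in $t$, and the lower bound by a second-moment count of ``frame'' pairs $(A_0,A_1)$ at levels $\lfloor x_{\star}n\rfloor$ and $n-\lfloor x_{\star}n\rfloor$ followed by a Chernoff estimate for the interior of $[A_0,A_1]$.

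Two small remarks are in order. In the upper bound, the paper simply discards the $H_2(x)$ term (and also the $C_3$ terms) from the minimum in \eqref{eq: upper bound on c_{star}(P) balanced}, which is a valid relaxation since dropping a term from a minimum can only increase the resulting bound; this sidesteps the need to show $H_2(x_{\star})>(1-2x_{\star})\log 2$. Your one-line justification of that inequality via ``$x_{\star}\ge 1/6$ and monotonicity of $H_2$'' is incomplete as written — both sides depend on $x_{\star}$, and in fact $H_2(1/6)\approx 0.4506 < (2/3)\log 2 \approx 0.4621$, so the inequality fails at $x=1/6$; it does hold at $x_{\star}$, but only because $x_{\star}$ lies above the crossing point of $H_2(x)$ and $(1-2x)\log 2$, which requires a slightly more careful argument (or just drop $H_2$ from the min, as the paper does). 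Your observation that the $C_3$ contributions also coincide with $w$ at $x_{\star}$ is a pleasant consistency check but is not needed. In the lower bound, your Mirsky/Dilworth extraction of an actual antichain among the interior points is correct but unnecessary: a (not necessarily induced) copy of $C(1,t,1)$ only requires the relations $A_0\subsetneq M_i\subsetneq B_0$ to be preserved, so the paper simply takes $A_0$, $B_0$, and any $t$ interior elements; combined with Theorem~\ref{theorem: existence threshold} this suffices. Your refinement does, however, yield an induced copy directly without appealing to that theorem.
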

\begin{proof} 
For the upper bound, let us consider the diamond $C(1,2,1)=\P(2)$. This is a bounded poset.  Label the elements of $C(1,2,1)$ as $1,2,3,4$ with $1=\max(\P(2))$ and $4=\min(\P(2))$. 
Applying~Proposition~\ref{prop: bound on c_{star}(P) balanced}, we obtain
\begin{align*}
c_{\star}(\P(2))&\leq \max_{x\in [\frac{1}{6}, \frac{1}{2}]} \min\left\{H_2(x), \min_{Q\in \{ \{1,4\}, \P(2)\}} -\frac{1}{\vert Q\vert} \left(2x\log x + (1-2x)\log \left(\frac{1-2x}{a(Q)-2}\right) \right)\right\}\\
&\leq \max_{x\in [\frac{1}{6}, \frac{1}{2}]} \min \left \{-\frac{1}{2}\left(2x\log x+ (1-2x)\log(1-2x) \right) , -\frac{1}{4}\left(2x\log x+ (1-2x)\log\left(\frac{1-2x}{4}\right) \right\} \right)\\
&=: \max _{x\in [\frac{1}{6}, \frac{1}{2}]} \min \left \{ F_1(x), F_2(x)\right \}.
\end{align*}

Now the function $F_1(x)$ is increasing in $[\frac{1}{6},\frac{1}{3}]$ and decreasing in $[\frac{1}{3}, \frac{1}{2}]$, while the function $F_2(x)$ is decreasing in $[\frac{1}{6}, \frac{1}{2}]$. Further, we have $F_1(\frac{1}{6})<F_2(\frac{1}{6})$ and $F_1(\frac{1}{4})=\frac{3}{4}\log 2>\frac{5}{8}\log 2= F_2(\frac{1}{4})$.  Thus there is a unique solution in the interval $[\frac{1}{6}, \frac{1}{4}]$ to the equation $F_1(x)=F_2(x)$. Rearranging terms, we see that this unique solution is precisely the value $x_{\star}$ from the statement of the theorem.  We then have
\begin{align*}
c_{\star}(\P(2))\leq \max_{x\in [\frac{1}{6},\frac{1}{2}]} \min\left \{F_1(x), F_2(x)\right \}=F_1(x_{\star})=(1-2x_{\star})\log 2,
\end{align*}
where for the equality we used $F_1(x_{\star})=\frac{1}{2}H_2(2x_{\star})+ x_{\star}\log 2$. Since $c_{\star}(C(1,t,1))$ is non-increasing in $t$, this establishes the upper bound in the theorem.

\smallskip

For the lower bound, we use a direct probabilistic argument. Let $c$ be a fixed constant with $c< (1-2x_{\star})\log 2$ and set $p=e^{-cn}$. Let $N$ denote the number of pairs of sets $(A,B)$ with $A,B\in \P(n,p)$, $A\subset B$ and $n-\vert B\vert = \vert A\vert=\lfloor x_{\star} n \rfloor$. We have
\begin{align}
\mathbb{E}N &= \binom{n}{\lfloor x_{\star} n\rfloor, \lfloor x_{\star} n\rfloor}p^2= \exp \left(\left( -2x_{\star}\log x_{\star} - (1-2x_{\star})\log (1-2x_{\star})- 2c \right)n+o(n)\right)\notag \\
&=\exp\left(n\left(2(1-2x_{\star})\log 2 - 2c \right)+o(n)\right),\label{eq: exponent for expected number N of top bottom}
\end{align}
which by our assumption on $c$ tends to infinity as $n\rightarrow \infty$. Now
\begin{align}
\mathbb{E}\left(N^2\right) &\leq  \left(\mathbb{E}N\right)^2 + \left(2\binom{n-\lfloor x_{\star}n\rfloor}{\lfloor x_{\star}n \rfloor} p +1\right) \mathbb{E}N\notag \\
&= \left(\mathbb{E}N\right)^2+ \mathbb{E}N \left(\exp\left(\left( H_2(2x_{\star}) -H_2(x_{\star}) +2x_{\star}\log 2-c\right)n+o(n)\right)+1  \right). \label{eq: exponent for variance of N}
\end{align}
Using $H_2(2x_{\star})= 2(1-3x_{\star})\log 2$, we have
\begin{align}\label{eq: showing variance is small}
\left(2(1-2x_{\star})\log 2 - 2c\right) -\left( H_2(2x_{\star}) -H_2(x_{\star}) +2x_{\star}\log 2-c\right)&= H_2(x_{\star})-c\notag \\
&>H_2(x_{\star})- (1-2x_{\star})\log 2>0.
\end{align}
Combining \eqref{eq: exponent for expected number N of top bottom}, \eqref{eq: exponent for variance of N} and \eqref{eq: showing variance is small} we have that $\mathbb{E} \left(N^2\right)= \left(1+o(1)\right)\left(\mathbb{E}N\right)^2$, and hence that $\mathrm{Var}N=o(\mathbb{E}N^2)$. A simple application of Chebyshev's inequality then tells us that $N$ is concentrated around its mean and in particular that w.h.p. there exists some pair $(A_0, B_0)$ satisfying $A_0,B_0 \in \P(n,p)$, $\vert A_0\vert = n-\vert B_0\vert  \lfloor x_{\star} n\rfloor$ and $A_0 \subseteq B_0$. Conditioning on this event, consider the binomially distributed random variable $Y=\P(n,p)\cap \{X\in \P(n): \ A_0\subsetneq X\subsetneq B\}$. Then with our conditioning $\vert Y\vert$ is a binomially distributed random variable with expected value at least 
\begin{align*}
\left(2^{(1-2x_{\star})n}-2\right)p= e^{n \bigl((1-2x_{\star})\log 2  - c\bigr)+o(1)}. 
\end{align*}
Since $c<(1-2x_{\star})\log 2$,  the expectation above is (exponentially) large, and a standard Chernoff bound shows that for any fixed $t$, w.h.p. $\vert Y\vert \geq t$. In particular this shows that for any fixed $t$, w.h.p. $\P(n,p)$ contains a copy of $C(1,t,1)$ (by considering $A_0, B_0$ and any $t$ elements from $Y$), and $c_{\star}(C(1,t,1))\geq (1-2x_{\star})\log 2$ as claimed. 
\end{proof}

The trick we used in the proofs of the lower bounds for $c_{\star}(P)$ in Theorem~\ref{theorem: cstar for stars} and Theorem~\ref{theorem: cstar for wide diamonds} of first finding suitable images for the top and/or bottom elements of $P$ in $\P(n,p)$ is applicable more generally. Given a poset $Q$, let $C(1,Q)$ denote the poset obtained by adding a new element $b$ to  $Q$ together with the relations $b<q$ for all $q\in Q$. Similarly, let $C(1,Q,1)$ denote the poset obtained from $Q$ by adding two new elements $b,t$ together with the relations $b<q<t$ for all $q\in Q$. 
\begin{proposition}\label{proposition: general lower bounds for existence}
	Let $Q$ be a finite poset.
\begin{enumerate}
	\item[(i)] Let $x_{\star}$ be the unique solution in $[0,1/2]$ to 
	\[ (1-x_{\star})c_{\star}(Q)- H_2(x_{\star})=0.\]
	Then $c_{\star}(C(1,Q))\geq H_2(x_{\star})$.	
	\item[(ii)] Let $x_{\star}$ be the unique solution in $[0,1/2]$ to 
	\[ 2(1-2x_{\star})c_{\star}(Q)-2x\log 2- H_2(x_{\star})=0.\]
	Then $c_{\star}(C(1,Q,1))\geq H_2(x_{\star})$.
\end{enumerate}		
\end{proposition}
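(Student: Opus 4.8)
The plan is to generalise the direct probabilistic lower-bound arguments from the proofs of Theorems~\ref{theorem: cstar for stars} and~\ref{theorem: cstar for wide diamonds}: first fix an image in $\P(n,p)$ for the new extremal element(s) of $C(1,Q)$ (respectively $C(1,Q,1)$) as a carefully-sized subset of $[n]$, and then invoke the existence threshold, Theorem~\ref{theorem: existence threshold}, to find a copy of $Q$ inside the smaller Boolean lattice lying above that image (respectively between the two images). In each part, the weight $x_\star$ is exactly the value of $x\in[0,\frac12]$ that balances the two resulting requirements --- that the pivot set(s) survive in $\P(n,p)$, and that the relevant sub-Boolean-lattice still contains a copy of $Q$ --- and the displayed defining equation is precisely this balancing condition.

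\emph{Part (i).} Fix $c<H_2(x_\star)$ and put $p=e^{-cn}$; by the defining equation for $x_\star$ we have $(1-x_\star)c_\star(Q)=H_2(x_\star)>c$. There are $e^{(H_2(x_\star)+o(1))n}$ subsets of $[n]$ of size $\lfloor x_\star n\rfloor$, each surviving in $\P(n,p)$ independently with probability $p$, so w.h.p.\ at least one such set $A_0$ is present. Condition on a choice of $A_0$ and fix $j_0\in[n]\setminus A_0$. By independence of the survival events, $\{X\in\P(n,p):\ A_0\cup\{j_0\}\subseteq X\}$ is distributed as $\P(m,p)$ with $m=n-\lfloor x_\star n\rfloor-1=(1-x_\star)n-O(1)$, independently of the conditioning; moreover every element of this sub-poset strictly contains $A_0$. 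Since $p=e^{-cn}$ and $cn=\bigl(c/(1-x_\star)+o(1)\bigr)m$ with $c/(1-x_\star)<c_\star(Q)$, the window bound of Remark~\ref{remark: narrow window} (applied inside this $\P(m,p)$) gives w.h.p.\ a copy of $Q$ there. Sending the new bottom element of $C(1,Q)$ to $A_0$ and using this copy of $Q$ produces a copy of $C(1,Q)$ in $\P(n,p)$. As $c<H_2(x_\star)$ was arbitrary and, by Theorem~\ref{theorem: existence threshold}, the threshold for $\P(n,p)$ to contain a copy of $C(1,Q)$ is precisely $c_\star(C(1,Q))$, we conclude $c_\star(C(1,Q))\geq H_2(x_\star)$.

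\emph{Part (ii).} The scheme is identical but with both a bottom and a top pivot. Fix $c$ below the stated bound, put $p=e^{-cn}$, and (as in the proof of Theorem~\ref{theorem: cstar for wide diamonds}) let $N$ count the nested pairs $(A,B)$ with $A,B\in\P(n,p)$, $A\subseteq B$ and $\lvert A\rvert=n-\lvert B\rvert=\lfloor x_\star n\rfloor$. Then $\mathbb{E}N=\binom{n}{\lfloor x_\star n\rfloor,\lfloor x_\star n\rfloor}p^2$; using the defining equation for $x_\star$ to rewrite the exponent, one checks $\mathbb{E}N\to\infty$ for $c$ in the stated range, and the second-moment estimate there (which does not involve $Q$) carries over verbatim to show that $N$ is concentrated, so w.h.p.\ a pair $(A_0,B_0)$ exists. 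Condition on such a pair and pick distinct $i_0,j_0\in B_0\setminus A_0$ (possible since $\lvert B_0\setminus A_0\rvert=(1-2x_\star)n=\omega(1)$). By independence, $\{X\in\P(n,p):\ A_0\cup\{i_0\}\subseteq X\subseteq B_0\setminus\{j_0\}\}$ is distributed as $\P(k,p)$ with $k=(1-2x_\star)n-O(1)$, independently of the conditioning, and every element $X$ of this sub-poset satisfies $A_0\subsetneq X\subsetneq B_0$. Since $cn=\bigl(c/(1-2x_\star)+o(1)\bigr)k$ with $c/(1-2x_\star)<c_\star(Q)$ (again by the defining equation, for $c$ in the stated range), Remark~\ref{remark: narrow window} gives w.h.p.\ a copy of $Q$ inside this $\P(k,p)$. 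Sending the new bottom and top to $A_0$ and $B_0$ and using this copy of $Q$ yields a copy of $C(1,Q,1)$ in $\P(n,p)$, and the claimed lower bound on $c_\star(C(1,Q,1))$ follows as in part (i).

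The step I expect to be most delicate is the decoupling: checking that, conditioned on the pivot set(s) surviving, the relevant slice of $\P(n,p)$ is genuinely a fresh $\P(\cdot,p)$ of the stated dimension. Restricting to $A_0\cup\{j_0\}\subseteq X$ (respectively $A_0\cup\{i_0\}\subseteq X\subseteq B_0\setminus\{j_0\}$) rather than to the full principal filter (respectively open interval) is exactly what makes this work, since it renders the new Boolean lattice vertex-disjoint from the pivots and hence independent of the conditioning, while keeping all of its elements strictly above $A_0$ (and strictly below $B_0$). The other point to watch is that the effective exponent $cn/m$ (respectively $cn/k$) is not constant; since it converges to a limit strictly below $c_\star(Q)$, the window bound of Remark~\ref{remark: narrow window} applies for $n$ large (alternatively one may use monotonicity in $p$ together with Theorem~\ref{theorem: existence threshold} for a nearby fixed exponent). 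Everything else is routine: verifying that the pivot-survival exponent and the copy-of-$Q$ exponent balance precisely at $x_\star$ --- which is the displayed equation --- and that this equation has a unique solution in $[0,\frac12]$, which holds because its left-hand side is strictly decreasing in $x$ and changes sign on the interval.
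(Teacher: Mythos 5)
Your proof follows exactly the route the paper takes: the paper's own proof is the one-line observation that the arguments of Theorems~\ref{theorem: cstar for stars} and~\ref{theorem: cstar for wide diamonds} go through verbatim once the binomial count of points above $A_0$ (resp.\ between $A_0$ and $B_0$) is replaced by the fact that $p$ exceeds the existence threshold for $Q$ in a subcube of the relevant dimension, which is precisely your argument spelled out in detail. The one refinement you add --- shrinking the filter (resp.\ interval) by one coordinate via $j_0$ (resp.\ $i_0,j_0$) so that the slice is exactly a fresh $\P(m,p)$ vertex-disjoint from the pivot(s) --- is a sensible tidying-up of the decoupling that the paper leaves implicit, but it does not constitute a different approach.
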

Note that in both cases it is easy to see that the solution $x_{\star}$ is unique: we have a decreasing linear function fighting against a concave entropy function that attains its maximum at $x=1/2$, and $c_{\star}(Q)\leq \log 2$, so considering the values of the functions at $0$ and $1/2$ shows the solution will occur in this interval.
\begin{proof}
	Identical to the lower bound proofs in Theorems~\ref{theorem: cstar for stars}, \ref{theorem: cstar for wide diamonds} with the single change that instead of counting the binomially distributed number of points $Y$ in the subcube above $A_0$/between $A_0$ and $B_0$ that we are investigating, we use instead the fact that $p$ is above the threshold for the existence of a copy of $Q$ in a subcube of that dimension.
\end{proof}

Finally, we record bounds on $c_{\star}(P)$ for $P=C_{\ell}(t)= C(t,t, \ldots, t)$, which is the poset obtained for the chain of length $\ell$ by replacing each element by an antichain of size $t$. 
\begin{proposition}\label{proposition: existence threshold for t-blow up of Cell}
	For all $\ell, t\in \mathbb{N}$, we have 
	\[ \frac{\log 2}{\ell}\leq c_{\star}(C_{\ell}(t)) \leq \frac{\log 2}{\ell}+ \frac{\log\left( \ell -(\ell-1)2^{-t}\right)}{\ell t}.\]
\end{proposition}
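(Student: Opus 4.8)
The plan is to bound $c_\star(C_\ell(t))$ from both sides using the general machinery of Theorem~\ref{theorem: existence threshold} together with direct probabilistic arguments. For the lower bound, observe that $C_\ell(t)$ contains the chain $C_\ell$ as a subposet, but that inclusion goes the wrong way; instead I would argue directly. Let $c < (\log 2)/\ell$ and $p = e^{-cn}$. The idea is to build a copy of $C_\ell(t)$ greedily, layer by layer: first locate, via a Chernoff bound, a set $A_1 \in \mathcal{P}(n,p)$ in the layer of size roughly $n/\ell$ (more precisely one wants $\ell$ nested "slabs" of width $n/\ell$ each). Having chosen nested sets $A_1 \subsetneq A_3 \subsetneq \cdots$ at the boundaries of consecutive slabs, the number of elements of $\mathcal{P}(n,p)$ lying strictly between $A_i$ and $A_{i+1}$ is binomially distributed with mean about $(2^{n/\ell} - O(1))p = e^{n((\log 2)/\ell - c) + o(1)}$, which is exponentially large precisely because $c < (\log 2)/\ell$. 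A Chernoff bound then gives at least $t$ such elements in each of the $\ell$ slabs w.h.p.; picking $t$ elements from each slab produces a copy of $C_\ell(t)$. (One must be slightly careful to choose the nested boundary sets so that all $\ell$ slabs have the required width; a union bound over a bounded number of Chernoff events handles this.) Hence $c_\star(C_\ell(t)) \geq (\log 2)/\ell$.

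For the upper bound, I would apply the definition $c_\star(C_\ell(t)) = \max_{\boldsymbol\alpha} \min_{\emptyset \neq Q \subseteq P} c_{\boldsymbol\alpha, P}(Q)$ and exhibit a single subposet $Q$ whose critical exponent $c_{\boldsymbol\alpha,P}(Q)$ is small for \emph{every} weighting $\boldsymbol\alpha$. The natural choice is $Q = P = C_\ell(t)$ itself, using the bound $c_{\boldsymbol\alpha,P}(P) \leq \frac{\log a(P)}{|P|}$ from \eqref{eq: upper bound on c*(P)}. Here $|P| = \ell t$, so I need to count the antichains of $C_\ell(t)$: an antichain is obtained by choosing one of the $\ell$ antichain-blocks $V_i$ and then a nonempty subset of it, plus the empty antichain, giving $a(C_\ell(t)) = \ell(2^t - 1) + 1 = \ell 2^t - (\ell - 1)$. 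Therefore
\[
c_\star(C_\ell(t)) \leq \frac{\log\left(\ell 2^t - (\ell-1)\right)}{\ell t} = \frac{\log 2}{\ell} + \frac{\log\left(\ell - (\ell-1)2^{-t}\right)}{\ell t},
\]
where the last equality just factors $2^t$ out of the logarithm: $\log(\ell 2^t - (\ell-1)) = t\log 2 + \log(\ell - (\ell-1)2^{-t})$, and dividing by $\ell t$ splits the two terms. This is exactly the claimed upper bound.

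The main obstacle is the lower bound, and specifically making the "layer-by-layer" greedy construction rigorous while keeping the constant at exactly $(\log 2)/\ell$. The subtlety is that one cannot afford any loss: the boundary sets $A_1 \subsetneq A_3 \subsetneq \cdots \subsetneq A_{2\ell-1}$ must be placed so that each of the $\ell$ consecutive "gaps" (including the bottom gap below $A_1$ and the top gap above $A_{2\ell-1}$, if one phrases it with $\ell+1$ regions, or $\ell$ regions if the extremal blocks sit at the top and bottom layers) spans $\approx n/\ell$ coordinates — only then is each gap a Boolean lattice of dimension $\approx n/\ell$ whose $p$-random subset has exponentially many elements. One should condition on the existence of the boundary sets first (a finite sequence of Chernoff/first-moment arguments, or one could even just use deterministic sets of the right sizes and condition on their presence in $\mathcal{P}(n,p)$, which happens w.h.p. for any fixed finite collection of sets), then condition on the counts in each gap, using independence of the regions. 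Since $t$ and $\ell$ are fixed, all the Chernoff bounds are routine once the geometry is set up correctly; the only real care needed is the bookkeeping of which layers the $\ell$ blocks of $C_\ell(t)$ should occupy so that the gaps come out equal. I would present this cleanly by first handling the case where the two extreme blocks sit on the layers of size $\lfloor n/\ell \rfloor$ and $n - \lfloor n/\ell \rfloor$ — actually it is cleaner still to put the blocks on layers $\lfloor jn/(\ell+1)\rfloor$ for appropriate $j$, matching the $a(C_\ell) = \ell+1$ heuristic, but since we only need the bound $(\log 2)/\ell$ any placement giving gaps of width $\geq (1-o(1))n/\ell$ suffices, and blocks on evenly spaced layers with the top block at layer $n$ and no block forced to the very bottom gives gaps of width exactly $n/\ell$ asymptotically.
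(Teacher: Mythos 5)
Your upper bound is exactly the paper's: apply \eqref{eq: upper bound on c*(P)} to $C_\ell(t)$, count $a(C_\ell(t)) = \ell 2^t - (\ell-1)$, and simplify the logarithm. That part is correct and complete.

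For the lower bound you have the right picture (split $[n]$ into $\ell$ blocks of width $n/\ell$ and find $t$ elements of $\P(n,p)$ in each resulting ``slab''), but the execution is needlessly tangled and contains one genuine error. You propose to first locate random boundary sets $A_1\subsetneq A_3 \subsetneq \cdots$ \emph{inside} $\P(n,p)$ by Chernoff, and then as a supposed simplification you remark that ``one could even just use deterministic sets of the right sizes and condition on their presence in $\P(n,p)$, which happens w.h.p.\ for any fixed finite collection of sets.'' That remark is false: for $p=e^{-cn}$ with $c>0$, a fixed set belongs to $\P(n,p)$ with probability $e^{-cn}\to 0$, so a fixed finite collection is present with probability tending to $0$, not $1$. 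The point you are missing — and which the paper exploits — is that the boundary sets need not lie in $\P(n,p)$ at all. Fix a deterministic equipartition $A_1\sqcup \cdots \sqcup A_\ell$ of $[n]$ and, for each $i$, let $Y_i$ be the set of $X\in\P(n,p)$ with $\bigcup_{j<i}A_j \subseteq X \subsetneq \bigcup_{j\le i}A_j$. The $|Y_i|$ are independent binomial random variables with mean $(2^{|A_i|}-1)p = e^{(\log 2/\ell - c)n + o(n)}$, which is exponentially large for $c<(\log2)/\ell$; w.h.p.\ each $|Y_i|>t$, and any $t$ elements from each $Y_i$ give a copy of $C_\ell(t)$, since every element of $Y_i$ is strictly contained in every element of $Y_{i+1}$. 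This does away with all the conditioning and the ``finding boundary sets'' step that dominate your sketch.
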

Note the lower bound is asymptotically tight as $t\rightarrow \infty$.
\begin{proof}
For the lower bound, let $\sqcup_{i=1}^{\ell}A_{i}$ be an equipartition of $[n]$. Suppose $c<(\log 2 )/\ell$ is fixed, and set $p=e^{-cn}$. Let $Y_i= \P(n,p)\cap \{X\in \P(n): \bigcup_{j<i}A_j \subseteq X \subsetneq \bigcup_{j\leq i}A_j \}$. Clearly the $\vert Y_i\vert$ are independent binomially distributed random variables, each with  $\mathbb{E} Y_i =\left(2^{\vert A_i\vert}-1\right)p = \exp\left(\left(\frac{\log 2}{\ell}-c\right)n+o(n)\right)$. In particular our choice of $c$ ensures that w.h.p. $\vert Y_i\vert >t$ for all $i$. 
Taking any $t$ elements from each of the $Y_i$ then yields a copy of $C_{\ell}(t)$ in $\P(n,p)$. It follows that $c_{\star}(C_{\ell}(t))\geq (\log 2)/\ell$ as claimed. 

For the upper bound, we simply appeal to \eqref{eq: upper bound on c*(P)}, noting that $a\left(C_{\ell}(t)\right)= \ell 2^{t}- (\ell-1)$.
\end{proof}

\subsection{Universality}
Our next aim is to establish the existence of a  universality threshold $c_u$, such that for $c$ smaller than $c_u$ almost all fixed posets appear in $\mathcal{P}(n,p)$.

In 1975 Kleitman and Rotschild \cite{KR75} gave a structural description of a typical poset  on $N$ elements.  Given a ground set $V_N$ of $N$ elements, define a class of posets $A_N$ on $V_N$ as follows.   For every member $P$  of $A_N$, we have a partition of 
$V_N$ into three antichains $L_1, L_2, L_3$ such that $\vert \vert L_i\vert -N/4\vert<\sqrt{N}\log(N)$ for $i=1,3$ and $\vert \vert L_i\vert -N/2\vert <\log(N)$ for $i=2$, together with the following poset relations:  for every $x\in L_1$ and $y\in L_3$, $x <_P y$; for every $x\in L_1$ and $y\in L_2$, either $x<_Py$ or  $x$ and $y$ are incomparable in $P$;  likewise for every $x\in L_2$ and $y\in L_3$, either $x<_Py$ or $x$ and $y$ are incomparable in $P$.
\begin{theorem}[Kleitman and Rotschild, 1975]\label{K-R}
	Asymptotically almost every poset on a set $V_N$ of $N$ (labelled) elements belongs to $A_N$, i.e.  $\lim_{N\rightarrow \infty}\frac{\vert \mathcal{P}_N\vert}{\vert A_N\vert} =1$, where $\mathcal{P}_N$ denotes the collection of all posets on $V_N$.
\end{theorem}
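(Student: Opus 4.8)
Since this is precisely the Kleitman--Rothschild structure theorem, the plan is simply to quote it from~\cite{KR75} and use it as a black box; none of the machinery developed earlier in this paper is relevant to its proof. It is nonetheless worth recording the shape of the argument, since it explains why the layer sizes $N/4,\ N/2,\ N/4$ and the error windows in the definition of $A_N$ are forced. Write $\mathcal{P}_N$ for the collection of all labelled posets on $V_N$ and set $Q_N:=\vert A_N\vert$; the goal is $\vert\mathcal{P}_N\vert=(1+o(1))Q_N$, which I would obtain by sandwiching the trivial bound $\vert\mathcal{P}_N\vert\ge Q_N$ (valid since $A_N\subseteq\mathcal{P}_N$) with a matching upper bound.

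The first step is a purely combinatorial count of $Q_N$. A member of $A_N$ is specified by an ordered partition $V_N=L_1\sqcup L_2\sqcup L_3$ with sizes $(a,b,c)$ in the prescribed windows, together with, for each pair in $L_1\times L_2$ and each pair in $L_2\times L_3$, an independent ``comparable/incomparable'' choice, the relations $L_1<L_3$ being forced (and one checks transitivity then imposes no further constraint). Hence the number of members with a given size profile is $\binom{N}{a,b,c}\,2^{ab+bc+o(N^2)}$. The dominant term $ab+bc=b(N-b)$ is maximised at $b=N/2$, and then $\binom{N}{a,N/2,a}$ pins $a=c$ near $N/4$; a perturbation of $b$ by $\delta$ costs $\delta^2$ in the $N^2$-scale exponent while a perturbation of $a$ (with $b$ fixed) costs $\Theta(\delta^2/N)$, which is exactly why the windows $\pm\log N$ and $\pm\sqrt{N}\log N$ appear. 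This gives $Q_N=2^{N^2/4+3N/2+O(\log N)}$.

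The second step is the upper bound $\vert\mathcal{P}_N\vert\le(1+o(1))Q_N$, which I would organise as: (a) almost every poset on $V_N$ has no chain of four elements; (b) height-$\le 2$ posets are negligible; and (c) almost every poset of height exactly $3$ lies in $A_N$. Step (b) is an easy count: a height-$\le 2$ poset is a bipartite-type relation on $V_N$, of which there are only $2^{N^2/4+N+o(N)}$, so their fraction is $2^{-N/2+o(N)}=o(1)$. Step (c) combines the concentration estimates of the first paragraph with the observation that, for $\approx\tfrac12$-dense bipartite relations $L_1$--$L_2$ and $L_2$--$L_3$ (which is the typical case), almost every pair in $L_1\times L_3$ has a common neighbour in $L_2$ and is therefore forced to be comparable --- so a.e. height-$3$ poset automatically has $L_1<L_3$ complete and balanced layers, i.e. lies in $A_N$.

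The main obstacle is step (a): showing a typical poset has height at most three. This is the delicate heart of the Kleitman--Rothschild argument, and it cannot be done by a crude count, since one must rule out the \emph{a priori} possibility of an abundance of tall posets. The idea is a compression/encoding argument: a chain of four elements forces many comparabilities and strong correlations among the remaining pairs, so any poset containing such a chain can be encoded using at most $N^2/4-\Omega(N^2)$ bits; summing over all such posets then gives a bound that is super-exponentially (on the $N^2$ scale) smaller than $Q_N$. Granting (a)--(c), combining the two bounds proves the theorem; for the purposes of this paper we simply invoke~\cite{KR75}.
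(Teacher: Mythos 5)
The paper does not prove this theorem either: it is stated with attribution to Kleitman and Rothschild and simply cited from~\cite{KR75} as a black box, which is exactly what you propose. Your additional sketch of the layer-size count, the concentration windows, and the compression argument for bounding the height is a reasonable overview of the original proof, but it goes beyond anything the paper itself does with this statement.
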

One consequence of this theorem is that if we consider the uniform probability measure on $A_N$, by making each relation between elements from $L_1, L_2$  and $L_2,L_3$ exist with probability $1/2$, 
 then the corresponding random poset will be contiguous with the uniform distribution on $\mathcal{P}_N$, in the sense that the asymptotic 0/1 events of the two distributions agree.  See \cite{JLR} for a more detailed discussion of contiguous random models.

Using the  Kleitman--Rotschild theorem we can establish a universality result for the appearance of posets on $N$ elements as subposets of $\P(n,p)$. 
\begin{theorem}\label{universality}
Almost all posets $P$ on $N$ elements satisfy
\[c_{\star}(P) = \frac{\log 2}{3} + O\left(\frac{1}{\log N}\right),\]
where the lower order term is positive.
\end{theorem}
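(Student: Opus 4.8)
The plan is to exploit the Kleitman--Rotschild structure theorem (Theorem~\ref{K-R}): since almost all posets $P$ on $N$ elements lie in $A_N$, it suffices to show that $c_\star(P) = \frac{\log 2}{3} + O(1/\log N)$ for posets $P\in A_N$ (perhaps after discarding a further $o(1)$-fraction of exceptional posets inside $A_N$, which we can afford). So fix $P \in A_N$ with its three-layer structure $L_1, L_2, L_3$, where $|L_1|, |L_3| = N/4 + O(\sqrt{N}\log N)$ and $|L_2| = N/2 + O(\log N)$.

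For the \textbf{upper bound} $c_\star(P) \le \frac{\log 2}{3} + O(1/\log N)$, I would use~\eqref{eq: upper bound on c*(P)}: $c_\star(P) \le \frac{\log a(P)}{|P|}$. Here $|P| = N$, so I need $a(P) \le 2^{N/3 + O(N/\log N)}$, i.e. an upper bound on the number of antichains of a typical Kleitman--Rotschild poset. An antichain of $P$ is determined by its intersections $A\cap L_i$; since $L_1 <_P L_3$ entirely, an antichain cannot meet both $L_1$ and $L_3$. The $L_1$--$L_2$ and $L_2$--$L_3$ relations are random, so a uniform random subset of $L_2$ of size $\approx N/2$ has, in expectation, only a bounded-in-probability number of elements it can be extended by into $L_1$ (or $L_3$); more precisely, for a fixed antichain $S_2\subseteq L_2$, the elements of $L_1$ incomparable to all of $S_2$ number roughly $|L_1| 2^{-|S_2|}$ in expectation. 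Summing $2^{|S_2|}$ times this count over all $S_2$, one finds $a(P)$ is dominated by the $2^{|L_2|}$ choices of $S_2$ together with a subexponential (in fact polynomial, after a union bound / concentration argument over $P\in A_N$) correction from the $L_1$ or $L_3$ part. This gives $a(P) = 2^{N/2}\cdot 2^{O(\log^2 N)}$ for almost all $P\in A_N$, hence $c_\star(P)\le \frac{\log 2}{2N}\cdot N \cdot \frac{1}{?}$ --- wait, that gives $\frac{\log 2}{2}$, which is too large. So the naive upper bound~\eqref{eq: upper bound on c*(P)} is \emph{not} tight here; instead I would use the sharper bound of Proposition~\ref{prop: bound on c_{star}(P) balanced} applied after passing to the natural bounded poset obtained by adding a global min and max (or, better, directly bound $c_{\boldsymbol\alpha, P}(Q)$ for a cleverly chosen small subposet $Q$). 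The key realization: take $Q$ to consist of one element $x_1\in L_1$ and one element $x_3\in L_3$ with $x_1<_P x_3$ forming a $C_2$; then for \emph{any} weighting $\boldsymbol\alpha$, $c_{\boldsymbol\alpha, P}(Q) = H_3(\partial_Q\boldsymbol\alpha)/2 \le \frac{\log 3}{2}$, still too weak. The right obstruction is a $C_3$ through the three layers: $x_1<_P x_2<_P x_3$ with $x_i\in L_i$. By Theorem~\ref{theorem: cstar for chains}, $c_\star(C_3) = \frac{\log 4}{3} = \frac{2\log 2}{3}$ --- still not $\frac{\log 2}{3}$. This tells me the correct extremal subposet is \emph{wide}: the whole of $P$ behaves like a $C_3(t)$-type blow-up, and by Proposition~\ref{proposition: existence threshold for t-blow up of Cell} with $\ell=3$ we get $c_\star(C_3(t)) \to \frac{\log 2}{3}$ as $t\to\infty$. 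So the upper bound should come from embedding a large $C_3(t)$ inside $P$ (using that almost every $P\in A_N$ contains, by a second-moment / concentration argument on the random $L_1$--$L_2$ and $L_2$--$L_3$ bipartite relations, an induced $C_3(t)$ with $t = \Theta(\log N)$ say), giving $c_\star(P) \le c_\star(C_3(t)) \le \frac{\log 2}{3} + \frac{\log(3 - 2\cdot 2^{-t})}{3t} = \frac{\log 2}{3} + O(1/\log N)$.

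For the \textbf{lower bound} $c_\star(P) \ge \frac{\log 2}{3} - O(1/\log N)$ (with the difference positive), I would run a direct probabilistic argument in the spirit of the proofs of Theorems~\ref{theorem: cstar for stars}, \ref{theorem: cstar for wide diamonds} and Proposition~\ref{proposition: existence threshold for t-blow up of Cell}. Fix $c < \frac{\log 2}{3}$, $p = e^{-cn}$, and an equipartition $[n] = A_1\sqcup A_2\sqcup A_3$. First find, w.h.p., the ``spine'' sets: for the at most $N/4 + O(\sqrt N\log N)$ elements of $L_1$ we want distinct sets inside the layer $\subseteq A_1$; for $L_3$, distinct sets containing $A_1\cup A_2$; and crucially for $L_2$, we must realize each $x_2\in L_2$ by a set $X_{x_2}$ with $A_1\subseteq X_{x_2}\subseteq A_1\cup A_2$, where $X_{x_2}\supseteq X_{x_1}$ iff $x_1 <_P x_2$ and $X_{x_2}\subseteq X_{x_3}$ iff $x_2<_P x_3$. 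Because $|A_2| = n/3$, the subcube between $A_1$ and $A_1\cup A_2$ is a copy of $\P(n/3,p')$ with $p' = e^{-cn} = e^{-3c(n/3)}$ and $3c < \log 2$; since $\log 2$ is (at least) the threshold for embedding \emph{any} bounded-width structure of the complexity we need, and more carefully since $P$ restricted to $L_2$ together with the downsets/upsets toward $L_1, L_3$ is a poset whose $c_\star$ we can bound below by a quantity tending to $\log 2 /1 = \log 2$... this is where the $O(1/\log N)$ slack enters: we only need $c < \frac{\log 2}{3} - O(1/\log N)$, and a Chernoff/second-moment estimate shows the relevant binomial counts (number of extensions available at each stage, over $N = 2^{O(n)}$-independent... no, $N$ fixed) remain exponentially large, so greedily or via a union bound over the $\le N^{O(1)}$ required sets we can realize the embedding. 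I would structure this as: (1) realize $X_{x_1}$ for $x_1\in L_1$ inside $A_1$; (2) for each $x_2\in L_2$ in turn, the number of candidate sets $X_{x_2}$ with $\bigcup_{x_1<_P x_2}X_{x_1}\subseteq X_{x_2}\subseteq A_1\cup A_2$ lying in $\P(n,p)$ is binomial with mean $\ge (2^{|A_2| - O(\sqrt N\log N)} - N)p \ge 2^{n/3 - cn - o(n)}$, exponentially large since $c < \frac{\log 2}{3}$ — pick one avoiding all previously-chosen sets; (3) similarly realize $X_{x_3}\supseteq A_1\cup A_2$ for $x_3\in L_3$. A union bound over the $\le N^2$ stages and $\le N^2$ collision events finishes it. The positivity of the error term: the upper bound error $\frac{\log(3 - 2\cdot 2^{-t})}{3t}$ with $t\asymp\log N$ is strictly positive, and the lower bound gap is made $o(1)$ but we only claim $c_\star(P)$ equals $\frac{\log 2}{3}$ up to an $O(1/\log N)$ term, consistent with both sides.

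The \textbf{main obstacle} is the upper bound: showing that almost every Kleitman--Rotschild poset $P$ contains a large induced blow-up $C_3(t)$ with $t = \Omega(\log N)$ (so that Proposition~\ref{proposition: existence threshold for t-blow up of Cell} gives a matching upper bound), which requires a concentration argument on the random bipartite relations between consecutive layers --- essentially finding, in a random bipartite graph between $L_1$ and $L_2$ (and similarly $L_2$, $L_3$), a balanced complete bipartite "co-subgraph" of the incomparability relation of logarithmic size, or more precisely three mutually "all-below" antichains of size $t$; a Kővári--Sós--Turán / probabilistic deletion argument should yield $t = \Theta(\log N)$. The delicate point is coordinating the $L_1$--$L_2$ and $L_2$--$L_3$ choices so the three antichains are simultaneously compatible, but since the two bipartite relations are independent this is a routine (if slightly involved) second-moment computation.
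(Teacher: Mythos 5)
Your upper bound, after several false starts (the naive count $\log a(P)/|P|$, a two-element chain, a three-element chain), eventually converges on the correct idea: almost every $P \in A_N$ contains a copy of $C_3(t)$ as a subposet with $t = \Theta(\log N)$, whence $c_\star(P) \le c_\star(C_3(t)) \le \frac{\log 2}{3} + O(1/\log N)$ by Proposition~\ref{proposition: existence threshold for t-blow up of Cell}. This matches the paper's argument.

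Your lower bound, however, takes a genuinely more complicated route than necessary and has some issues. The key observation the paper makes --- and which you do not use --- is that \emph{every} $P \in A_N$ is itself a subposet of $C_3(N) = C(N,N,N)$: simply map each $L_i$ injectively into the $i$-th level. Since $c_\star$ is monotone under subposets (if $P$ is a subposet of $Q$ then $c_\star(P) \ge c_\star(Q)$), this immediately gives $c_\star(P) \ge c_\star(C_3(N)) \ge \frac{\log 2}{3}$ from the lower bound already proved in Proposition~\ref{proposition: existence threshold for t-blow up of Cell}, with \emph{no error term} at all. Your direct probabilistic construction essentially re-proves that proposition from scratch for a specific case, and with two added complications: (a) you impose ``iff'' conditions on the $L_2$ layer (``$X_{x_2}\supseteq X_{x_1}$ iff $x_1 <_P x_2$''), i.e.\ you aim for an \emph{induced} copy of $P$, which is strictly stronger than what the statement requires and which would force you to control incomparabilities as well as comparabilities --- a nontrivial extra burden; and (b) you would need to handle the fact that the $A_N$ layers are only approximately of sizes $N/4, N/2, N/4$. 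All of this is bypassed by the monotonicity observation. Finally, your framing ``$c_\star(P) \ge \frac{\log 2}{3} - O(1/\log N)$'' is weaker than what is actually true: the lower bound holds exactly at $\frac{\log 2}{3}$, and this is precisely why the theorem can assert that the $O(1/\log N)$ term is positive.
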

\begin{proof}
By Theorem~\ref{K-R}, it is enough to show that for a uniformly chosen random poset $P$ from $A_N$ we have that w.h.p. $c_{\star}(P)=(\log 2)/3 + O\left(1/\log N\right)$.	
	
For the lower bound, note that every poset $P$ in $A_N$ is clearly a  subposet of $C(N, N,N)=C_3(N)$. Thus $c_{\star}(P)\geq c_{\star}(C_3(N))$, which by Proposition~\ref{proposition: existence threshold for t-blow up of Cell} is at least $(\log 2)/3$.

For the upper bound, it is an easy exercise in discrete probability to show that there is a  constant $b>0$, such that w.h.p. a uniformly chosen random poset $P$ from $A_N$  contains a copy  of   $C(t,t,t)=C_3(t)$, where $t= \lceil b \log N\rceil$.   For such a $P$, we thus have $c_{\star}(P)\leq c_{\star}(C(t,t,t))$, which by Proposition~\ref{proposition: existence threshold for t-blow up of Cell} is at most $(\log 2)/3 + O\left(1/\log N\right)$.	
\end{proof}

\subsection{Small examples}\label{section: small examples}
We now turn to examples of computations of the exact or approximate value of $c_{\star} (P)$ for various small posets $P$, including some of the posets used for our Ramsey results in the next section.

Write $V$ for the $3$-element poset on $\{A,B,C\}$ with  $A<_VB$, $A<_V C$. Thus $V= C(1,2)$. Set also $\Lambda$ to be the reverse of $V$, i.e. the $3$-element poset on $\{A,B,C\}$ with $A>_{\Lambda }B,C$. Theorem~\ref{theorem: cstar for stars} determined $c_{\star}(V)$. Now for any poset $P$, $c_{\star}(P)=c_{\star}(R(P))$ (since $R(\P(n,p))$ and $\P(n,p)$ have the same distribution). Thus Theorem~\ref{theorem: cstar for stars} also determined $c_{\star}(\Lambda)$. Combining this with Theorem~\ref{theorem: cstar for chains} we thus have the existence thresholds for all posets $P$ on at most $3$ elements.

Next, let $\Lambda'$ be the poset obtained from $\Lambda$ by adding two elements $D,E$ and the relations $B>_{\Lambda'}D$ and $C>_{\Lambda'}E$.
Let $Y$ denote the poset on $\{A,B,C,D\}$ defined by the relations $A<_YB$ and  $B<_Y C, D$ (so $Y=C(1,1,2)$.  Let $Y'$ denote the poset obtained from $Y$ by adding two new elements  $E, F$ and the relations $A <_{Y'} E <_{Y'} F$. Let $Y''$ be the poset obtained from $Y'$ by adding four new elements $G, H, I, J$ and the relations $G<_{Y''}A$ and $G<_{Y''}H<_{Y''}I<_{Y''}J$.

Further let $T_2$ denote the binary tree of height $3$, which is the poset obtained from $Y'$ by adding a new element $G$ and the relation $E<_{T_2}  G$. Let $F$ denote the fish-like poset obtained from $T_2$ by identifying the elements $D$ and $F$.

We will also need results about the existence thresholds of the `long $Y$' $C(1,1,1,2)$, of $C(2,3,2)$, of the kite-like poset $C(1,1,2,1)$, of the double diamond $DD$, which is obtained from $C(1,2,2,1)$ by removing one of the inequalities between the elements in the second and third layer. Finally we will need to know the existence thresholds for $C(1,2,1,2,1)$ (a diamond on top of a diamond), and for the poset $H$ defined by the Hasse diagram in Figure~2.

In  Figure \ref{fig:posets} we display  the Hasse diagrams for some of these posets.   Using the bounds from the previous sections we have computed the threshold  for all connected posets on at most four elements, and some of the additional examples which we will use in our results on Ramsey thresholds.  In some cases we have to settle for upper and lower bounds on the threshold.   These results are compiled in Table \ref{tab:th}.
For uniform and balanced posets we state a numerical version of the  exact threshold in the lower bound column of the table.  For general posets we state the best lower bound we have found by a numerical procedure, and the best upper bound found by either using the simple upper bound $\frac{\log a(P)}{\vert P \vert}$  or the best upper bound for a subposet for $P$. In the final column  of the table we state which class the posets belongs to, and we include the label Exact for posets which do not belong to our general classes but for which we can nonetheless determine the exact value of the threshold (in terms of the  solution to an equation involving entropy functions).

We have already seen several examples of families of posets with identical thresholds,  in particular $C(1,t)$  and $C(1,t,1)$  for $t\geq 2$,  and for these we only include the smallest member in the table.  There are a few posets $P$ for  which the bounds on $c_{\star}(P)$ that we obtain are very close, and where the thresholds should in fact be the same, for instance $Y$ and $Y'$. 
\begin{conjecture}
	$c_{\star}(Y)=c_{\star}(Y')$.
\end{conjecture}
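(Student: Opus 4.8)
The plan is to determine $c_\star(Y)$ and $c_\star(Y')$ exactly and observe that they coincide. In the notation $C(1,Q)$ of Proposition~\ref{proposition: general lower bounds for existence}, deleting the minimum $A$ of $Y$ leaves a copy of $V = C(1,2)$, so $Y = C(1,V)$; similarly $Y' = C(1,W)$, where $W$ is the poset on $\{B,C,D,E,F\}$ whose elements split into the two mutually incomparable parts $\{B,C,D\}$ (a copy of $V$) and $\{E,F\}$ (a copy of $C_2$). By Lemma~\ref{discon} together with Theorems~\ref{theorem: cstar for chains} and~\ref{theorem: cstar for stars},
\[ c_\star(W) = \min\{c_\star(V),\ c_\star(C_2)\} = \min\bigl\{c_\star(V),\ \tfrac12\log 3\bigr\} = c_\star(V) \]
(numerically $0.5357\ldots < 0.5493\ldots$). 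Hence Proposition~\ref{proposition: general lower bounds for existence}(i), applied to $Q = V$ and to $Q = W$, gives the \emph{same} lower bound for both posets: $c_\star(Y) \geq H_2(\xi)$ and $c_\star(Y') \geq H_2(\xi)$, where $\xi \in [0,1/2]$ is the unique root of $(1-\xi)\, c_\star(V) = H_2(\xi)$. On the other hand $Y$ is an induced subposet of $Y'$, so every copy of $Y'$ in $\P(n,p)$ contains a copy of $Y$; by Theorem~\ref{theorem: existence threshold} this gives $c_\star(Y') \leq c_\star(Y)$. Therefore it is enough to prove the single inequality $c_\star(Y) \leq H_2(\xi)$: combined with the above it sandwiches $H_2(\xi) \leq c_\star(Y') \leq c_\star(Y) \leq H_2(\xi)$ and forces $c_\star(Y) = c_\star(Y') = H_2(\xi)$.

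To prove $c_\star(Y) \leq H_2(\xi)$ I would argue from the max--min formula~\eqref{eq: c_{star} def max min}. By Corollary~\ref{cor: exists Aut and R-Aut invariant optimal weighting} there is an optimal weighting $\boldsymbol{\alpha} \in \triangle_{a(Y)}$ invariant under the unique non-trivial automorphism of $Y$ (which swaps the maximal elements $C$ and $D$); write $a := \alpha_{\{A\}}$. Since shadows compose, every non-empty subposet $R \subseteq \{B,C,D\}$ --- which is also a subposet of $Y$ --- satisfies $c_{\boldsymbol{\alpha}, Y}(R) = c_{\boldsymbol{\gamma}, V}(R)$, where $\boldsymbol{\gamma} := \partial_{\{B,C,D\}}(\boldsymbol{\alpha})$, and a short computation gives $u := \gamma_{\{B\}} = \alpha_{\{A\}} + \alpha_{\{B\}} \geq a$ (with $\{B\}$ the minimum of the copy of $V$). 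Three families of subposets then supply the essential constraints: the singleton $\{A\}$ gives $c_\star(Y) \leq H_2(a)$; the chain $\{A,B\} \cong C_2$ gives $c_\star(Y) \leq \tfrac12 H_3(a,\, u-a,\, 1-u)$; and running over the $R \subseteq \{B,C,D\}$ gives, by the entropy estimates in the proof of Theorem~\ref{theorem: cstar for stars}, $c_\star(Y) \leq \min\bigl\{H_2(u),\ \tfrac13\bigl(-u\log u - (1-u)\log\tfrac{1-u}{4}\bigr)\bigr\}$, a function of $u$ which rises to its maximum $c_\star(V)$ and then falls. What is left is a two-variable constrained entropy optimisation: one must show that these bounds, holding for every admissible $(a,u)$ with $0 \leq a \leq u \leq 1$, together force $c_\star(Y) \leq H_2(\xi)$, the extremal configuration being the one behind Proposition~\ref{proposition: general lower bounds for existence} --- namely $X_A$ on layer $\xi n$, with a squashed copy of $V$ sitting inside the sublattice $\{X \in \P(n): X_A \subseteq X\} \cong \P((1-\xi)n)$ above it.

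The main obstacle is precisely this optimisation, and two features make it awkward. First, one must check that the three families of subposets above already certify $c_\star(Y) \leq H_2(\xi)$ for \emph{every} weighting, i.e.\ that exploiting the remaining subposets (such as $\{A,C\}$ or $\{A,C,D\}$) cannot do better; this looks plausible given the convexity and symmetry of $\mathit{Opt}(Y)$ (Propositions~\ref{prop: opt(p) is convex} and~\ref{prop: Opt(P) closed under Aut(P) and R-Aut(P)}), but --- as the authors note is typical for such computations --- it most likely requires a finite check over all $2^{\vert Y\vert} - 1$ subposets, plausibly with computer assistance. Second, the inequality lives in a genuinely narrow window: one has $H_2(\xi) \leq c_\star(Y) \leq \tfrac{\log a(Y)}{\vert Y\vert} = \tfrac14\log 6 \approx 0.448$, and $Y$ is \emph{not} uniformly balanced (already $c_{\boldsymbol{u},Y}(\{A,B\}) = \tfrac12 H_3(\tfrac16,\tfrac16,\tfrac23) < \tfrac14\log 6$, so $c_\star(Y) < \tfrac14\log 6$), while $H_2(\xi)$ falls only a whisker below $\tfrac14\log 6$; thus a first-moment estimate on the number of copies of $Y$ is too crude to pin down the threshold and a sharp argument is unavoidable. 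Finally, the obvious shortcut --- bypassing $c_\star(Y)$ by showing that for any $c < c_\star(Y)$ the Janson-based machinery of Theorem~\ref{theorem: existence threshold}(ii) already produces an induced copy of $Y$ whose bottom element $X_A$ occupies a layer $an$ with $c < (1-a)c_\star(V) = (1-a)c_\star(W)$, so that $\{X : X_A \subseteq X\} \cong \P((1-a)n)$ also contains a copy of $W$ and hence $\P(n,p) \supseteq Y'$ --- only reaches $c < H_2(\xi)$ unless one already knows $c_\star(Y) = H_2(\xi)$, so it collapses back to the same difficulty.
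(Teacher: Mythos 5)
The paper states this only as an open conjecture, so there is no internal proof to compare against; I assess your attempt against the paper's theorems and the numerics in Table~\ref{tab:th}. Your reductions are correct: $Y = C(1,V)$ and $Y' = C(1,W)$ with $W = V \sqcup C_2$, so $c_{\star}(W) = c_{\star}(V)$ by Lemma~\ref{discon}; Proposition~\ref{proposition: general lower bounds for existence}(i) yields the identical lower bound $H_2(\xi)$ for both; and $c_{\star}(Y') \leq c_{\star}(Y)$ since $Y$ is a subposet of $Y'$. But the inequality your sandwich then needs, $c_{\star}(Y) \leq H_2(\xi)$, is almost certainly \emph{false}. Numerically $\xi \approx 0.1648$ and $H_2(\xi) \approx 0.44747$, whereas Table~\ref{tab:th} records the certified lower bound $c_{\star}(Y) \geq 0.44769950\ldots$, a discrepancy of about $2.3 \times 10^{-4}$, far outside round-off (and, strikingly, the table value essentially coincides with $c_{\star}(C(1,2,1)) = 0.447699551\ldots$). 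So the Proposition~\ref{proposition: general lower bounds for existence} bound is not tight for $Y$, $H_2(\xi)$ is the wrong target value, and the sandwich cannot close. Relatedly, your shortlist of constraints for the max--min optimisation omits $Q = Y$ itself, which is often the binding one: at $\boldsymbol{\alpha} = (0.17,\, 0.19,\, 0.16,\, 0.16,\, 0.16,\, 0.16)$ your three listed constraints give a minimum of roughly $0.451$, while $c_{\boldsymbol{\alpha},Y}(Y) = \tfrac{1}{4} H_6(\boldsymbol{\alpha}) \approx 0.4474 < H_2(\xi)$.

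You also discard the ``shortcut'' too quickly. As you state it, you look for a fresh copy of all of $W = V \sqcup C_2$ above $X_A$, which requires $c < (1-a)c_{\star}(W) = (1-a)c_{\star}(V)$ and indeed degenerates to the Proposition~\ref{proposition: general lower bounds for existence} bound. But a copy of $Y$ already supplies a copy of $V$ above $X_A$; to extend to $Y'$ one need only locate a further copy of $C_2$ in a sub-cube of $\{X : X_A \subseteq X\}$ that avoids that $V$ (precisely the device in the proof of Lemma~\ref{discon}), and this requires merely $c < (1-a)c_{\star}(C_2)$, a strictly weaker demand since $c_{\star}(C_2) = \tfrac{1}{2}\log 3 > c_{\star}(V)$. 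This version does not collapse to $H_2(\xi)$: it reduces the conjecture to showing that some $\boldsymbol{\alpha}^{\star} \in \mathit{Opt}(Y)$ satisfies $\alpha^{\star}_{\{A\}} \leq 1 - c_{\star}(Y)/c_{\star}(C_2)$, which the $Q = Y$ constraint and the numerics make plausible but which remains delicate; it is nonetheless a far more tractable endgame than pinning down $c_{\star}(Y)$ exactly.
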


\begin{table}[ht]
	\begin{center}
		\begin{tabular}{|r||r|r|r|}
			\hline 
			 Name &	\multicolumn{1}{c|}{L.b.}   & \multicolumn{1}{c|}{U.b.}   & \multicolumn{1}{c|}{Class}   \\ 
			\hline \hline
			$C(2)$	& 0.549306\ldots		&	 	&	Uniform	 \\ \hline 
			$V=C(1,2)$  & 0.53573885\ldots	 	&	 	&	Exact	 \\ \hline 	
			A2  & 0.51986038\ldots	 	&	 	&	Uniform	 \\ \hline 
			$C(2,2)$  & 0.48647753\ldots	 	&	 	&	Uniform	 \\ \hline 											
			$C(3)$  & 0.462098\ldots	 	&	 	&	Uniform	 \\ \hline 	
			 A1  & 0.4620981202 	&   0.4620981203 	&	General	 \\ \hline 	
			 
			 $\Lambda'$  &  0.455914351 &   0.46051702 	&	General	 \\ \hline

			$C(1,2,1)$  & 0.447699551\ldots	 	&	 	&	Balanced	 \\ \hline

			$Y$  & 0.44769950088	 	& 0.44793987	 	&	General	 \\ \hline
			$Y'$  & 0.44769951418	 	&	0.44793987 	&	General	 \\ \hline 							 	
			$T_2$  & 0.4474689916	 	&	0.44793987  	&	General	 \\ \hline 
			
			$F$  & 	0.43238626	&  0.43984289 	&	General	 \\ \hline 			
				
			$C(2,1,2)$  & 0.415888308\ldots	 	&	 	&	Uniform	 \\ \hline 				
			$C(1,2,2)$  & 0.415507009 	 	& 0.4158883 	 	&	General	 \\ \hline 				
			$C(4)$  & 0.402359\ldots	 	&	 	&	Uniform	 \\ \hline 	
			$C(1,1,2,1)$  & 0.3891411 	&  0.38918203 	&	General	 \\ \hline 		
			$C(1,1,1,2)$  & 	 0.3891411	&  0.38918203	 	&	General	 \\ \hline 			
				
			$Y''  $  &  	 0.38890390	&   0.38918203	 	&	General	 \\ \hline 				
			$DD$  & 	0.3816641132\ldots	&  	 	&	Balanced	 \\ \hline 				
			$C(2,3,2)$  & 0.376783	 	& 0.3770081   	 	&	General	 \\ \hline 				
			$\mathcal{P}(3)$  & 0.36356411\ldots	&	 	&	Uniform	 \\ \hline 				
			$C(1,2,1,2,1)$  & 0.3289037390\ldots	 	&	 	&	Uniform	 \\ \hline 				
			$H$  & 0.3250121326	 	& 0.328903	 	&	General	 \\ \hline 				
			
	\end{tabular}\end{center}
	\caption{Thresholds for small posets}
	\label{tab:th}
\end{table}

\begin{figure}[ht]
	\includegraphics[scale=1]{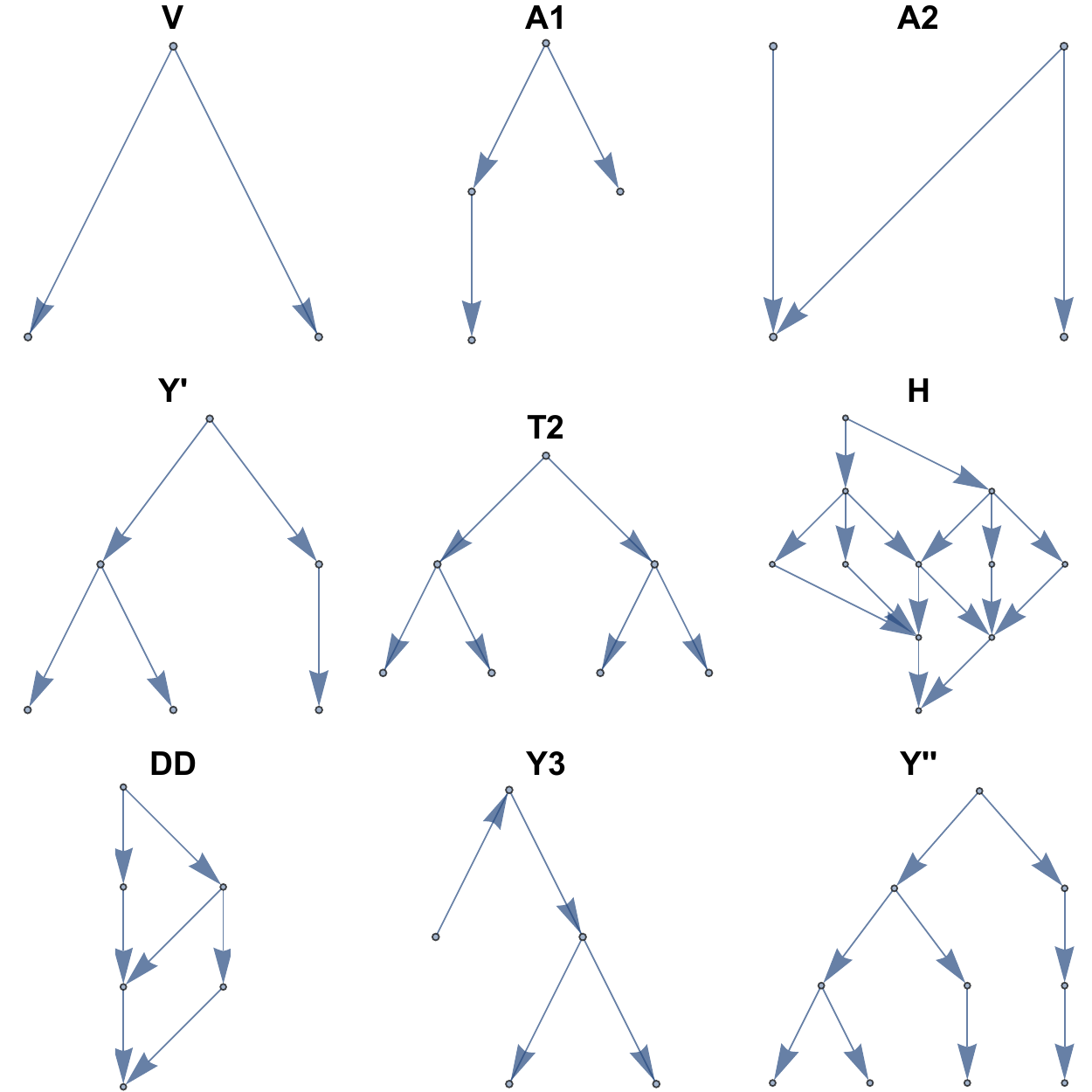}
	\caption{Small posets}\label{fig:posets}
\end{figure}

\section{Ramsey thresholds for posets}\label{sec:ramsey}

\subsection{Ramsey exponents}
Given non-empty posets $P,Q, R$, we say that $R$ is \emph{$(P,Q)$-Ramsey}   if in every $2$-colouring of the elements of $R$, there is either a copy of $P$ in colour $1$ or a copy of $Q$ in colour $2$. We write $R\rightarrow (P,Q)$ if $R$ is  $(P,Q)$-Ramsey, and $R\not\rightarrow (P,Q)$ otherwise. The \emph{poset Ramsey number} $R(P,Q)$ of the pair $(P,Q)$ is defined to be the least $N\in \mathbb{N}$ such that $\P(N)\rightarrow (P,Q)$. Recall from the introduction that this number exists and is finite for every $(P,Q)$. In this section, we consider the problem of determining the range of $p=e^{-cn}$ for which w.h.p.  $\P(n,p)\rightarrow (P,Q)$.

Define the \emph{lower} and \emph{upper Ramsey exponents} of $(P,Q)$ $c_{\mathrm{Ram}^-}(P,Q)$ and $c_{\mathrm{Ram}^+}(P,Q)$ to be 
\[c_{\mathrm{Ram}^-}(P,Q):= \sup\Bigl\{c> 0: \ \P(n,e^{-cn})\rightarrow(P,Q) \textrm{ holds w.h.p.}  \Bigr\}\]
and
\[c_{\mathrm{Ram}^+}(P,Q):= \inf\Bigl\{c>0: \ \P(n,e^{-cn})\not\rightarrow(P,Q) \textrm{ holds w.h.p.}  \Bigr\}.\]
Clearly
\begin{align*}
0\leq c_{\star}\left(\P(R(P,Q)) \right)\leq c_{\mathrm{Ram}^-}(P,Q) \leq c_{\mathrm{Ram}^+}(P,Q)\leq \max \left \{ c_{\star}(P), c_{\star}(Q)\right \},
\end{align*}
so these exponents are well-defined. If $c_{\mathrm{Ram}^-}(P,Q) = c_{\mathrm{Ram}^+}(P,Q)$, then with say that their common value is the \emph{critical Ramsey exponent} for $(P,Q)$, and denote it by $c_{\mathrm{Ram}}(P,Q)$.
\begin{conjecture}\label{conjecture: Ramsey exponents exist}
	For every pair of fixed posets $(P,Q)$, $c_{\mathrm{Ram}}(P,Q)$ exists.
\end{conjecture}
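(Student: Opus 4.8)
The property $\mathcal A_n:=\{\,\mathcal Q\subseteq\P(n):\ \mathcal Q\rightarrow(P,Q)\,\}$ is monotone increasing (restricting a $2$-colouring of a poset to a sub-poset preserves monochromatic copies) and invariant under the action of $S_n$ on $\P(n)$ induced by permutations of the ground set $[n]$. Writing $\mu_n(c):=\mathbb P\bigl(\P(n,e^{-cn})\in\mathcal A_n\bigr)$, which by monotonicity is a continuous non-increasing function of $c$, we see that for each $\varepsilon\in(0,1/2)$ there are exponents $c_n^{+}(\varepsilon)\leq c_n^{-}(\varepsilon)$ with $\mu_n(c_n^{-}(\varepsilon))=\varepsilon$ and $\mu_n(c_n^{+}(\varepsilon))=1-\varepsilon$, and that Conjecture~\ref{conjecture: Ramsey exponents exist} is equivalent to the conjunction of: (a) \emph{the threshold is sharp on the exponential scale}, i.e.\ $c_n^{-}(\varepsilon)-c_n^{+}(\varepsilon)\to 0$ for each fixed $\varepsilon$; and (b) \emph{the threshold location converges}, i.e.\ $c_n^{+}(\varepsilon)$ converges as $n\to\infty$ (its limit then being automatically independent of $\varepsilon$). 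The plan is to attack (a) and (b) separately.

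For (a) one would exploit the $S_n$-symmetry via the Friedgut--Kalai / Bourgain--Kalai theory of sharp thresholds. For a monotone Boolean function on $N$ variables invariant under a transitive group, the total influence satisfies $I_p\geq c_0\,\mu(1-\mu)\log(1/p)$ for an absolute $c_0>0$; feeding this into Russo's lemma $\tfrac{d\mu}{dp}=I_p/p$ and integrating $\tfrac{d}{dp}\log\tfrac{\mu}{1-\mu}\geq c_0\,\tfrac{\log(1/p)}{p}$ between $p_-=e^{-c_n^{-}n}$ and $p_+=e^{-c_n^{+}n}$ gives
\[
\bigl(\log(1/p_-)\bigr)^2-\bigl(\log(1/p_+)\bigr)^2\ \leq\ \tfrac{4}{c_0}\log\tfrac{1-\varepsilon}{\varepsilon},
\]
i.e.\ $\bigl(c_n^{-}n\bigr)^2-\bigl(c_n^{+}n\bigr)^2=O_\varepsilon(1)$ and hence $c_n^{-}(\varepsilon)-c_n^{+}(\varepsilon)=O_\varepsilon(1/n^2)$ (using $c_n^{\pm}\geq c_{\mathrm{Ram}^-}/2>0$ for large $n$). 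The catch is that $S_n$ is \emph{not} transitive on $\P(n)$: its orbits are the layers $\binom{[n]}{k}$, with sizes ranging from $1$ to $\binom{n}{\lfloor n/2\rfloor}$. To cope with this, fix $c<\log 2$ in the relevant window (the value $\log 2$ only arising when $P$ or $Q$ is an antichain, a case easily settled directly) and pick $\varepsilon_0\in(0,1/2)$ with $H_2(\varepsilon_0)<c$; then the expected number of elements of $\P(n,e^{-cn})$ in the extreme layers $\{k\le\varepsilon_0 n\}\cup\{k\ge(1-\varepsilon_0)n\}$ is at most $e^{H_2(\varepsilon_0)n}e^{-cn}=o(1)$, so w.h.p.\ these coordinates are absent and may be conditioned out. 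On the remaining bulk layers every $S_n$-orbit has size at least $\binom{n}{\lceil\varepsilon_0 n\rceil}=2^{\Theta(n)}$, and one needs an influence inequality robust to an action with such large-but-uneven orbits; this can be arranged by combining per-layer bounds ($S_n$ acts transitively on each single layer) or by using versions of the Bourgain--Kalai inequality valid for actions with polynomially related orbit sizes, at worst replacing $\log(1/p)$ by $\Theta(n)$ above --- still enough to force an $o(1)$ window.

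The genuine obstacle is (b), ruling out oscillation of the (sharp) threshold location with $n$. The only elementary leverage is that larger Boolean lattices are ``more Ramsey'': the natural coupling gives $\P(n,p)\subseteq\P(n+1,p)$ (as the sub-poset of subsets of $[n]$), hence $\mu_n\le\mu_{n+1}$ at fixed $p$; likewise, partitioning $[(1+\delta)n]$ into $[n]$ and a set $D$ of size $\delta n$ exhibits $2^{\delta n}$ pairwise disjoint copies of $\P(n,p)$ inside $\P((1+\delta)n,p)$. Unwinding these couplings at $p=e^{-cn}$ yields one-sided stability, $c_{n+1}^{+}\geq c_n^{+}\bigl(1-O(1/n)\bigr)$ --- the threshold cannot drop quickly --- but no matching bound stops it from jumping \emph{up}, so a sawtooth behaviour is not excluded by these arguments alone. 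Overcoming this is the poset analogue of the fact that Ramsey properties of $G_{n,p}$ possess sharp thresholds with well-defined limiting location, which in the graph case required the deep work of R\"odl--Ruci\'nski and, already for a monochromatic triangle in every $2$-colouring, the memoir of Friedgut--R\"odl--Ruci\'nski--Tetali. It is precisely this step that we do not see how to carry out in general, which is why the statement is recorded as a conjecture.
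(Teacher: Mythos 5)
The statement you were given is labelled as a \emph{conjecture} in the paper, and the authors offer no proof of it; indeed they return to it in the open-problems discussion and leave it unresolved, even as their Theorems~\ref{theorem: ramsey exponent for chains} and~\ref{theorem: ramsey exponent for V} establish special cases. So there is no ``paper's own proof'' to compare against, and what you have written is, as you yourself say in your final paragraph, a framing of the problem rather than a solution. You are correct to conclude that the conjecture remains open, and your decomposition into (a) sharpness of the threshold on the exponential scale and (b) convergence of the threshold location is a valid reformulation (both directions of the equivalence follow from an easy compactness argument together with monotonicity of $\mu_n$).

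That said, two cautions about your sketch for (a). First, the orbits of $S_n$ on $\P(n)$ are the layers $\binom{[n]}{k}$, and even after you throw away the extreme layers $k\le\varepsilon_0 n$ and $k\ge(1-\varepsilon_0)n$, the surviving orbit sizes range from $\binom{n}{\lceil\varepsilon_0 n\rceil}$ up to $\binom{n}{\lfloor n/2\rfloor}$, a ratio of order $e^{\Theta(n)}$. This is exponential, not polynomial, so the ``Bourgain--Kalai for polynomially related orbit sizes'' escape hatch you invoke is not available here; no sharp-threshold theorem known to us covers $S_n$ acting on $\P(n)$ through its layer decomposition. Second, your assertion that one can at worst ``replace $\log(1/p)$ by $\Theta(n)$'' is vacuous in this regime, since $\log(1/p)=cn$ is already $\Theta(n)$, so the purported weakening gives you nothing. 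The result is that (a) is also genuinely open, not merely routine; and, as you rightly observe, (b) is the deeper obstruction, being the poset analogue of the R\"odl--Ruci\'nski threshold theorem (and, for genuine sharpness, of the Friedgut--R\"odl--Ruci\'nski--Tetali machinery), for which no $\P(n,p)$ counterpart is currently known. Your write-up is honest and essentially correct as an assessment of the difficulty, but it does not constitute a proof, which is consistent with the paper's own treatment of the statement as a conjecture.
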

More generally, rather than a pair of posets $(P,Q)$ we may consider Ramsey problems for pairs of families of posets $(\mathcal{P}, \mathcal{Q})$. We write $R\rightarrow (\mathcal{P}, \mathcal{Q})$ if $R$ is  $(\mathcal{P}, \mathcal{Q})$-Ramsey, i.e. if every $2$-colouring of $R$ contains a copy of a member of $\mathcal{P}$ in colour $1$ or a member of $\mathcal{Q}$ in colour $2$. We extend our definitions of poset Ramsey numbers and Ramsey exponents from poset pairs $(P,Q)$ to poset family pairs $(\mathcal{P}, \mathcal{Q})$ in the natural way.

\subsection{General bounds}
Let $P,Q$ be posets such that $P$ has a unique $\leq_P$-maximal element and $Q$ has a unique $\leq_Q$-minimal element. Define the \emph{$Q$--on--$P$} poset $T=T(P,Q)$ by taking disjoint copies of $P$ and $Q$, identifying $\max(P)$ with $\min(Q)$ and adding the relation $p\leq_T q$ for every $p\in P$, $q\in Q$.
\begin{theorem}\label{theorem: upper bound on Ramsey exponent using tower colouring}
For every pair of fixed posets $P,Q$ such that $P$ has a unique $\leq_P$-maximal element and $Q$ has a unique $\leq_Q$-minimal element, we have
\[ c_{\mathrm{Ram}^+}(P,Q)\leq c_{\star}(T(P,Q)).\]	
\end{theorem}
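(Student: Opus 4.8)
The plan is to prove the contrapositive-flavoured statement directly: for every fixed $c>c_{\star}(T(P,Q))$ and $p=e^{-cn}$, I will exhibit a $2$-colouring of $\mathcal{P}(n,p)$ with no copy of $P$ in colour $1$ and no copy of $Q$ in colour $2$, valid with high probability. Since this shows $\mathcal{P}(n, e^{-cn})\not\to(P,Q)$ w.h.p. for every such $c$, the definition of $c_{\mathrm{Ram}^+}$ immediately gives $c_{\mathrm{Ram}^+}(P,Q)\le c_{\star}(T(P,Q))$.

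Fix such a $c$ and write $T:=T(P,Q)$. By Theorem~\ref{theorem: existence threshold}(i), w.h.p.\ $\mathcal{P}(n,p)$ contains no copy of $T$; I condition on this event. The colouring is the natural ``is a top of a $P$-copy'' colouring: colour an element $X\in\mathcal{P}(n,p)$ with colour $2$ if there exists an injective poset homomorphism $\phi\colon P\to\mathcal{P}(n,p)$ with $\phi(\max(P))=X$, and with colour $1$ otherwise. First, no copy of $P$ in $\mathcal{P}(n,p)$ can be monochromatic in colour $1$, since the image of $\max(P)$ in any such copy is coloured $2$ by construction.

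The key step is ruling out a copy of $Q$ in colour $2$. Suppose $\psi\colon Q\to\mathcal{P}(n,p)$ is an injective poset homomorphism with $\psi(Q)$ entirely coloured $2$, and set $X:=\psi(\min(Q))$. Since $X$ has colour $2$, there is an injective poset homomorphism $\phi\colon P\to\mathcal{P}(n,p)$ with $\phi(\max(P))=X$. Because $\max(P)$ is the unique maximal element of the finite poset $P$, every $y\in P\setminus\{\max(P)\}$ satisfies $y<_P\max(P)$, hence $\phi(y)\subsetneq X$; dually every $z\in Q\setminus\{\min(Q)\}$ satisfies $\psi(z)\supsetneq X$. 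Therefore $\phi(P)\cap\psi(Q)=\{X\}$, and glueing $\phi$ and $\psi$ along their common value at $\max(P)\sim\min(Q)$ yields a well-defined map $\Psi\colon T\to\mathcal{P}(n,p)$. It is injective, since its restrictions to the copies of $P$ and of $Q$ are injective and their images meet only in $X$; and it is a poset homomorphism, since the relations internal to $P$ and to $Q$ are respected by $\phi$ and $\psi$, while for $p\in P$ and $q\in Q$ we have $\Psi(p)\subseteq X\subseteq\Psi(q)$, matching the relation $p\le_T q$. Thus $\mathcal{P}(n,p)$ contains a copy of $T$, contradicting our conditioning.

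Hence, with high probability, the colouring above witnesses $\mathcal{P}(n,p)\not\to(P,Q)$, and letting $c$ range over all values exceeding $c_{\star}(T(P,Q))$ gives the theorem. I expect the only genuinely delicate point to be the verification in the previous paragraph that the two embeddings glue into an \emph{injective} poset homomorphism of $T$: this is precisely where the hypotheses that $P$ has a unique maximal element and $Q$ a unique minimal element are used, forcing $\phi(P)\cap\psi(Q)=\{X\}$. The remainder is routine bookkeeping, and no probabilistic input beyond Theorem~\ref{theorem: existence threshold}(i) is required.
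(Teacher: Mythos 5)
Your proof is correct and follows essentially the same approach as the paper: assign colour $2$ to elements that are the image of $\max(P)$ under some injective poset homomorphism $P\to\mathcal{P}(n,p)$, and derive a forbidden copy of $T(P,Q)$ from any monochromatic copy of $Q$ in colour $2$. You spell out more carefully than the paper why the glued map is a genuine injective poset homomorphism of $T(P,Q)$ (via $\phi(y)\subsetneq X\subsetneq\psi(z)$ for non-extremal $y,z$), which is a welcome clarification of a step the paper leaves implicit.
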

\begin{proof}
Let $c>	c_{\star}(T(P,Q))$. Then for $p=e^{-cn}$, w.h.p. $\P(n,p)$ contains no copy of $T(P,Q)$. Condition on this event and colour the elements of $\P(n,p)$ as follows. Given an element $x\in P(n,p)$, assign it colour $2$ if it is the unique maximal element in a (not necessarily induced) copy of $P$ in $\P(n,p)$, and otherwise assign it colour $1$.

Clearly in this colouring there is no monochromatic copy of $P$ in colour $1$, since by construction the maximal element is in colour $2$. Further, there is no monochromatic copy $Q'$ of $Q$ in colour $2$, otherwise there must be a copy of $P'$ of $P$ in $\P(n,p)$ such that $\max(P')=\min(Q')$. But then $P'\cup Q'$ contains a copy of $T(P,Q)$, a contradiction.
\end{proof}
Given posets $P,Q$ we may define their \emph{lexicographic product} $P\times_{\textrm{lex}}Q$ to be the poset on $P\times Q:=\{(p,q):\ p\in P, q\in Q\}$ with partial order $\leq$ defined by $(p,q)\leq (p', q')$ if and only if either $p<_P p'$ or $p=p'$ and $q\leq_Q q'$.
\begin{theorem}\label{theorem: lower bound on Ramsey exponent using lexicographic product}
	For every pair of fixed posets $P,Q$, we have
	\[ c_{\star}(P\times_{\textrm{lex}} Q) \leq c_{\mathrm{Ram}^-}(P,Q).\]	
\end{theorem}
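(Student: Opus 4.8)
The plan is to combine the existence threshold (Theorem~\ref{theorem: existence threshold}) with an elementary structural fact about the lexicographic product, namely that the single finite poset $R:=P\times_{\textrm{lex}}Q$ already satisfies $R\rightarrow(P,Q)$. Granting this, the argument runs as follows. Fix $c<c_\star(R)$ and set $p=e^{-cn}$. By Theorem~\ref{theorem: existence threshold}(ii), w.h.p. $\P(n,p)$ contains an induced copy of $R$, witnessed by an injective poset homomorphism $\phi\colon R\rightarrow\P(n,p)$. Given any $2$-colouring of the elements of $\P(n,p)$, pull it back along $\phi$ to a $2$-colouring of $R$; by $R\rightarrow(P,Q)$ this yields either a copy of $P$ in colour $1$ or a copy of $Q$ in colour $2$ inside $R$, realised by an injective poset homomorphism $\psi$ from $P$ (or $Q$) into $R$, and then $\phi\circ\psi$ is an injective poset homomorphism realising the same monochromatic copy inside $\P(n,p)$. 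Hence w.h.p. $\P(n,p)\rightarrow(P,Q)$, so $c\le c_{\mathrm{Ram}^-}(P,Q)$; letting $c\uparrow c_\star(R)$ gives the theorem. Note that it is immaterial whether the copy of $R$ supplied by Theorem~\ref{theorem: existence threshold} is induced, since compositions of injective poset homomorphisms are again injective poset homomorphisms.

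It remains to establish the structural fact $R\rightarrow(P,Q)$. Decompose $R$ into the blocks $B_p:=\{p\}\times Q$ for $p\in P$; each $B_p$ is an induced copy of $Q$, and for distinct $p,p'$ one has $x\leq_R y$ for all $x\in B_p,\ y\in B_{p'}$ when $p<_Pp'$, while no element of $B_p$ is comparable with any element of $B_{p'}$ when $p$ and $p'$ are incomparable in $P$. Now fix a $2$-colouring of $R$. If some block $B_p$ has all its elements in colour $2$, then $B_p$ is a monochromatic copy of $Q$ in colour $2$ and we are done. Otherwise every block $B_p$ contains an element $x_p$ of colour $1$, and the set $\{x_p:\ p\in P\}$ forms a copy of $P$ in colour $1$: for distinct $p,p'$ the elements $x_p,x_{p'}$ are comparable in $R$, with the same relative order, precisely when $p,p'$ are comparable in $P$. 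In either case we obtain the required monochromatic configuration, so $R\rightarrow(P,Q)$.

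I do not expect a real obstacle: both ingredients are short. The only slightly delicate point is the bookkeeping in the first paragraph---verifying that a monochromatic copy of $P$ or $Q$ inside an embedded copy of $R$ transports to a genuine subcopy in $\P(n,p)$---but this is automatic from the closure of injective poset homomorphisms under composition, and in particular no strengthening of Theorem~\ref{theorem: existence threshold} is needed beyond the statement already proved.
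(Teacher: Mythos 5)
Your proof is correct and takes essentially the same approach as the paper: both establish $P\times_{\textrm{lex}}Q\rightarrow(P,Q)$ by the block decomposition into $\{p\}\times Q$, picking a colour-$1$ element in each block if no block is monochromatically colour $2$, and then deduce the threshold bound from Theorem~\ref{theorem: existence threshold}. The paper leaves that last deduction as "the theorem follows immediately from that fact," while you spell out the routine pullback-and-compose bookkeeping explicitly; the mathematical content is identical.
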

\begin{proof}
We claim that $P\times_{\textrm{lex}} Q$ is  $(P,Q)$-Ramsey. Indeed consider any $2$-colouring of $P\times Q$. If for any $p\in P$ the set $\{p\}\times Q$ is monochromatic in colour $2$, then this gives us a copy of $Q$ in colour $2$ inside $P\times_{\textrm{lex}} Q$. Otherwise for every $p\in P$ there exists $q_p\in Q$ such that $(p, q_p)$ received colour $1$. Then the set $\{(p,q_p): \ p\in P\}$ gives us a copy of $P$ in colour $1$ inside $P\times_{\textrm{lex}} Q$.

Thus  $P\times_{\textrm{lex}} Q\rightarrow (P,Q)$ as claimed, and the theorem follows immediately from that fact.
\end{proof}

\subsection{Specific posets}
In this subsection, we give bounds on the Ramsey exponents for $(P,Q)$ for various pairs of small posets $P$ and $Q$.

\begin{theorem}[Kreuter~\cite{Kreuter98}]\label{theorem: ramsey exponent for chains}
For all $s,t\geq 2$, $c_{\mathrm{Ram}}(C_s, C_t)=c_{\star}(C_{s+t-1})$.	
\end{theorem}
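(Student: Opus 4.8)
The plan is to sandwich both Ramsey exponents between $c_{\star}(C_{s+t-1})$ and $c_{\star}(C_{s+t-1})$, which simultaneously proves the critical Ramsey exponent exists and computes it.

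\emph{Upper bound.} For $s,t\geq 2$ the chain $C_s$ has a unique $\leq$-maximal element and $C_t$ has a unique $\leq$-minimal element, so Theorem~\ref{theorem: upper bound on Ramsey exponent using tower colouring} applies and gives $c_{\mathrm{Ram}^+}(C_s,C_t)\leq c_{\star}(T(C_s,C_t))$. It then suffices to identify $T(C_s,C_t)$. By definition $T(C_s,C_t)$ is obtained from disjoint copies of $C_s=\{a_1<\dots<a_s\}$ and $C_t=\{b_1<\dots<b_t\}$ by identifying $a_s=\max(C_s)$ with $b_1=\min(C_t)$ and adding all relations $p\leq q$ for $p\in C_s$, $q\in C_t$; but each such relation is already forced by transitivity through the identified element, so $T(C_s,C_t)$ is simply the chain $a_1<\dots<a_s=b_1<\dots<b_t$ on $s+t-1$ elements, i.e.\ $T(C_s,C_t)=C_{s+t-1}$. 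Hence $c_{\mathrm{Ram}^+}(C_s,C_t)\leq c_{\star}(C_{s+t-1})$.

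\emph{Lower bound.} Rather than invoke Theorem~\ref{theorem: lower bound on Ramsey exponent using lexicographic product} (which would only give $c_{\star}(C_s\times_{\mathrm{lex}}C_t)=c_{\star}(C_{st})\leq c_{\mathrm{Ram}^-}$, a strictly weaker bound since $c_{\star}(C_\ell)$ is decreasing in $\ell$ and $st\geq s+t-1$), I would use directly that $C_{s+t-1}$ is itself $(C_s,C_t)$-Ramsey. Indeed, in any $2$-colouring of the $s+t-1$ elements of a chain, if the colour classes had sizes $r\leq s-1$ and $b\leq t-1$ we would get $r+b\leq s+t-2<s+t-1$, a contradiction; so some colour class has at least $s$ elements (a copy of $C_s$, as every subset of a chain is a chain) or some colour class has at least $t$ elements (a copy of $C_t$). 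Combining $C_{s+t-1}\rightarrow(C_s,C_t)$ with the existence threshold theorem (Theorem~\ref{theorem: existence threshold}(ii)): for any fixed $c<c_{\star}(C_{s+t-1})$ and $p=e^{-cn}$, w.h.p.\ $\P(n,p)$ contains an induced copy of $C_{s+t-1}$, and therefore w.h.p.\ $\P(n,p)\rightarrow(C_s,C_t)$. Hence $c_{\mathrm{Ram}^-}(C_s,C_t)\geq c_{\star}(C_{s+t-1})$.

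\emph{Conclusion.} Together with the trivial inequality $c_{\mathrm{Ram}^-}(C_s,C_t)\leq c_{\mathrm{Ram}^+}(C_s,C_t)$ this yields
\[
c_{\star}(C_{s+t-1})\ \leq\ c_{\mathrm{Ram}^-}(C_s,C_t)\ \leq\ c_{\mathrm{Ram}^+}(C_s,C_t)\ \leq\ c_{\star}(C_{s+t-1}),
\]
so all three quantities coincide; in particular $c_{\mathrm{Ram}}(C_s,C_t)$ exists and equals $c_{\star}(C_{s+t-1})$, which by Theorem~\ref{theorem: cstar for chains} is $\tfrac{\log(s+t)}{s+t-1}$. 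There is no substantive obstacle here: the proof reduces to the two structural observations that $T(C_s,C_t)=C_{s+t-1}$ and that $C_{s+t-1}$ is already $(C_s,C_t)$-Ramsey by pigeonhole, fed into the general Ramsey bounds and the existence threshold theorem already established; the only point to be careful about is using $C_{s+t-1}$, not the much larger lexicographic product $C_s\times_{\mathrm{lex}}C_t=C_{st}$, in the lower bound.
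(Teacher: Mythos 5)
Your proposal is correct and follows essentially the same route as the paper: upper bound via the tower-colouring theorem with the identification $T(C_s,C_t)=C_{s+t-1}$, and lower bound via the pigeonhole fact $C_{s+t-1}\rightarrow(C_s,C_t)$ combined with the existence threshold. The paper leaves the pigeonhole step and the identification $T(C_s,C_t)=C_{s+t-1}$ implicit, but your fuller argument matches what was intended, and your side remark that the lexicographic-product bound would be strictly weaker here is a sound observation.
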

\begin{proof}
For the upper bound, we have by Theorem~\ref{theorem: upper bound on Ramsey exponent using tower colouring} that $c_{\mathrm{Ram}^+}(C_s, C_t)\leq c_{\star}(T(C_s, C_t))=c_{\star}(C_{s+t-1})$. For the lower bound, observe that by the pigeonhole principle $C_{s+t-1}\rightarrow (C_s, C_t)$.
\end{proof}

\begin{theorem}\label{theorem: ramsey exponent for V}
 $c_{\mathrm{Ram}}(V, V)= c_{\star}(T_2)$.	
\end{theorem}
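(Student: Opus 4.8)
The plan is to prove the two inequalities $c_{\mathrm{Ram}^-}(V,V)\geq c_{\star}(T_2)$ and $c_{\mathrm{Ram}^+}(V,V)\leq c_{\star}(T_2)$ separately, each time reducing to the existence threshold of Theorem~\ref{theorem: existence threshold} applied to a suitable fixed poset.

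For the lower bound I would first establish the purely combinatorial fact that $T_2\rightarrow (V,V)$. The root of $T_2$ lies strictly below the remaining six elements, which span a copy of $V\sqcup V$. In any $2$-colouring of $T_2$, if at least two of those six elements share the root's colour then together with the root they form a monochromatic copy of $V$; otherwise at least five of them carry the other colour, and since deleting any single element from $V\sqcup V$ still leaves a copy of $V$, that colour class contains a monochromatic $V$. Hence $T_2\rightarrow (V,V)$. Now if $c<c_{\star}(T_2)$ and $p=e^{-cn}$, then by Theorem~\ref{theorem: existence threshold}(ii) w.h.p.\ $\mathcal{P}(n,p)$ contains an induced (hence genuine) copy of $T_2$; restricting any $2$-colouring of $\mathcal{P}(n,p)$ to this copy yields a monochromatic $V$ inside $\mathcal{P}(n,p)$. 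Thus w.h.p.\ $\mathcal{P}(n,p)\rightarrow (V,V)$, giving $c_{\mathrm{Ram}^-}(V,V)\geq c_{\star}(T_2)$.

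For the upper bound the key is a self-contained structural statement: there is a \emph{finite} family $\mathcal{L}$ of posets (namely the Ramsey-minimal $(V,V)$-arrowing posets) such that a poset $R$ satisfies $R\not\rightarrow(V,V)$ precisely when $R$ contains no member of $\mathcal{L}$, and moreover $\max_{L\in\mathcal{L}}c_{\star}(L)=c_{\star}(T_2)$, attained uniquely by $T_2$. One checks $T_2\in\mathcal{L}$ as above, while the other members are small posets such as $C_5$, $C(1,1,1,2)$, $C(1,2,3)$ and a handful more, whose critical exponents may be read off (or bounded via $\log a(P)/|P|$) and are all strictly smaller than $c_{\star}(T_2)$; see Table~\ref{tab:th}. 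Granting this, fix $c>c_{\star}(T_2)$ and $p=e^{-cn}$: since every $L\in\mathcal{L}$ has $c_{\star}(L)\leq c_{\star}(T_2)<c$, Theorem~\ref{theorem: existence threshold}(i) together with a union bound over the finite set $\mathcal{L}$ shows that w.h.p.\ $\mathcal{P}(n,p)$ contains no member of $\mathcal{L}$, hence w.h.p.\ $\mathcal{P}(n,p)\not\rightarrow(V,V)$. This gives $c_{\mathrm{Ram}^+}(V,V)\leq c_{\star}(T_2)$, and combining with the lower bound yields $c_{\mathrm{Ram}}(V,V)=c_{\star}(T_2)$.

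The main obstacle is the structural statement, and in particular proving that the obstructions are bounded in size. A natural attack is to start from the colouring that gives colour $1$ to every element with at most one element of $R$ strictly above it and colour $2$ to the rest: colour class $1$ is automatically $V$-free (it is a disjoint union of ``down-stars''), so one must only understand when class $2$ contains a monochromatic $V$, i.e.\ when there exist $x<_R y$, $x<_R z$ with $y\neq z$ all having at least two elements above them in $R$. Splitting this configuration according to whether $y$ and $z$ are comparable, and (when incomparable) according to the sizes and overlap of their up-sets, forces $R$ to contain one of a short explicit list of small posets — if the up-sets overlap little one recovers $T_2$, while the ``degenerate'' cases produce $C(1,1,1,2)$, $C(1,2,3)$, $C_5$, etc. Making this dichotomy precise, ruling out arbitrarily large minimal obstructions (so that $\mathcal{L}$ is finite), and verifying that each obstruction has critical exponent at most $c_{\star}(T_2)$, is the delicate part of the argument.
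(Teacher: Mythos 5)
Your lower bound is fine and coincides with the paper's: you verify $T_2\rightarrow(V,V)$ combinatorially (the argument using $V\sqcup V$ above the root, noting that a chain $C_3$ contains $V$ as a non-induced subposet so two same-coloured elements above the root suffice, is correct) and then invoke Theorem~\ref{theorem: existence threshold}(ii). That part is complete.

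The upper bound is where the gap is. Your main framing --- that there is a \emph{finite} family $\mathcal{L}$ of Ramsey-minimal $(V,V)$-arrowing posets and that $\max_{L\in\mathcal{L}}c_{\star}(L)=c_{\star}(T_2)$ --- is both unproven and, as a strategy, unnecessarily strong. For graphs, the family of Ramsey-minimal graphs for a pair like $(K_3,K_3)$ is known to be infinite, and there is no reason to expect finiteness in the poset setting either; so a union bound over $\mathcal{L}$ is not a safe move. You then acknowledge (``the delicate part of the argument'') that you have not established finiteness, nor the structural dichotomy, nor the verification that every obstruction has critical exponent at most $c_{\star}(T_2)$. As written, the upper bound is therefore not a proof.

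The good news is that your ``natural attack'' paragraph contains the germ of the paper's actual argument, and that argument does \emph{not} need $\mathcal{L}$ to be finite. The paper fixes the specific colouring you describe (colour by whether an element has at least two elements above it in $\P(n,p)$), notes colour-$2$ is $V$-free for free, and observes that a monochromatic $V$ in colour $1$ forces a configuration $x<y,z$ together with (at least) two witnesses above each of $y$ and $z$. This is a bounded configuration on at most seven elements, so by construction the set of obstruction posets it produces is automatically a finite, explicit list --- namely $T_2$, the ``fish'' $F$ (one shared witness), and $C(1,2,2)$ (both witnesses shared), with the degenerate comparable cases giving chains or $C(1,1,1,2)$ --- each of which the paper's Table~\ref{tab:th} shows has $c_{\star}\leq c_{\star}(T_2)$. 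There is no need to argue about \emph{all} minimal obstructions: one good colouring, whose obstructions are few and small by construction, is enough. To turn your sketch into a proof you should drop the appeal to Ramsey-minimality, carry out the finite case analysis on the $\leq 7$-element configuration to get an explicit short list (your list $C(1,1,1,2)$, $C(1,2,3)$, $C_5$ misses $F$ and $C(1,2,2)$, which arise when $y,z$ are incomparable and their up-sets overlap), and then compare the resulting critical exponents to $c_{\star}(T_2)$ via Table~\ref{tab:th} and Theorem~\ref{theorem: existence threshold}(i).
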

\begin{proof}

	For the upper bound, we use a slight variant of the colouring given in the proof of Theorem~\ref{theorem: upper bound on Ramsey exponent using tower colouring}. Assign an element $X\in \P(n,p)$ the colour $1$ if there exists $Y,Z\in \P(n,p)$ with $X\subsetneq Y, Z$ (i.e. if $X$ is the minimal element of a copy of $V$ in $\P(n,p)$), and otherwise assign $X$ the colour $2$. By construction, there is no monochromatic copy of $V$ in colour $2$. Suppose now there exists a monochromatic copy of $V$ in colour $1$. By construction of our colouring, this implies the existence of one of the following: a copy of the binary tree $T_2$ of height 3,  a copy of the poset $F$ obtained from $T_2$ by identifying the elements $D$ and $E$, or a copy of $C(1,2,2)$. In particular, this shows
	\begin{align*}
	c_{\mathrm{Ram}^+}(V,V)\leq \max\left \{c_{\star}(T_2), c_{\star}(F), c_{\star}(C(1,2,2)\right \}=c_{\star}(T_2),
	\end{align*}
where the last equality follows from the bounds given in Section~\ref{section: small examples}. For the lower bound, it is easily checked that $T_2\rightarrow (V,V)$, whence $c_{\mathrm{Ram}^-}(V,V)\geq c_{\star}(T_2)$.
\end{proof}
Clearly a poset $H$ is  $(P,Q)$-Ramsey if and only if its reverse $R(H)$ is $(R(P), R(Q))$-Ramsey for the pair of reverse posets $R(P), R(Q)$. Since $R(\P(n,p))$ has exactly the same distribution as $\P(n,p)$, the Ramsey exponents for $(P,Q)$ and $(R(P), R(Q))$ are equal for all pairs $(P,Q)$. In particular, Theorem~\ref{theorem: ramsey exponent for V} also determines  $c_{\mathrm{Ram}}(\Lambda, \Lambda)$. Thus Theorems~\ref{theorem: ramsey exponent for chains}--\ref{theorem: ramsey exponent for V} together determine the critical Ramsey exponents for all pairs $(P,P)$ with $\vert P \vert \leq 3$. For mixed pairs $(P,Q)$, we can give the following bounds on the Ramsey exponents.
\begin{theorem}\label{theorem: other small Ramsey bounds}
	The following hold:
	\begin{enumerate} [(i)]	
		\item  $c_{\star}(Y') \leq  c_{\mathrm{Ram}^-}(C_2,V)\leq  c_{\mathrm{Ram}^+}(C_2,V) \leq c_{\star}(Y)$;  
				
		\item  $ c_{\star}(C(2,3,2))   \leq c_{\mathrm{Ram}^-} (\Lambda, V) \leq c_{\mathrm{Ram}^+} (\Lambda, V) \leq c_{\star}(C(2,1,2))$;
		
		\item$  c_{\star}(Y'')\leq c_{\mathrm{Ram}^-}(C_3,V) \leq  c_{\mathrm{Ram}^+}(C_3,V) \leq c_{\star}(T(C_3, V))=c_{\star}(C(1,1,1,2))$;  
		
		\item  $c_{\star}( C(2,1,2) )  \leq c_{\mathrm{Ram}^-} (\{V,  \Lambda\}, \{V , \Lambda\}) \leq c_{\mathrm{Ram}^+} (\{V,  \Lambda\}, C_2) \leq  c_{\star}(\Lambda')$;
		
		 \item $c_{\star}(DD)  \leq c_{\mathrm{Ram}^-} (\P(2), C_2) \leq c_{\mathrm{Ram}^+} (\P(2), C_2) \leq  c_{\star}(T(C_2, \P(2)))=c_{\star}(C(1,1,2,1))$.
	\end{enumerate}
\end{theorem}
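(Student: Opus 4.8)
The plan is to handle the five pairs of bounds along two uniform lines. Every \emph{upper} bound will be obtained by exhibiting, w.h.p., an explicit $2$-colouring of $\P(n,p)$ with no monochromatic target; every \emph{lower} bound will be obtained by exhibiting a single fixed poset $R$ which is itself $(P,Q)$-Ramsey and then invoking Theorem~\ref{theorem: existence threshold}: for $c<c_{\star}(R)$ and $p=e^{-cn}$, w.h.p.\ $\P(n,p)$ contains a copy of $R$, and any $2$-colouring of $\P(n,p)$ restricts to one of that copy, so that $c_{\mathrm{Ram}^-}(P,Q)\ge c_{\star}(R)$. Throughout I will use two elementary symmetries: (a) $\P(n,p)$ and its reverse have the same law, so $c_{\star}$ is reversal-invariant; and (b) relabelling the two colours shows the Ramsey exponents of $(P,Q)$ and $(Q,P)$ coincide. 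I will also use repeatedly that ``copy'' means \emph{subposet} (not induced subposet), so that e.g.\ a chain of length $3$ already contains copies of both $V$ and $\Lambda$.

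For the upper bounds in parts (i), (ii), (iii) and (v), I would simply apply Theorem~\ref{theorem: upper bound on Ramsey exponent using tower colouring}, after the bookkeeping check that the relevant tower poset is the named one: $T(C_2,V)=C(1,1,2)=Y$; $T(\Lambda,V)=C(2,1,2)$; $T(C_3,V)=C(1,1,1,2)$; and $T(C_2,\P(2))=C(1,1,2,1)$ (the last applied via symmetry~(b), since $c_{\mathrm{Ram}^+}(\P(2),C_2)=c_{\mathrm{Ram}^+}(C_2,\P(2))$). In each case one must check the hypothesis of that theorem, namely that the first poset of the pair has a unique maximal and the second a unique minimal element. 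The upper bound in part (iv) needs a tailored colouring: give an element of $\P(n,p)$ colour $2$ if it is $\subseteq$-minimal in $\P(n,p)$, and colour $1$ otherwise. Colour $2$ is then an antichain, hence contains no $C_2$. If there were a copy of $V$ in colour $1$, say $X\subsetneq Y_1,Y_2$ with $X,Y_1,Y_2$ of colour $1$, then picking any $\subseteq$-minimal $m\subsetneq X$ and distinguishing the cases $Y_1\subsetneq Y_2$ and $Y_1\parallel Y_2$ exhibits a copy of $C_4$ or of $Y$ in $\P(n,p)$; and if there were a copy of $\Lambda$ in colour $1$, say $Y_1,Y_2\subsetneq X$ of colour $1$, then picking $\subseteq$-minimal $m_i\subsetneq Y_i$ and distinguishing whether $Y_1,Y_2$ are comparable and whether $m_1,m_2$ are equal or incomparable exhibits a copy of $C_4$, $\P(2)$ or $\Lambda'$ in $\P(n,p)$. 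Since $c_{\star}(C_4),c_{\star}(Y),c_{\star}(\P(2))$ are all strictly below $c_{\star}(\Lambda')$, choosing $c>c_{\star}(\Lambda')$ makes all of $C_4,Y,\P(2),\Lambda'$ w.h.p.\ absent, so this colouring witnesses $\P(n,p)\not\to(\{V,\Lambda\},C_2)$. The middle inequality of (iv), $c_{\mathrm{Ram}^-}(\{V,\Lambda\},\{V,\Lambda\})\le c_{\mathrm{Ram}^+}(\{V,\Lambda\},C_2)$, is just monotonicity: a monochromatic copy of $V$ or $\Lambda$ in colour $2$ contains a copy of $C_2$.

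For the lower bounds I would verify, by a finite case analysis on $2$-colourings, the Ramsey statements $Y'\to(C_2,V)$ (part (i)), $C(2,3,2)\to(\Lambda,V)$ (part (ii)), $Y''\to(C_3,V)$ (part (iii)), $C(2,1,2)\to(\{V,\Lambda\},\{V,\Lambda\})$ (the left inequality of part (iv)) and $DD\to(\P(2),C_2)$ (part (v)). All of these run on the same dichotomy applied to the extremal layers of the host poset: a colour class that must be an antichain, or must have height at most $2$, or must avoid a $V$ (hence has ``few elements above'' each point), is forced by the comparabilities of the host to contain either a chain that is too long or an antichain that is too large. For instance, in $Y'$: if colour $1$ is an antichain then the bottom element $A$ cannot have colour $2$ (the five elements above it contain no antichain of size $4$) and cannot have colour $1$ (then those five elements all have colour $2$ and contain a copy of $V$, e.g.\ $B\subsetneq C,D$); and in $Y''$: if the bottom element has colour $2$, then at most one of the remaining nine elements has colour $2$, so at least eight have colour $1$ and one of the two disjoint $3$-chains $\{A,B,C\}$, $\{H,I,J\}$ becomes monochromatic in colour $1$. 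For $C(2,1,2)$ and $DD$ one checks all colourings of these $5$- and $6$-element posets directly, the crux being that forcing the $\subseteq$-comparable pairs of the poset into one colour class overflows the maximum-antichain bound in the complementary class. The left inequality of (iv) then follows from the same ``contains a copy of $R$'' principle as the other lower bounds.

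The only genuine obstacle is careful bookkeeping rather than any new idea: in the part-(iv) colouring one must track exactly which auxiliary posets ($C_4$, $Y$, $\P(2)$, $\Lambda'$) can be forced and confirm from the computations in Section~\ref{section: small examples} that all their critical exponents are $\le c_{\star}(\Lambda')$, so that the stated bound is sharp at $c_{\star}(\Lambda')$ and not something smaller; and one must carry out the finite case analyses for the five Ramsey posets without slipping between the induced and non-induced notions of ``copy''. The tower-poset identifications and the symmetry reductions are routine, but they also need to be stated explicitly.
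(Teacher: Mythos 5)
Your proposal is correct and follows essentially the same route as the paper: tower colourings via Theorem~\ref{theorem: upper bound on Ramsey exponent using tower colouring} for the upper bounds in (i), (ii), (iii), (v), a bespoke ``minimal/non-minimal'' colouring for (iv) (identical to the paper's ``top-of-a-$C_2$'' colouring), and lower bounds from Theorem~\ref{theorem: existence threshold} applied to the same witness posets $Y'$, $C(2,3,2)$, $Y''$, $C(2,1,2)$, $DD$, with the remaining finite case analyses correctly deferred. The only (harmless) cosmetic difference is in the forcing analysis for part (iv): you track the possible posets as $C_4$, $Y$, $\P(2)$, $\Lambda'$ while the paper collapses $C_4$ into $Y$ (resp.\ $\P(2)$) since $C_4$ contains both as subposets, but both analyses arrive at the bound $c_{\star}(\Lambda')$.
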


\begin{proof}
	\begin{enumerate}[(i)]
		\item  For the upper bound, Theorem~\ref{theorem: upper bound on Ramsey exponent using tower colouring} implies $c_{\mathrm{Ram}^+}(C_2,V) \leq c_{\star}(T(C_2, V))=c_{\star}(Y)$. For the lower bound, it is 
		easily checked that $Y'\rightarrow (C_2,V)$.
				
		\item For the upper bound, Theorem~\ref{theorem: upper bound on Ramsey exponent using tower colouring} implies $c_{\mathrm{Ram}^+}(\Lambda,V) \leq c_{\star}(T(\Lambda, V))=c_{\star}(C(2,1,2))$. 
		
		For the lower bound, we claim that $C(2,3,2)\rightarrow(\Lambda, V)$. Indeed, suppose the bottom two elements of $C(2,3,2)$ both received colour $1$. Since $C(3,2)\rightarrow (C_1, V)$, this would give us either a 
		$\Lambda$ in colour $1$ or a $V$ in colour $2$. On the other hand, suppose that the bottom two  elements of $C(2,3,2)$ both received colour $2$. Since $C(3,2)\rightarrow(\Lambda, C_1\sqcup C_1)$, this would give   
		us either a $\Lambda$ in colour $1$ or a $V$ in colour $2$.

		We may thus assume that one of the bottom elements of $C(2,3,2)$ receives colour $1$ and the other receives colour $2$. By symmetry, the same is true of the top elements of $C(2,3,2)$. By the pigeonhole principle, 
		at least two elements in the middle layer of $C(2,3,2)$ are in the same colour, say $1$. Thus we have a $C_3$ (and hence a $\Lambda$) in colour $1$. Thus $C(2,3,2)\rightarrow(\Lambda, V)$ as claimed.

		\item For the upper bound, Theorem~\ref{theorem: upper bound on Ramsey exponent using tower colouring} implies $c_{\mathrm{Ram}^+}(C_3,V) \leq c_{\star}(T(C_3, V))=c_{\star}(C(1,1,1,2))$. 
		
		For the lower bound, we claim that $Y''\rightarrow (C_3, V)$. Indeed, suppose the bottom element of $Y''$ is in colour $2$.  If both of the branches of $Y''$ above this minimum element contain elements in colour 
		$2$, then we have a copy of $V$ in colour $2$. Otherwise one of the branches receives only the colour $1$, and hence gives us a copy of $C_3$ in colour $1$. 
		
		Assume therefore that the bottom element of $Y''$ is in colour $1$. One of the branches of $Y''$ above this bottom element is a copy of $Y'$, which as we observed in part (i) is  $(C_2, V)$-Ramsey. Thus in 
		that branch we either get a copy of $V$ in colour $2$ or a copy of $C_2$ in colour $1$, which together with the bottom element of $Y''$ gives us a copy of $C_3$ in colour $2$.
		
		\item  For the upper bound, assign each vertex in $\P(n,p)$ the colour $1$ if it is the top element of a copy of $C_2$ in $\P(n,p)$, and assign it the colour $2$ otherwise. Clearly in such a colouring there can be no 
		copy of  $C_2$ in colour $2$.  Further a copy of $V$ in colour $1$ would require the existence of a copy of $Y$, while a copy of $\Lambda$ in colour $1$ would require the existence of $\Lambda'$ or $\mathcal P(2)$. Thus we 
		have
		\begin{align*}
		c_{\mathrm{Ram}^+} (\{\Lambda, V\}, C_2)\leq \max\left \{ c_{\star}(Y),c_{\star}(\mathcal P(2)), c_{\star}(\Lambda')\right \}=c_{\star}(\Lambda').
		\end{align*}
		
		For the lower, bound, by considering the colour of the middle element, it is easy to see  that $C(2,1,2)$  is $(\{V, \Lambda\}, \{V, \Lambda\})$-Ramsey.

		\item For the upper bound, Theorem~\ref{theorem: upper bound on Ramsey exponent using tower colouring} implies $c_{\mathrm{Ram}^+}(C_2,\P(2 	)) \leq c_{\star}(T(C_2, \P(2)))=c_{\star}(C(1,1,2,1))$.

		For the lower bound, we claim the double diamond $DD$ is $(\P(2), C_2)$-Ramsey. Indeed, suppose the bottom element of $DD$ receives colour $2$. If any element above it is in colour $2$ we have a $C_2$ in colour $2$. 
		Otherwise $DD$ contains a copy of $\P(2)$ in colour $1$. By reverse-symmetry, we are similarly done if the top element of $DD$ receives colour $2$.

		On the other hand, suppose both the bottom and the top elements of $DD$ are in colour $1$. Then if any two of the other elements of $DD$ are in colour $1$ we have a copy of $\P(2)$ in colour $1$. Otherwise, at least three of the `middle' elements of $\P(2)$ are in colour $2$, and two of these will give us a copy of $C_2$ in colour $2$.
		
	\end{enumerate}
	
\end{proof}

Next we turn our attention to the Ramsey problem for the diamond $\P(2)$. Let $H$ be the poset defined by the Hasse diagram in Figure~2.

\begin{figure}[htb!] 
	\begin{center}\footnotesize
		\includegraphics[width=0.9\columnwidth]{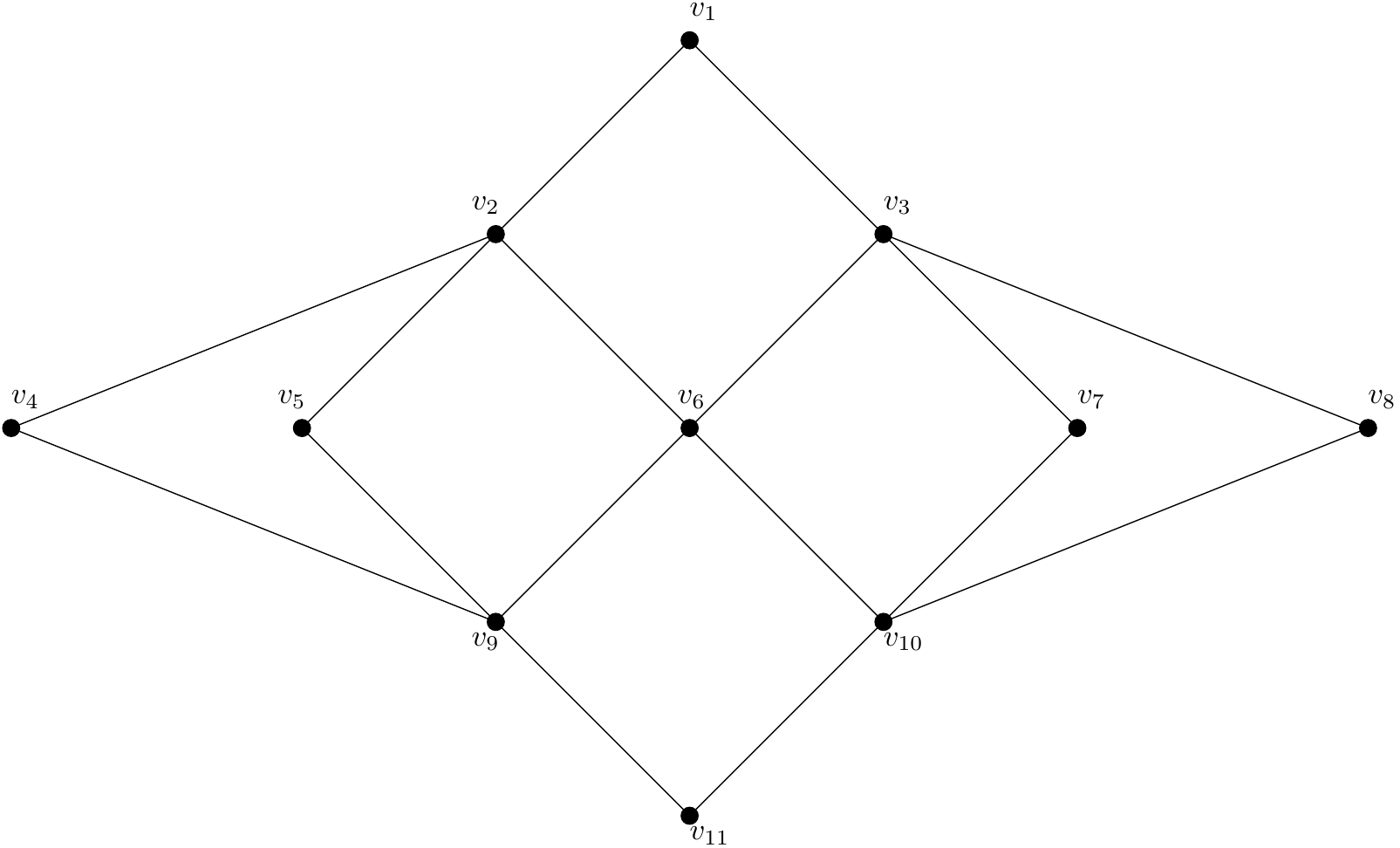}  
		\caption{The Hasse diagram of the poset $H$}
		\label{fig:h1}
	\end{center}
\end{figure}

\begin{theorem}\label{theorem: Ramsey bounds for diamond}
	 $ c_{\star}(H)  \leq c_{\mathrm{Ram}^-} (\P(2), \P(2)  ) \leq c_{\mathrm{Ram}^+} (\P(2), \P(2)  ) \leq c_{\star}(C(1,2,1,2,1))$
\end{theorem}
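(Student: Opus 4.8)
The plan is to prove the two outer inequalities; the middle one, $c_{\mathrm{Ram}^-}(\P(2),\P(2))\le c_{\mathrm{Ram}^+}(\P(2),\P(2))$, is immediate from the definitions and already appears in the displayed chain of inequalities in Section~\ref{sec:ramsey}.

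For the upper bound I would simply invoke Theorem~\ref{theorem: upper bound on Ramsey exponent using tower colouring}. Since $\P(2)$ has a unique $\le$-minimal element and a unique $\le$-maximal element, that theorem applies with $P=Q=\P(2)$ and gives $c_{\mathrm{Ram}^+}(\P(2),\P(2))\le c_{\star}\bigl(T(\P(2),\P(2))\bigr)$, so all that remains is to recognise the $\P(2)$--on--$\P(2)$ poset. Writing $\P(2)=C(1,2,1)$ and gluing the top of one diamond to the bottom of another produces a stack of antichains of sizes $1,2,1,2,1$; the extra relations ``$p\le_T q$ for every $p$ in the lower diamond and every $q$ in the upper diamond'' imposed in the definition of $T(\cdot,\cdot)$ hold automatically, since each element of the lower diamond lies below the glued element, which lies below each element of the upper diamond. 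Hence $T(\P(2),\P(2))=C(1,2,1,2,1)$ and the bound follows. (Concretely this is the colouring of Theorem~\ref{theorem: upper bound on Ramsey exponent using tower colouring}: when $c>c_{\star}(C(1,2,1,2,1))$ and $p=e^{-cn}$, condition on $\P(n,p)$ having no copy of $C(1,2,1,2,1)$, colour an element $2$ if it is the top of a copy of $\P(2)$ and $1$ otherwise; a colour-$1$ diamond is impossible because its top is coloured $2$, while a colour-$2$ diamond $D$ forces a diamond below $\min(D)$, i.e.\ a copy of $C(1,2,1,2,1)$, a contradiction.)

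For the lower bound I would first apply Theorem~\ref{theorem: existence threshold}(ii): for any fixed $c<c_{\star}(H)$ and $p=e^{-cn}$, w.h.p.\ $\P(n,p)$ contains an induced (hence, in particular, a) copy of $H$. It therefore suffices to prove the finite Ramsey statement $H\rightarrow(\P(2),\P(2))$, for then any $2$-colouring of $\P(n,p)$ restricts to a $2$-colouring of that copy of $H$, which must contain a monochromatic copy of $\P(2)$; thus w.h.p.\ $\P(n,p)\rightarrow(\P(2),\P(2))$, so $c\le c_{\mathrm{Ram}^-}(\P(2),\P(2))$, and letting $c\uparrow c_{\star}(H)$ gives $c_{\star}(H)\le c_{\mathrm{Ram}^-}(\P(2),\P(2))$. (The generic lower bound $c_{\star}(\P(2)\times_{\textrm{lex}}\P(2))\le c_{\mathrm{Ram}^-}(\P(2),\P(2))$ available from Theorem~\ref{theorem: lower bound on Ramsey exponent using lexicographic product} is far weaker, as $\P(2)\times_{\textrm{lex}}\P(2)$ has $16$ elements; the role of $H$ is to do much better.)

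The core of the argument, and the part I expect to be the main obstacle, is verifying that $H\rightarrow(\P(2),\P(2))$. I would proceed exactly as in the proofs of Theorems~\ref{theorem: ramsey exponent for V} and~\ref{theorem: other small Ramsey bounds}: fix a $2$-colouring of $H$, split into cases according to the colours of a few distinguished elements of $H$ (the Hasse diagram in Figure~\ref{fig:h1} is designed precisely so that the resulting case analysis closes), and in each case either exhibit a monochromatic diamond directly or apply the pigeonhole principle to a suitable antichain of $H$ together with a common lower or upper bound to complete a diamond in one of the two colours. The difficulty is that $H$ is not one of our structured families but was located, with computer assistance, as a $(\P(2),\P(2))$-Ramsey poset maximising $c_{\star}$, so pushing the forced colours through every branch requires some care. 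Together with the numerical evaluations $c_{\star}(H)=0.3250121326\ldots$ and $c_{\star}(C(1,2,1,2,1))=0.3289037390\ldots$ recorded in Table~\ref{tab:th}, this also yields the explicit constants of Theorem~\ref{thm1short}.
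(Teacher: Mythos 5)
Your overall structure is exactly the paper's: the upper bound is Theorem~\ref{theorem: upper bound on Ramsey exponent using tower colouring} applied with $T(\P(2),\P(2))=C(1,2,1,2,1)$, and the lower bound reduces, via the existence threshold, to showing the deterministic statement $H\rightarrow(\P(2),\P(2))$. That reduction is sound, and your observation that the generic lexicographic-product bound $c_{\star}(\P(2)\times_{\textrm{lex}}\P(2))$ is far too weak correctly explains why a bespoke Ramsey poset $H$ is needed.

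However, there is a genuine gap: you never actually verify $H\rightarrow(\P(2),\P(2))$. This is not a routine step; it is the entire content of the lower-bound side of the theorem, and the paper devotes a full six-stage case analysis to it, which depends on the precise structure of $H$ encoded in Figure~2 (labelled elements $v_1,\dots,v_{11}$). Saying ``split into cases according to the colours of a few distinguished elements and the case analysis closes'' is a description of the intended method, not a proof: a priori there is no reason the colour constraints propagate to a contradiction, and indeed $H$ was chosen so that they barely do. In the paper one must, in order, (a) force $v_{11}$ blue by a counting argument (otherwise the two disjoint diamonds inside $H\setminus\{v_1,v_{11}\}$ can't both avoid being blue), (b) force one of $v_9,v_{10}$ red, (c)--(d) force $v_2$ red and $v_3$ blue by pigeonhole on the antichain $\{v_4,v_5,v_6\}$, (e) force $v_{10}$ blue by symmetry, and finally (f) observe that $v_6$ is pinched between a forced red $C_4$ and a forced blue $C_4$. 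Without carrying out these steps your argument only establishes the upper bound and the reduction; it leaves the claimed lower bound $c_{\star}(H)\leq c_{\mathrm{Ram}^-}(\P(2),\P(2))$ unproved.
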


\begin{proof}
The upper bound on $c_{\mathrm{Ram}^+}(\P(2), \P(2))$ is an immediate consequence of Theorem~\ref{theorem: upper bound on Ramsey exponent using tower colouring} and the fact that $T(\P(2), \P(2))=C(1,2,1,2,1)$. For the lower bound, it suffices to show that $H$ is  $(\P(2),\P(2))$-Ramsey.

Consider any red-blue colouring of the elements of $H$. Suppose for a contradiction that there does not exist a monochromatic copy of $\P(2)$. Without loss of generality we may assume that  \textbf{the maximal element $v_1$ in $H$  is coloured red}. 
\begin{enumerate}[(a)]
\item  If the minimal element $v_{11}$ is coloured red, then at most one  element in $H\setminus\{v_1,v_{11}\}$ is coloured red. Indeed, otherwise we obtain a red copy of $\P(2)$ in $H$. However, then $H\setminus\{v_1,v_{11}\}$  contains two disjoint copies of $\P(2)$, at least one of which is blue, a contradiction. Thus, 
 \textbf{the minimal element $v_{11}$ is coloured blue}.
\item If both $v_9$ and $v_{10}$  are coloured blue, then   $v_2,v_3,v_6$ are coloured red (else we get a blue $\P(2)$). However, then  $v_1,v_2,v_3,v_6$ induce a red $\P(2)$ in $H$. So at least one of $v_9$ and $v_{10}$ is coloured red. 
Without loss of generality assume {\bf{$v_9$ is coloured red}}.

\item Suppose $v_2$ is blue. Then at most one of $v_4,v_5,v_6$ is blue (else we obtain a blue copy of $\P(2)$ with maximal element $v_2$ and minimal element $v_{11}$). However, at most one of $v_4,v_5,v_6$ is red
(else we obtain a red copy of $\P(2)$ with maximal element $v_1$ and minimal element $v_9$). This is a contradiction, so $v_2$ {\bf  is coloured red}.

\item This implies $v_3$ {\bf is coloured blue}
 (else $v_1,v_2,v_3, v_9$ induce a red $\P(2)$).

\item By symmetry with step (c), this implies \textbf{$v_{10}$ is blue}. 

\item Note that if $v_6$ is red, together with $v_1,v_2$ and $v_9$ it induces a red $C_4$, (which contains $\P(2)$ as a subposet). If $v_6$ is blue, together with $v_3,v_{10}$ and $v_{11}$ it induces a blue $C_4$. In either case we obtain a monochromatic copy of $\P(2)$, a contradiction. Thus, $H$ is indeed  $(\P(2),\P(2))$-Ramsey, as claimed.
\end{enumerate}

\end{proof}

Finally we  note that Theorem~\ref{K-R}  shows that most posets have height 3 and this makes it possible to find an interval which contains the Ramsey threshold for almost all posets.
\begin{theorem}
	There exists constants  $\frac{\log2}{5} \leq  c_{ru}^- \leq c_{ru}^+ \leq \frac{\log 2}{3}$ such that almost all posets $P$ on $N$ elements satisfy   
	$c_{ru}^-   \leq c_{\mathrm{Ram}^-} (P, P ) \leq c_{\mathrm{Ram}^+} (P, P  ) \leq  c_{ru}^+ +O(1/\log N)$.
\end{theorem}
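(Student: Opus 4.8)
The plan is to take the absolute constants $c_{ru}^-:=\tfrac{\log 2}{5}$ and $c_{ru}^+:=\tfrac{\log 2}{3}$, and, in both directions, to reduce via the Kleitman--Rotschild structure theorem (Theorem~\ref{K-R}) to the case where $P$ is a uniformly random member of the class $A_N$; since almost all $N$-element posets lie in $A_N$, it suffices to verify the two inequalities for (almost) all $P\in A_N$.

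The upper bound is essentially free. A poset containing no copy of $P$ at all is trivially not $(P,P)$-Ramsey, so $c_{\mathrm{Ram}^+}(P,P)\le c_{\star}(P)$ for every $P$ (this is part of the chain of inequalities recorded just before Conjecture~\ref{conjecture: Ramsey exponents exist}). Combined with the universality theorem (Theorem~\ref{universality}), which gives $c_{\star}(P)=\tfrac{\log 2}{3}+O(1/\log N)$ for almost all $P$, this yields $c_{\mathrm{Ram}^+}(P,P)\le \tfrac{\log 2}{3}+O(1/\log N)=c_{ru}^+ +O(1/\log N)$ for almost all $P$.

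The content is in the lower bound $c_{\mathrm{Ram}^-}(P,P)\ge \tfrac{\log 2}{5}$, which I would establish for \emph{every} $P\in A_N$. The first observation is that each such $P$ embeds as a (not necessarily induced) subposet into $C_3(N)=C(N,N,N)$: the antichains $L_1,L_2,L_3$ witnessing membership in $A_N$ have sizes at most $N$, and every relation of $P$ runs upward through them, so sending $L_i$ into the $i$-th level of $C_3(N)$ is an injective poset homomorphism. The second observation is the Ramsey statement $C_5(2N)\rightarrow\bigl(C_3(N),C_3(N)\bigr)$, proved by a one-line pigeonhole argument: in any $2$-colouring of $C_5(2N)$ each of its five levels has at least $N$ elements in some single colour, so at least three levels share a majority colour, and selecting $N$ such elements from each of those three levels produces a monochromatic copy of $C_3(N)$. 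Since $P$ embeds into $C_3(N)$, a monochromatic $C_3(N)$ contains a monochromatic $P$, and hence $C_5(2N)\rightarrow(P,P)$.

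To finish: whenever $c<c_{\star}(C_5(2N))$, Theorem~\ref{theorem: existence threshold} shows that w.h.p.\ $\P(n,e^{-cn})$ contains a copy of $C_5(2N)$, and therefore w.h.p.\ $\P(n,e^{-cn})\rightarrow(P,P)$; consequently $c_{\mathrm{Ram}^-}(P,P)\ge c_{\star}(C_5(2N))\ge \tfrac{\log 2}{5}$, the last inequality being Proposition~\ref{proposition: existence threshold for t-blow up of Cell} applied with $\ell=5$ and $t=2N$. As the lower bound holds for all $P\in A_N$ and the upper bound for almost all $P\in A_N$, Theorem~\ref{K-R} transfers both conclusions to almost all $N$-element posets, which completes the proof. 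The only genuine difficulty is conceptual --- realising that one should route the argument through the "host" posets $C_3(N)$ and $C_5(2N)$ rather than reason about the random poset $P$ directly; once this is set up, the computations are routine, and indeed one expects that with a more refined colouring one could push $c_{ru}^+$ strictly below $\tfrac{\log 2}{3}$, though this is not needed here.
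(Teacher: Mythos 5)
Your proof is correct and takes essentially the same route as the paper's: the upper bound via $c_{\mathrm{Ram}^+}(P,P)\le c_{\star}(P)$ together with Theorem~\ref{universality}, and the lower bound by embedding every $P\in A_N$ into $C_3(N)$, observing that $C_5(2N)\rightarrow(C_3(N),C_3(N))$ by pigeonhole on the majority colour of each layer, and then invoking Proposition~\ref{proposition: existence threshold for t-blow up of Cell}. The only cosmetic difference is that the paper uses $C_5(2N-1)$ where you use $C_5(2N)$; both work equally well.
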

\begin{proof}
For every poset $P$ we have that $c_{\mathrm{Ram}^+} (P, P  )\leq c_{\star} (P)$. Thus
Theorem \ref{universality} implies the upper bound.

Recall from 
Theorem~\ref{K-R} that asymptotically almost every poset on a fixed set of $N$ elements belongs to $A_N$, and that every poset $P$ in $A_N$ is a subposet of $C(N,N,N)=C_3(N)$.  So 
	$$
		c_{\mathrm{Ram}^-} (C_3(N)  , C_3(N ))  \leq c_{\mathrm{Ram}^-} (P, P ) \leq  
		c_{\mathrm{Ram}^+} (P, P  ) 		.
	$$
For $c < \frac{\log(2)}{5}$ and $p=e^{-cn}$,  by Proposition~\ref{proposition: existence threshold for t-blow up of Cell},  $\mathcal{P}(n,p)$  w.h.p.
	contains a copy  of $C_5(2N-1)$; any two-colouring of   $C_5(2N-1)$ will contain a monochromatic copy of  $C_3(N)$ and  
	hence also a monochromatic copy of $P$.	 Thus the two exponents lie in the stated interval.
\end{proof}

We  believe that this result can be sharpened to give a single universality exponent.  As we noted in the proof of Theorem \ref{universality}, there is a constant  $b$ such that w.h.p.  a poset $P$ from  $A_{N}$  contains a copy of 
$C_3(t)$ with $t= \lceil b \log N\rceil$.   Hence, in order to be $(P,P)$-Ramsey  the random poset must also be $(C_3(t),C_3(t))$-Ramsey  and hence  $c_{ru}^+ \leq c_{\mathrm{Ram}^+}(C_3(t))$.   Since the sequences $c_{\mathrm{Ram}^-} (C_3(N)  , C_3(N)) $   and $c_{\mathrm{Ram}^+}(C_3(N)),C_3(N))$ are both bounded and non-increasing in $N$, they both converge to limits that give a lower bound on $c_{ru}^-$ and an upper bound on $c_{ru}^+$ respectively. However by Conjecture~\ref{conjecture: Ramsey exponents exist}  these limits should be the same, which would imply the following. 
\begin{conjecture} 
  There exists a constant $c_{ru} = \lim_{N\rightarrow \infty} c_{\mathrm{Ram}^-} (C_3(N))$ such that almost all posets $P$  on $N$ elements have $c_{ru} \leq c_{\mathrm{Ram}^-} (P, P ) \leq c_{\mathrm{Ram}^+} (P, P  ) \leq  c_{ru} +O(1/\log N)$ 
\end{conjecture}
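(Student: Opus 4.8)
The plan is to deduce the conjecture from Conjecture~\ref{conjecture: Ramsey exponents exist} applied to the posets $C_3(t)$, together with the Kleitman--Rothschild structure theorem (Theorem~\ref{K-R}) and the elementary monotonicity of the Ramsey exponents under the subposet relation. The starting point is a \emph{sandwich at the level of subposets}: for almost all $P$ on $N$ elements one has $C_3(t)\subseteq P\subseteq C_3(N)$ with $t=\lceil b\log N\rceil$, where $b>0$ is the absolute constant produced in the proof of Theorem~\ref{universality}. The inclusion $P\subseteq C_3(N)$ holds for \emph{every} $P\in A_N$ (each such poset embeds into $C(N,N,N)$), while $C_3(t)\subseteq P$ holds w.h.p.\ for a uniformly random $P\in A_N$ by the same discrete-probability estimate used in the proof of Theorem~\ref{universality}; by Theorem~\ref{K-R} these two facts together cover almost all posets on $N$ elements. (Throughout, I read $c_{\mathrm{Ram}^-}(C_3(N))$ in the statement of the conjecture as the symmetric quantity $c_{\mathrm{Ram}^-}(C_3(N),C_3(N))$.)

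Next I would record the routine monotonicity facts. If $P_1\subseteq P_2$, then a monochromatic copy of $P_2$ contains a monochromatic copy of $P_1$, so $R\rightarrow(P_2,P_2)$ implies $R\rightarrow(P_1,P_1)$; feeding this into the definitions of the upper and lower Ramsey exponents gives $c_{\mathrm{Ram}^{\pm}}(P_2,P_2)\leq c_{\mathrm{Ram}^{\pm}}(P_1,P_1)$. Applied to the sandwich above, this yields, for almost all $P$ on $N$ elements,
\begin{equation*}
c_{\mathrm{Ram}^-}(C_3(N),C_3(N))\ \leq\ c_{\mathrm{Ram}^-}(P,P)\ \leq\ c_{\mathrm{Ram}^+}(P,P)\ \leq\ c_{\mathrm{Ram}^+}(C_3(t),C_3(t)),\qquad t=\lceil b\log N\rceil.
\end{equation*}
Applying the same monotonicity to the chain $C_3(N)\subseteq C_3(N+1)$ shows that $\bigl(c_{\mathrm{Ram}^-}(C_3(N),C_3(N))\bigr)_N$ and $\bigl(c_{\mathrm{Ram}^+}(C_3(N),C_3(N))\bigr)_N$ are non-increasing and bounded below (by $(\log 2)/5$, via Proposition~\ref{proposition: existence threshold for t-blow up of Cell}), hence convergent. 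Granting Conjecture~\ref{conjecture: Ramsey exponents exist}, one has $c_{\mathrm{Ram}^-}(C_3(N),C_3(N))=c_{\mathrm{Ram}^+}(C_3(N),C_3(N))$ for every $N$, so the two sequences share a single limit, which we take to be $c_{ru}$. The lower bound in the conjecture is then immediate: a non-increasing sequence lies above its limit, so $c_{\mathrm{Ram}^-}(P,P)\geq c_{\mathrm{Ram}^-}(C_3(N),C_3(N))\geq c_{ru}$ for almost all $P$ on $N$ elements.

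The remaining — and genuinely hard — step is the upper bound with the claimed $O(1/\log N)$ error. Writing $c_{\mathrm{Ram}}(C_3(t))$ for the common value of $c_{\mathrm{Ram}^-}(C_3(t),C_3(t))$ and $c_{\mathrm{Ram}^+}(C_3(t),C_3(t))$, the display above gives $c_{\mathrm{Ram}^+}(P,P)\leq c_{\mathrm{Ram}}(C_3(t))$ with $t=\lceil b\log N\rceil$, so it suffices to establish the quantitative rate $c_{\mathrm{Ram}}(C_3(t))-c_{ru}=O(1/t)$. One plausible route is to bound $c_{\mathrm{Ram}^+}(C_3(t),C_3(t))$ from above by $c_{\star}$ of an explicit ``tower'' poset formed from two copies of $C_3(t)$ sharing a middle antichain (in the spirit of Theorem~\ref{theorem: upper bound on Ramsey exponent using tower colouring}), and then to show, using Proposition~\ref{proposition: existence threshold for t-blow up of Cell}-type blow-up estimates, that this $c_{\star}$-value decreases to $c_{ru}$ at rate $O(1/t)$; one would also want a matching $O(1/t)$ control on the gap between $c_{\mathrm{Ram}^+}(C_3(t))$ and $c_{\mathrm{Ram}^-}(C_3(s))$ for a suitable $s=s(t)$ in order to pin down the limit. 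I expect this rate estimate to be the main obstacle, and together with the appeal to Conjecture~\ref{conjecture: Ramsey exponents exist} — which is needed both to make the limit defining $c_{ru}$ unambiguous and to collapse the two sandwiching sequences — it is precisely why the statement is posed as a conjecture rather than a theorem.
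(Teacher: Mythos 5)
This statement is posed in the paper as a conjecture, not a theorem: the paper offers no proof, only the informal discussion immediately preceding it, and your proposal reconstructs that discussion faithfully. You use the same ingredients --- the Kleitman--Rothschild sandwich $C_3(t)\subseteq P\subseteq C_3(N)$ with $t=\lceil b\log N\rceil$ for almost all $P$ on $N$ elements (Theorem~\ref{K-R} together with the estimate from the proof of Theorem~\ref{universality}), the monotonicity of the Ramsey exponents under the subposet relation, the resulting bounded non-increasing (hence convergent) sequences $c_{\mathrm{Ram}^{\pm}}(C_3(N),C_3(N))$, and the appeal to Conjecture~\ref{conjecture: Ramsey exponents exist} to identify the two limits as a single $c_{ru}$. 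Your one substantive addition is also correct and worth stressing: even granting Conjecture~\ref{conjecture: Ramsey exponents exist}, this argument only yields $c_{\mathrm{Ram}^+}(P,P)\leq c_{ru}+o(1)$, whereas the conjectured bound $c_{ru}+O(1/\log N)$ requires in addition a rate of convergence $c_{\mathrm{Ram}}(C_3(t))-c_{ru}=O(1/t)$, in the spirit of the $O(1/t)$ bound for $c_{\star}(C_3(t))$ in Proposition~\ref{proposition: existence threshold for t-blow up of Cell}. The paper's phrase ``which would imply the following'' glosses over exactly this point, so your identification of the rate estimate as the genuinely open quantitative step (on top of the dependence on Conjecture~\ref{conjecture: Ramsey exponents exist}) is the right diagnosis of why the statement remains a conjecture rather than a theorem.
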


\begin{question}
	What is the value of  $\lim_{N\rightarrow \infty} c_{\mathrm{Ram}^-} (C_3(N))$?
\end{question}

\section{Open problems}\label{sec:open}
In addition to Conjecture~\ref{conjecture: Ramsey exponents exist} about the existence of Ramsey exponents and the obvious problem of tightening our Ramsey results, many other open problems remain.

\begin{question}
{\ }
\begin{itemize}
	\item   Is $C(n,n)$   uniformly balanced for all $n$?
	\item Is $C(n_1,n_2,n_1)$  uniformly balanced if $n_1\geq n_2$?
\end{itemize}
\end{question}

Let $Z_t$  be the poset whose Hasse graph  is obtained from a path on $t$ vertices by giving  the edges alternating directions. The number of antichains  in $Z_t$ is given by the Fibonacci numbers, which implies that $c_{\star}(Z_t)\leq \log\frac{1+\sqrt{5}}{2}+ O(t^{-1})$. On the other hand, $(\log 2)/2$  is a lower bound for $c_{\star}(Z_t)$, since $Z_t$ is a subposet of $C(t/2,t/2)$. This leaves a small gap which it would be nice to close.
\begin{question}
	What is $c_{\star}(Z_t)$?   
\end{question}

Something which we have touched upon in the paper, albeit indirectly, is the size of the connected components of $\mathcal{P}(n,p)$.  
\begin{question}
	What is the size of  the largest connected component of $\mathcal{P}(n,p)$?
\end{question}
At the common threshold for the stars  $C(1,t)$  the components size becomes unbounded. For larger values of $p=\exp(-cn )$ it would be interesting to compare the size of largest component to $2^n e^{-c n}$,  the expected 
number of elements in $\mathcal{P}(n,p)$. 

As we have seen,  once we pass the threshold for the existence of $P$, the collection of `profiles' of copies of $P$ that occur with positive probability in $\P(n,p)$ begins to expand. We have also given examples where this set of embeddings does not consist of a single point, even at $c_{\star}(P)$. It would be interesting to identify conditions which ensure that there is a unique embedding at $c_{\star}(P)$, and to give some quantitative large deviation bounds for the occurring copies of $P$.

Finally, let us note that determining the Ramsey threshold for  $\P(d)$ exactly seems hard, much like  the deterministic question of finding $R(\mathcal{P}(d_1),\mathcal{P}(d_2))$.  In \cite{AxenovichWalzer17} 
various bounds were  given and it was shown that $R(\mathcal{P}(3),\mathcal{P}(3))$ is either 7 or 8.  As part of our own investigation into Ramsey problems for posets we proved the following:
\begin{theorem}
	$R(\mathcal{P}(3),\mathcal{P}(3))=7$ 
\end{theorem}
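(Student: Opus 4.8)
The plan is to combine the bounds $7\le R(\mathcal{P}(3),\mathcal{P}(3))\le 8$ of Axenovich and Walzer~\cite{AxenovichWalzer17} with a new argument establishing the upper bound $R(\mathcal{P}(3),\mathcal{P}(3))\le 7$. Since~\cite{AxenovichWalzer17} already exhibits a red/blue colouring of $\mathcal{P}(6)$ with no monochromatic induced copy of $\mathcal{P}(3)$, it suffices to show that every red/blue colouring $\chi$ of $\mathcal{P}(7)$ contains a monochromatic induced copy of $\mathcal{P}(3)$; so from now on I assume for contradiction that $\chi$ has no such copy.

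First I would locate a monochromatic induced diamond. Restricting $\chi$ to the subposet $\{X:\ X\subseteq[4]\}\cong\mathcal{P}(4)$ and using the known value $R(\mathcal{P}(2),\mathcal{P}(2))=4$ from~\cite{AxenovichWalzer17}, I obtain an induced diamond $D=\{b,m_1,m_2,t\}$ with $b\subsetneq m_1,m_2\subsetneq t\subseteq[4]$, with $m_1,m_2$ incomparable, all four sets receiving the same colour, say red. Note $|t|\le 4$, so $[7]\setminus t$ contains at least three elements.

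The core of the argument is a ``doubling'' step, based on the isomorphism $\mathcal{P}(3)\cong\mathcal{P}(2)\times C_2$. One checks that for \emph{every} non-empty $C\subseteq[7]\setminus t$ the family $D\cup(D+C)$, where $D+C:=\{x\cup C:\ x\in D\}$, is an induced copy of $\mathcal{P}(3)$ in $\mathcal{P}(7)$ with $D$ as ``bottom'' diamond and $D+C$ as ``top'' diamond; symmetrically, if $b\ne\emptyset$ then for every non-empty $C\subseteq b$ the family $(D\setminus C)\cup D$, with $D\setminus C:=\{x\setminus C:\ x\in D\}$, is an induced copy of $\mathcal{P}(3)$. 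Since $\chi$ has no red induced $\mathcal{P}(3)$, each of the (at least seven) families $D+C$, and each family $D\setminus C$, must therefore contain a blue element. Every such blue element lies in some Boolean interval isomorphic to $\mathcal{P}(4)$, so $R(\mathcal{P}(2),\mathcal{P}(2))=4$ again produces a monochromatic diamond through it, to which the doubling step applies anew; iterating this — together with a handful of further induced copies of $\mathcal{P}(3)$ sharing only part of $D$, and the red/blue symmetry — the two colour classes are squeezed into increasingly rigid configurations. The plan is to push this until the constraints become jointly unrealisable inside the $128$-element poset $\mathcal{P}(7)$, giving the desired contradiction; with the lower bound of~\cite{AxenovichWalzer17} this yields $R(\mathcal{P}(3),\mathcal{P}(3))=7$.

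The hard part will be controlling the case analysis: a monochromatic diamond can occupy many relative positions in $\mathcal{P}(7)$, and can be completed to a copy of $\mathcal{P}(3)$ in several ways, so the branching is substantial. The work lies in organising it — reducing to a short list of ``critical'' sub-configurations up to the automorphisms of $\mathcal{P}(7)$ and the colour swap, and then ruling each out — a bookkeeping task for which some computer assistance is convenient.
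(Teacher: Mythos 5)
Your proposal is a plan, not a proof: you correctly set up the lower bound via the Axenovich--Walzer colouring of $\mathcal{P}(6)$, and the ``doubling'' observation is sound (for $C\subseteq[7]\setminus t$ nonempty, $D\cup(D+C)$ is indeed an induced copy of $\mathcal{P}(3)$, since adding a set disjoint from $t$ preserves the inclusion pattern within $D$ and no element of $D+C$ can be contained in an element of $D$). But the heart of the argument --- iterating through the forced blue elements, generating new monochromatic diamonds, and ``squeezing the two colour classes into increasingly rigid configurations'' until a contradiction falls out --- is entirely unrealised. You explicitly defer the case analysis and acknowledge it would need computer assistance, so as written this is a genuine gap: the key step is a sketch with no bound on the branching or argument that the process terminates in a contradiction.

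The paper's actual proof takes a more direct route and skips the human preprocessing altogether. It encodes the problem as a Boolean satisfiability instance: one variable per element of $\mathcal{P}(7)$, and for each (induced) copy of $\mathcal{P}(3)$ two clauses forcing the copy not to be monochromatic, then checks unsatisfiability with a standard SAT solver (MiniSat). For $d\leq 6$ the analogous formulas are satisfiable, matching the lower bound. So both approaches are computer-assisted, but yours tries to front-load structure (monochromatic diamonds from $R(\mathcal{P}(2),\mathcal{P}(2))=4$, the doubling lemma) before handing off to a machine, whereas the paper simply hands the entire combinatorial explosion to a SAT solver. If your goal is a short and verifiable certificate, the direct SAT encoding is cleaner; your approach would be worth pursuing if you wanted a more illuminating, partially-human proof, but then you must actually close the case analysis rather than gesture at it.
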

In order to prove this we created a Boolean satisfiability version of the problem. Here we have one Boolean variable for each element of $\P(d)$. For each $\P(3)$ in $\P(d)$ we create two clauses, expressing that at least one variable in a $\P(3)$ must be set to True  and at least one to False, thereby avoiding a monochromatic copy of $\P(3)$. For $d\leq 6$  satisfying assignments for these Boolean formulae are easily found by a standard SAT-solver  like MiniSat, while for  $d=7$ the formula is found to be unsatisfiable.

\bigskip

\noindent \textbf{Added in proof.}
After  submitting this paper, we learned that Theorem~\ref{thm:count} could be derived from an old result of Stanley. Stanley \cite{Stanley86} showed that for any poset $P$, there is a bijection between the family of antichains in $P$ and $\mathrm{hom}_P(\mathcal{P}(1))$, the family  of homomorphisms from $P$ to $\mathcal{P}(1)$.  As there is a natural bijection between $\mathrm{hom}_P(\mathcal{P}(n))$ and $(\mathrm{hom}_P(\mathcal{P}(1)))^n$, this gives an alternative proof of Theorem~\ref{thm:count}. 

\section*{Acknowledgments}
Much of the research in this paper was conducted whilst the third and fourth authors were visiting Ume{\aa} Universitet. They are grateful to the University for the nice work environment. The authors thank Hao Huang for suggesting an alternative proof of Theorem 1 by using a result of Stanley. The authors are also grateful to the referees for their helpful and careful reviews.


\end{document}